\title[Shock formation in 1D conservation laws: vanishing viscosity]{Shock formation in 1D conservation laws II:\\Vanishing viscosity}
\author{John Anderson}
\address{JA: Department of Mathematics, Stony Brook University, Stony Brook, NY 11794, USA}
\email{\href{mailto:jrlanderson@math.stonybrook.edu}{\tt jrlanderson@math.stonybrook.edu}}
\author{Sanchit Chaturvedi}
\address{SC: Courant Institute of Mathematical Sciences, New York University, New York, NY 10012, USA}
\email{\href{mailto:chaturvedisanchit@nyu.edu}{\tt chaturvedisanchit@nyu.edu}}
\author{Cole Graham}
\address{CG: Department of Mathematics, University of Wisconsin--Madison, Madison, WI 53706, USA}
\email{\href{mailto:graham@math.wisc.edu}{\tt graham@math.wisc.edu}}
\date{\today}
\dedicatory{In memory of Dr. S.~C. Chaturvedi}
\begin{document}

\begin{abstract}
  We study the effects of weak viscosity on shock formation in 1D hyperbolic conservation laws.
  Given an inviscid solution that forms a nondegenerate shock, we add a small viscous regularization and study the limit as the viscosity vanishes.
  Using a matched asymptotic expansion, we determine the sharp rate of convergence in strong norms up to the time of inviscid shock formation, and we identify universal viscous behavior near the first singularity.
  To treat the complex interactions between multiple characteristics and the viscosity, we develop an approximation scheme that exploits a certain decoupling between shocking and nonshocking characteristics.
  Our analysis makes minimal assumptions on the equation, and in particular applies to the compressible Navier--Stokes equations with degenerate physical viscosity.
\end{abstract}

\maketitle

\section{Introduction}

We study systems of viscous conservation laws in one spatial dimension:
\begin{equation}
  \label{eq:main}
  \rd_t\psi^{(\nu)}+A(\psi^{(\nu)})\rd_x \psi^{(\nu)} = \nu\partial_x[B(\psi^{(\nu)})\partial_x\psi^{(\nu)}], \quad t,x \in \R.
\end{equation}
The solution $\psi^{(\nu)}$ is vector-valued with $N$ components, and $A$ and $B$ are $N \times N$ advection and diffusion matrices, respectively.
We index the solution by the viscosity $\nu$, which is small and nonnegative.

Conservation laws arise widely in continuum mechanics, modeling phenomena ranging from the motion of gases to the dynamics of elastic materials~\cite{Dafermos}.
For example, if $(\rho, u, E)$ denotes the mass density, velocity, and energy density of a gas, the compressible Navier--Stokes(--Fourier) equations can be written in the form
\begin{equation}
  \label{eq:NSE}
  \begin{aligned}
    \partial_t \rho + \partial_x (\rho u) &= 0,\\
    \partial_t (\rho u) + \partial_x(\rho u^2 + P) &= \nu \partial_x(\mu \partial_xu),\\
    \partial_t E + \partial_x(Eu + Pu) &= \nu \partial_x(\mu u \partial_xu + \kappa \partial_x \theta).
  \end{aligned}
\end{equation}
The pressure $P$ and temperature $\theta$ are functions of $(\rho, u, E)$, while the dynamic viscosity $\mu$ and thermal conductivity $\kappa$ depend on $\theta$.
These functions are material properties of the gas in question.
Here the small parameter $\nu > 0$ represents the Knudsen number.
The system \eqref{eq:NSE} provides the primary motivation for this work.

In the inviscid setting ($\nu = 0$), initially smooth solutions of \eqref{eq:main} typically develop singularities in finite time.
Fast parcels of fluid can overtake slow, forming a shock singularity in which $\psi$ remains bounded but $\partial_x \psi$ blows up~\cite{Lax64,John74}.
In contrast, for certain viscous equations with $\nu > 0$, the diffusion $B$ can prevent shock formation, producing global-in-time classical solutions~\cite{Kawashima87,MelVas}.
Inviscid and viscous models thus differ qualitatively at the moment of shock formation.
This discrepancy raises important questions regarding the vanishing viscosity limit:
\smallskip
\begin{quote}
  \textit{%
    As a shock forms, does the viscous solution $\psi^{(\nu)}$ converge to the inviscid solution $\psi^{(0)}$ as $\nu \to 0$?
    In which norms, at what rates?
    What is the precise structure of the viscous regularization?
  }  
\end{quote}
\smallskip
In a companion work~\cite{AndChaGra25a}, we examine the detailed structure of inviscid shock formation.
Building on those results, we here assume $\psi^{(0)}$ forms a nondegenerate shock at a time $t_* > 0$, and study $\psi^{(\nu)}$ as $\nu \to 0$ on $[0, t_*] \times \R$.
\begin{theorem}
  \label{thm:approx-informal}
  Under natural conditions on $A$, $B,$ and $\psi^{(0)}$, $\psi^{(\nu)}$ can be approximated to arbitrary precision and regularity on $[0, t_*] \times \R$ using simpler building blocks.
\end{theorem}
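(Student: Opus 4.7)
The plan is to construct an explicit approximate solution $\tilde\psi^{(\nu)}$ via matched asymptotic expansion, then close the argument with a stability estimate showing that the true viscous solution $\psi^{(\nu)}$ stays close to $\tilde\psi^{(\nu)}$ up to $t_*$. The domain $[0,t_*]\times\R$ naturally splits into an \emph{outer region} away from the forming shock, where viscosity is a regular perturbation of the inviscid dynamics, and an \emph{inner region} in a shrinking neighborhood of the shock trajectory, where the steepening gradient of $\psi^{(0)}$ saturates against the diffusion. The building blocks of Theorem~\ref{thm:approx-informal} should be the terms of these two expansions, together with a viscous shock profile acting as the leading inner object.

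For the outer expansion, I would posit a formal series $\psi^{(\nu)} \sim \sum_{k\ge 0}\nu^k \Psi_k$ with $\Psi_0 = \psi^{(0)}$, plug into \eqref{eq:main}, and collect powers of $\nu$. This yields a triangular hierarchy in which each $\Psi_k$ solves a linear transport equation with coefficient $A(\psi^{(0)})$, forced by diffusive terms built from $\Psi_0,\ldots,\Psi_{k-1}$. The description of the inviscid solution from the companion paper \cite{AndChaGra25a} supplies characteristic coordinates adapted to the shocking eigenmode; I would decompose each $\Psi_k$ along the shocking and non-shocking eigendirections of $A(\psi^{(0)})$ and integrate separately. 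The non-shocking components stay smooth up to $t_*$, while the shocking component develops a controlled singularity as $t\to t_*$ whose rate is dictated by the nondegeneracy hypothesis.

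Near the shock, I would introduce a stretched variable $y=(x-\xi(t))/\nu^\alpha$ (and a rescaled time close to $t_*$) centered on the predicted shock curve $\xi$. At leading order the inner problem reduces to the traveling-wave ODE for a viscous shock profile in the shocking direction, with the non-shocking components essentially frozen at their outer values; this is where the \emph{decoupling between shocking and non-shocking characteristics} becomes the crucial technical tool, since it lets us localize the viscous boundary layer to a single scalar equation. Higher-order inner corrections satisfy forced linear ODEs in $y$; the usual solvability conditions determine integration constants, and asymptotic matching with the outer expansion in the overlap strip pins down the remaining freedom. A smooth cutoff glues the two expansions into a composite approximant $\tilde\psi^{(\nu)}$, and one can iterate to make the formal residual $O(\nu^M)$ for arbitrarily large $M$ and any prescribed number of derivatives.

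The final step is the stability estimate: plugging $\tilde\psi^{(\nu)}$ into \eqref{eq:main} leaves a residual of size $\nu^M$ in appropriate weighted norms, and a weighted energy estimate for the difference $\psi^{(\nu)}-\tilde\psi^{(\nu)}$ closes the argument by absorbing the genuinely nonlinear error terms. I expect the main obstacle to be uniform control in the matching region, where the outer expansion's characteristic-coordinate singularity and the inner profile's spatial tails must be reconciled uniformly in $\nu$. This demands scale-aware function spaces tied to the inviscid self-similar structure, and it is precisely here that the shocking/non-shocking decoupling pays off: it confines all genuinely viscous effects to one mode, so that the remaining modes can be treated by near-inviscid transport estimates that do not compete with the diffusive length scale.
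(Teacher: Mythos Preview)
Your outer expansion and overall matched-asymptotic architecture are broadly correct, but the inner problem you propose is the wrong one. You describe the leading inner object as a \emph{traveling-wave viscous shock profile}, obtained from a stretched variable $y=(x-\xi(t))/\nu^\alpha$ and an ODE in $y$. That is the Goodman--Xin picture for a \emph{fully developed} shock, i.e.\ for times strictly after $t_*$ when $\psi^{(0)}$ already carries a jump discontinuity. The present paper works on $[0,t_*]$, up to and including the \emph{preshock}: at $t_*$ the inviscid solution has no jump, only a cubic cusp $\sigma^{(0)}\sim\cub$ with $x=a(t_*-t)\cub+b\cub^3$. There is no viscous shock layer to resolve because there is no shock yet.

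The correct inner scaling is the parabolic blow-up $T=\nu^{-1/2}(t-t_*)$, $X=\nu^{-3/4}x$, $\Psi=\nu^{-1/4}\psi$, under which the leading shocking component solves the \emph{time-dependent} viscous Burgers equation $\partial_T\Sigma+\partial_1A_1^1(0)\,\Sigma\,\partial_X\Sigma=B_1^1(0)\,\partial_X^2\Sigma$ on all of $\R_T\times\R_X$, with the inverse cubic $\cub(T,X)$ prescribed as far-field behavior rather than any traveling-wave boundary condition. This global solution was constructed in~\cite{CG23} and is the actual leading inner building block. A second difficulty your sketch does not address is that the diffusive zone $\{\dist\lesssim\nu^{1/4}\}$ is highly anisotropic while the nonshocking characteristics sweep its effects out to the much larger isotropic region $\{\euc\lesssim\nu^{1/2}\}$; the paper handles the resulting matching not by direct comparison of outer and inner partial sums but through an auxiliary doubly-indexed ``grid'' expansion $(\psi_{k,\ell})$ whose rows reproduce the outer terms and whose columns reproduce the inner terms. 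Without something playing this role, the overlap estimates you allude to do not close.
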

\noindent
Using this approximation, we can answer the questions raised above.
For example:
\begin{corollary}
  \label{cor:informal}
  Under the same conditions, $\norms{\psi^{(\nu)} - \psi^{(0)}}_{L^\infty([0, t_*] \times \R)} \leq C \nu^{1/4}$.
  In H\"older spaces, $\psi^{(\nu)} \to \psi^{(0)}$ in $L_t^\infty \m{C}_x^\al$ if and only if $\al < 1/3$.
  Near shock formation, $\psi^{(\nu)}$ resembles a universal profile solving the viscous Burgers equation.
\end{corollary}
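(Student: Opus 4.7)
My plan is to deduce all three conclusions from the explicit structure of the building blocks furnished by Theorem~\ref{thm:approx-informal}. I would invoke the theorem at sufficiently high order to produce an approximation $\Psi^{(\nu)}$ with $\|\psi^{(\nu)} - \Psi^{(\nu)}\|_{L^\infty}$ negligible compared to $\nu^{1/4}$. Based on the matched-asymptotics picture advertised in the abstract, I expect the dominant pieces to be the inviscid solution $\psi^{(0)}$ (together with smooth corrections along the nonshocking characteristics) on an outer region, and, on an inner region of spatial width $\sim\nu^{3/4}$ centered on the shocking characteristic, a rescaled profile of the form $c + \nu^{1/4} V\bigl(\nu^{-1/2}(t - t_*),\,\nu^{-3/4}(x - x_* - c(t - t_*))\bigr)$, where $V$ solves the viscous Burgers equation and matches the cubic singularity of $\psi^{(0)}$ at $(t_*, x_*)$ as $\tau \to -\infty$. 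The exponents $\tfrac14$ and $\tfrac34$ are dictated by requiring the leading Burgers nonlinearity, diffusion, and matching with the generic cubic $\psi^{(0)}(t_*,x) - c \sim -A(x-x_*)^{1/3}$ to all balance.

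The upper bounds follow by direct accounting. Outside the inner region one has $\psi^{(\nu)} = \psi^{(0)} + O(\nu^{1-})$, while inside it both $\psi^{(\nu)} - c$ and $\psi^{(0)} - c$ are of size $\nu^{1/4}$, giving $\|\psi^{(\nu)} - \psi^{(0)}\|_{L^\infty} \lesssim \nu^{1/4}$. Writing $D = \psi^{(\nu)} - \psi^{(0)}$, I would note that $D$ concentrates on a spatial tube of width $\nu^{3/4}$ on which $\partial_x D = O(\nu^{-1/2})$. The interpolation $|D(x) - D(y)| \leq C \min(\nu^{1/4},\, |x - y|\, \nu^{-1/2})$ then yields $[D]_{C^\alpha_x} \lesssim \nu^{(1 - 3\alpha)/4} \to 0$ for $\alpha < 1/3$.

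For sharpness at $\alpha = 1/3$, I would exhibit two spatial points at the critical scale where the inviscid cubic and the matched Burgers profile contribute at order $\nu^{1/4}$ but generically do not cancel. For example, taking $t = t_*$ and comparing $D(t_*, x_*)$ with $D(t_*, x_* + \nu^{3/4})$, each term is a definite $O(1)$ multiple of $\nu^{1/4}$, so $|D(t_*, x_*) - D(t_*, x_* + \nu^{3/4})| \asymp \nu^{1/4} = (\nu^{3/4})^{1/3}$, showing $[D]_{C^{1/3}_x}$ is bounded below uniformly in $\nu$. The universal profile claim is then a direct reading of the inner block: its leading order is the unique self-similar solution of viscous Burgers matching the generic cubic inviscid data, parametrized by only a few scalars extracted from $A$, $B$, and $\psi^{(0)}$ at $(t_*, x_*)$.

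The main obstacle I anticipate is not the upper bounds, which become bookkeeping once Theorem~\ref{thm:approx-informal} is invoked, but the sharpness argument at $\alpha = 1/3$: ruling out accidental cancellation at the critical scale requires controlling the matching one order beyond the leading approximation, to guarantee that the inner Burgers profile and the inviscid cubic truly disagree at the chosen pair of points by a definite amount. A secondary subtlety is that the nonshocking characteristics contribute their own $O(\nu)$ corrections to $D$; these are negligible for the upper bound but must be checked not to alter the leading-order matching used in the lower bound.
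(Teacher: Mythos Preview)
Your high-level strategy matches the paper's: invoke the approximation theorem at high order, account separately for the outer and inner contributions to $\psi^{(\nu)}-\psi^{(0)}$, and extract the lower bound at $\alpha=1/3$ by comparing the inner Burgers profile to the inviscid cubic at two points separated by $\nu^{3/4}$. One imprecision: your claim that $\psi^{(\nu)}=\psi^{(0)}+O(\nu^{1-})$ in the outer region is too optimistic. The first outer corrector $\nu\,\out{\psi}{1}$ has size $\nu\dist^{-3}$, which near the matching zone is only $\nu^{1/4+}$, not $\nu^{1-}$; this does not affect your $L^\infty$ upper bound (since $\nu^{1/4}$ is what you need globally anyway) but it means $D$ does not truly ``concentrate'' in the inner tube. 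For the H\"older upper bound, your interpolation $|D(x)-D(y)|\lesssim\min(\nu^{1/4},|x-y|\nu^{-1/2})$ is a legitimate route, but it requires a \emph{global} bound $\partial_x D=O(\nu^{-1/2})$, which you have not justified in the outer region; it does hold, but only because $\nu\,\partial_x\out{\psi}{1}\lesssim\nu\dist^{-6}\lesssim\nu^{-1/2}$ there. The paper instead proves the sharper pointwise bound $\partial_x D\lesssim\nu^{\eta}\dist^{-2-3\delta}$ and converts it to H\"older regularity via the elementary fact that $|\partial_x f|\lesssim\dist^{-a}$ with $a<3$ implies $f\in\m{C}_x^{1-a/3}$; this $\dist$-weighted approach also cleanly isolates the improved $\m{C}_x^{2/3-}$ convergence of the nonshocking component $\omega$, which your interpolation would not detect. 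For the lower bound, the paper avoids your worry about accidental cancellation by observing that $\partial_X\inn{\Sigma}{0}$ is bounded while $\partial_X\Sigma_{0,0}$ diverges at the origin, forcing $\inn{\Sigma}{0}(0,X_1)\neq\Sigma_{0,0}(0,X_1)$ for some $X_1$ with $\Dist(0,X_1)\le 1$; oddness then gives the second point for free.
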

\noindent
We make these statements precise in Theorem~\ref{thm:approx} and Corollaries~\ref{cor:rate}--\ref{cor:universal} below.

Our approach flexibly handles a wide range of equations, including the compressible Navier--Stokes equations \eqref{eq:NSE} with degenerate physical viscosity.
To our knowledge, this is the first study in strong norms of the vanishing viscosity limit up to shock formation for systems of conservation laws.

\subsection{Setup}

We now describe our problem in greater detail.
The solution $\psi$ takes values in an $N$-dimensional ``state space'' $\m{V} \cong \R^N$, which represents a collection of physical variables such as mass, momentum, and energy density.
The advection $A \colon \m{V} \to \m{V} \otimes \m{V}^*$ satisfies:
\begin{enumerate}[label={(H\arabic*)},series=hyp]
\item
  \label{hyp:advection}
  $A$ is the derivative $\Der f$ of a smooth ``flux function'' $f \colon \m{V} \to \m{V}$ and has $N$ distinct real eigenvalues.
  Also, there exists a smooth ``entropy-flux pair'' $\eta, q \colon \m{V} \to \R$ such that $\nab \eta = (\nab q)^\top A$ and $\Der^2 \eta > 0$ as a quadratic form.
\end{enumerate}
We use $\Der$ to denote the total derivative, so $\Der^2\eta$ is the Hessian of $\eta$.

Conservation laws arising in physical systems often admit strictly convex entropies, which play an important role in the theory of weak solutions~\cite{Dafermos}.
Here, they prove beneficial for the well-posedness of the viscous problem \eqref{eq:main} with $\nu > 0$.
To this end, we assume that the diffusion $B \colon \m{V} \to \m{V} \otimes \m{V}^*$ satisfies:
\begin{enumerate}[hyp]
\item
  \label{hyp:diffusion}
  $B$ has constant nullity $s \leq N-1$, and there is a fixed basis for $\m{V}$ in which the first $s$ rows of $B$ vanish.
  Moreover, there is a constant $c > 0$ such that
  \begin{equation}
    \label{eq:entropy-dissipation}
    \Der^2 \eta(B \xi, \xi) \geq c \abs{B \xi}^2 \ForAll \xi \in \m{V}.
  \end{equation}
\end{enumerate}
Here we interpret $\Der^2 \eta$ as a bilinear form.

This condition ensures that \eqref{eq:main} is locally well-posed, as shown by Serre~\cite{Serre10b}, building on seminal work of Kawashima and Shizuta~\cite{Kawashima,ShiKaw85}.
Many physically-motivated diffusions naturally satisfy \ref{hyp:diffusion}.
For example, in the Navier--Stokes equations, physical viscosity vanishes in the mass equation and dissipates the entropy density $\eta$ in the sense of \eqref{eq:entropy-dissipation}.

Next, we place conditions on the inviscid solution $\psi^{(0)}$.
For convenience, we use the same initial condition $\psi^{(\nu)}(0, \anon) = \mr{\psi}$ for all $\nu > 0$ and assume:
\begin{enumerate}[hyp]
\item $\mr{\psi} \in \m{C}^\infty(\R; \m{V})$, $\mr{\psi}|_{(-\infty, -R]} \equiv c_-$, and $\mr{\psi}|_{[R,\infty)} \equiv c_+$ for some $R > 0,$ $c_\pm \in \m{V}$.
\end{enumerate}
In fact, Corollary~\ref{cor:informal} only requires a large but finite degree of regularity.
In general, if $\mr{\psi} \in \m{C}^k$ for large $k$, Theorem~\ref{thm:approx-informal} will hold to some large but finite degree of approximation depending on $k$.
Tracking this dependence is cumbersome, so we assume $\mr{\psi} \in \m{C}^\infty$ for simplicity.
Similarly, the assumption that $\mr{\psi}$ is constant outside a compact set is a mere technical convenience.
Our analysis is local, so we could treat much more general behaviors at infinity.

We are interested in the local structure of $\psi^{(\nu)}$ near shock formation, so we assume $\psi^{(0)}$ forms a first isolated shock at $(t_*, x_*) \in \R_+ \times \R$.
Generically, it should form in a single characteristic, meaning $\partial_x \psi$ diverges in one eigenspace of $A$ but remains bounded in the rest.
The corresponding ``shocking'' eigenvalue is the spatial velocity of the incipient shock.
Using gauge freedom, we shift into a co-moving frame so this velocity becomes zero.
More broadly, using a Galilean transformation of spacetime and an affine change in state space, we can arrange:
\begin{enumerate}[hyp]
\item $x_* = 0$, $\psi^{(0)}(t_*, 0) = 0$, and there is a basis $\{e_1,\ldots,e_n\}$ of $\m{V}$ in which $A(0)$ is diagonal, the shock forms in $e_1$, and $A_1^1(0) = 0$.
  \label{hyp:gauge}
\end{enumerate}
Here we use $A_i^j$ to denote $e_j^\top A e_i$.
Because $A(0)$ is diagonal, $A_1^1(0)$ is the instantaneous velocity of shock formation at $(t_*, x_* = 0)$, set to zero.
We eventually shift time so that $t_* = 0$, but for the sake of exposition, we keep $t_* > 0$ in the introduction.

We next require the diffusion to act nontrivially on the shocking component:
\begin{enumerate}[hyp]
\item $B_1^1(0) > 0$.
  \label{hyp:shocking-diffusion}
\end{enumerate}
Thus the diffusion matrix $B$ may be degenerate, but it must directly influence the shocking characteristic at shock formation.
For example, in the Navier--Stokes equations \eqref{eq:NSE}, physical viscosity vanishes in the first equation, but acts nontrivially on the fluid characteristics that form shocks.

We now turn to the detailed structure of generic inviscid shock formation.
In the scalar case ($N = 1$), shocks generally form as in Figure~\ref{fig:shock-formation}.
\begin{figure}[t]
  \centering
  \includegraphics[width = 0.5\linewidth]{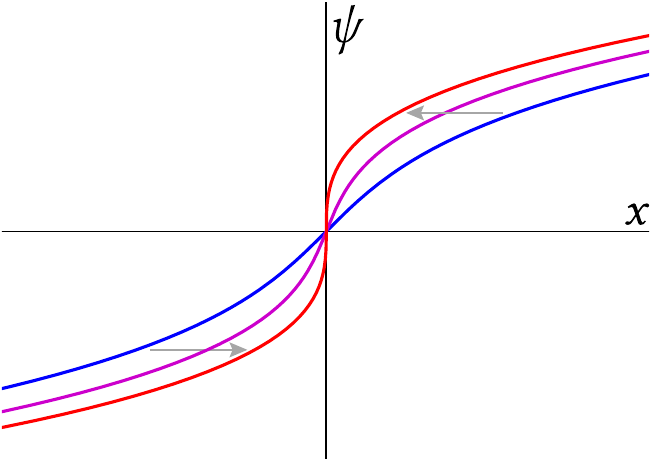}
  \caption[Scalar shock formation.]{%
    Shock formation in a scalar conservation law.
    As time evolves, the profile progressively steepens (blue, then purple) toward a preshock (red) at time $t_*$.
  }
  \label{fig:shock-formation}
\end{figure}
The spatial profile $\psi^{(0)}(t, \anon)$ steepens as $t$ approaches $t_*$ and forms a cusp-like ``preshock'' with infinite slope at $t = t_*$.
A true shock discontinuity only develops after time $t_*$.
Solutions must then be interpreted in a weak sense, and selection principles such as entropy conditions are required for uniqueness~\cite{Dafermos}.
We will not explore this regime here, as we restrict our attention to $t \leq t_*$.

Near the nascent shock, scalar solutions $\psi^{(0)}$ generically resemble a self-similar inverse cubic profile $\cub$ satisfying
\begin{equation*}
  x = a (t_* - t) \cub + b \cub^3
\end{equation*}
for constants $a,b \neq 0$ of the same sign.
(In the case of the Burgers equation, see the work~\cite{CG23} of the second and third authors).
Figure~\ref{fig:shock-formation} depicts the self-similar profile $\cub$.
Its character becomes most evident at the preshock $t = t_*$, when $\cub$ develops a cubic cusp with infinite slope at the origin.
We note that this behavior is generic but not exhaustive.
For certain initial conditions of positive codimension, the first shock can be more degenerate with higher inverse degree~\cite{ColGhoMas_2020}.
Solutions can even degenerate on an entire open interval, producing a discontinuity at $t = t_*$.

Returning to the vector setting with arbitrary $N$, we assume the first component resembles $\cub$ near $(t_*, 0)$, the other components are more regular, and all admit asymptotic expansions in functions that scale like $\cub$:
\begin{enumerate}[label = (H6$'$)]
\item \!(Informal) Near $(t_*, 0),$ $\psi^{(0)}$ admits an expansion in terms that scale like $\cub$, $\cub^2$, $\cub^3$, etc.
  The leading term is $\cub e_1$.
  \label{hyp:expansion-informal}
\end{enumerate}
This nondegenerate formation expansion is our key hypothesis.
For a precise statement, see \ref{hyp:expansion} in Section~\ref{sec:hom} below.
Loosely, \ref{hyp:expansion-informal} provides a fractional Taylor series for $\psi^{(0)}$ with terms obeying the natural inverse-cubic scaling of generic shock formation.
In fact, the cubic nature of $\psi^{(0)}$ in \ref{hyp:expansion-informal} implies that the shocking eigenvalue is genuinely nonlinear in the sense of Lax~\cite{Lax64}.
For this reason, we need not state genuine nonlinearity as a separate hypothesis.
For more details, see Section~\ref{sec:hom} below.

In~\cite{AndChaGra25a}, we show that \ref{hyp:expansion-informal} is satisfied for an open set of initial data.
In particular, it holds for small perturbations of nondegenerate ``simple waves,'' in which $\partial_x \psi$ occupies a single $A$-eigenspace.
We expect this scenario to be generic, in the sense that shocking solutions should typically resemble a nondegenerate simple wave shortly before shock formation.
We leave this question to future investigation.

\subsection{Matched expansions}
\label{sec:expansion}

As indicated in Theorem~\ref{thm:approx-informal}, we analyze $\psi^{(\nu)}$ by assembling an approximate solution $\app{\psi}{}$ of \eqref{eq:main} from solutions of simpler problems.
This amounts to a matched asymptotic expansion linking an outer expansion away from shock formation with an inner expansion near $(t_*, 0)$.
To be more precise, let $\euc(t, x) \coloneqq [(t_* - t)^2 + x^2]^{1/2}$ denote the Euclidean distance to the preshock $(t_*, 0)$ in spacetime.
We choose an exponent $\beta \in (0, 1/2)$ and use a conventional expansion in $\nu$ where $\euc \geq \nu^\beta$:
\begin{equation}
  \label{eq:outer-intro}
  \psi^{(\nu)} \approx \out{\psi}{0} + \nu \out{\psi}{1} + \nu^2 \out{\psi}{2} + \ldots
\end{equation}
The leading term $\out{\psi}{0}$ is simply the inviscid solution $\psi^{(0)}$, and thus solves a hyperbolic conservation law.
We study the precise structure of $\psi^{(0)}$ in our companion paper~\cite{AndChaGra25a}.
The higher-order correctors $\out{\psi}{k}$ solve linearizations of the conservation law about $\psi^{(0)}$, so their behavior is closely tied to that of $\psi^{(0)}$.

The outer expansion \eqref{eq:outer-intro} is valid where $\euc \geq \nu^\beta$ because the viscosity $\nu \partial_x(B\partial_x \psi)$ remains perturbative in this region: it is much smaller than the other terms in \eqref{eq:main}.
This breaks down near $(t_*, 0)$, so we employ a different inner expansion where $\euc \leq \nu^\beta$ based on proximity to the preshock.
Following~\cite{CG23}, we introduce blow-up coordinates
\begin{equation}
  \label{eq:blow-up}
  T \coloneqq \nu^{-1/2} (t - t_*), \quad X \coloneqq \nu^{-3/4} x, \quad \Psi = \nu^{-1/4} \psi
\end{equation}
and propose an inner expansion of the form
\begin{equation}
  \label{eq:inner-intro}
  \Psi^{(\nu)} = \inn{\Psi}{0} + \nu^{1/4} \inn{\Psi}{1} + \nu^{1/2} \inn{\Psi}{2} + \ldots
\end{equation}
The cubic shock $\cub$ is preserved under the blow-up \eqref{eq:blow-up}, while higher powers of $\cub$ are suppressed.
In light of \ref{hyp:expansion-informal}, the shocking component $e_1$ dominates the inner expansion: $\inn{\Psi}{0}$ lies solely in the shocking component and is thus effectively scalar.
Moreover, \eqref{eq:blow-up} preserves the leading behavior of each term in \eqref{eq:main}, so $\inn{\Psi}{0}$ solves the scalar viscous Burgers equation.
The higher correctors $\inn{\Psi}{\ell}$ solve linear equations that conveniently decouple.
Nonshocking components solve constant-coefficient transport equations, and shocking components solve linearizations of viscous Burgers.

To link the two expansions, we choose data for $\inn{\Psi}{\ell}$ in the distant $T$-past so that partial sums of the outer and inner expansions coincide.
At leading order, $\inn{\Psi}{0}$ resembles the cubic profile $\cub$ at infinity in $(T, X)$.
The second and third authors constructed and extensively studied this solution of viscous Burgers in~\cite{CG23}.
The higher correctors are simple to analyze once the leading part $\inn{\Psi}{0}$ is well understood.

\subsection{Main results}

Once we have constructed the outer \eqref{eq:outer-intro} and inner \eqref{eq:inner-intro} expansions in their respective domains, we truncate them to some order $K$ and use a partition of unity to glue them into an approximate solution $\app{\psi}{K}$.
Using local well-posedness estimates, we show that $\app{\psi}{K}$ approximates $\psi^{(\nu)}$.
\begin{theorem}
  \label{thm:approx}
  Assume \ref{hyp:advection}--\ref{hyp:expansion}.
  Then there exists $\bar{\nu} > 0$ such that for all $\nu \in (0, \bar{\nu}]$, \eqref{eq:main} admits a solution $\psi^{(\nu)}$ on $[0, t_*] \times \R$.
  Moreover, for any $s,p \geq 0$, there exist $K \in \N$ and $C > 0$ such that for all $\nu \in (0, \bar{\nu}]$,
  \begin{equation*}
    \norms{\psi^{(\nu)} - \app{\psi}{K}}_{H^s([0, t_*] \times \R)} \leq C \nu^p.
  \end{equation*}
\end{theorem}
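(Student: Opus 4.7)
The plan is to carry out the matched expansion of Section~\ref{sec:expansion} rigorously, estimate the residual of the resulting approximate solution $\app{\psi}{K}$ in \eqref{eq:main}, and then run a Kawashima--Shizuta-type stability argument about $\app{\psi}{K}$ to both control $\psi^{(\nu)}-\app{\psi}{K}$ and extend $\psi^{(\nu)}$ to $[0,t_*]$.

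To construct the expansion, I would build the outer terms iteratively: $\out{\psi}{0}=\psi^{(0)}$ is supplied by the companion paper \cite{AndChaGra25a}, and each $\out{\psi}{k}$ with $k\geq 1$ solves a linear hyperbolic equation obtained by linearizing \eqref{eq:main} about $\psi^{(0)}$, sourced by the viscous correction of $\out{\psi}{k-1}$. Hypothesis~\ref{hyp:expansion-informal} provides a fractional expansion of $\psi^{(0)}$ in powers of $\cub$ near the preshock, which I would propagate into analogous expansions for each $\out{\psi}{k}$ with explicit singular rates at $(t_*,0)$. In the blow-up coordinates \eqref{eq:blow-up}, $\inn{\Psi}{0}$ is the self-similar viscous Burgers solution from \cite{CG23}; the higher correctors $\inn{\Psi}{\ell}$ decouple into constant-coefficient transport for the nonshocking components and linearized viscous Burgers for the shocking one, with data at $T\to-\infty$ fixed by matching them order by order to the outer expansion in overlapping coordinates. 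A partition of unity supported across $\euc\sim\nu^\beta$ assembles these pieces into $\app{\psi}{K}$.

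Next I would control the residual $R_K\coloneqq\partial_t \app{\psi}{K}+A(\app{\psi}{K})\partial_x\app{\psi}{K}-\nu\partial_x[B(\app{\psi}{K})\partial_x\app{\psi}{K}]$. In the pure outer zone the expansion cancels everything through order $\nu^K$, leaving an $O(\nu^{K+1})$ residual modulated by inverse powers of $\euc$; in the pure inner zone it cancels everything through order $\nu^{K/4}$, modulated by powers of $\euc$; in the overlap $\euc\sim\nu^\beta$, cutoff commutators are bounded by the outer remainder evaluated at $\euc\sim\nu^\beta$ and the inner remainder in blow-up coordinates of size $\nu^\beta$, both small for $\beta$ strictly inside $(0,1/2)$. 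Trading $K$ against $s$ and $p$ yields $\norms{R_K}_{H^s([0,t_*]\times\R)}\leq C\nu^p$.

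Finally, the error $\vartheta\coloneqq\psi^{(\nu)}-\app{\psi}{K}$ satisfies a linear system $\partial_t\vartheta+A(\app{\psi}{K})\partial_x\vartheta-\nu\partial_x[B(\app{\psi}{K})\partial_x\vartheta]=N(\vartheta)-R_K$, with $N$ collecting higher-order terms in $\vartheta$. Friedrichs symmetrization via the strictly convex entropy $\eta$ from \ref{hyp:advection}, combined with the partial parabolic dissipation \eqref{eq:entropy-dissipation} from \ref{hyp:diffusion}, supplies $\nu$-uniform $H^s$ energy estimates of Kawashima--Shizuta--Serre type. A continuity/bootstrap argument gives local existence of $\psi^{(\nu)}$, the claimed bound on $\vartheta$, and extension of $\psi^{(\nu)}$ to $t_*$, since $\app{\psi}{K}$ remains smooth and uniformly bounded there. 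I expect the main obstacle to be the overlap bookkeeping: one must extract sharp asymptotics for the outer correctors near the cusp (from \cite{AndChaGra25a}) and sharp large-$(T,X)$ asymptotics for the inner correctors (from \cite{CG23}), and verify that they match to high enough order to overcome the loss of powers of $\nu^{-\beta}$ coming from the cusp singularity; the energy estimate itself is essentially standard once the $\nu^{-1/2}$ blow-up of $\partial_x\app{\psi}{K}$ in the inner region is absorbed by the parabolic dissipation available there.
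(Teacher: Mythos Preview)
Your high-level strategy matches the paper's: outer expansion, inner expansion in blow-up coordinates with leading viscous Burgers term, glue via a cutoff at $\euc\sim\nu^\beta$, then close by an entropy-based energy estimate of Serre type with a bootstrap. However, there are two substantive gaps.

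\textbf{Matching via the grid.} You correctly flag the overlap bookkeeping as the main obstacle, but your plan to match outer and inner expansions directly is precisely what the paper avoids. The difficulty is structural: the outer expansion is hyperbolic but fully coupled across components, while the inner expansion is decoupled (triangular in $(\Sigma,\Omega)$) but parabolic. The difference of the two partial sums therefore solves a fully coupled degenerate-parabolic system, and extracting the needed cancellation from that equation is not straightforward. The paper's solution is an auxiliary doubly-indexed ``grid'' $(\psi_{k,\ell})$ of decoupled \emph{hyperbolic} equations, designed so that $\out{\psi}{k}\approx\sum_\ell\psi_{k,\ell}$ (row sums) and $\inn{\psi}{\ell}\approx\sum_k\nu^k\psi_{k,\ell}$ (column sums) in the matching zone. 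Matching then reduces to showing that each grid term is small, which is tractable because the grid equations inherit the advantages of both expansions. Your proposal to fix inner data ``by matching order by order to the outer expansion'' is too vague to substitute for this; in the paper, the inner data are specified through the grid (see the data \eqref{eq:grid-data} and the construction in Section~\ref{sec:inner-matching}), and Propositions~\ref{prop:outer-matching} and \ref{prop:inner-matching} are the heart of the argument.

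\textbf{The energy estimate.} Your claim that the $\nu^{-1/2}$ blow-up of $\partial_x\app{\psi}{K}$ is ``absorbed by the parabolic dissipation'' is not how the argument closes. The relevant term enters the Gr\"onwall exponent linearly as $\int\|\Der\app{\varphi}{}\|_{L_x^\infty}\,dt$, and since $\|\Der\app{\varphi}{}\|_{L_x^\infty}\lesssim(|t|\vee\nu^{1/2})^{-1}$ this integral is only $O(\log\nu^{-1})$, yielding a polynomial loss $\nu^{-C(n)}$ in the energy estimate (Lemma~\ref{lem:energy}). Parabolic dissipation is used to absorb the \emph{quadratic} terms $\nu\|\partial_x\app{\varphi}{}\|_{L_x^\infty}^2$, not the linear one. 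The polynomial loss is then beaten by taking $K,L$ large so that the residual carries an arbitrarily high positive power of $\nu$ (Theorem~\ref{thm:small}).
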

\noindent
We note that large-data global well-posedness is presently open for the full Navier--Stokes--Fourier system \eqref{eq:NSE}, so even the existence of a solution to time $t_*$ cannot be taken for granted.

The approximate solution $\app{\psi}{K}$ incorporates solutions to a hyperbolic conservation law, the viscous Burgers equation, linear advection equations, and linear advection-diffusion equations.
Each is considerably simpler to analyze than the original nonlinear parabolic-hyperbolic system \eqref{eq:main}.
This justifies Theorem~\ref{thm:approx-informal}.

To illustrate the utility of this approximation, we present a few applications based on the leading behavior of the outer and inner expansions.
First, we determine the sharp $L^\infty$-rate of the vanishing viscosity limit.
For systems, this is the first result of its kind in norms stronger than $L^1$.
\begin{corollary}
  \label{cor:rate}
  Assume \ref{hyp:advection}--\ref{hyp:expansion}.
  Then there exists $C \geq 1$ such that for all $\nu \in (0, \bar{\nu}]$,
  \begin{equation*}
    C^{-1} \nu^{1/4} \leq \norms{\psi^{(\nu)} - \psi^{(0)}}_{L^\infty([0, t_*] \times \R)} \leq C \nu^{1/4}.
  \end{equation*}
\end{corollary}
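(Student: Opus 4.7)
The strategy is to use Theorem~\ref{thm:approx} to replace $\psi^{(\nu)}$ by the explicit approximant $\app{\psi}{K}$ and then analyze $\app{\psi}{K} - \psi^{(0)}$ directly. Applying Sobolev embedding on $[0,t_*]\times\R$ with $s > 1$ to Theorem~\ref{thm:approx} gives $\norms{\psi^{(\nu)} - \app{\psi}{K}}_{L^\infty} \lesssim \nu^{1/2}$ for a suitably chosen $K$. Since $\nu^{1/2} \ll \nu^{1/4}$, both the upper and lower bounds reduce to matching estimates on $\app{\psi}{K} - \psi^{(0)}$.

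For the upper bound, split $[0,t_*]\times\R$ along the interface $\{\euc = \nu^\beta\}$ used to assemble $\app{\psi}{K}$. On the outer region $\{\euc \ge \nu^\beta\}$, the partition of unity makes $\app{\psi}{K} - \psi^{(0)} = \sum_{k=1}^K \nu^k \out{\psi}{k}$; each corrector solves a linearization about $\psi^{(0)}$ and inherits algebraic blow-up $|\out{\psi}{k}| \lesssim \euc^{-\gamma_k}$ near the preshock, with exponents $\gamma_k$ dictated by the fractional expansion~\ref{hyp:expansion-informal}. Choosing $\beta \in (0,1/2)$ small enough relative to $\{\gamma_k\}_{k \le K}$ makes $\nu^k |\out{\psi}{k}| \lesssim \nu^{k - \beta \gamma_k} \lesssim \nu^{1/4}$ uniformly. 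On the inner region $\{\euc \le \nu^\beta\}$, pass to blow-up coordinates~\eqref{eq:blow-up}. The inner expansion~\eqref{eq:inner-intro} gives $\nu^{-1/4}\app{\psi}{K} = \inn{\Psi}{0}(T,X) + o(1)$ uniformly, while the scaling identity $\cub(\nu^{1/2}T, \nu^{3/4}X) = \nu^{1/4}\cub(T, X)$ applied to~\ref{hyp:expansion-informal} yields $\nu^{-1/4}\psi^{(0)} = \cub(T,X)\,e_1 + o(1)$ uniformly as well. Therefore $\app{\psi}{K} - \psi^{(0)} = \nu^{1/4}\bigl(\inn{\Psi}{0} - \cub\,e_1\bigr) + o(\nu^{1/4})$ on the inner region, and the $L^\infty(\R^2)$-boundedness of $\inn{\Psi}{0} - \cub\,e_1$ established by the viscous Burgers analysis of~\cite{CG23} completes the bound $|\app{\psi}{K} - \psi^{(0)}| \lesssim \nu^{1/4}$.

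For the lower bound, fix $(T_0, X_0) \in (-\infty, 0] \times \R$ with $\inn{\Psi}{0}(T_0, X_0) \neq \cub(T_0, X_0)\,e_1$. Such a point exists because viscous smoothing renders $\inn{\Psi}{0}(0, \anon)$ smooth at $X = 0$, while $\cub(0, \anon) = (\anon/b)^{1/3}$ has infinite slope there, forcing disagreement on every neighborhood of the origin in blow-up coordinates. Set $t_\nu \coloneqq t_* + \nu^{1/2}T_0 \in [0, t_*]$ (for $\nu$ small) and $x_\nu \coloneqq \nu^{3/4}X_0$. Evaluating the inner and outer expansions at this point yields
\[
  \psi^{(\nu)}(t_\nu, x_\nu) = \nu^{1/4}\inn{\Psi}{0}(T_0, X_0) + o(\nu^{1/4}), \qquad \psi^{(0)}(t_\nu, x_\nu) = \nu^{1/4}\cub(T_0, X_0)\,e_1 + o(\nu^{1/4}),
\]
so $|\psi^{(\nu)} - \psi^{(0)}|(t_\nu, x_\nu) \ge \tfrac{1}{2}\,|\inn{\Psi}{0}(T_0, X_0) - \cub(T_0, X_0)\,e_1|\,\nu^{1/4}$ for all sufficiently small $\nu$.

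The main obstacle is the outer bookkeeping: one must quantify the algebraic blow-up rates $\gamma_k$ of the correctors $\out{\psi}{k}$ near the preshock sharply enough that a single $\beta \in (0, 1/2)$ tames all $K$ of them simultaneously. This should follow from the fine inviscid analysis of~\cite{AndChaGra25a}, whose fractional Taylor expansion of $\psi^{(0)}$ in powers of $\cub$ controls the singular structure inherited by each linearization; the decoupling between shocking and nonshocking characteristics highlighted in the introduction makes the inductive control of $\gamma_k$ tractable.
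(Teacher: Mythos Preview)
Your approach is the paper's: reduce to $\app{\psi}{K}$ via Theorem~\ref{thm:approx}, bound $\nu^k\out{\psi}{k}$ in the outer region, and compare $\inn{\Psi}{0}$ with $\cub\,e_1$ in the inner region; the lower-bound argument is likewise the same.

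The one misdirection is your handling of $\beta$. You propose taking $\beta$ small to tame the outer correctors, but $\beta$ is not a free parameter here --- it is baked into the construction of $\app{\psi}{K}$ and must lie in $(6/13,1/2)$ (Proposition~\ref{prop:grid-est}). Worse, your own inner claim $\nu^{-1/4}\app{\psi}{K} = \inn{\Psi}{0} + o(1)$ requires $\beta > 3/8$: on the inner region the correctors $\nu^{\ell/4}\inn{\Psi}{\ell}$ have size $\nu^{(\ell+1)\beta/3 - 1/4}$ (Corollary~\ref{cor:worst-inner-size}), which tends to zero only when $2\beta/3 > 1/4$. The good news is that the outer bound needs no tuning of $\beta$ at all: Proposition~\ref{prop:outersize} gives $|\out{\psi}{k}| \lesssim \dist^{-4k+1} + |\log\euc|\,\euc^{-2k+1}$, and combining with $\dist^{-4}\lesssim\euc^{-2}$ one finds $\nu^k\dist^{-4k+1} \lesssim (\nu\euc^{-2})^k\dist \leq \nu\dist^{-3} \leq \nu^{1-3\beta/2} \leq \nu^{1/4}$ for every $\beta \leq 1/2$ and every $k\geq 1$. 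Thus the ``main obstacle'' you flag in your final paragraph is already dispatched by Proposition~\ref{prop:outersize}, and there is no tension between the inner and outer constraints on $\beta$.
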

We can also track convergence in higher regularity, which reveals a difference between the shocking and nonshocking components.
Writing $\psi = (\psi^1, \ldots, \psi^N)$ in the basis from \ref{hyp:gauge}, let $\sigma \coloneqq \psi^1$ and $\omega \coloneqq (\psi^2, \ldots, \psi^N)$ denote the shocking and nonshocking components, respectively.
\begin{corollary}
  \label{cor:sharp-conv}
  Assume \ref{hyp:advection}--\ref{hyp:expansion}.
  Then $\sigma^{(\nu)} \to \sigma^{(0)}$ in $L_t^\infty\mathcal{C}_x^{\al}([0,t_*] \times \R)$ as $\nu \to 0$ if and only if $\al < 1/3$, while $\omega^{(\nu)} \to \omega^{(0)}$ in $L_t^\infty\mathcal{C}_x^{\beta}$ if $\beta < 2/3$.
\end{corollary}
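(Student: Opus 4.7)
The plan is to combine Theorem~\ref{thm:approx} with the inner-expansion structure and then perform a Hölder balance at the viscous scale $\delta \coloneqq \nu^{3/4}$. First, applying Theorem~\ref{thm:approx} with $s$ large enough for $H^s \hookrightarrow \m{C}_x^1$ and $p \geq 1$ reduces the problem to Hölder estimates on $\app{\psi}{K} - \psi^{(0)}$ at some fixed large $K$, up to a $\m{C}_x^1$-error of size $O(\nu)$ that is negligible at any Hölder exponent below $1$.

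Next, I extract the relevant scales on the viscous region $J_\nu \coloneqq \{|x| \lesssim \nu^{3/4},\, |t - t_*| \lesssim \nu^{1/2}\}$. Outside $J_\nu$, the outer expansion~\eqref{eq:outer-intro} gives $\app{\psi}{K} - \psi^{(0)} = \nu \out{\psi}{1} + O(\nu^2)$, contributing negligibly. Inside $J_\nu$, the blow-up~\eqref{eq:blow-up} and the inner expansion~\eqref{eq:inner-intro}, together with the crucial fact that $\inn{\Psi}{0}$ lies entirely in the shocking direction $e_1$, yield $|\sigma^{(\nu)}| \lesssim \nu^{1/4}$, $|\partial_x \sigma^{(\nu)}| \lesssim \nu^{-1/2}$, $|\omega^{(\nu)}| \lesssim \nu^{1/2}$, and $|\partial_x \omega^{(\nu)}| \lesssim \nu^{-1/4}$. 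The same amplitude bounds hold for $\sigma^{(0)},\omega^{(0)}$ on $J_\nu$ by hypothesis~\ref{hyp:expansion-informal}: the leading $\cub\, e_1$ term of $\psi^{(0)}$ gives $|\sigma^{(0)}(t_*,x)| \lesssim |x|^{1/3}$, and the next nonshocking term (scaling like $\cub^2$) gives $|\omega^{(0)}(t_*,x)| \lesssim |x|^{2/3}$.

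The Hölder estimates then follow by a standard balance across $\delta$. For the shocking component at exponent $\alpha$: when $|x-y| \geq \delta$, the quotient is bounded by $\|\sigma^{(\nu)} - \sigma^{(0)}\|_{L^\infty}/|x-y|^\alpha \lesssim \nu^{1/4}/\delta^\alpha = \nu^{(1-3\alpha)/4}$; when $|x-y| \leq \delta$, combining the Lipschitz bound on $\sigma^{(\nu)}$ with the $\m{C}^{1/3}$ modulus of $\sigma^{(0)}$ and dividing by $|x-y|^\alpha$ yields at most $\nu^{-1/2}\delta^{1-\alpha} + \delta^{1/3 - \alpha} \lesssim \nu^{(1-3\alpha)/4}$. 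Hence $\|\sigma^{(\nu)} - \sigma^{(0)}\|_{L_t^\infty \m{C}_x^\alpha} \lesssim \nu^{(1-3\alpha)/4}$, vanishing for $\alpha < 1/3$. The identical argument with the triple $(\nu^{1/4}, \nu^{-1/2}, 1/3)$ replaced by $(\nu^{1/2}, \nu^{-1/4}, 2/3)$ gives $\|\omega^{(\nu)} - \omega^{(0)}\|_{L_t^\infty \m{C}_x^\beta} \lesssim \nu^{(2-3\beta)/4}$, establishing the ``if'' direction for $\beta < 2/3$.

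Finally, the sharpness at $\alpha = 1/3$ follows by testing the $\m{C}^{1/3}$ quotient at $(t_*, 0)$ and $(t_*, \delta)$. All four relevant values are of order $\nu^{1/4}$: $\sigma^{(0)}$ takes the cubic values $0$ and $c\delta^{1/3}$ with $c \neq 0$, while $\sigma^{(\nu)}$ is governed by the smooth Burgers profile $\inn{\Psi}{0}_1(0,\cdot)$ studied in~\cite{CG23}. Dividing by $\delta^{1/3} = \nu^{1/4}$ produces an $O(1)$ quotient, and nondegeneracy of $\inn{\Psi}{0}_1(0, X) - cX^{1/3}$ at $X = 1$---a consequence of matching, since the viscous correction to the cubic cusp is nontrivial at finite $X$---forces the quotient to be bounded below uniformly in $\nu$. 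The main obstacle is extracting the nonshocking amplitude $|\omega^{(0)}| \lesssim |x|^{2/3}$ cleanly from~\ref{hyp:expansion-informal} and confirming the generic nondegeneracy of the limiting Burgers profile away from the origin; both should follow from the structure established in the companion paper and in~\cite{CG23}.
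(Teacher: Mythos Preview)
Your scale-balance strategy at $\delta = \nu^{3/4}$ is a different route from the paper's, which instead establishes pointwise bounds $\partial_x(\app{\psi}{}-\psi^{(0)}) \lesssim \nu^\eta\dist^{-2-3\delta}$ (resp.\ $\nu^\eta\dist^{-1-3\delta}$ for $\omega$) uniformly across all regions and then converts these to $\m{C}_x^{1/3-\delta}$ (resp.\ $\m{C}_x^{2/3-\delta}$) via the elementary fact that $|\partial_x f| \lesssim \dist^{-a}$ implies $f \in \m{C}_x^{1-a/3}$ (Lemma~\ref{lem:homog-Holder}). The paper's single weight $\dist$ handles the outer, matching, and inner zones at once; your approach is conceptually simpler but requires region-by-region accounting that you have not carried out.

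There is a genuine gap: your claim that outside $J_\nu$ the outer corrector $\nu\out{\psi}{1}+O(\nu^2)$ contributes ``negligibly'' is false. By Proposition~\ref{prop:outersize}, $\out{\psi}{1}\sim\dist^{-3}$, so at the edge of $J_\nu$ (where $\dist\sim\nu^{1/4}$) one has $\nu\out{\psi}{1}\sim\nu^{1/4}$ and $\nu\partial_x\out{\psi}{1}\sim\nu^{-1/2}$---the same order as the inner contribution. (Separately, $\app{\psi}{K}$ equals the \emph{inner} expansion on $\{\euc\lesssim\nu^\beta\}$, a region much larger than $J_\nu$, so your split does not align with the structure of $\app{\psi}{K}$.) For $\sigma$ your balance can be repaired by establishing $\|\partial_x\sigma^{(\nu)}\|_{L^\infty}\lesssim\nu^{-1/2}$ \emph{globally}, which follows from Lemma~\ref{lem:app-size}. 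The $\omega$ case is harder: you need the global improvements $\|\omega^{(\nu)}-\omega^{(0)}\|_{L^\infty}\lesssim\nu^{1/2-}$ and $\|\partial_x\omega^{(\nu)}\|_{L^\infty}\lesssim\nu^{-1/4-}$, and these do not follow from scaling on $J_\nu$ alone. They rest on the structural fact that the leading homogeneous column $\psi_{k,0}$ is purely shocking (Proposition~\ref{prop:zeroth-col}), so that $\out{\omega}{k}\lesssim|\log\euc|\,\euc^{-2k+1}$ rather than $\dist^{-4k+1}$ (Proposition~\ref{prop:out-bound}); without invoking this, the outer contribution near the preshock would defeat your balance at exponent $2/3$.
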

\noindent
The regularity threshold $1/3$ is naturally tied to the leading inverse-cubic behavior of the shocking component $\sigma^{(0)}$ from \ref{hyp:expansion-informal}.
However, the nonshocking threshold $2/3$ is somewhat surprising, as there are natural hyperbolic models in which $\omega^{(0)}$ is $\m{C}_x^{1,1/3}$.
Nonetheless, we find that the viscous perturbation $\omega^{(\nu)} - \omega^{(0)}$ is generally only bounded in $\m{C}_x^{2/3}$, due to the off-diagonal influence of $\nu \partial_x^2 \sigma^{(\nu)}$ on $\omega^{(\nu)}$.
We detail this effect in Remark~\ref{rem:unfrozen} below.
So while the convergence of $\omega^{(\nu)}$ to $\omega^{(0)}$ may be improved for certain special systems, Corollary~\ref{cor:sharp-conv} is sharp in general.

Finally, we show that $\psi^{(\nu)}$ has a universal viscous character near shock formation.
Recall the blown-up solution $\Psi^{(\nu)}(T, X)$ defined through \eqref{eq:blow-up}.
\begin{corollary}
  \label{cor:universal}
  Assume \ref{hyp:advection}--\ref{hyp:expansion}.
  Then there exists a unique solution $U$ of the viscous Burgers equation such that, modulo scaling symmetries, $\Psi^{(\nu)} \to U e_1$ locally uniformly in $(T, X)$ as $\nu \to 0$.
\end{corollary}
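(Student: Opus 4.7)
The plan is to deduce Corollary~\ref{cor:universal} directly from Theorem~\ref{thm:approx} together with the inner construction outlined in Section~\ref{sec:expansion}. I would take $U \coloneqq \inn{\Psi}{0}$, the leading inner profile, viewed as a scalar function via its $e_1$-component. By construction, $\inn{\Psi}{0}$ solves the viscous Burgers equation and, as $T \to -\infty$, matches the self-similar cubic profile $\cub$ inherited from the outer expansion. This matching condition, combined with the viscous Burgers dynamics, determines $U$ uniquely modulo the scaling symmetry $U(T, X) \mapsto \lambda U(\lambda^2 T, \lambda^3 X)$; existence and uniqueness of such a solution were established by two of the authors in~\cite{CG23}.

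To obtain the local uniform limit, I would translate the $H^s$ bound of Theorem~\ref{thm:approx} into blow-up coordinates. Fix a compact $\m{K} \subset \R_T \times \R_X$. For small $\nu$, the preimage of $\m{K}$ under \eqref{eq:blow-up} is a box of dimensions $\nu^{1/2} \times \nu^{3/4}$ centered at $(t_*, 0)$, which lies well inside the inner region $\{\euc \leq \nu^\beta\}$ whenever $\beta < 1/2$. In this region the glued approximation $\app{\psi}{K}$ agrees with the truncated inner expansion $\nu^{1/4}\sum_{\ell=0}^{K} \nu^{\ell/4} \inn{\Psi}{\ell}$ by design, so in blow-up coordinates
\begin{equation*}
  \app{\Psi}{K}(T, X) = \inn{\Psi}{0}(T, X) e_1 + O(\nu^{1/4}) \quad \text{uniformly on } \m{K}.
\end{equation*}

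For the residual $\Psi^{(\nu)} - \app{\Psi}{K}$, Sobolev embedding in two spacetime dimensions converts the bound of Theorem~\ref{thm:approx} into $\norms{\psi^{(\nu)} - \app{\psi}{K}}_{L^\infty([0, t_*] \times \R)} \leq C \nu^p$ provided $s$ is chosen sufficiently large. The rescaling $\Psi = \nu^{-1/4} \psi$ then yields
\begin{equation*}
  \norms{\Psi^{(\nu)} - \app{\Psi}{K}}_{L^\infty(\m{K})} \leq \nu^{-1/4} \norms{\psi^{(\nu)} - \app{\psi}{K}}_{L^\infty} \leq C \nu^{p - 1/4}.
\end{equation*}
Choosing $K$ large enough that Theorem~\ref{thm:approx} supplies $p > 1/4$, the two displays combine to give $\Psi^{(\nu)} \to \inn{\Psi}{0} e_1$ uniformly on $\m{K}$.

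The main obstacle, in my view, is essentially bookkeeping: verifying that the partition-of-unity construction of $\app{\psi}{K}$ places any fixed compact set in $(T, X)$ squarely in the inner region, and tracking how the anisotropic factor $\nu^{-1/4}$ from $\Psi = \nu^{-1/4} \psi$ interacts with the $H^s \hookrightarrow L^\infty$ embedding. Both points are routine once the setup of Section~\ref{sec:expansion} is in place; no new PDE analysis is required beyond Theorem~\ref{thm:approx} and the viscous Burgers results of~\cite{CG23}.
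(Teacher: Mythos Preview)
Your proposal is correct and follows essentially the same route as the paper: identify $U=\inn{\Sigma}{0}$, split $\Psi^{(\nu)}-\inn{\Psi}{0}e_1$ into the approximation error (handled by Theorem~\ref{thm:approx} plus Sobolev embedding and the $\nu^{-1/4}$ rescaling) and the higher inner terms, and cite~\cite{CG23} for uniqueness modulo scaling. One caveat: your claimed $O(\nu^{1/4})$ bound on $\sum_{\ell\ge 1}\nu^{\ell/4}\inn{\Psi}{\ell}$ presumes the $\inn{\Psi}{\ell}$ are $\nu$-independent on compact sets, but the paper's inner expansion is ``criminal'' ($\inn{\Psi}{\ell}$ depends on $\nu$ and can grow mildly), so the actual bound is only $o(1)$ via the inner estimates of Proposition~\ref{prop:inner-matching}---still sufficient for the conclusion.
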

\noindent
That is, sufficiently near shock formation, $\psi^{(\nu)}$ resembles a universal profile solving viscous Burgers.

Collectively, Corollaries~\ref{cor:rate}--\ref{cor:universal} provide a precise description of the nature of viscous regularization near nondegenerate shock formation.
Of the hypotheses \ref{hyp:advection}--\ref{hyp:expansion}, the first five hold for many natural dissipative conservation laws, including the Navier--Stokes--Fourier equations \eqref{eq:NSE} (in the absence of vacuum) and certain regimes in elasticity and magnetohydrodynamics.
On its face, the final assumption \ref{hyp:expansion} of a nondegenerate formation expansion may seem rather strong.
However, in~\cite{AndChaGra25a} we show that it holds on an open set of initial data, and as described above, we expect it to hold generically.

We also expect our results to extend to 1D reductions of higher-dimensional systems.
For example, we anticipate that Corollaries~\ref{cor:rate}--\ref{cor:universal} hold for the Navier--Stokes regularization of the Euler equations under azimuthal symmetry considered in~\cite{NSV23}.
This is not immediate, as radial coordinates introduce variable coefficients in~\eqref{eq:main}, which we do not explicitly consider.
Nonetheless, our approach is essentially local, so (smooth) variable coefficients should pose no difficulty.

In a similar vein, while we restrict our attention to strictly hyperbolic models, this is mostly a matter of convenience.
In fact, our analysis immediately applies to systems in which the shocking eigenvalue is simple, but others are repeated.
Moreover, we believe the overall approach should extend to shocking eigenvalues with multiplicity, which arise naturally in settings such as magnetohydrodynamics.
In this situation, the leading inner expansion would involve a vector-valued form of viscous Burgers.
We believe energy estimates could be used to treat this inner equation and recover our main results.
We leave this fascinating question to future inquiry.

\subsection{Related work}

Our examination of vanishing viscosity and shock formation fits within a very broad investigation of singularities and regularization in conservation laws.
Here, we discuss a modest selection of works that strike us as particularly relevant to the present effort.
Our companion paper~\cite{AndChaGra25a} includes a similar survey of hyperbolic results, so we here place greater emphasis on the viscous setting.

\subsubsection{Well-posedness}

As shock formation indicates, the well-posedness of conservation laws is itself a subtle matter.
For hyperbolic conservation laws in one dimension, Glimm famously constructed global weak solutions for small-BV data~\cite{Glimm65}.
This inspired a number of other compactness-based constructions~\cite{DiPerna76, Bressan92, BaiJen98, Risebro93} and uniqueness results~\cite{Bressan95,BreLeF97,BreLew_2000,CheKruVas_2022,BreGue_2024}.
For more details, see \cite{Bressan00}.
In contrast, there is no general global theory for inviscid conservation laws in higher dimensions, and in fact such equations need not be well-posed in BV~\cite{rauch1986bv}.

In the viscous setting, degenerate diffusion greatly complicates well-posedness.
The seminal work of Kawashima and Shizuta~\cite{Kawashima, ShiKaw85, KawShi88} identified conditions on the diffusion matrix $B$ that ensure local well-posedness.
Serre has since refined these conditions~\cite{Serre10a, Serre10b}, and our hypothesis \ref{hyp:diffusion} makes use of this framework.
For long times, small $H^2$ data yield global solutions in quite general systems~\cite{Kawashima,Kawashima87}, while rougher data can be propagated weakly in the isentropic Navier--Stokes equations~\cite{LiuYu22}.
Moreover, Mellet and Vasseur have shown that isentropic Navier--Stokes admits global-in-time strong solutions started from \emph{large} $H^1$ data~\cite{MelVas}.
As mentioned earlier, there is no large-data global theory for viscous conservation laws in the generality considered here, so the existence until time $t_*$ in Theorem~\ref{thm:approx} is new for many systems, including Navier--Stokes--Fourier \eqref{eq:NSE}.

\subsubsection{Shock formation}

The study of hyperbolic shock formation has a long history dating back to work of Riemann~\cite{Riemann} on 1D scalar conservation laws.
This was extended by Lax~\cite{Lax64} to $2 \times 2$ systems and John~\cite{John74} to more general systems with small data.
In higher dimensions, John~\cite{John79} and Sideris~\cite{Sideris85} showed that small smooth data break down, and Alinhac provided a more detailed account of nondegenerate shock formation~\cite{Alinhac01, Alinhac02, Alinhac03}.
Christodoulou pioneered a powerful geometric approach to shock formation~\cite{Christodoulou01}, which has since been extended to resolve a number of challenging problems in compressible fluids~\cite{LukSpe01,LukSpe02,AbbSpe}.

A parallel program of Buckmaster, Shkoller, and Vicol is particularly relevant to the present work.
In \cite{BucShkVic_2019, BucShkVic_2020, BucShkVic_2020.2}, the authors use modulation theory to precisely describe the nature of nondegenerate shock formation in compressible Euler.
Their analysis identifies Burgers-like behavior at leading order, and makes use of a multidimensional variant of the inverse cubic profile $\cub$ studied here.
Buckmaster and Iyer have also constructed unstable modes of shock formation in 2D compressible Euler~\cite{BuckIye_2020}.

Beyond shock formation, there has been much recent interest in the construction of maximal developments for hyperbolic equations.
These capture the largest region in which classical solutions make sense, and their study seems to require a detailed understanding of singularity (e.g., shock) formation.
We refer the reader to \cite{Christodoulou01,AbbSpe,ShkVic_2024,AndChaGra25a} for more information in this direction.

\subsubsection{Vanishing viscosity for general data}

The convergence of viscous conservation laws to their inviscid counterparts as $\nu \to 0$ has received a great deal of attention, beginning with pivotal work of Hopf~\cite{Hopf_1950}, Oleinik~\cite{Oleinik_1957}, and Kru\v{z}kov~\cite{Kruzkov_1965} on 1D scalar equations.
Vanishing viscosity for systems is considerably more delicate.
In the case of isentropic Euler, equations with nondegenerate ``artificial'' viscosity can be used to construct $L^\infty$ solutions as subsequential limits~\cite{DiPerna_1983,GQChen_1997,LioPerSou_1996}.
In general systems, the seminal work of Bianchini and Bressan~\cite{BiaBre05} established the vanishing viscosity limit for small-BV data, again with artificial viscosity.

For more delicate \emph{physical} viscosity (which does not act on the density), G.-Q.~Chen and Perepelitsa~\cite{CP10} used $L^2$-methods to prove the convergence of Navier--Stokes to isentropic Euler for finite-energy data.
To treat small-BV solutions with physical viscosity, Kang, Leger, and Vasseur pioneered the method of ``$a$-contraction with shifts''~\cite{LegVas_2011,KanVas_2016}.
In recent breakthrough work with G.~Chen, Kang and Vasseur used this approach to establish vanishing viscosity convergence and stability for barotropic Euler~\cite{ChKaVa24}.

\subsubsection{Vanishing viscosity for developed shocks}

The above works treat vanishing viscosity limits for general data in various classes.
In parallel, there is an extensive literature on the viscous regularization of hyperbolic solutions with fully developed shocks.
This begins with Hopf~\cite{Hopf_1950} and Gilbarg~\cite{Gilbarg_1951}, who constructed viscous shock waves for the Burgers and Navier--Stokes--Fourier equations \eqref{eq:NSE}, respectively.
In the context of vanishing viscosity, such waves should appear in an inner layer near the shock, where viscosity is non-perturbative.
Goodman and Xin developed this intuition into a complete matched asymptotic expansion for a sequence of (artificially) viscous solutions converging to a solution of a general hyperbolic system with a small, isolated shock~\cite{GoodXin_1992}.

This groundbreaking effort inspired much subsequent work, including our own use of a matched expansion.
Yu~\cite{Yu_1999} explored the early time layer in which viscosity smooths an initial discontinuity, and Rousset~\cite{Rousset_2003} extended \cite{GoodXin_1992} to large shocks satisfying certain stability criteria.
In remarkable work in multiple dimensions, Gu\`{e}s, M\'{e}tivier, Williams, and Zumbrun~\cite{GuMeWiZu_2005,GuMeWiZu_2006} constructed viscous expansions about shocks satisfying the Majda stability conditions~\cite{Majda_1983}.
In a different direction, Tang and Teng used a matched approximation to identify the vanishing viscosity rate in $L_x^1$ about piecewise-smooth solutions of scalar conservation laws~\cite{TanTen_1997}.
Wang subsequently showed that their bound is achieved in Burgers at shock formation and the corners of rarefaction waves~\cite{Wang_1999}.

The complete expansions developed above all concern fully developed shocks.
In the context of the present paper, they would apply after time $t_* + \tau$ for some fixed $\tau > 0$, which allows a full discontinuous shock to develop out of the preshock at time $t_*$.
In contrast, as in~\cite{CG23}, we focus on the times $[0, t_*]$ leading up to the preshock.
This leaves the period $[t_*, t_* + \tau]$ of \emph{shock development} open.
It remains a compelling subject for future work.

\subsubsection{Singularities despite regularization}

In equations like ours, the addition of any amount of positive viscosity suffices to prevent the onset of singularities (here, shocks).
However, weaker forms of regularization may fail to prevent singularities.
Such blow-up \emph{despite} regularization has attracted a great deal of recent attention.

In one dimension, the fractional Burgers and KdV equations are notable examples~\cite{Whitham}.
These involve fractional degrees of dissipation or dispersion that can be tuned to exhibit a number of behaviors, including singularity formation under weak regularization.
In fractional (or ``fractal'') Burgers, the critical degree of regularization separating blow-up and well-posedness was independently established in~\mbox{\cite{KiNaSh_2008, DongDuLi_2009, AliDroVov_2007}}.
The precise nature of generic singularities is now understood for equations spanning fractal Burgers~\cite{ChiMorPan_2021}, Burgers--Hilbert~\cite{Yang_2020, yang22}, and many other dissipative and dispersive models \cite{OhPas_2021}.
If diffusion acts only in an additional transverse direction, Burgers can likewise form a shock admitting a precise description~\cite{ColGhoMas_2020}.

In higher dimensions, even classical diffusion may be insufficiently strong to prevent singularities.
In a remarkable effort,  Merle, Rapha\"{e}l, Rodnianski, and Szeftel have constructed solutions of the 3D barotropic Euler and Navier--Stokes equations exhibiting finite-time implosion~\cite{MeRaRoSz01,MeRaRoSz02}.
These results have spectacular applications~\cite{MeRaRoSz03} and have inspired a number of other impressive works~\cite{BCLCS_2025,cao2023non,CCSV,Chen}.

\subsection{Organization}

In Section~\ref{sec:proof-over}, we present an overview of our proof strategy.
We describe the conventional outer expansion in Section~\ref{sec:outer}.
In Section~\ref{sec:inner}, we develop the more delicate inner expansion and an auxiliary ``grid'' expansion that aids the gluing of the outer to the inner (see Section~\ref{sec:proof-over} for more details).
The core of our analysis is Section~\ref{sec:grid-est}, where we control the terms of the grid expansion.
Using these bounds, we demonstrate close matching between the grid and both the outer (Section~\ref{sec:outer-matching}) and inner expansions (Section~\ref{sec:inner-matching}).
Finally, we construct an approximate solution and prove our main results in Section~\ref{sec:closure}.

\section*{Acknowledgments}

JA was partially supported by the National Science Foundation under grant DMS-2103266.
SC was supported by the Simons Foundation Award 1141490.
CG was partially supported by the National Science Foundation through the grants DMS-2103383 and DMS-2516786.

We are grateful to Jonathan Luk for many helpful discussions.

\section{Proof overview}
\label{sec:proof-over}

In this section, we outline our construction of an approximate solution to \eqref{eq:main}.
This yields Theorem~\ref{thm:approx}, and Corollaries~\ref{cor:rate}--\ref{cor:universal} follow from the nature of the approximate solution.

\subsection{Notation}

Throughout, we use $f \lesssim_\al g$ to indicate that $\abs{f} \leq C(\al) g$ for some constant $C(\al) \in \R_+$ that may depend on a parameter $\al$ but is independent of the variables $t,x,$ and $\nu.$
We emphasize the absolute value $\abs{f}$ in our convention, which allows us to treat sign-indefinite functions $f$.
We write $\m{O}_\al(g)$ to denote an implicit function $f$ satisfying $f \lesssim_\al g$.
We write $f \asymp_\al g$ if $f \lesssim_\al g$ and $g \lesssim_\al f$.
Finally, we occasionally write $f \ll g$ if $f/g \to 0$ in a given limit.

\subsection{Homogeneous expansion}
\label{sec:hom}

To simplify scaling relative to the location of shock formation, we shift time so that our Cauchy problem begins at a negative time $t_0 < 0$ and the first singularity forms at the origin in spacetime.
That is,
\begin{equation}
  \label{eq:time-shift}
  \psi^{(\nu)}(t_0, \anon) = \mr{\psi} \And t_* = 0.
\end{equation}
We study our solutions $\psi^{(\nu)}$ on the spacetime strip $[t_0, 0] \times \R$.

Let $\euc$ denote the spacetime Euclidean distance to the origin:
\begin{equation}
  \label{eq:Euclidean}
  \euc(t, x) \coloneqq (\abs{t}^2 + \abs{x}^2)^{1/2}.
\end{equation}
As described in Section~\ref{sec:expansion}, the viscosity is perturbative in the ``outer region'' where $\euc \geq \nu^\beta$ for a certain exponent $\beta \in (0, 1/2)$ to be chosen in the course of the proof.
In this region, we approximate $\psi^{(\nu)}$ with the conventional outer expansion \eqref{eq:outer-intro} in integer powers of $\nu$.
The leading term $\out{\psi}{0}$ coincides with the inviscid solution $\psi^{(0)}$, and the higher correctors $\out{\psi}{k}$ solve linearizations of the the inviscid equation about $\out{\psi}{0}$ (Proposition~\ref{prop:outer-eq}).
Thus, the nature of the expansion is determined by the behavior of its leading part.

In \ref{hyp:expansion-informal}, we assume $\out{\psi}{0}$ is dominated near the origin by the inverse cubic
\begin{equation}
  \label{eq:fu}
  x = a \abs{t} \fu + b \fu^3.
\end{equation}
We now define a ``cubic distance''
\begin{equation}
  \label{eq:fd}
  \fd \coloneqq (\abs{t} + 3 a^{-1} b \fu^2)^{1/2},
\end{equation}
which measures proximity to the origin in the intrinsic scaling of the nondegenerate cubic preshock $\cub$.
The coefficients are chosen so that $\fd = \cubder^{-1/2}$ for
\begin{equation}
  \label{eq:fm}
  \fm \coloneqq a \partial_x \fu = \frac{1}{\abs{t} + 3 a^{-1} b \fu^2}.
\end{equation}
For future use, we record some relationships between these functions.
\begin{lemma}
  \label{lem:fuest}
  We have $t = - \fm^{-1} + 3 a^{-1}b \fu^2$,  $\rd_x \fu = a^{-1} \fm$, and $\rd_t \fu = a \fu \rd_x \fu = \fu \fm$.
  Moreover, in $\R_{\leq 0} \times \R$,
  \begin{enumerate}[label = \textup{(\roman*)}, itemsep = 2pt]
  \item $|\fu| \asymp |x|^{\f 1 3}$ for $\abs{t} \ls |x|^{\f 2 3}$ and $|\fu| \asymp |x| |t|^{-1}$ for $|x|^{\f 2 3} \ls \abs{t}$;
  \item $\fm \ls |\fu| |x|^{-1}$;
  \item $\fd \asymp |t|^{\f 1 2}$ for $|x|^{\f 2 3} \ls \abs{t}$ and $\fd \asymp |x|^{\f 1 3}$ for $|x|^{\f 2 3} \gs \abs{t}$.
  \end{enumerate}
\end{lemma}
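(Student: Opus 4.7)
The plan is to dispatch the three algebraic identities by implicit differentiation, then derive (i)--(iii) from a case analysis of the defining cubic~\eqref{eq:fu}.

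First, the identity $t = -\fm^{-1} + 3a^{-1}b\fu^2$ is immediate from~\eqref{eq:fm}: on $\R_{\leq 0}$ we have $|t| = -t$, so $\fm^{-1} = -t + 3a^{-1}b\fu^2$. For the derivatives, differentiating~\eqref{eq:fu} with respect to $x$ gives $1 = (a|t| + 3b\fu^2)\rd_x\fu$, and the coefficient equals $a(|t| + 3a^{-1}b\fu^2) = a\fm^{-1}$, so $\rd_x\fu = a^{-1}\fm$. Differentiating with respect to $t$, using $\rd_t|t| = -1$ on $\R_{\leq 0}$, and rearranging similarly yields $\rd_t\fu = a\fu \cdot a^{-1}\fm = \fu\fm$, which also equals $a\fu\,\rd_x\fu$.

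For~(i), I would exploit the factorization $x = \fu(a|t| + b\fu^2)$. Since $a$ and $b$ share a sign, $a|t| + b\fu^2$ has the sign of $a$, so $|x| = |\fu|\,(|a||t| + |b|\fu^2)$. The crossover between the linear and cubic terms occurs when $|\fu|^2 \sim |t|$, i.e.\ when $|x| \sim |t|^{3/2}$. If $|x|^{2/3} \ls |t|$, I expect the linear term $|a||t|$ to dominate, giving $|\fu| \asymp |x|/|t|$; if instead $|x|^{2/3} \gs |t|$, the cubic term $|b|\fu^2$ should dominate, giving $|\fu| \asymp |x|^{1/3}$. To pin this down rigorously I would use strict monotonicity of the cubic $\fu \mapsto a|t|\fu + b\fu^3$ to sandwich $\fu$ between the explicit roots one obtains by dropping each of the two terms in turn, then verify a posteriori that the neglected term is indeed subdominant in each regime.

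Claims~(ii) and~(iii) follow directly from~(i) together with the definitions. For~(ii), the factorization above gives
\begin{equation*}
  \frac{|x|}{|\fu|} \;=\; |a||t| + |b|\fu^2 \;\leq\; |a||t| + 3|b|\fu^2 \;=\; |a|\,\fm^{-1},
\end{equation*}
so $\fm \ls |\fu|/|x|$ with implicit constant $|a|$. For~(iii), I substitute~(i) into $\fd^2 = |t| + 3a^{-1}b\fu^2$: in the ``linear'' regime $|x|^{2/3} \ls |t|$, (i) gives $\fu^2 \asymp x^2/t^2 \ls |t|^3/t^2 = |t|$, so $\fd^2 \asymp |t|$; in the ``cubic'' regime $|x|^{2/3} \gs |t|$, (i) gives $\fu^2 \asymp |x|^{2/3} \gs |t|$, so $\fd^2 \asymp |x|^{2/3}$. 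The only nontrivial step is the sandwich argument for~(i); since the underlying cubic admits explicit roots via Cardano's formula, this is elementary and should not present a serious obstacle.
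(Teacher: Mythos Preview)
Your proof is correct. The three identities follow by implicit differentiation exactly as you say, and your arguments for (i)--(iii) are sound: the factorization $|x| = |\fu|\,(|a|\,|t| + |b|\,\fu^2)$ immediately gives one-sided bounds by dropping either summand, and the contrapositive argument (if $|\fu| \leq \eps\,|x|^{1/3}$ then the right side is too small) cleanly handles the remaining direction in the cubic regime without invoking Cardano. Parts~(ii) and~(iii) are then immediate, as you indicate.

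The paper does not prove this lemma at all; it simply cites the companion work~\cite{AndChaGra25a} (Lemmas~8.4 and~8.5 there). So there is no comparison to make at the level of method---you have supplied a self-contained elementary proof where the paper defers to another source.
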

\noindent
This consolidates Lemmas~8.4 and 8.5 in~\cite{AndChaGra25a}.

The functions $\cub,\cubder,$ and $\dist$ are self-similar under an anisotropic scaling of time and space.
We introduce a nonstandard notion of homogeneity to capture this structure:
\begin{definition}
  \label{def:rhom}
  Given $r \in \R$, a function $f \colon \R_{\le 0} \times \R \rightarrow \R$ is \emph{$r$-homogeneous} if
  \begin{equation*}
    f(\lambda^2 t,\lambda^3 x) = \lambda^r f(t,x) \ForAll \lambda > 0, t \leq 0, \text{ and } x \in \R.
  \end{equation*}
\end{definition}
\noindent
Under this definition, $\cub$ and $\dist$ are 1-homogeneous while $\cubder$ is $-2$-homogeneous.

If an $r$-homogeneous function $f$ is continuous away from the origin, it satisfies $\abs{f} \leq C(f) \dist^r$ for $C(f) \coloneqq \sup_{\{\dist = 1\}} \abs{f}$.
Thus $\dist$ and its powers conveniently control homogeneous expressions.
With this in mind, we state the precise form of \ref{hyp:expansion-informal}:
\begin{enumerate}[label = (H6)]
\item
  There exists a sequence $(\bar{\psi}_\ell)_{\ell \in \N_0}$ of $\m{V}$-valued $(\ell + 1)$-homogeneous functions such that $\bar{\psi}_\ell$ is a polynomial in $(t, \cub, \cubder)$, $\psi_0 = \cub e_1$, and for all $L,m,n \in \N_0$,
  \begin{equation*}
    \partial_t^m \partial_x^n \psi^{(0)} = \partial_t^m \partial_x^n \sum_{\ell = 0}^L \bar{\psi}_\ell + \m{O}_{L,m,n}(\dist^{L - 2m - 3n + 2}).
  \end{equation*}
  \label{hyp:expansion}
\end{enumerate}
\noindent
Note that $\partial_t$ and $\partial_x$ reduce homogeneity by $2$ and $3$, respectively.
This explains the exponent in the error term above.

We now consider some consequences of \ref{hyp:expansion}.
Recalling the notation $\psi = (\sigma, \omega)$ for the shocking and nonshocking components, \ref{hyp:expansion} asserts that $\sigma$ is much larger that $\omega$ near the origin.
We therefore project \eqref{eq:main} onto $e_1$ and extract the leading part of each term.
Because $A_1^1(0) = 0$, we linearize and keep only the shocking portion: $A_1^1(\psi) \approx \partial_1 A_1^1(0) \sigma$, where $\partial_1$ denotes the derivative in $\m{V}$ in the direction $e_1$.
Taking $\nu = 0,$ \eqref{eq:main} approximately reduces to the scalar Burgers equation:
\begin{equation}
  \label{eq:shocking-Burgers}
  \partial_t \sigma + \partial_1 A_1^1(0) \sigma \partial_x \sigma \approx 0.
\end{equation}
Substituting the leading part $\cub$ of $\sigma$, we find
\begin{equation}
  \label{eq:cubic-Burgers}
  \partial_t \cub + \partial_1 A_1^1(0) \cub \partial_x \cub = 0.
\end{equation}
Substituting \eqref{eq:fu}, we find $a = -\partial_1 A_1^1(0)$.
The former is nonzero by hypothesis, so $\partial_1 A_1^1(0) \neq 0$.

Now if $\lambda_{(1)}$ denotes the eigenvalue of $A$ with eigenvector $e_1$ at $0 \in \m{V}$, we can check that
\begin{equation}
  \label{eq:coeff}
  \partial_1 A_1^1(0) = \partial_1 \lambda_{(1)}(0) = -a.
\end{equation}
That is, $\partial_1 A_1^1(0)$ is the derivative of the shocking eigenvalue in the direction of the shocking eigenvector.
Thus $\partial_1 A_1^1(0) \neq 0$ is equivalent to Lax's \emph{genuine nonlinearity}~\cite{Lax64} for $\lambda_{(1)}$.
Hence the genuine nonlinearity of $\lambda_{(1)}$ follows from the leading cubic behavior in \ref{hyp:expansion}.

\subsection{Inner scaling}

To determine where the viscosity is non-perturbative, we reduce \eqref{eq:main} to its shocking component as in \eqref{eq:shocking-Burgers}, but now keep $\nu > 0$ to obtain the \emph{viscous} Burgers equation:
\begin{equation}
  \label{eq:viscous-Burgers}
  \partial_t \sigma + \partial_1 A_1^1(0) \sigma \partial_x \sigma \approx \nu B_1^1(0) \partial_x^2 \sigma.
\end{equation}
We observe that $B_1^1$ is the only entry of $B$ appearing at leading order, and we have assumed $B_1^1(0) > 0$ in \ref{hyp:shocking-diffusion}.
Thus at leading order we have a scalar conservation law with nondegenerate viscosity.
Using $\cub$ as an ansatz for $\sigma$, we can formally check that the diffusion $\nu \partial_x^2 \sigma$ begins to dominate the advection $\sigma \partial_x \sigma$ where $\abs{t} \lesssim \nu^{1/2}$ and $\abs{x} \lesssim \nu^{3/4}$.
There $\abs{\cub} \lesssim \nu^{1/4}$, which motivates our definition of the inner coordinates
\begin{equation}
  \label{eq:inner-coord}
  T \coloneqq \nu^{-1/2} t, \quad X \coloneqq \nu^{-3/4} x, \And \Psi \coloneqq \nu^{-1/4} \psi.
\end{equation}
Writing $\Psi = (\Sigma, \Omega)$ for the scaled shocking a nonshocking components, \eqref{eq:viscous-Burgers} becomes
\begin{equation}
  \label{eq:viscous-shocking-Burgers}
  \partial_T \Sigma + \partial_1 A_1^1(0) \Sigma \partial_X \Sigma \approx B_1^1(0) \partial_X^2 \Sigma.
\end{equation}
This illustrates the utility of the inner coordinates: in these variables, the non-perturbative nature of the diffusion becomes evident.

Precisely, the diffusion is leading order in the ``diffusive region'' where $\abs{t} \lesssim \nu^{1/2}$ and $\abs{x} \lesssim \nu^{3/4}$, or equivalently, $\dist \lesssim \nu^{1/4}$.
However, the system \eqref{eq:main} presents an important complication relative to a scalar conservation law like Burgers.
The diffusion acts primarily in the anisotropic region $\{\abs{t} \lesssim \nu^{1/2}, \abs{x} \lesssim \nu^{3/4}\} = \{\dist \lesssim \nu^{1/4}\}$, but transverse nonshocking characteristics sweep its effects to the larger isotropic region $\{\abs{t},\abs{x} \lesssim \nu^{1/2}\} = \{\euc \lesssim \nu^{1/2}\}$, as in Figure~\ref{fig:sweep}.
In hyperbolic terminology, the diffusive zone $\{\dist \lesssim \nu^{1/4}\}$ is not causally closed, so its effects are felt throughout its much larger causal closure.
In particular, the outer expansion \eqref{eq:outer-intro} is invalid where $\euc \lesssim \nu^{1/2}$, although the diffusion itself remains relatively small in much of this region.
For this reason, we use the outer expansion where $\euc \geq \nu^\beta$ and the inner where $\euc \leq \nu^\beta$ for some $\beta < 1/2.$
\begin{figure}[t]
  \centering
  \includegraphics[width = 0.85\linewidth]{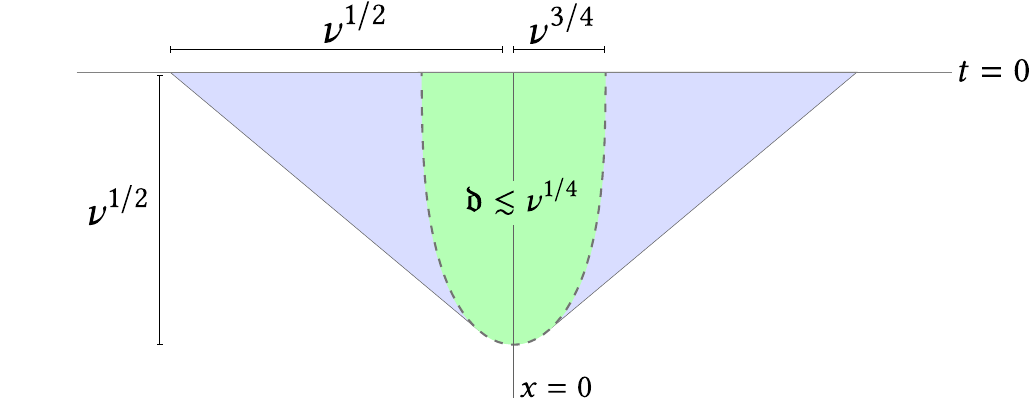}
  \caption[The diffusive zone and its causal closure.]{Due to transverse advection, the anisotropic diffusive zone $\{\dist \lesssim \nu^{1/4}\}$ in green influences a much larger region, indicated here as two blue triangular ``shadows'' cast to either side.}
  \label{fig:sweep}
\end{figure}
The interplay between anisotropic diffusive activity and the nonshocking characteristics poses a consistent technical challenge throughout this work.

Motivated by \eqref{eq:viscous-shocking-Burgers}, we let the leading part $\inn{\Sigma}{0}$ of $\Sigma$ in the inner expansion \eqref{eq:inner-intro} satisfy the viscous Burgers equation.
The nonshocking components are of lower order, so we set the corresponding nonshocking component $\inn{\Omega}{0}$ to zero.
The higher-order correctors $\inn{\Psi}{\ell} = (\inn{\Sigma}{\ell}, \inn{\Omega}{\ell})$ satisfy constant-coefficient transport equations and linearizations of viscous Burgers (Propositions~\ref{prop:Omega-inner} and \ref{prop:Sigma-inner}):
\begin{align*}
  \nu^{1/4}\partial_T \inn{\Omega}{\ell} + A(0) \partial_X \inn{\Omega}{\ell} &= \cdots\\
  \partial_T \inn{\Sigma}{\ell} + \partial_1 A_1^1(0) \partial_X(\inn{\Sigma}{0} \inn{\Sigma}{\ell}) &= B_1^1(0) \partial_X^2 \inn{\Sigma}{\ell} + \cdots
\end{align*}
Ellipses denote forcing that depends on earlier terms in the inner expansion.
These equations largely decouple, so the inner analysis is effectively scalar.

\subsection{Matching}

The most delicate task in our approximation scheme is the matching between the inner and outer expansions.
We need to choose data for the inner equations so that partial sums of the two expansions agree to high order, and we need a method to prove this agreement.
As a direct approach, we might attempt to analyze an equation satisfied by the difference of the outer and inner partial sums.
This proves challenging, however.
Each expansions has a strength and a drawback: the outer is hyperbolic but fully coupled, while the inner is decoupled but parabolic.
The difference between the two expansions would suffer the drawbacks of both, solving a fully coupled degenerate parabolic equation.

To circumvent this difficulty, we introduce an auxiliary doubly-indexed expansion $(\psi_{k,\ell})_{k,\ell \in \N_0}$ that we term the ``grid.''
Let $\inn{\psi}{\ell}$ denote a blow-down of $\inn{\Psi}{\ell}$ to the original coordinates $(t, x)$.
Inspired by the homogeneous expansion $(\bar{\psi}_\ell)$ for $\out{\psi}{0},$ we design $(\psi_{k,\ell})$ so that in the ``matching zone'' $\euc \asymp \nu^\beta$,
\begin{equation}
  \label{eq:grid-informal}
  \out{\psi}{k} \approx \psi_{k,0} + \psi_{k,1} + \ldots \And \inn{\psi}{\ell} \approx \psi_{0,\ell} + \nu \psi_{1,\ell} + \ldots
\end{equation}
(See Propositions~\ref{prop:outer-matching} and \ref{prop:inner-matching}.)
As a result, the outer and inner expansions are composed of the same underlying building blocks, and they therefore match to a high degree (Proposition~\ref{prop:outer-inner-matching}).
Thus the grid reduces the fine cancelation between the outer and inner expansions to the increasing smallness of terms in \eqref{eq:grid-informal}.
We can accomplish this while letting $\psi_{k,\ell}$ satisfy simple \emph{decoupled hyperbolic} equations (Propositions~\ref{prop:omega-grid} and \ref{prop:sigma-grid}).
The grid thus enjoys the advantages of both expansions and the disadvantages of neither.

Bounding the grid terms $(\psi_{k,\ell})$ proves to be the heart of our analysis (Proposition~\ref{prop:grid-est}).
We carefully track the interplay between homogeneous and isotropic effects, and employ elaborate bookkeeping to treat the double index.
Once we have controlled the grid, matching between the outer and inner expansions follows from \eqref{eq:grid-informal}.
This allows us to smoothly switch from one to the other in the intermediate region $\{\euc \asymp \nu^\beta\}$ in our approximate solution $\app{\psi}{K}$.
If we include enough terms $(K \gg 1)$, this approximate solution is highly accurate, in the sense that it solves \eqref{eq:main} with minuscule residue (Proposition~\ref{prop:diffeo-approx}).

\subsection{Closure}

To complete our analysis, we show that our approximate solution does in fact approximate the true solution.
This is similar to Cauchy stability from standard well-posedness theory, with the main difficulty coming from the degenerate parabolic nature of the equations.
We draw on energy estimates from Serre's approach to local well-posedness~\cite{Serre10b} to close \eqref{eq:main} in arbitrarily high Sobolev norms (Theorem~\ref{thm:small}).
Theorem~\ref{thm:approx} follows.

Corollaries~\ref{cor:rate}--\ref{cor:universal} reflect the behavior of $\psi^{(\nu)}$ in the diffusive zone $\{\dist \lesssim \nu^{1/4}\}$.
After the scaling \eqref{eq:inner-coord}, $\Psi^{(\nu)}(T,X) \sim \inn{\Psi}{0}(T,X)$ while $\Psi^{(0)}(T,X) \sim \cub(T,X)$.
The difference $\inn{\Psi}{0} - \cub$ is order 1, which corresponds to $\psi^{(\nu)} - \psi^{(0)} \asymp \nu^{1/4}$.
Thus the $L_x^\infty$-rate in Corollary~\ref{cor:rate} is sharp.

By \eqref{eq:inner-coord}, this $\nu^{1/4}$ difference persists over a spatial region of scale $\nu^{3/4}$.
As a result, the difference is small in $\m{C}_x^\al$ if and only if $\al < 1/3$.
The nonshocking component $\omega^{(0)}$ has leading homogeneity 2 rather than 1, so similar reasoning yields a H\"older threshold of $2/3$ for $\omega$, as in Corollary~\ref{cor:sharp-conv}.

Finally, $\Psi^{(\nu)} \sim \inn{\Psi}{0}$.
As noted above, we let $\inn{\Psi}{0}$ solve viscous Burgers.
In fact, we choose the unique solution of viscous Burgers constructed in~\cite{CG23} that matches $\cub$ to a high degree at infinity.
Thus, the limit of $\Psi^{(\nu)}$ is unique, modulo scaling symmetries corresponding to the constants $a$ and $b$ in \eqref{eq:fu} and $B_1^1(0)$ in \eqref{eq:viscous-shocking-Burgers}.
This explains Corollary~\ref{cor:universal}.

\section{The outer expansion}
\label{sec:outer}

In this section, we begin to study the inviscid limit $\nu \to 0$ in \eqref{eq:main}.

\subsection{A standard expansion}
As described in Section~\ref{sec:proof-over}, we expect $\psi^{(\nu)}$ to admit a standard asymptotic expansion in $\nu$ sufficiently far from shock formation:
\begin{equation}
  \label{eq:outer-expansion}
  \psi^{(\nu)} \sim \sum_{k \geq 0} \nu^k\out{\psi}{k}.
\end{equation}
We term this the outer expansion.
Because the data $\mr{\psi}$ is independent of $\nu$, we take
\begin{equation*}
  \out{\psi}{0}(t_0, \anon) = \mr{\psi} \And \out{\psi}{k}(t_0, \anon) = 0 \ForAll k \geq 1.
\end{equation*}
We can derive evolution equations for $\out{\psi}{k}$ by substituting \eqref{eq:outer-expansion} in \eqref{eq:main} and formally collecting terms of like order in $\nu$.
The first outer term $\out{\psi}{0}$ coincides with the inviscid solution $\psi^{(0)}$:
\begin{equation}
  \label{eq:inviscid}
  \partial_t \out{\psi}{0} + A(\out{\psi}{0}) \partial_x \out{\psi}{0} = 0, \quad \out{\psi}{0}(t_0, \anon) = \mr{\psi}.
\end{equation}
By \ref{hyp:gauge} and \eqref{eq:time-shift}, the earliest shock forms at the spacetime origin, $\out{\psi}{0}(0,0) = 0$, and $\lambda_{(I_0)}(0) = 0$.

To express the evolution equations for $k \geq 1$, we introduce some convenient notation.
Let $\Der A$ denote the 3-tensor field defined by
\begin{equation*}
  [(\mathrm{D} A)(\psi) v w]^I \coloneqq (\partial_K A_J^I)(\psi) v^K w^J.
\end{equation*}
We extend this in a natural fashion to higher derivatives $\Der^r A$.
Given functions $g_1,\ldots,g_k$, let $\func[g_1,\ldots,g_k]$ denote a function of $g_1,\ldots,g_k$ and derivatives thereof.
We use brackets to emphasize the presence of derivatives, and we allow the function to change from instance to instance.
We write $\out{\psi}{<k}$ for $(\out{\psi}{0},\ldots,\out{\psi}{k-1})$ and trust that similar expressions will be clear from context.
Then we can write
\begin{equation*}
  \partial_t \out{\psi}{k} + A(\out{\psi}{0}) \partial_x \out{\psi}{k} + (\Der A)(\out{\psi}{0}) \out{\psi}{k}\partial_x \out{\psi}{0} = \func[\out{\psi}{<k}], \quad \out{\psi}{k}(t_0, \anon) = 0.
\end{equation*}
In full:
\begin{proposition}
  \label{prop:outer-eq}
  For all $k \in \N$, $\out{\psi}{k}$ satisfies
  \begin{equation}
    \label{eq:outer-eq}
    \begin{aligned}
    \partial_t \out{\psi}{k} + \partial_x[A(\out{\psi}{0}) \out{\psi}{k}] = - \partial_x &\sum_\bigsub{r \geq 1, \, \kappa_j \geq 1\\ \kappa_0 + \ldots + \kappa_r = k} \frac{\Der^rA(\out{\psi}{0})}{(r + 1)!} \prod_{j=0}^r \out{\psi}{\kappa_j}
    \\ &+ \partial_x \sum_\bigsub{r \geq 0, \, \kappa_{\geq 1} \hspace{0.5pt}\geq 1\\ \kappa_0 + \ldots + \kappa_r = k-1} \frac{\Der^rB(\out{\psi}{0})}{r!} (\partial_x \out{\psi}{\kappa_0}) \prod_{j=1}^r \out{\psi}{\kappa_j}.
    \end{aligned}
  \end{equation}
\end{proposition}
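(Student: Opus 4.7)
My plan is to derive \eqref{eq:outer-eq} by substituting the outer expansion into \eqref{eq:main}, Taylor-expanding the nonlinearities about $\out{\psi}{0}$, and matching coefficients of $\nu^k$. By \ref{hyp:advection}, $A = \Der f$, so the main equation takes the conservative form
\begin{equation*}
  \partial_t \psi^{(\nu)} + \partial_x f(\psi^{(\nu)}) = \nu \partial_x[B(\psi^{(\nu)}) \partial_x \psi^{(\nu)}].
\end{equation*}
Setting $\delta \coloneqq \psi^{(\nu)} - \out{\psi}{0} = \sum_{k \geq 1} \nu^k \out{\psi}{k}$ and applying Taylor's theorem about $\out{\psi}{0}$ gives the formal series
\begin{equation*}
  f(\psi^{(\nu)}) = \sum_{r \geq 0} \frac{\Der^r f(\out{\psi}{0})}{r!} \delta^{\otimes r}, \qquad B(\psi^{(\nu)}) = \sum_{r \geq 0} \frac{\Der^r B(\out{\psi}{0})}{r!} \delta^{\otimes r}.
\end{equation*}
The multinomial expansion of $\delta^{\otimes r}$ produces a sum over multi-indices $(\kappa_1, \ldots, \kappa_r)$ with each $\kappa_j \geq 1$, weighted by $\nu^{\kappa_1 + \cdots + \kappa_r}$. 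Proposition~\ref{prop:outer-eq} then falls out by collecting the coefficient of $\nu^k$ on each side.

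For the flux term, the order-$\nu^k$ contribution to $f(\psi^{(\nu)})$ splits into an $r=1$ piece and an $r \geq 2$ remainder: the former is $A(\out{\psi}{0}) \out{\psi}{k}$, which after $\partial_x$ becomes the term $\partial_x[A(\out{\psi}{0}) \out{\psi}{k}]$ that I move to the left of \eqref{eq:outer-eq}. The contributions from $r \geq 2$, after the reindexing $r \mapsto r+1$ (so that the new $r$ runs from $1$) together with the identification $\Der^{r+1} f = \Der^r A$, produce exactly the first sum on the right, with the minus sign arising from the transfer across the equation.

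For the diffusion, the outer factor of $\nu$ means I need the $\nu^{k-1}$ coefficient of $B(\psi^{(\nu)}) \partial_x \psi^{(\nu)}$. The factor $\partial_x \psi^{(\nu)} = \sum_{\kappa_0 \geq 0} \nu^{\kappa_0} \partial_x \out{\psi}{\kappa_0}$ supplies a \emph{single} index $\kappa_0$ which is permitted to vanish (since $\partial_x \psi^{(\nu)}$ already contains the leading term $\partial_x \out{\psi}{0}$), while the Taylor expansion of $B$ supplies the remaining indices $\kappa_1, \ldots, \kappa_r$, each forced to be $\geq 1$. Collecting reproduces the second sum in \eqref{eq:outer-eq}, while $\partial_t \psi^{(\nu)}$ trivially contributes $\partial_t \out{\psi}{k}$ and completes the left-hand side.

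The proposition is a purely formal statement about how the correctors $\out{\psi}{k}$ are \emph{defined} via matching in $\nu$, so no PDE analysis enters here; the existence, regularity, and size of the $\out{\psi}{k}$ are separate matters treated elsewhere. The only subtlety I anticipate is combinatorial bookkeeping: one must verify that the factorial normalizations match (the proposition's $(r+1)!$ is the Taylor denominator for $\Der^{r+1} f = \Der^r A$, while the $r!$ in the $B$-sum corresponds to $\Der^r B$) and that the constraint $\kappa_j \geq 1$ applies only to those indices originating from $\delta$ rather than from $\partial_x \psi^{(\nu)}$. With this careful bookkeeping, the identity is immediate.
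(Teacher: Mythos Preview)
Your proposal is correct and follows essentially the same approach as the paper: write the equation in conservative form via $A = \Der f$, Taylor-expand $f$ and $B$ about $\out{\psi}{0}$, multinomially expand the power of $\delta = \sum_{k \geq 1} \nu^k \out{\psi}{k}$, and collect the coefficient of $\nu^k$. The paper's proof is slightly terser (it omits the $B$-term details), but the bookkeeping you describe regarding the factorials and the distinction between $\kappa_0 \geq 0$ from $\partial_x\psi^{(\nu)}$ versus $\kappa_{j\geq 1} \geq 1$ from the Taylor expansion of $B$ is exactly right.
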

Precisely, we contract the $(r + 2)$-tensor $\Der^r A$ against the $r + 1$ tensor $\prod_{j=0}^r \out{\psi}{\kappa_j}$ to obtain a vector (and similarly in the $B$ term).
We de-emphasize this tensorial structure in the sequel.
\begin{proof}
  Recalling that $A(\psi) \partial_x \psi = \partial_x[f(\psi)]$ for the flux function $f,$ we formally assume \eqref{eq:outer-expansion} and expand a Taylor series for $f$:
  \begin{align*}
    \partial_x\Big[f\Big(\out{\psi}{0} + \sum_{k \geq 1} \nu^k \out{\psi}{k}\Big) - f(\out{\psi}{0})\Big] &= \partial_x \sum_{r \geq 0} \frac{\Der^{r + 1}f(\out{\psi}{0})}{(r + 1)!} \Big(\sum_{k \geq 1} \nu^k \out{\psi}{k}\Big)^{r + 1}\\
                                                                                                          &= \partial_x \sum_\bigsub{r \geq 0\\ \kappa_0, \ldots, \kappa_r \geq 1} \nu^{\kappa_0 + \ldots + \kappa_r} \frac{\Der^{r + 1}f(\out{\psi}{0})}{(r + 1)!} \prod_{j=0}^r \out{\psi}{\kappa_j}.
  \end{align*}
  Using $\Der^{r + 1}f = \Der^r A$ and extracting the coefficient of $\nu^k$, we obtain the expression
  \begin{equation*}
    \partial_x \sum_\bigsub{r \geq 0, \, \kappa_j \geq 1\\ \kappa_0 + \ldots + \kappa_r = k} \frac{\Der^rA(\out{\psi}{0})}{(r + 1)!} \prod_{j=0}^r \out{\psi}{\kappa_j} = \partial_x[A(\out{\psi}{0}) \out{\psi}{k}] +
    \partial_x \sum_\bigsub{r \geq 1, \, \kappa_j \geq 1\\ \kappa_0 + \ldots + \kappa_r = k} \frac{\Der^rA(\out{\psi}{0})}{(r + 1)!} \prod_{j=0}^r \out{\psi}{\kappa_j}.
  \end{equation*}
  Recalling \eqref{eq:main}, this yields all but the last term in \eqref{eq:outer-eq}.
  We obtain the final term from the viscous action $\nu\partial[B(\psi) \partial_x \psi]$ in a similar fashion, and omit the details.
\end{proof}
The system \eqref{eq:outer-eq} is a linear heterogeneous transport equation and is well-posed on $[t_0, 0) \times \R$.
Hence the outer terms $\out{\psi}{k}$ are defined unambiguously.

\subsection{Shocking and nonshocking subspaces}
In subsequent work, it will be important to distinguish between shocking and nonshocking components.
For simplicity, we perform a linear change of basis in state space $\m{V}$ so that $r_I(0) = e_I$.
That is, the right eigenvectors of $A(0)$ coincide with the standard basis.
We also relabel our indices so that $I_0 = 1$.
That is, $e_1$ corresponds to the shocking characteristic.
(Hence in general, the eigenvalues are no longer ordered.)
As a consequence, $A(0)$ is diagonal and $A_1^1(0) = \lambda_{(1)}(0) = 0$.
We now introduce distinct notation for the shocking and nonshocking components:
\begin{equation}
  \label{eq:shocking-decomp}
  \sigma \coloneqq \psi^\Is \And \omega \coloneqq (\psi^2,\ldots,\psi^N).
\end{equation}
In particular, $\psi = (\sigma, \omega)^\top$.
For convenience, we let $\perp$ denote restriction to nonshocking indices, so $\omega = \psi^\sperp$.
We have decomposed the range $\m{V}$ into the one-dimensional shocking eigenspace $\R e_1$, denoted by $\m{V}^1$, and the complementary nonshocking subspace $\m{V}^\sperp \coloneqq \Span(e_2,\ldots,e_N)$.
We highlight that we ``freeze coefficients'' by decomposing relative to the matrix $A(0)$ rather than $A(\psi)$.
This choice prevents complications when derivatives land on the eigenbasis $(\lambda_{(I)}, r_I)_{I \in [N]}$.

We extend this notation to mappings from $\m{V}$.
\begin{notation}
  Let $\pi \colon \m{V} \to \m{V}^\sperp$ and $\iota \colon \m{V}^\perp \to \m{V}$ denote orthogonal projection and inclusion.
  If $f \colon \m{V} \to \m{V}$, we write $f^\sperp \coloneqq \pi f$.
  If $M \colon \m{V} \to \m{V} \otimes \m{V}^*$, we write $M_\sperp \coloneqq M \iota$, meaning $M_\sperp(p)v = M(p) \iota \, v$ for $p \in \m{V}$, $v \in \m{V}^\sperp$.
  Similarly, $M^\sperp \coloneqq \pi A$ and $M_\sperp^\sperp \coloneqq \pi M \iota$.
  If $g \colon \m{V} \to \m{W}$ for some vector space $\m{W}$, we interpret $\dn f \colon \m{V} \to \m{W} \otimes \m{V}^*$ and write $\partial_\sperp g \coloneqq (\dn g)_\sperp$.
  We use analogous notation with $1$ in place of $\perp$.
\end{notation}
In our basis, projections to $\m{V}^1$ and $\m{V}^\sperp$ represent projections onto the first component and remaining components, respectively.
Hence for a matrix $M \in \m{V} \otimes \m{V}^*$ acting on $\m{V}$, the notation $M_\sperp$ and $M^\sperp$ restricts to nonshocking indices of columns and rows, respectively (and similarly for $M_1,M^1$).

\subsection{Outer estimates}

The quality of the approximation \eqref{eq:outer-expansion} is determined by the size of $\out{\psi}{k}$ and its derivatives, particularly near the preshock.
By hypothesis \ref{hyp:expansion}, the first term $\out{\psi}{0}$ is well-approximated by $\cub$ near the origin.
We therefore bound $\out{\psi}{0}$ and its derivatives by suitable powers of the distance $\dist$ from \eqref{eq:fd}.

We take a similar approach to the higher-order outer correctors $\out{\psi}{k}$.
Each appears homogeneous at leading order, but the remainder takes a more complex form due to the interplay between the viscosity and the nonshocking characteristics.
The homogeneous distance function $\dist$ is highly anisotropic due to the different scaling exponents in $t$ and $x$.
Ultimately, this reflects our convention that $\lambda(0) = 0$, which lends the shocking dynamics a distinguished $t$ direction.
In contrast, nonshocking characteristics scale similarly in $t$ and $x$.
To capture their effects, we use the isotropic Euclidean distance $\euc$ from \eqref{eq:Euclidean}.
The two distance functions are related in the following manner:
\begin{equation}
  \label{eq:distance-relation}
  \dist^{-2} \lesssim \euc^{-1} \And \dist \lesssim \euc^{1/3}.
\end{equation}
We express most of the coming estimates in term of these distinct measures of proximity to the origin.
For example,
\begin{proposition}
  \label{prop:outersize}
  For all $k, m, n \in \N_0$, where $\euc \leq 1/2$ we have
  \begin{equation}
    \label{eq:outerest1}
    |\partial_t^m \partial_x^n \out{\psi}{k}| \lesssim \dist^{-4k - 2 m - 3 n + 1} + \abs{\log \euc} \euc^{-2 k - m + 1} \dist^{-3 n}.
  \end{equation}
\end{proposition}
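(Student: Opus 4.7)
I would prove Proposition~\ref{prop:outersize} by strong induction on $k$, with a secondary induction on $m + n$ at each level.

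\textbf{Base case.} For $k = 0$, the estimate reduces to $|\partial_t^m \partial_x^n \out{\psi}{0}| \lesssim \dist^{1 - 2m - 3n}$. This follows from \ref{hyp:expansion}: each $\bar{\psi}_\ell$ is $(\ell+1)$-homogeneous in the sense of Definition~\ref{def:rhom}, so its $(m,n)$-derivative is $(\ell+1-2m-3n)$-homogeneous and bounded by $\dist^{\ell+1-2m-3n}$. On $\{\euc \leq 1/2\}$, Lemma~\ref{lem:fuest} gives $\dist \lesssim \euc^{1/3} \lesssim 1$, so only the $\ell = 0$ term matters, and the remainder in \ref{hyp:expansion} (taking $L$ large) is strictly smaller. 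The isotropic term is not needed at $k = 0$.

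\textbf{Inductive step.} For $k \geq 1$, Proposition~\ref{prop:outer-eq} expresses $\out{\psi}{k}$ as the solution of the linear hyperbolic system $\partial_t \out{\psi}{k} + \partial_x[A(\out{\psi}{0}) \out{\psi}{k}] = F_k$, where $F_k$ is an algebraic combination of the $\out{\psi}{<k}$ and their derivatives (including a viscous contribution from $B$). The inductive hypothesis, together with Leibniz and the bounds on $\out{\psi}{0}$, gives $|F_k|$ controlled by a sum of a homogeneous piece $\dist^{-4k-1}$ and an isotropic piece of the form $|\log\euc|\euc^{-2k}\dist^{-1}$. I then split $\out{\psi}{k} = \sigma^{(k)} e_1 + \omega^{(k)}$ using the basis of \ref{hyp:gauge}, so that $A(\out{\psi}{0})$ is diagonal at leading order and the principal part decouples. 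For $\sigma^{(k)}$, the shocking characteristic carries the speed $\lambda_{(1)}(\out{\psi}{0}) \approx -a \cub$ of the cubic preshock, so it remains in a single homogeneous shell of constant $\dist$; integrating the shocking forcing over a time interval of length $\sim \dist^2$ yields $|\sigma^{(k)}| \lesssim \dist^{1 - 4k}$. For $\omega^{(k)}$, each nonshocking characteristic has nonzero speed $\lambda_{(J)}(0)$ and cuts transversally across the origin region; parametrizing $\dist$ along such a trajectory via Lemma~\ref{lem:fuest}, the homogeneous part of $F_k$ integrates to a power of $\euc$ while the isotropic part, already carrying $|\log\euc|^{k-1}\euc^{-2(k-1)}$ by induction, accrues one additional logarithm per transverse integration, giving $|\omega^{(k)}| \lesssim |\log \euc|\, \euc^{1-2k}$.

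\textbf{Higher derivatives and main obstacle.} The derivative structure $\dist^{-3n}$ (in both terms) and the asymmetry $\dist^{-2m}$ versus $\euc^{-m}$ reflect that $\partial_x$ always lands on a homogeneous factor (contributing $\dist^{-3}$), while $\partial_t$ either differentiates homogeneously (contributing $\dist^{-2}$) or is absorbed into transport along a nonshocking characteristic (contributing $\euc^{-1}$). One then commutes $\partial_t^m \partial_x^n$ through the equation, trades $\partial_t$ for $\partial_x$ plus forcing, and closes by the secondary induction on $m+n$. The main obstacle is controlling cross-products between the homogeneous and isotropic parts of $F_k$: the inductive ansatz is a sum, and its Leibniz expansion produces mixed terms that could a priori exceed the claimed bound. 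These are absorbed using $\dist^{-2} \lesssim \euc^{-1}$ from \eqref{eq:distance-relation}, which converts any hybrid contribution into the isotropic branch without worsening the power of $\euc$, and by verifying that the log factor accumulates at most once per level $k$. This clean separation between the homogeneous and isotropic ``modes'' of propagation is, in my view, the essential technical point and the natural place to expect the argument to require the most care.
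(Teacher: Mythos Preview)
Your overall strategy (induction on $k$, characteristic integration, shocking/nonshocking split) is in the right direction, but several steps are incorrect and the paper's key device is missing.

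First, the shocking characteristic does \emph{not} remain in a shell of constant $\dist$. Along the field $\bar\partial_t = \partial_t - a\cub\,\partial_x$ the quantity $\cub$ is constant, but $\dist^2 = |t| + 3a^{-1}b\cub^2$ satisfies $\bar\partial_t\dist^2 = -1$, so $\dist$ varies monotonically. The ``time interval of length $\sim\dist^2$'' heuristic therefore has no clear meaning; what actually works is the integrating factor $\dist^2 = \cubder^{-1}$, which converts the shocking equation to $\bar\partial_t(\dist^2\sigma) = \dist^2 F$ before one integrates. Second, your claim $|\sigma^{(k)}| \lesssim \dist^{1-4k}$ is too strong: the shocking equation is forced by the nonshocking component through the off-diagonal part of $A(\out{\psi}{0})$, so $\sigma^{(k)}$ inherits an isotropic piece as well. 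The proposition's bound is genuinely a \emph{sum}, and for $k\ge 2$ near $t=0$ the isotropic term $|\log\euc|\,\euc^{1-2k}$ can dominate $\dist^{1-4k}$.

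Third, your account of the logarithm is backwards. The nonshocking integrals $\int \euc^{-r}\circ\gamma_{I'}$ do \emph{not} produce logarithms for $r>1$; the single $|\log\euc|$ in the statement arises from the \emph{shocking} integral $\int \euc^{-1}\dist^{-q}\circ\gamma_1$ at level $k=1$ only, and does not accumulate for higher $k$ because the relevant exponent becomes $r=2k-1>1$. Your phrase ``accrues one additional logarithm per transverse integration'' would yield $|\log\euc|^k$, which is not what the proposition asserts. The paper avoids all of this by a different decomposition: it first constructs explicit $(-4k+1)$-homogeneous polynomial solutions $\psi_{k,0}$ of the linearized Burgers hierarchy (Proposition~\ref{prop:zeroth-col}, taken from \cite{CG23}), then proves that the difference $p_k = \out{\psi}{k} - \psi_{k,0}$ satisfies the isotropic bound via a dedicated hyperbolic estimate (Propositions~\ref{prop:outer-op} and \ref{prop:out-bound}). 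The sum of these two bounds gives \eqref{eq:outerest1}. Subtracting the known homogeneous part is the missing ingredient that makes the shocking analysis clean and the log bookkeeping tractable.
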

\noindent
As our problem is fundamentally local, we always restrict statements such as this to a fixed neighborhood of the origin.
We take this as a convention in the sequel: all estimates apply in the region $\{\euc \leq 1/2\}$, as the solution is uniformly smooth outside this region.

The first term in Proposition~\ref{prop:outersize} represents the leading homogeneous behavior of $\out{\psi}{k}$.
However, in some regimes this term is overwhelmed by nonshocking effects, which produce the second term in \eqref{eq:outerest1}.
We defer the proof to Section~\ref{sec:zeroth-column}, which clarifies this decomposition through Proposition~\ref{prop:out-bound}.
Using Proposition~\ref{prop:outersize}, we show that the partial sum
\begin{equation*}
  \out{\psi}{[K]} \coloneqq \sum_{k=0}^K \nu^k \out{\psi}{k}
\end{equation*}
is a good approximate solution of \eqref{eq:main} provided $K \gg 1$ and we are not too close to the singularity at the origin.
\begin{proposition}
  \label{prop:outer-approx}
  For all $K \in \N$, $n \in \N_0$, and $\beta \in (3/8, 1/2)$, we can write
  \begin{equation*}
    \partial_t \out{\psi}{[K]} + A(\out{\psi}{[K]}) \partial_x \out{\psi}{[K]} = \nu \partial_x\big[B(\out{\psi}{[K]} \partial_x \out{\psi}{[K]})\big] + \out{F}{K}
  \end{equation*}
  for forcing $\out{F}{K}$ satisfying $\partial_x^n \out{F}{K} \lesssim_{K,n} \nu^{(1 - 2 \beta)K - 3n/4 - 3/4}$ where $\euc \geq \nu^\beta$.
\end{proposition}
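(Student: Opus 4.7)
The plan is to substitute $\out{\psi}{[K]}$ directly into the nonlinear operator on both sides of \eqref{eq:main} and identify $\out{F}{K}$ as the uncanceled residual. Taylor-expanding $A$ and $B$ about $\out{\psi}{0}$ produces formal series in $\nu$ whose coefficient at $\nu^k$, for $k \in [0, K]$, matches exactly the right-hand side of the defining equation \eqref{eq:outer-eq} for $\out{\psi}{k}$. Summing those equations weighted by $\nu^k$ therefore cancels every contribution of total $\nu$-power at most $K$, leaving $\out{F}{K}$ as a finite sum of terms
\[
\nu^{|\kappa|+\delta}\, \partial_x^{1+\delta}\Bigl[ \Der^r \Xi(\out{\psi}{0})\, \prod_j \partial_x^{\delta_j}\out{\psi}{\kappa_j} \Bigr],
\]
with $\Xi \in \{A, B\}$, $\delta \in \{0, 1\}$ distinguishing flux from viscous, $\delta_j \in \{0, 1\}$ with at most one nonzero (and only when $\delta = 1$), and $|\kappa| + \delta \geq K + 1$.

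To control $\partial_x^n \out{F}{K}$, I would use Leibniz to distribute the $n$ extra derivatives across each summand, applying the chain rule to $\Der^r \Xi(\out{\psi}{0})$; each such chain-rule derivative contributes an additional low-order factor $\partial_x \out{\psi}{0}$. After reduction, every summand is a finite product of derivatives $\partial_x^{n_j} \out{\psi}{\kappa_j}$ with $\sum \kappa_j \geq K + 1 - \delta$ and $\sum n_j = n + 1 + \delta$, plus harmless $\out{\psi}{0}$ factors from the chain rule. Proposition~\ref{prop:outersize} bounds each factor by the two-part expression $\dist^{-4\kappa_j - 3 n_j + 1} + |\log \euc|\, \euc^{-2\kappa_j + 1} \dist^{-3 n_j}$, and expanding the resulting product yields a finite collection of sub-products of the form $\nu^{|\kappa| + \delta} |\log \euc|^c \euc^{p_1} \dist^{p_2}$ for explicit nonnegative exponents $c, p_1, p_2$ depending on which bound is chosen for each factor.

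Finally, in the region $\euc \geq \nu^\beta$ I would convert these weights into powers of $\nu$ using $\euc \geq \nu^\beta$ together with $\dist \gtrsim \euc^{1/2} \geq \nu^{\beta/2}$ from \eqref{eq:distance-relation}. A direct computation on the worst configurations (smallest $|\kappa|$, smallest $r + 1$, all derivatives concentrated on one factor, either all-homogeneous or all-Euclidean) shows that the resulting $\nu$-exponent strictly exceeds $(1 - 2\beta) K - 3n/4 - 3/4$ for $\beta < 1/2$, with a positive slack that comfortably absorbs the $|\log \euc|^c$ factors. The main obstacle is combinatorial rather than analytical: I must carefully enumerate the cases arising from the Leibniz expansion, the chain rule on $\Der^r \Xi(\out{\psi}{0})$, and the two-part bound, so that no configuration is overlooked. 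Once the extremal cases are identified, each reduces to a short linear inequality in $K$, $n$, and $\beta$, valid throughout the interval $\beta \in (3/8, 1/2)$ in the hypothesis.
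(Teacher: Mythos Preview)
Your approach is the paper's: Taylor-expand about $\out{\psi}{0}$, cancel the $\nu^k$-contributions for $k\le K$ against \eqref{eq:outer-eq}, and bound the overflow via Proposition~\ref{prop:outersize} and $\dist^{-1}\lesssim\euc^{-1/2}\le\nu^{-\beta/2}$. Two remarks are worth making.

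First, the case enumeration on the two-part bound is unnecessary. Since $\dist^{-2}\lesssim\euc^{-1}$, the homogeneous part $\dist^{-4\kappa_j+1}=\dist\cdot\dist^{-4\kappa_j}\lesssim\dist\,\euc^{-2\kappa_j}$ is already dominated by the Euclidean part (up to logs), so you may simply use $\out{\psi}{\kappa_j}\lesssim\euc^{-2\kappa_j}\dist$ uniformly. The entire product then collapses to $(\nu\euc^{-2})^{|\kappa|}\dist^{\#\kappa+1}\lesssim\nu^{(1-2\beta)|\kappa|}\dist^{\#\kappa+1}$ in one line, with no ``worst configuration'' analysis.

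Second, and more substantively: your description of $\out{F}{K}$ as a \emph{finite} sum of terms with coefficients $\Der^r\Xi(\out{\psi}{0})$ is incomplete. A finite Taylor expansion leaves a remainder involving $\Der^{r}\Xi$ at an intermediate point, which is not of this form; an infinite expansion is not a finite sum. Either way you must bound the remainder separately, and this is exactly where the hypothesis $\beta>3/8$ is used. The remainder is $\mathcal{O}_K(|\out{\psi}{[1,K]}|^{K+2})=\mathcal{O}_K(\nu^{(K+2)/4})$ via $\out{\psi}{[1,K]}\lesssim\nu\dist^{-3}\lesssim\nu^{1/4}$, and one needs $(K+2)/4\ge(1-2\beta)K-3/4$ for large $K$, i.e.\ $1-2\beta\le 1/4$, i.e.\ $\beta\ge3/8$. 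Your overflow terms alone satisfy the claimed bound for all $\beta<1/2$, so if you never invoke $\beta>3/8$ you have almost certainly dropped the remainder.
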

To express the proof concisely, we introduce some multi-index notation.
Given $r \in \N_0$ and ${\kappa = (\kappa_0, \ldots, \kappa_r) \in \N_0^{r + 1}}$, we let $\abs{\kappa} \coloneqq \kappa_0 + \ldots + \kappa_r$ and $\# \kappa \coloneqq r$.
\begin{proof}
  Let $\out{\psi}{[1,K]} \coloneqq \out{\psi}{[K]} - \out{\psi}{0}$ and suppose $\euc \geq \nu^\beta$.
  Proposition~\ref{prop:outersize} implies that $\out{\psi}{[1,K]} \lesssim_K \nu \dist^{-3} \lesssim_K \nu^{1/4}$ in this region.
  After all, \eqref{eq:distance-relation} yields
  \begin{equation}
    \label{eq:dist-est}
    \dist^{-1} \lesssim \euc^{-1/2} \leq \nu^{-\beta/2} < \nu^{-1/4}.
  \end{equation}
  Recall that $A$ is the derivative of the flux function $f$.
  Using Taylor's theorem and Proposition~\ref{prop:outersize}, we write
  \begin{equation*}
    f(\out{\psi}{[K]}) - f(\out{\psi}{0}) = \sum_{r = 0}^K \frac{\Der^r A(\out{\psi}{0})}{(r + 1)!} \big(\out{\psi}{[1,K]})^{r+1} + \m{O}_K\big(\nu^{(K+2)/4}\big).
  \end{equation*}
  Moreover,
  \begin{align*}
    \sum_{r = 0}^K \frac{\Der^r A(\out{\psi}{0})}{(r + 1)!} \big(\out{\psi}{[1,K]})^{r+1} = \sum_\bigsub{1 \leq \kappa_j \leq K\\ \abs{\kappa} \leq K} \frac{\Der^{\#\kappa} A(\out{\psi}{0})}{(\#\kappa + 1)!} &\nu^{\abs{\kappa}} \prod_j \out{\psi}{\kappa_j}\\[-8pt]
    &+ \sum_\bigsub{1 \leq \kappa_j \leq K\\ \#\kappa \leq K\\ \abs{\kappa} > K} \frac{\Der^{\#\kappa} A(\out{\psi}{0})}{(\#\kappa + 1)!} \nu^{\abs{\kappa}} \prod_j \out{\psi}{\kappa_j}.
  \end{align*}
  The first sum on the right agrees with the sum of the $A$-terms in \eqref{eq:outer-eq} over $1 \leq k \leq K$, provided we weight $\out{\psi}{k}$ by $\nu^k$.
  For the second sum on the right, Proposition~\ref{prop:outersize} implies that
  \begin{equation*}
    \nu^{\abs{\kappa}} \prod_j \out{\psi}{\kappa_j} \lesssim_{\kappa} (\nu \euc^{-2})^{\abs{\kappa}} \dist^{\#\kappa + 1} \lesssim_\kappa \nu^{(1 - 2\beta)\abs{\kappa}} \dist^{\#\kappa + 1}.
  \end{equation*}
  In this sum we have $\#\kappa \geq 1$ and $\abs{\kappa} \geq K + 2$, so the sum is bounded by $\nu^{(1 - 2\beta)(K + 2)} \dist^2$.
  When we differentiate the flux in $x$, we pick up a factor of $\dist^{-3}$ (see Proposition~\ref{prop:outersize}) and obtain error bounded by $\nu^{(1 - 2\beta)K - 1/4}$, due to \eqref{eq:dist-est}.
  Noting that $1 - 2\beta < 1/4$, this is larger than the $\nu^{(K + 2)/4}$ Taylor-error from earlier.
  Performing similar computations for the diffusive terms involving $B$, we find slightly smaller error of order $\nu^{(1 - 2\beta)K}$.
  Thus $\out{F}{K} \lesssim_K \nu^{(1 - 2\beta)K - 1/4}$ where $\euc \geq \nu^\beta$.
  Estimates on derivatives follow in a similar manner, using $\dist^{-3} \lesssim \nu^{-3/4}$.
\end{proof}

\section{Inner and grid expansions}
\label{sec:inner}

We have now constructed an outer expansion that should closely approximate the full solution $\psi^{(\nu)}$ a certain distance from the preshock.
In this section, we show how to treat $\psi^{(\nu)}$ near the preshock, where viscous effects are nonperturbative.

By \ref{hyp:expansion}, $\out{\psi}{0} \approx (\cub, 0)$ near the origin, where $\cub$ satisfies the cubic equation \eqref{eq:fu}.
We thus expect viscous effects to matter where $\nu \abss{\partial_x^2 \cub} \gtrsim \abss{\cub \partial_x \cub}$ and to primarily act on the shocking component.
Unraveling the structure of $\cub$ from \eqref{eq:fu}, we can check that viscosity matters when $\abs{t} \lesssim \nu^{1/2}$ and $\abs{x} \lesssim \nu^{3/4}$, and there $\abss{\cub} \lesssim \nu^{1/4}$.
To study $\psi$ in this small region, we perform the blow-up from \eqref{eq:inner-coord}:
\begin{equation*}
  T \coloneqq \nu^{-1/2} t, \quad X \coloneqq \nu^{-3/4} x, \And \Psi \coloneqq \nu^{-1/4} \psi.
\end{equation*}
In these inner coordinates, \eqref{eq:main} becomes
\begin{equation}
  \label{eq:inner}
  \partial_T \Psi + \nu^{-1/4} A(\nu^{1/4} \Psi) \partial_X \Psi = \partial_X\big[B(\nu^{1/4}\Psi) \partial_X \Psi\big].
\end{equation}
Due to the powers of $\nu^{1/4}$ appearing in \eqref{eq:inner}, we posit the inner ansatz
\begin{equation}
  \label{eq:inner-ansatz}
  \Psi \sim \sum_{\ell \geq 0} \nu^{\ell/4} \inn{\Psi}{\ell}.
\end{equation}
Unlike the outer expansion \eqref{eq:outer-expansion}, our ansatz will be ``criminal''~\cite{ALKR22} in the sense that the terms $\inn{\Psi}{\ell}$ themselves depend on $\nu$.
Thus \eqref{eq:inner-ansatz} is not a true expansion in $\nu$.
This concession allows us to overcome a difficulty with the nonshocking components that we explain below.

\subsection{Inner equations}

Following \eqref{eq:shocking-decomp}, we set
\begin{equation*}
  \Sigma \coloneqq \Psi^\Is \And \Omega \coloneqq \Psi^\sperp = (\Psi^2, \ldots, \Psi^N).
\end{equation*}
As in \eqref{eq:inner}, we develop inner ansatzes for $\Sigma$ and $\Omega$ of the form
\begin{equation}
  \label{eq:inner-components}
  \Sigma \sim \sum_{\ell \geq 0} \nu^{\ell/4} \inn{\Sigma}{\ell} \And \Omega \sim \sum_{\ell \geq 0} \nu^{\ell/4} \inn{\Omega}{\ell}.
\end{equation}
Due to our choice $\lambda_{(\Is)}(0) = 0$ in \ref{hyp:gauge}, the shocking component lies in the nullspace of $A(0)$.
Thus formally
\begin{equation}
  \label{eq:small-advection}
  \nu^{-1/4} A^\Is(\nu^{1/4} \Psi) \partial_X \Psi = (\Der A^\Is)(0) \Psi \partial_X \Psi + \m{O}(\nu^{1/4}),
\end{equation}
while
\begin{equation}
  \label{eq:large}
  \nu^{-1/4} A^\sperp(\nu^{1/4} \Psi) \partial_X \Psi = \nu^{-1/4} A_\sperp^\sperp(0) \partial_X \Omega + \m{O}(1).
\end{equation}
Strict hyperbolicity implies that the restriction $A^\sperp_\sperp$ of $A$ to the nonshocking subspace is invertible.
It follows that the advection term in \eqref{eq:inner} is of a different order in the shocking and nonshocking components.
This term dominates all others in the nonshocking direction, while it balances others in the shocking equation.
This asymmetry leads to a triangular structure: $\inn{\Omega}{\ell}$ does not depend on $\inn{\Sigma}{\ell}$, while $\inn{\Sigma}{\ell}$ does depend on $\inn{\Omega}{\ell}$.
We thus first estimate $\inn{\Omega}{\ell}$ and then $\inn{\Sigma}{\ell}$.
We use this structure heavily.

We take a ``lawful'' approach to the evolution equations for the terms $\inn{\Sigma}{\ell}$ in \eqref{eq:inner-components}.
We project \eqref{eq:inner} onto the span of $e_\Is$, Taylor expand the nonlinearities, and collect terms of order $\nu^{\ell/4}$.
Our criminality manifests in our treatment of the nonshocking components.
To account for the $\nu^{-1/4}$ prefactor on the right side of \eqref{eq:large}, we project onto the nonshocking subspace and multiply \eqref{eq:inner} by $\nu^{1/4}$:
\begin{equation}
  \label{eq:inner-small}
  \nu^{1/4} \partial_T \Omega + A^\sperp(\nu^{1/4} \Psi) \partial_X \Psi = \nu^{1/4}\partial_X\big[B^\sperp(\nu^{1/4}\Psi) \partial_X \Psi\big].
\end{equation}
The evolution equation for $\inn{\Omega}{\ell}$ consists of the order-$\nu^{\ell/4}$ terms in \eqref{eq:inner-small} \emph{and} the time derivative $\nu^{(\ell + 1)/4} \partial_T \inn{\Omega}{\ell}$.
It is our inclusion of this ``off-order'' derivative that makes the inner ansatz criminal.
It amounts to a slight tilt of the characteristics in the $\inn{\Omega}{\ell}$-equation, which maintains a causal link between the inner and outer expansions.
This avoids technical difficulties that otherwise appear.

Because the nonshocking component of $\out{\psi}{0}$ is much smaller than the shocking component near the origin, it is natural to take $\inn{\Omega}{0} = 0$.
So in fact the $\Omega$-sum in \eqref{eq:inner-components} begins from $\ell = 1$.
For $\Sigma$, we use \eqref{eq:small-advection} to extract the leading terms in \eqref{eq:inner}:
\begin{equation}
  \label{eq:s0-pre}
  \partial_T \inn{\Sigma}{0} + (\Der A^\Is)(0) \inn{\Psi}{0} \partial_X \inn{\Psi}{0} = \partial_X[B^\Is(0)\partial_X \inn{\Psi}{0}].
\end{equation}
Because $\inn{\Omega}{0} = 0$, we have $\inn{\Psi}{0} = \inn{\Sigma}{0} e_\Is$ and \eqref{eq:s0-pre} simplifies to the viscous Burgers equation:
\begin{equation}
  \label{eq:s0}
  \partial_T \inn{\Sigma}{0} + (\partial_\Is A_\Is^\Is)(0) \inn{\Sigma}{0} \partial_X \inn{\Sigma}{0} = B_\Is^\Is(0) \partial_X^2 \inn{\Sigma}{0}.
\end{equation}
As noted after \eqref{eq:cubic-Burgers}, $\partial_\Is A_\Is^\Is(0) \neq 0$.
Moreover, by \ref{hyp:shocking-diffusion}, $B_1^1 (0) > 0$.
This is the only nondegeneracy we require of $B$.

Due to these hypotheses, \eqref{eq:s0} is equivalent modulo scaling to the viscous Burgers equation satisfied by the leading inner term in~\cite{CG23}.
There, we constructed a unique solution of \eqref{eq:s0} that agrees with the cubic \eqref{eq:fu}, and thus $\inn{\psi}{0}$, sufficiently well at infinity.
We use the same solution here, which is precisely constructed in \cite[Proposition~4.2]{CG23}.
\medskip

We next describe the evolution equations for $\inn{\Omega}{\ell}$ and $\inn{\Sigma}{\ell}$ with $\ell \geq 1$.
We first write the nonshocking equation.
Because the viscous term in \eqref{eq:inner} is smaller than the nonshocking advection by a factor of $\nu^{1/4}$, the $\inn{\Omega}{\ell}$ equation does not directly involve $\partial_X^2 \inn{\Omega}{\ell}$.
Indeed, we can write
\begin{equation}
  \label{eq:nsl}
  \nu^{1/4} \partial_T \inn{\Omega}{\ell} + A_\sperp^\sperp(0) \partial_X \inn{\Omega}{\ell} = \func[\inn{\Psi}{<\ell}].
\end{equation}
We recall that $\func[\inn{\Psi}{<\ell}]$ denotes a function of $\inn{\Psi}{0},\ldots,\inn{\Psi}{\ell-1}$ and their derivatives.
Assuming we have already determined the terms $\inn{\Psi}{\ell'}$ for $\ell' < \ell$, we can treat the expression on the right as forcing.
So \eqref{eq:nsl} is a linear constant-coefficient transport equation.
For example,
\begin{equation*}
  \nu^{1/4} \partial_T \inn{\Omega}{1} + A_\sperp^\sperp(0) \partial_X \inn{\Omega}{1} = -(\partial_\Is A_\Is^\sperp)(0) \inn{\Sigma}{0} \partial_X \inn{\Sigma}{0} + B_\Is^\sperp(0) \partial_X^2 \inn{\Sigma}{0} = \func[\inn{\Psi}{0}].
\end{equation*}
We now turn to $\inn{\Sigma}{\ell}$.
Its equation is somewhat more complicated, as it includes contributions from $\inn{\Omega}{\ell}$ and involves the viscosity directly.
\begin{equation}
  \label{eq:sl-pre}
  \begin{aligned}
    \hspace{-3pt}\partial_T \inn{\Sigma}{\ell} + (\partial_\Is A_\Is^\Is)(0) \partial_X(\inn{\Sigma}{0}\inn{\Sigma}{\ell}) + (\partial_\sperp& A_\Is^\Is)(0) \inn{\Omega}{\ell} \partial_X \inn{\Sigma}{0} + (\partial_\Is A_\Is^\sperp)(0)\inn{\Sigma}{0} \partial_X \inn{\Omega}{\ell}\\
                                                                                                                                             &= B_\Is^\Is(0) \partial_X^2 \inn{\Sigma}{\ell} + B_\sperp^\Is(0) \partial_X^2 \inn{\Omega}{\ell} + \func[\inn{\Psi}{<\ell}].
  \end{aligned}
\end{equation}
Provided we determine the nonshocking component $\inn{\Omega}{\ell}$ first, we can fold its terms into our known forcing.
So \eqref{eq:sl-pre} becomes a linear advection-diffusion equation:
\begin{equation}
  \label{eq:sl}
  \partial_T \inn{\Sigma}{\ell} + (\partial_\Is A_\Is^\Is)(0) \partial_X(\inn{\Sigma}{0}\inn{\Sigma}{\ell}) - B_\Is^\Is(0) \partial_X^2 \inn{\Sigma}{\ell} = \func[\inn{\Psi}{<\ell}, \inn{\Omega}{\ell}].
\end{equation}
Comparing \eqref{eq:nsl} and \eqref{eq:sl}, we see that $\inn{\Omega}{\ell}$ does not depend on $\inn{\Sigma}{\ell}$, while the latter depends on the former.
This is the triangular structure mentioned above.

In the following propositions, we record the precise nature of the forcing in \eqref{eq:nsl} and \eqref{eq:sl}.
Given a collection of functions $G_\al \coloneqq G_{\al_0},\ldots,G_{\al_r}$ taking values in $\m{V}$, we frequently treat Taylor terms of $A$ and $B$ of the form
\begin{equation}
  \label{eq:multilinear}
  \Aop G_\al \coloneqq \frac{\Der^rA(0)}{(r + 1)!} \prod_{j=0}^r G_{\al_j} \And \Bop G_\al \coloneqq \frac{\Der^rB(0)}{r!} \partial_X G_{\al_0} \prod_{j = 1}^r G_{\al_j}.
\end{equation}
The product in the first expression denotes contractions of the vectors $G_{\alpha_j}$ with derivatives of $A$.
We let $\Aop^1$ and $\Aop^\sperp$ denote the projections of the image of the multilinear operator $\Aop$ onto the shocking and nonshocking subspaces, respectively.
These involve derivatives of $A^1$ or $A^\sperp$ rather than $A$.
We treat $\Bop$ similarly.
\begin{proposition}
  \label{prop:Omega-inner}
  For all $\ell \in \N$, $\inn{\Omega}{\ell}$ satisfies
  \begin{equation}
    \label{eq:Omega-inner}
    [\nu^{1/4} \partial_T + A_\sperp^\sperp(0)] \partial_X \inn{\Omega}{\ell} = -\partial_X \sum_\bigsub{1 \leq r \leq \ell\\ \al_0 + \ldots + \al_r = \ell - r} \Aop^\sperp\inn{\Psi}{\al}
    + \partial_X \sum_\bigsub{0 \leq r \leq \ell - 1\\ \al_0 + \ldots + \al_r = \ell - r - 1} \Bop^\sperp\inn{\Psi}{\al}.
  \end{equation}
\end{proposition}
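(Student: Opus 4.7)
My plan is to derive \eqref{eq:Omega-inner} by formally substituting the inner ansatz $\Psi = \sum_{\ell \geq 0} \nu^{\ell/4} \inn{\Psi}{\ell}$ into the nonshocking-projected, $\nu^{1/4}$-rescaled equation \eqref{eq:inner-small}, extracting the coefficient of $\nu^{\ell/4}$ from the advective and diffusive terms, and combining these with the ``criminal'' off-order time derivative $\nu^{1/4} \partial_T \inn{\Omega}{\ell}$ on the left, as discussed just before the statement.

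First I would write both nonlinearities of \eqref{eq:inner-small} in conservation form. Since $A = \Der f$, Taylor expanding $f^\sperp$ about $0 \in \m{V}$ gives
\[
A^\sperp(\nu^{1/4} \Psi) \partial_X \Psi = \partial_X \Bigl[\sum_{r \geq 0} \frac{\nu^{r/4}}{(r + 1)!} \Der^r A^\sperp(0)\, \Psi^{r + 1}\Bigr],
\]
while the viscous term reads
\[
\nu^{1/4} \partial_X[B^\sperp(\nu^{1/4} \Psi) \partial_X \Psi] = \partial_X \Bigl[\sum_{r \geq 0} \frac{\nu^{(r + 1)/4}}{r!} \Der^r B^\sperp(0)\, (\partial_X \Psi)\, \Psi^r\Bigr].
\]
Substituting the ansatz and expanding each power $\Psi^{r + 1}$ as an ordered multi-index sum $\sum_{\alpha \in \N_0^{r + 1}} \nu^{|\alpha|/4} \prod_{j = 0}^r \inn{\Psi}{\alpha_j}$, the coefficient of $\nu^{\ell/4}$ in the $A$-contribution is precisely $\partial_X \sum_{r \geq 0,\, |\alpha| = \ell - r} \Aop^\sperp \inn{\Psi}{\alpha}$, directly from the definition in \eqref{eq:multilinear}; likewise, the $B$-contribution at order $\nu^{\ell/4}$ is $\partial_X \sum_{r \geq 0,\, |\alpha| = \ell - r - 1} \Bop^\sperp \inn{\Psi}{\alpha}$, with the shift $\ell - r - 1$ coming from the extra $\nu^{1/4}$ prefactor on the viscous side.

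To isolate the transport operator on the left of \eqref{eq:Omega-inner}, I would peel off the $r = 0$, $\alpha_0 = \ell$ term from the $A$-sum, which equals $\partial_X[A^\sperp(0) \inn{\Psi}{\ell}]$. By \ref{hyp:gauge}, $A(0)$ is diagonal with $A_\Is^\Is(0) = 0$, so its shocking column $A^\sperp_\Is(0)$ vanishes, giving $A^\sperp(0) \inn{\Psi}{\ell} = A_\sperp^\sperp(0) \inn{\Omega}{\ell}$ and hence the desired term $A_\sperp^\sperp(0) \partial_X \inn{\Omega}{\ell}$. The residual $A$-sum therefore begins at $r \geq 1$, and the $B$-sum terminates at $r \leq \ell - 1$ since $|\alpha| \geq 0$, matching the ranges asserted in \eqref{eq:Omega-inner}.

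The proof is thus a formal expansion; the main obstacle is the combinatorial bookkeeping. Specifically, I would need to verify that the symmetrization over the $r + 1$ positions of the ordered multi-index $\alpha$ absorbs the $(r + 1)!$ normalization in $\Aop^\sperp$ into the $r!$ of the Taylor coefficient $\Der^r A^\sperp(0)/r!$ once $\partial_X$ is distributed across the $(r + 1)$-fold product, and analogously for $\Bop^\sperp$. Once the indices and prefactors are aligned, everything else is a routine matter of tracking which terms carry the extra $\nu^{1/4}$ from the rescaled viscous side of \eqref{eq:inner-small}.
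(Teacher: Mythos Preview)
Your proposal is correct and follows essentially the same route as the paper: write the advective term in conservation form via the flux $f$, Taylor-expand about $0$, substitute the inner ansatz, collect terms of order $\nu^{\ell/4}$, peel off the $r=0$ singleton to produce $A_\sperp^\sperp(0)\partial_X\inn{\Omega}{\ell}$, and append the criminal time derivative. The combinatorial worry in your last paragraph is unnecessary: since you already wrote the $A$-term as $\partial_X\bigl[\sum_{r\ge0}\tfrac{\nu^{r/4}}{(r+1)!}\Der^rA^\sperp(0)\Psi^{r+1}\bigr]$ with the $(r+1)!$ coming directly from the Taylor expansion of $f^\sperp$, expanding $\Psi^{r+1}$ over ordered multi-indices $\alpha\in\N_0^{r+1}$ yields exactly $\Aop^\sperp\inn{\Psi}{\alpha}$ with no further symmetrization or redistribution of $\partial_X$ required, and likewise the $r!$ in your $B$-expansion already matches the $r!$ in $\Bop^\sperp$.
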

\begin{proof}
  We multiply \eqref{eq:inner} by $\nu^{1/4}$ and collect terms of order $\nu^{\ell/4}$, omitting the time derivative.
  Using the fact that $A(\Psi)\partial_X \Psi = \partial_X[f(\Psi)]$ for the flux $f$, we can write
  \begin{align*}
    A(\nu^{\frac{1}{4}}\Psi) \partial_X \Psi &= \nu^{-\frac{1}{4}} \partial_X[f(\nu^{\frac{1}{4}}\Psi)] = \nu^{-\frac{1}{4}} \partial_X f\Big(\sum_{\ell} \nu^{\frac{\ell + 1}{4}} \inn{\Psi}{\ell}\Big)
    \\ &= \partial_X \sum_\al \nu^{\frac{\abs{\al} + \# \al}{4}} \frac{\Der^{\# \al + 1}f(0)}{(\# \al + 1)!}  \prod_{j} \inn{\Psi}{\al_j} = \partial_X \sum_\al \nu^{\frac{\abs{\al} + \# \al}{4}} \frac{\Der^{\# \al}A(0)}{(\# \al + 1)!} \prod_{j} \inn{\Psi}{\al_j}.
  \end{align*}
  Grouping by $r = \# \al$, we see that terms of order $\nu^{\ell/4}$ satisfy $\al_0 + \ldots + \al_r = \ell - r$.
  After projection onto $\m{V}^\sperp$, the singleton index $\al = (\ell)$ yields $A_\sperp^\sperp(0) \partial_X \inn{\Omega}{\ell}$ in \eqref{eq:Omega-inner}, and the higher-order terms take the form $\Aop^\sperp$ from \eqref{eq:multilinear}.
  The computations for $B$ are similar, save that they include an extra factor of $\nu^{1/4}$, so their indices are shifted by one.
  Finally, we insert $\nu^{1/4} \partial_T \inn{\Omega}{\ell}$ as in the left side of \eqref{eq:Omega-inner} to match our criminal convention for $\inn{\Omega}{\ell}$.
\end{proof}
In the remainder of the paper, the sum notation $\sum\limits^*$ indicates the omission of terms that directly involve the function of interest ($\inn{\Sigma}{\ell}$ below).
\begin{proposition}
  \label{prop:Sigma-inner}
  For all $\ell \in \N$, $\inn{\Sigma}{\ell}$ satisfies
  \begin{equation}
   \label{eq:Sigma-inner}
    \partial_T \inn{\Sigma}{\ell} + \partial_\Is A_\Is^\Is(0) \partial_x(\inn{\Sigma}{0}\inn{\Sigma}{\ell}) - B_\Is^\Is(0)\partial_X^2 \inn{\Sigma}{\ell}
    = -\partial_X \sum^*_\bigsub{1 \leq r \leq \ell + 1\\ \al_0 + \ldots + \al_r = \ell - r + 1} \Aop^\Is\inn{\Psi}{\al}
    + \partial_X \sum^*_\bigsub{0 \leq r \leq \ell\\ \al_0 + \ldots + \al_r = \ell - r} \Bop^1\inn{\Psi}{\al}.
  \end{equation}
\end{proposition}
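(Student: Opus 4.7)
The plan is to follow the blueprint of the proof of Proposition \ref{prop:Omega-inner}, now projecting \eqref{eq:inner} onto the shocking subspace $\m{V}^\Is = \R e_1$. The key structural difference is that $A^\Is(0) = 0$, since $A(0)$ is diagonal with $A_\Is^\Is(0) = 0$. Consequently the advection $\nu^{-1/4} A^\Is(\nu^{1/4}\Psi) \partial_X \Psi$ is already of order $O(1)$, and the time derivative $\partial_T \inn{\Sigma}{\ell}$ appears at order $\nu^{\ell/4}$ without any criminal modification on the left.

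First I would use the flux identity $A(\psi)\partial_x \psi = \partial_x f(\psi)$ to rewrite the shocking advection as $\nu^{-1/2}\partial_X[f^\Is(\nu^{1/4}\Psi) - f^\Is(0)]$. Taylor-expanding $f^\Is$ about $0$, invoking $\Der^{r+1}f^\Is = \Der^r A^\Is$, and substituting the ansatz $\Psi = \sum_\ell \nu^{\ell/4}\inn{\Psi}{\ell}$, the same multi-index manipulation as in the proof of Proposition \ref{prop:Omega-inner} produces
\[
\nu^{-1/4} A^\Is(\nu^{1/4}\Psi) \partial_X \Psi = \partial_X \sum_\al \nu^{(|\al| + \#\al - 1)/4}\, \Aop^\Is \inn{\Psi}{\al}.
\]
Extracting the $\nu^{\ell/4}$ coefficient imposes $\al_0 + \cdots + \al_r = \ell - r + 1$; the $r = 0$ contribution $A^\Is(0)\inn{\Psi}{\ell+1}$ vanishes, leaving the effective range $1 \leq r \leq \ell + 1$ that appears in \eqref{eq:Sigma-inner}. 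A parallel expansion of $B^\Is(\nu^{1/4}\Psi)\partial_X \Psi$ yields $\partial_X \sum_\al \nu^{(|\al|+\#\al)/4}\, \Bop^\Is \inn{\Psi}{\al}$, whose $\nu^{\ell/4}$ coefficient is constrained by $\al_0 + \cdots + \al_r = \ell - r$ over $0 \leq r \leq \ell$.

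It then remains to peel off the three terms that directly involve $\inn{\Sigma}{\ell}$ and move them to the left. From the $\Aop^\Is$-sum, the $r = 1$ multi-indices $(\al_0, \al_1) = (\ell, 0)$ and $(0, \ell)$ combine, via the symmetry of $\Der A = \Der^2 f$ in its two input slots, into $\partial_K A_J^\Is(0)\, \inn{\Psi}{0,K}\, \inn{\Psi}{\ell,J}$. Since $\inn{\Psi}{0} = \inn{\Sigma}{0} e_\Is$, the $K$-sum collapses to $K = \Is$, and the $J$-sum then splits: $J = \Is$ contributes $\partial_\Is A_\Is^\Is(0)\, \inn{\Sigma}{0}\inn{\Sigma}{\ell}$, which is moved to the left as $\partial_\Is A_\Is^\Is(0)\, \partial_X(\inn{\Sigma}{0}\inn{\Sigma}{\ell})$ after the outer $\partial_X$, while $J \in \sperp$ produces a term involving $\inn{\Omega}{\ell}$ that stays in the forcing. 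From the $\Bop^\Is$-sum, the $r = 0$ term $\al = (\ell)$ gives $B^\Is(0)\partial_X\inn{\Psi}{\ell}$; its purely shocking component $B_\Is^\Is(0)\partial_X \inn{\Sigma}{\ell}$ moves to the left as $B_\Is^\Is(0)\partial_X^2 \inn{\Sigma}{\ell}$, while the cross-term $B_\sperp^\Is(0)\partial_X \inn{\Omega}{\ell}$ stays in the forcing. The notation $\sum^*$ precisely records that these diagonal pieces have been removed.

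I anticipate no genuine obstacle beyond this careful bookkeeping. The only mildly delicate point is the selective extraction above, where both $\Aop^\Is$ and $\Bop^\Is$ contain cross-terms that split according to the $\Is$ versus $\sperp$ decomposition of the second tensor slot; since Proposition \ref{prop:Omega-inner} has already furnished $\inn{\Omega}{\ell}$, the residual $\inn{\Omega}{\ell}$-dependent pieces are legitimately treated as known forcing, preserving the triangular structure emphasized in the text.
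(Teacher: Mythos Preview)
Your proposal is correct and follows essentially the same approach as the paper's proof, which simply says to collect terms of order $\nu^{\ell/4}$ in \eqref{eq:inner}, project onto $\m{V}^1$, and use $A_1^1(0) = 0$. Your write-up in fact supplies considerably more detail than the paper's one-sentence proof, including the explicit verification that the $r=0$ advective term vanishes and the careful splitting of the $r=1$ advective and $r=0$ diffusive contributions into the diagonal $\inn{\Sigma}{\ell}$-pieces (moved left) and the $\inn{\Omega}{\ell}$-cross-terms (retained in $\sum^*$).
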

\begin{proof}
  We collect all terms of order $\nu^{\ell/4}$ in \eqref{eq:inner} and project onto $\m{V}^1$, observing that $A_1^1(0) = 0$.
  The computations are analogous to the proof of Proposition~\ref{prop:Omega-inner}.
\end{proof}
We again emphasize again that at level $\ell$ in the inner expansion, the nonshocking component $\inn{\Omega}{\ell}$ is decoupled from the shocking component $\inn{\Sigma}{\ell}$.
In contrast, the right side of \eqref{eq:Sigma-inner} includes terms of the form $\partial_x(\inn{\Sigma}{0}\inn{\Omega}{\ell})$ and $\partial_X^2 \inn{\Omega}{\ell}$, so $\inn{\Omega}{\ell}$ influences $\inn{\Sigma}{\ell}$.
We can thus construct $\inn{\Omega}{\ell}$ knowing only $\inn{\Psi}{<\ell}$, and then construct $\inn{\Sigma}{\ell}$ from both $\inn{\Psi}{<\ell}$ and $\inn{\Omega}{\ell}.$
This triangular structure means we do not have to study potentially complicated hyperbolic-parabolic systems.

\subsection{Grid equations}

To control the size of $\inn{\Psi}{\ell}$, we must explore the interplay between the outer expansion \eqref{eq:outer-expansion} and the inner ansatz \eqref{eq:inner-ansatz}.
For this purpose, we return to the original coordinates.
Set
\begin{equation*}
  \inn{\psi}{\ell}(t, x) \coloneqq \nu^{(\ell + 1)/4} \inn{\Psi}{\ell}\big(\nu^{-1/2}t, \nu^{-3/4}x\big).
\end{equation*}
We use the evolution equations for $\inn{\Psi}{\ell}$ to derive analogous equations for $\inn{\psi}{\ell}$.
At leading order, we have weakly-viscous Burgers
\begin{equation*}
  \partial_t \inn{\sigma}{0} + (\partial_\Is A_\Is^\Is)(0) \inn{\sigma}{0} \partial_x \inn{\sigma}{0} = \nu B_\Is^\Is(0) \partial_x^2 \inn{\sigma}{0},
\end{equation*}
while $\inn{\omega}{0}$ vanishes.
Using \eqref{eq:nsl} and \eqref{eq:sl}, the equations for $\ell \ge 1$ assume the following form:
\begin{equation*}
  \begin{split}
    \partial_t \inn{\omega}{\ell} + A_\sperp^\sperp(0) \partial_x \inn{\omega}{\ell} &= \func[\inn{\psi}{<\ell}],\\
    \partial_t \inn{\sigma}{\ell} + (\partial_\Is A_\Is^\Is)(0) \partial_x(\inn{\sigma}{0} \inn{\sigma}{\ell}) - \nu B_\Is^\Is(0) \partial_x^2 \inn{\sigma}{\ell} &= \func[\inn{\psi}{<\ell}, \inn{\omega}{\ell}].
  \end{split}
\end{equation*}
The forces $\func$ may depend on $\nu$ as well.

To relate the outer and inner expansions, we examine the contributions to $\inn{\psi}{\ell}$ from terms in the outer expansion.
The outer expansion arises from a Taylor expansion in $\nu$, so we expand $\inn{\psi}{\ell}$ in integer powers of $\nu$:
\begin{equation}
  \label{eq:in-grid}
  \inn{\psi}{\ell} \sim \sum_{k \geq 0} \nu^k \psi_{k,\ell},
\end{equation}
with similar decompositions for $\sigma$ and $\omega$.
Scaling the inner ansatz \eqref{eq:inner-ansatz}, we should have $\psi \sim \sum_{\ell \geq 0} \inn{\psi}{\ell}$ near the origin.
Comparing with the outer expansion \eqref{eq:outer-expansion}, we formally conclude that
\begin{equation*}
  \out{\psi}{k} \sim \sum_{\ell \geq 0} \psi_{k,\ell}.
\end{equation*}
If we arrange $(\psi_{k,\ell})_{k,\ell \geq 0}$ in a grid, then the outer and inner ansatzes amount to horizontal or vertical summation, respectively; see Figure~\ref{fig:grid}.
We therefore refer to $\psi_{k,\ell}$ as a ``grid term.''
The second and third authors employed a similar structure in an analysis of the Burgers equation~\cite{CG23}.
In the present setup, the criminal nature of the inner ansatz implies that the grid terms $\psi_{k,\ell}$ do not enjoy the homogeneity of their brethren in~\cite{CG23}.
\begin{figure}[t]
  \centering
  \begin{tikzpicture}[scale = 1]
    \tikzstyle{every node}=[font=\small]

    \def\k{3}
    \def\Xeps{0.6}
    \def\Yeps{0.6}

    \fill [green, opacity = 0.2, rounded corners=5pt] (- 0.7, -2 + 0.5) rectangle (\k + 1 + 0.5, -2 - 0.5);
    \fill [blue, opacity = 0.1, rounded corners=5pt] (1 - 0.5, 0.5) rectangle (1 + 0.5, -\k - 1 - 0.5);

    \foreach \x in {0,1,...,\k}
    {\node at (\x, 0) {$ \psi_{0, \x}$};
      \node at (\x, -1) {$\nu \psi_{1, \x}$};
      \foreach \y in {2,...,\k}
      {\node at (\x, -\y) {$\nu^{\y} \psi_{\y, \x}$};}
      \node at (\x, -\k-0.9) {$\vdots$};}
    
    \foreach \y in {0,...,\k}
    {\node at (\k + 1, -\y) {$\cdots$};}

    \node at (\k + 1, -\k - 0.8) {$\ddots$};

    \node[green!40!black] at (-1.4, -2) {$\nu^2 \out{\psi}{2}$};
    \node[blue!60!black] at (1, 1) {$\inn{\psi}{1}$};
  \end{tikzpicture}
  \caption{
    \tbf{Asymptotic structure of the inner and outer expansions:} $\nu^2\out{\psi}{2}$ is asymptotically equivalent to the sum of the green row, while $\inn{\psi}{1}$ is equivalent to the sum of the blue column.
  }
  \label{fig:grid}
\end{figure}

We find an evolution equation for $\psi_{k,\ell}$ by substituting \eqref{eq:in-grid} into the equation for $\inn{\psi}{\ell}$ and collecting terms of order $\nu^k$.
For example, the first two contributions to $\inn{\sigma}{0}$ solve
\begin{align*}
  \partial_t \sigma_{0,0} + (\partial_\Is A_\Is^\Is)(0) \sigma_{0,0} \partial_x \sigma_{0,0} &= 0,\\
  \partial_t \sigma_{1,0} + (\partial_\Is A_\Is^\Is)(0)\partial_x(\sigma_{0,0}\sigma_{1,0}) &= B_\Is^\Is(0) \partial_x^2 \sigma_{0,0}.
\end{align*}
The first equation is inviscid Burgers, and in fact the entire sequence of equations for $(\sigma_{k,0})_{k \geq 0}$ coincides with that in~\cite{CG23} for $(u_{k,0})_{k}$.
In accordance with \ref{hyp:expansion}, we naturally take $\sigma_{0,0} = \cub$.

When $\ell \geq 1$, we make one modification: the leading row $(\psi_{0,\ell})_\ell$ only approximately solves the expected equations.
This leaves a residue $F_\ell$ that we cancel in the next row $(\psi_{1,\ell})$, where it acts with strength $\nu^{-1}$.
We describe this twist in Section~\ref{sec:inviscid-row}.

In a minor abuse of notation, we use the $x$-derivative $\partial_x$ in $\Bop$ when it acts on expressions in the outer variables.
Extending the notation from \eqref{eq:multilinear} to doubly-indexed functions, we thus define
\begin{equation*}
  \Aop g_{\kappa,\al} \coloneqq \frac{\Der^rA(0)}{(r + 1)!} \prod_{j=0}^r g_{\kappa_j,\al_j} \And \Bop g_{\kappa,\al} \coloneqq \frac{\Der^rB(0)}{r!} \partial_x g_{\kappa_0,\al_0} \prod_{j = 1}^r g_{\kappa_j,\al_j}.
\end{equation*}
Whenever multi-indices $\kappa$ and $\al$ appear together in this fashion, we implicitly assume they have the same length.
With these conventions, we have:
\begin{proposition}
  \label{prop:omega-grid}
  For all $k, \ell \in \N$, there is a force $F_\ell^\omega$ defined in Proposition~\textnormal{\ref{prop:inviscid-row}} below such that
  \begin{equation}
    \label{eq:omega-grid}
    [\partial_t + A_\sperp^\sperp(0) \partial_x] \omega_{k, \ell} = -\partial_x \sum_\bigsub{1 \leq r \leq \ell\\ \kappa_0 + \ldots + \kappa_r = k\\ \al_0 + \ldots + \al_r = \ell - r} \Aop^\sperp\psi_{\kappa,\al}
    +\partial_x \sum_\bigsub{0 \leq r \leq \ell-1\\ \kappa_0 + \ldots + \kappa_r = k-1\\ \al_0 + \ldots + \al_r = \ell - r - 1} \Bop^\sperp\psi_{\kappa,\al}
    - \nu^{-1} \delta_{k,1} F_\ell^\omega.
  \end{equation}
\end{proposition}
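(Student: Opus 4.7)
The plan is to derive \eqref{eq:omega-grid} by translating \eqref{eq:Omega-inner} into original coordinates, inserting the grid ansatz \eqref{eq:in-grid}, and matching powers of $\nu$. First I change variables via $T = \nu^{-1/2}t$, $X = \nu^{-3/4}x$, $\inn{\Omega}{\ell}(T,X) = \nu^{-(\ell+1)/4}\inn{\omega}{\ell}(t,x)$, and similarly for each $\inn{\Psi}{\al_j}$. A short computation shows that both $\nu^{1/4}\partial_T\inn{\Omega}{\ell}$ and $A_\sperp^\sperp(0)\partial_X\inn{\Omega}{\ell}$ carry a common prefactor of $\nu^{(2-\ell)/4}$, so after dividing through, the LHS becomes the clean constant-coefficient transport operator $[\partial_t + A_\sperp^\sperp(0)\partial_x]\inn{\omega}{\ell}$. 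On the RHS, each $\partial_X\Aop^\sperp\inn{\Psi}{\al}$ with $\abs{\al} = \ell - r$ rescales with this same prefactor, while each $\partial_X\Bop^\sperp\inn{\Psi}{\al}$ with $\abs{\al} = \ell - r - 1$ carries an additional factor of $\nu$, coming from the inner $\partial_X$ hidden in \eqref{eq:multilinear}. This extra power is exactly what will drive the index shift $k \to k - 1$ in the $B$-constraint.

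Next I substitute the grid expansion $\inn{\psi}{\al_j} = \sum_{\kappa_j \geq 0} \nu^{\kappa_j}\psi_{\kappa_j,\al_j}$ into each multilinear form. Since $\Aop^\sperp$ and $\Bop^\sperp$ are polynomial in their arguments (with the $\partial_x$ in $\Bop$ acting only on the first argument), distributing the sum yields $\sum_{\kappa} \nu^{\abs{\kappa}}\Aop^\sperp\psi_{\kappa,\al}$ and $\sum_{\kappa} \nu^{\abs{\kappa}+1}\Bop^\sperp\psi_{\kappa,\al}$ respectively. Matching the coefficient of $\nu^k$ on each side and invoking the linearity of the transport operator applied to $\inn{\omega}{\ell} = \sum_k \nu^k \omega_{k,\ell}$ produces the $A$-sum with $\abs{\kappa} = k$ and the $B$-sum with $\abs{\kappa} = k - 1$, exactly as in \eqref{eq:omega-grid}.

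Finally I handle the twist described in Section~\ref{sec:inner}: as formalized in Proposition~\ref{prop:inviscid-row}, the leading row $\psi_{0,\ell}$ satisfies the naive $k = 0$ equation only up to a residue $F_\ell^\omega$. Since we want $\inn{\omega}{\ell} = \sum_k \nu^k\omega_{k,\ell}$ to solve the inner equation exactly, this residue at order $\nu^0$ must be canceled at order $\nu^1$, which produces the correction $-\nu^{-1}F_\ell^\omega$ in the $k = 1$ equation and no correction for $k \geq 2$, hence the factor $\delta_{k,1}$ in \eqref{eq:omega-grid}.

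The main obstacle is purely bookkeeping: carefully tracking the $\nu^{1/4}$-exponents introduced by the inner blow-up \eqref{eq:inner-coord}, verifying that only the $B$-terms pick up an extra power of $\nu$, and confirming that the multi-index constraints transcribe faithfully between the inner and grid expansions. Fortunately, the criminal off-order time derivative in \eqref{eq:Omega-inner} is a help rather than a hindrance here: under the rescaling, $\nu^{1/4}\partial_T\inn{\Omega}{\ell}$ converts precisely to $\partial_t\inn{\omega}{\ell}$ with no extraneous power of $\nu^{1/4}$, which preserves the clean transport-operator structure of the LHS of \eqref{eq:omega-grid}.
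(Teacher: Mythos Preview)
Your proposal is correct and follows exactly the approach the paper indicates: convert \eqref{eq:Omega-inner} to outer coordinates, substitute the grid ansatz \eqref{eq:in-grid}, match powers of $\nu$, and absorb the zeroth-row residue $F_\ell^\omega$ into the $k=1$ equation. The paper itself omits a detailed proof, noting only that it is ``similar to those of Propositions~\ref{prop:Omega-inner} and \ref{prop:Sigma-inner},'' and your bookkeeping fills this in correctly. One minor imprecision: the extra factor of $\nu$ on the $B$-terms comes jointly from the hidden $\partial_X$ (contributing $\nu^{3/4}$) \emph{and} the shift $\abs{\al}+\#\al = \ell-1$ versus $\ell$ (contributing $\nu^{1/4}$), not from the $\partial_X$ alone; but your conclusion and the resulting constraint $\abs{\kappa}=k-1$ are correct.
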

\begin{proposition}
  \label{prop:sigma-grid}
  For $k \in \N$ and $\ell \in \N_0$, there is a force $F_\ell^\sigma$ defined in Proposition~\textnormal{\ref{prop:inviscid-row}} such that
  \begin{equation}
    \label{eq:sigma-grid}
    \partial_t \sigma_{k,\ell} + \partial_\Is A_\Is^\Is(0)\partial_x(\sigma_{0,0}\sigma_{k,\ell}) =
    -\partial_x \sum^*_\bigsub{1 \leq r \leq \ell + 1\\ \kappa_0 + \ldots + \kappa_r = k\\ \al_0 + \ldots + \al_r = \ell - r + 1} \Aop^1\psi_{\kappa,\al}
    + \partial_x \sum_\bigsub{0 \leq r \leq \ell\\ \kappa_0 + \ldots + \kappa_r = k-1\\ \al_0 + \ldots + \al_r = \ell - r} \Bop^1\psi_{\kappa,\al}
    - \nu^{-1} \delta_{k,1} F_\ell^\sigma.
  \end{equation}
\end{proposition}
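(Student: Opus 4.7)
The plan is to derive both grid equations by pushing the inner equations of Propositions~\ref{prop:Omega-inner} and \ref{prop:Sigma-inner} down to the original $(t, x)$ coordinates, and then formally expanding $\inn{\psi}{\al} = \sum_{\kappa \geq 0} \nu^\kappa \psi_{\kappa, \al}$ and matching $\nu^k$-coefficients. I would focus on Proposition~\ref{prop:sigma-grid}; Proposition~\ref{prop:omega-grid} follows the same template after multiplying the $\inn{\Omega}{\ell}$-equation by $\nu^{\ell/4}$ rather than $\nu^{(\ell-1)/4}$, the extra $\nu^{1/4}$ reflecting the criminal off-order time derivative built into \eqref{eq:inner-small}.

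First I would carry out the coordinate conversion. Using $\partial_T = \nu^{1/2}\partial_t$, $\partial_X = \nu^{3/4}\partial_x$, and $\inn{\psi}{\al} = \nu^{(\al+1)/4}\inn{\Psi}{\al}$, a direct chain-rule computation shows that multiplying the $\inn{\Sigma}{\ell}$-equation by $\nu^{(\ell-1)/4}$ produces the required principal part $\partial_t \inn{\sigma}{\ell} + \partial_\Is A_\Is^\Is(0)\partial_x(\inn{\sigma}{0}\inn{\sigma}{\ell}) - \nu B_\Is^\Is(0)\partial_x^2\inn{\sigma}{\ell}$. Tracking powers in the multilinear operators from \eqref{eq:multilinear}, one finds the identities
\begin{equation*}
  \nu^{(\ell-1)/4}\,\partial_X \Aop^\Is \inn{\Psi}{\al} = \partial_x \Aop^\Is \inn{\psi}{\al} \And \nu^{(\ell-1)/4}\,\partial_X \Bop^\Is \inn{\Psi}{\al} = \nu\, \partial_x \Bop^\Is \inn{\psi}{\al}
\end{equation*}
whenever $|\al| = \ell - r + 1$ and $|\al| = \ell - r$, respectively. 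The first explains why the $\Aop^\Is$-sum in \eqref{eq:sigma-grid} carries no extra $\nu$-factor; the second explains the weak $\nu$ attached to the $\Bop^\Is$-sum.

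Next I would substitute $\inn{\psi}{\al_j} = \sum_{\kappa_j \geq 0}\nu^{\kappa_j}\psi_{\kappa_j,\al_j}$ into each product $\prod_j \inn{\psi}{\al_j}$. Expanding gives $\sum_\kappa \nu^{|\kappa|}\prod_j \psi_{\kappa_j,\al_j}$ with $\#\kappa = \#\al$, so collecting the $\nu^k$-piece forces $|\kappa| = k$ in the $\Aop^\Is$-sum and, by the extra $\nu$ above, $|\kappa| = k - 1$ in the $\Bop^\Is$-sum; the $\al$-constraints pass verbatim from Proposition~\ref{prop:Sigma-inner}. For the quadratic $r = 1$ term in $\Aop^\Is$, the two diagonal configurations $(\kappa_0,\al_0,\kappa_1,\al_1) = (0, 0, k, \ell)$ and $(k, \ell, 0, 0)$ combine to produce $\partial_\Is A_\Is^\Is(0)\partial_x(\sigma_{0,0}\sigma_{k,\ell})$; moving this to the left-hand side yields the stated principal part, and the starred sum $\sum^*$ is the convention omitting exactly these two contributions. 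The residue $-\nu^{-1}\delta_{k,1}F_\ell^\sigma$ enters because, as will be arranged in Section~\ref{sec:inviscid-row}, the leading-row term $\sigma_{0,\ell}$ with $\ell \geq 1$ satisfies its expected hyperbolic equation only up to a remainder $F_\ell^\sigma$ of formal order $\nu^0$, which we absorb as forcing at the next level $k = 1$ with a compensating $\nu^{-1}$. The main obstacle is combinatorial rather than analytical: the double-indexed expansion produces convolutions in both $\kappa$ and $\al$, and one must verify carefully that the diagonal $\Aop^\Is$-contribution is extracted exactly once, that the unit shift in the $\Bop^\Is$-constraint is counted correctly, and, in the $\omega$-case, that the off-order $\nu^{1/4}\partial_T$ derivative integrates smoothly into the same bookkeeping.
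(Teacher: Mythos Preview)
Your approach is correct and matches the paper's own treatment, which simply says ``The proofs of these results are similar to those of Propositions~\ref{prop:Omega-inner} and \ref{prop:Sigma-inner}.'' You have supplied considerably more detail than the paper, and the chain-rule bookkeeping for the $\Sigma$-equation---in particular the factor $\nu^{(\ell-1)/4}$ and the identities for $\Aop^1$ and $\Bop^1$---is right.

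Two small corrections. First, in your aside about Proposition~\ref{prop:omega-grid}, the scaling factor for the $\inn{\Omega}{\ell}$-equation should be $\nu^{(\ell-2)/4}$, not $\nu^{\ell/4}$: the criminal $\nu^{1/4}\partial_T$ already present in \eqref{eq:Omega-inner} means you multiply by \emph{less}, since $\nu^{(\ell-2)/4}\cdot\nu^{1/4}\partial_T\inn{\Omega}{\ell} = \partial_t\inn{\omega}{\ell}$. Second, your description of $\sum^*$ as ``omitting exactly these two contributions'' is slightly imprecise: the diagonal configurations $(0,0,k,\ell)$ and $(k,\ell,0,0)$ contribute both $\sigma_{k,\ell}$- and $\omega_{k,\ell}$-pieces through $\psi_{k,\ell}$, and the paper's convention is that $\sum^*$ omits only the $\sigma_{k,\ell}$-pieces (which become the advective term on the left), while the $\omega_{k,\ell}$-pieces remain in the sum. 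Neither point affects the validity of your argument for Proposition~\ref{prop:sigma-grid}.
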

\noindent
We recall that $\sum\limits^*$ indicates the omission of terms involving the function $\sigma_{k,\ell}$ of interest.
The Kronecker delta $\delta_{k,1}$ confines the force $-F_\ell$ to the row $k = 1$.
The proofs of these results are similar to those of Propositions~\ref{prop:Omega-inner} and \ref{prop:Sigma-inner}.

\subsection{Data}

We now select a domain and data for the grid equations.
Let
\begin{equation*}
  \mathsf{S} \coloneqq \max_I \abss{\lambda_{(I)}(0)} + 1,
\end{equation*}
which exceeds the maximal characteristic speed at the origin.
Then we define a spacetime contour $\Gamma$ and region $\m{I}$ by
\begin{equation}
  \label{eq:data-contour}
  \begin{aligned}
    \Gamma &\coloneqq \big\{(t, x) \in \R_- \times (-\pi/2, \pi/2) \mid t = - \mathsf{S}^{-1} \cos x\big\},\\
    \m{I} &\coloneqq \big\{(t, x) \in \R_- \times (-\pi/2, \pi/2) \mid t > - \mathsf{S}^{-1} \cos x\big\}.
  \end{aligned}
\end{equation}
For future use, we also define a constant $\mathsf{R} \asymp \mathsf{S}^{-1} \vee 1$ by
\begin{equation}
  \label{eq:fit}
  \mathsf{R} \coloneqq \sup\{R > 0 \colon (t, x) \in \m{I} \text{ if } \euc(t, x) < R\}.
\end{equation}
By the definition of $\mathsf{S}$, $\Gamma$ is uniformly spacelike with respect to $A(0)$: the characteristics $\partial_t + \lambda_{(I)}(0) \partial_x$ flow through $\Gamma$ in the same direction and are uniformly nontangential.

Given $p > 0$, we write $\Gamma_p \coloneqq \nu^p \Gamma$ and $\m{I}_p \coloneqq \nu^p \m{I}$.
We select $\beta \in (0, 1/2)$ satisfying certain conditions and solve the inner and grid equations in $\m{I}_\beta$.
The contour $\Gamma_\beta$ represents the lower part of $\partial \m{I}_\beta$, and the spacelike nature of $\Gamma$ ensures that $\m{I}_\beta$ is causally closed.
To solve our transport equations in $\m{I}_\beta$, it thus suffices to pose data on $\Gamma_\beta$.
The curve $\Gamma_\beta$ roughly denotes the transition region between the outer and inner expansions.
The grid is defined near this curve, and the joint decomposition for the inner and outer expansions in its vicinity allow us to compare the two expansions and show that they match.

We construct the zeroth row $(\psi_{0,\ell})_{\ell \geq 0}$ and column $(\psi_{k,0})_{k \geq 0}$ separately in Sections~\ref{sec:inviscid-row} and \ref{sec:zeroth-column} below.
For the remaining grid terms with $k \geq 1$, we set
\begin{equation}
  \label{eq:grid-data}
  \psi_{k,1} = \out{\psi}{k} - \psi_{k,0} \And \psi_{k, \ell} = 0 \enspace \text{on } \Gamma_\beta \text{ for } \ell \geq 2.
\end{equation}
Informally, we put all data on the first row $(\psi_{1,\ell})$ and initialize the remaining rows from zero.
The data for $\psi_{k,1}$ links the grid to the outer expansion.

To relate the inner expansion to the grid, we scale it to the inner coordinates:
\begin{equation}
  \label{eq:inner-conversion}
  \Psi_{k, \ell}(T, X) \coloneqq \nu^{k - (\ell + 1)/4} \psi_{k, \ell}\big(\nu^{1/2} T, \nu^{3/4}X\big).
\end{equation}
Given $K \in \N_0$, let
\begin{equation*}
  \Psi_{[K],\ell} \coloneqq \sum_{k \leq K} \Psi_{k, \ell}
\end{equation*}
denote the corresponding partial sum up to row $K$.
Then we construct solutions of the inner equations \eqref{eq:Omega-inner} and \eqref{eq:Sigma-inner} such that $\inn{\Psi}{\ell} \approx \Psi_{[K],\ell}$ in a precise sense where $\euc \asymp \nu^\beta$.
We defer the details to Section~\ref{sec:inner-matching}.

\section{Grid estimates}
\label{sec:grid-est}
We now use the equations and data for the grid terms $\psi_{k,\ell}$ to control their size in $\m{I}_\beta$.
We begin with bespoke constructions of the zeroth row $(\psi_{0,\ell})_{\ell \in \N_0}$ and zeroth column $(\psi_{0,k})_{k \in \N_0}$.

\subsection{The zeroth row}
\label{sec:inviscid-row}

The zeroth row $(\psi_{0,\ell})$ satisfies a hierarchy of equations decomposing the inviscid conservation law \eqref{eq:inviscid}.
To state this hierarchy, we use $\m{V}^\ell$ to denote the Cartesian $\ell$-power of $\m{V}$ and define polynomials $\poly_\ell^\omega \colon \m{V}^\ell \to \m{V}^\sperp$ and $\poly_\ell^\sigma \colon \m{V}^\ell \times \m{V}^\sperp \to \m{V}^{\Is}$ by
\vspace*{-2mm}
\begin{equation}
  \label{eq:poly}
  \poly_\ell^\omega(g_{<\ell}) \coloneqq -\sum_\bigsub{1 \leq r \leq \ell\\ \al_0 + \ldots + \al_r = \ell - r} \Aop^\sperp g_\al
  \And
  \poly_\ell^\sigma(g_{<\ell},(g_\ell)_\sperp) \coloneqq  - \sum^*_\bigsub{1 \leq r \leq \ell + 1\\ \al_0 + \ldots + \al_r = \ell - r + 1} \Aop^1g_\al.
  \vspace*{-2mm}
\end{equation}
Here $\sum\limits^*$ omits terms involving the shocking component $(g_\ell)_1$, which is not provided.

These polynomials represent certain terms in the Taylor series of $A$ projected onto the nonshocking and shocking subspaces, respectively.
Ideally, we would like $\psi_{0,\ell}$ to satisfy
\begin{equation}
  \label{eq:inviscid-row-ideal}
  \begin{aligned}
    \partial_t \omega_{0,\ell} + A_\sperp^\sperp(0) \partial_x \omega_{0,\ell} &= \partial_x[\poly_\ell^\omega(\psi_{0,<\ell})],\\
    \partial_t \sigma_{0,\ell} + \partial_\Is A_\Is^\Is(0) \partial_x(\sigma_{0,0}\sigma_{0,\ell}) &= \partial_x[\poly_\ell^\sigma(\psi_{0,<\ell}, \, \omega_{0,\ell})].
  \end{aligned}
\end{equation}
However, we do not exactly solve these equations.
Rather, we exhibit approximate solutions that solve the equations with a residue $F_\ell$:
\begin{equation}
  \label{eq:inviscid-row}
  \begin{aligned}
    \partial_t \omega_{0,\ell} + A_\sperp^\sperp(0) \partial_x \omega_{0,\ell} &= \partial_x[\poly_\ell^\omega(\psi_{0,<\ell})] + F_\ell^\omega\\
    \partial_t \sigma_{0,\ell} + \partial_\Is A_{\Is}^{\Is}(0) \partial_x(\sigma_{0,0}\sigma_{0,\ell}) &= \partial_x[\poly_\ell^\sigma(\psi_{0,<\ell},\omega_{0,\ell})] + F_\ell^\sigma.
  \end{aligned}
\end{equation}
In the following, we use the notation $[L]$ to indicate partial summation in $\ell$ up to $\ell = L$.
We choose $\psi_{0,\ell}$ so that the partial sum $\psi_{0,[L]} \coloneqq \sum_{\ell = 0}^L \psi_{0,\ell}$ closely approximates $\out{\psi}{0}$ and $\psi_{0,\ell}$ and $F_\ell$ vanish at suitable rates at the origin.
\begin{proposition}
  \label{prop:inviscid-row}
  There exist solutions $(\psi_{0,\ell})$ of \eqref{eq:inviscid-row} with residues $(F_\ell)$ such that for all $\ell,L,m,n \in \N_0$,
  \begin{equation}
    \label{eq:inviscid-row-size}
    \begin{gathered}
      \absb{\partial_t^m \partial_x^n \psi_{0,\ell}} \lesssim_{\ell,m,n} \abs{\log \dist}^{(\ell-1)_+} \dist^{\ell - 2m - 3n + 1},\\[2mm]
      \absb{\partial_t^m \partial_x^n F_\ell^\omega} \lesssim_{\ell,m,n} \abs{\log \dist}^{\ell} \dist^{\ell - 2m - 3n - 1},
      \quad \absb{\partial_t^m \partial_x^n F_\ell^\sigma} \lesssim_{\ell,m,n} \abs{\log \dist}^{\ell} \dist^{\ell - 2m - 3n},
    \end{gathered}
  \end{equation}
  and
  \begin{equation}
    \label{eq:inviscid-row-matching}
    \absb{\partial_t^m \partial_x^n (\out{\psi}{0} - \psi_{0,[L]})} \lesssim_{L,m,n} \abs{\log \dist}^{L} \dist^{L - 2m - 3n + 2}.
  \end{equation}
  Moreover, $\psi_{0,0} = (\sigma_{0,0}, 0)$ and $F_0 = 0.$
\end{proposition}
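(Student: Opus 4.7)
The plan is to argue by induction on $\ell$. The base case $\ell = 0$ is forced by the choice $\psi_{0,0} = (\cub, 0)$: $\cub$ satisfies inviscid Burgers \eqref{eq:cubic-Burgers} so that $F_0 = 0$, and the derivative bounds reduce to Lemma~\ref{lem:fuest}. For the inductive step, given $(\psi_{0,j})_{j < \ell}$ with the stated bounds, I would construct $\omega_{0,\ell}$ first and then $\sigma_{0,\ell}$, mirroring the triangular decoupling of the inner and grid equations.

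The construction of $\omega_{0,\ell}$ exploits a mismatch of homogeneity in the nonshocking equation of \eqref{eq:inviscid-row}. If we seek $\omega_{0,\ell}$ of formal homogeneity $\ell + 1$, then $\partial_t \omega_{0,\ell}$ has homogeneity $\ell - 1$, whereas $A_\sperp^\sperp(0)\partial_x \omega_{0,\ell}$ and the forcing $\partial_x \poly_\ell^\omega(\psi_{0,<\ell})$ have homogeneity $\ell - 2$ and are therefore strictly \emph{more} singular near the origin. Since $A_\sperp^\sperp(0)$ is invertible by \ref{hyp:advection} and \ref{hyp:gauge}, I would simply set
\[
  \omega_{0,\ell} \coloneqq \bigl[A_\sperp^\sperp(0)\bigr]^{-1}\poly_\ell^\omega(\psi_{0,<\ell}),
\]
so that $A_\sperp^\sperp(0)\partial_x \omega_{0,\ell} = \partial_x \poly_\ell^\omega(\psi_{0,<\ell})$, and define the residue $F_\ell^\omega \coloneqq \partial_t \omega_{0,\ell}$, which is automatically one order smaller in homogeneity. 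The bounds on $\omega_{0,\ell}$ and $F_\ell^\omega$ then follow by unpacking $\poly_\ell^\omega$ through \eqref{eq:multilinear} and applying the inductive hypothesis with the Leibniz rule.

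For $\sigma_{0,\ell}$, the homogeneities of both sides of the shocking equation match at $\ell - 1$, so the equation becomes a linear transport--reaction along characteristics of $\sigma_{0,0}$-Burgers. Since $\cub$ is constant along these characteristics, the linearized operator reduces to an explicit ODE on each curve. I would solve this ODE with homogeneous data prescribed on the cross-section $\{\dist = 1\}$; the integrating factor $\exp\bigl(-a \int \cubder \, dt\bigr)$ evaluates along a characteristic to a rational power of $|t| + 3 a^{-1} b \cub^2 = \dist^2$, which combined with the homogeneity of the forcing produces $\sigma_{0,\ell}$ of the expected size. The logarithmic factor $|\log \dist|^{(\ell-1)_+}$ enters iteratively through logs already present in $\psi_{0,<\ell}$ and $\omega_{0,\ell}$, while $F_\ell^\sigma$ absorbs only the terms of one higher homogeneity.

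The matching \eqref{eq:inviscid-row-matching} then follows from \ref{hyp:expansion} by comparing $\psi_{0,[L]}$ with the homogeneous expansion $\sum_{\ell \leq L} \bar\psi_\ell$; inductively, one verifies that the two agree at leading homogeneous order at each level, so $\out{\psi}{0} - \psi_{0,[L]}$ inherits the $\dist^{L+2}$ remainder of \ref{hyp:expansion} up to the logarithmic factor. The principal technical difficulty is the careful accounting of logarithmic accumulation: verifying that the exponents $(\ell-1)_+$ for $\psi_{0,\ell}$ and $\ell$ for $F_\ell$ grow by at most one per inductive step requires detailed bookkeeping of how logs propagate through the multilinear operators $\Aop^\sperp$ and $\Aop^1$ and through the integrating factor along Burgers characteristics that converge at the preshock.
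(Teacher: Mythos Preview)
Your construction of $\omega_{0,\ell}$ and $\sigma_{0,\ell}$, and the size bounds \eqref{eq:inviscid-row-size}, are essentially the same as the paper's: set the nonshocking component algebraically via $[A_\sperp^\sperp(0)]^{-1}\poly_\ell^\omega$, then integrate the shocking ODE along Burgers characteristics with the integrating factor $\dist^2$. The logarithm indeed arises precisely when antidifferentiating a term whose $\cubder$-power is $-1$.

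There is, however, a real gap in the matching step \eqref{eq:inviscid-row-matching}. Your claim that $\psi_{0,\ell}$ and $\bar\psi_\ell$ ``agree at leading homogeneous order at each level'' is false for $\ell \geq 2$. Compare the equations satisfied: by \eqref{eq:homog}, the homogeneous $\bar\omega_\ell$ obeys
\[
  A_\sperp^\sperp(0)\partial_x \bar\omega_\ell \;=\; \partial_x\bigl[\poly_\ell^\omega(\bar\psi_{<\ell})\bigr] \;-\; \partial_t \bar\omega_{\ell-1},
\]
whereas your $\omega_{0,\ell} = [A_\sperp^\sperp(0)]^{-1}\poly_\ell^\omega(\psi_{0,<\ell})$ omits the $\partial_t\bar\omega_{\ell-1}$ term. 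That term is $(\ell-2)$-homogeneous, so after $\partial_x^{-1}$ it contributes an $(\ell+1)$-homogeneous piece to $\bar\omega_\ell$ that your $\omega_{0,\ell}$ lacks. Concretely, at $\ell=2$ one has $A_\sperp^\sperp(0)\partial_x(\bar\omega_2 - \omega_{0,2}) = -\partial_t\bar\omega_1 \neq 0$ in general, and the difference is exactly $3$-homogeneous, not of higher order. Since your $\omega_{0,\ell}$ is uniquely determined (no free data), there is no room to absorb this discrepancy. Consequently $\psi_{0,[L]} - \bar\psi_{[L]}$ contains a nonzero $3$-homogeneous term, so the remainder $\out{\psi}{0} - \psi_{0,[L]}$ is only $\m{O}(\dist^3)$, not $\m{O}(\dist^{L+2})$.

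The paper repairs this by a telescoping correction: it first builds leading parts $\hat\psi_\ell$ exactly as you do, then sets $\psi_{0,\ell} \coloneqq \hat\psi_\ell + (\bar\psi_{\ell+1} - \hat\psi_{\ell+1})$. The correction lies in $\m{Q}_{\ell+2}$ and so does not disturb the size bounds, but telescoping gives $\psi_{0,[L]} = \bar\psi_{[L]} + (\bar\psi_{L+1} - \hat\psi_{L+1})$, from which \eqref{eq:inviscid-row-matching} follows directly from \ref{hyp:expansion}. The residue $F_\ell$ then comes entirely from the correction term, which also explains its stated size.
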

To construct $\psi_{0,\ell}$, we turn to the homogeneous components $\bar\psi_\ell$ of $\out{\psi}{0}$ from \ref{hyp:expansion}.
These are polynomials in the functions $t$, $\cub$, and $\cubder$ (from \eqref{eq:fu} and \eqref{eq:fm}) that are homogeneous in the sense of Definition~\ref{def:rhom}.
Expanding on \ref{hyp:expansion}, we obtain:
\begin{proposition}
  There exists a sequence $(\bar\psi_{\ell})$ of $(\ell + 1)$-homogeneous functions $\bar\psi_\ell$ such that for all $L,m,n \in \N_0$,
  \begin{equation}
    \label{eq:homog-matching}
    \absb{\partial_t^m \partial_x^n (\out{\psi}{0} - \bar\psi_{[L]})} \lesssim_{L,m,n} \dist^{L - 2m - 3n + 2}.
  \end{equation}
  Moreover, $\bar\psi_{0,\ell}$ is a polynomial in $(t, \cub, \cubder)$ and $\psi_0 = (\cub, 0)$.
  It solves
  \vspace*{-2mm}
  \begin{equation}
    \label{eq:homog}
    \begin{aligned}
      A_\sperp^\sperp(0) \partial_x \bar\omega_{\ell} &= \partial_x[\poly_\ell^\omega(\bar\psi_{<\ell})] - \partial_t \bar\omega_{\ell-1},\\
      \partial_t \bar\sigma_{\ell} + \partial_\Is A_{\Is}^{\Is}(0) \partial_x(\bar\sigma_{0}\bar\sigma_{\ell}) &= \partial_x[\poly_\ell^\sigma(\bar\psi_{<\ell},\bar\omega_{\ell})].
    \end{aligned}
  \end{equation}
\end{proposition}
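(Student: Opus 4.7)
The strategy is to extract everything except the hierarchy \eqref{eq:homog} directly from hypothesis \ref{hyp:expansion}, which supplies a sequence of $(\ell+1)$-homogeneous polynomials $\bar\psi_\ell$ in $(t, \cub, \cubder)$ satisfying the matching bound \eqref{eq:homog-matching}, with $\bar\psi_0 = (\cub, 0)$. The only nontrivial content is thus the system \eqref{eq:homog}, which I derive by decomposing the inviscid conservation law $\partial_t \out{\psi}{0} + \partial_x[f(\out{\psi}{0})] = 0$ into homogeneous components.

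I would Taylor-expand the flux $f$ at the origin in state space and substitute $\out{\psi}{0} = \bar\psi_{[L]} + E_L$, where by \eqref{eq:homog-matching} the remainder obeys $\partial_t^m \partial_x^n E_L = \m{O}_{L,m,n}(\dist^{L + 2 - 2m - 3n})$. Products of the form $\bar\psi_{\alpha_0}\cdots \bar\psi_{\alpha_r}$ carry homogeneity $|\alpha| + r + 1$, so after $\partial_x$ they contribute to the $(|\alpha|+r-2)$-homogeneous piece of $\partial_x[f(\out{\psi}{0})]$; any product containing at least one $E_L$-factor is $\m{O}(\dist^{L - c})$ for a constant $c$ depending only on $r$. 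At each fixed homogeneity $d$, choosing $L$ large enough renders the $E_L$-contributions negligible, and uniqueness of the homogeneous decomposition of a polynomial in $(t, \cub, \cubder)$ forces the purely-$\bar\psi$ terms of homogeneity $d$ to cancel exactly.

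Collecting the $\m{V}^\sperp$-projection of the residue at homogeneity $\ell - 2$ yields the first line of \eqref{eq:homog}: the $\partial_t \out{\psi}{0}$ contribution is $\partial_t \bar\omega_{\ell-1}$ (since $\partial_t$ drops homogeneity by two); the $r = 0$ Taylor piece gives $A(0)\partial_x \bar\psi_\ell$, which on the $\sperp$-row of the diagonal matrix $A(0)$ reduces to $A_\sperp^\sperp(0)\partial_x \bar\omega_\ell$; and the $r \geq 1$ terms, organized via the multilinear operator $\Aop$ of \eqref{eq:multilinear}, assemble into $\partial_x[\poly_\ell^\omega(\bar\psi_{<\ell})]$. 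Rearranging gives the stated $\omega$-equation. The $\m{V}^\Is$-projection at homogeneity $\ell - 1$ is parallel: the $r = 0$ contribution vanishes because $A_\Is^\Is(0) = 0$, while the $r = 1$ symmetric term $\Der A(0)\,\bar\psi_0 \bar\psi_\ell$ projected onto $e_\Is$ produces the Burgers-like piece $\partial_\Is A_\Is^\Is(0)\,\bar\sigma_0 \bar\sigma_\ell$, kept on the LHS via the $*$-omission of the shocking component of $\bar\psi_\ell$ from $\poly_\ell^\sigma$, together with the cross-coupling $\partial_\Is A_\sperp^\Is(0)\,\bar\sigma_0 \bar\omega_\ell$ that remains in $\poly_\ell^\sigma(\bar\psi_{<\ell}, \bar\omega_\ell)$.

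The main subtlety is combinatorial: one must verify that the symmetrization factors from the Taylor series of $f$ reproduce the $1/(r+1)!$ normalization in \eqref{eq:multilinear}, and that the $*$-exclusion in $\poly_\ell^\sigma$ correctly isolates only the terms peeled off to form the Burgers operator (arising from $r=1$ with $\alpha\in\{(0,\ell),(\ell,0)\}$). Once this bookkeeping is settled, the argument is clean: a polynomial in $(t, \cub, \cubder)$ that is $\m{O}(\dist^L)$ for arbitrarily large $L$ must vanish identically, so the order-by-order cancellations upgrade to the exact identities \eqref{eq:homog}.
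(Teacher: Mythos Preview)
Your proposal is correct and follows essentially the same approach as the paper: the first two assertions are direct restatements of hypothesis \ref{hyp:expansion}, and the hierarchy \eqref{eq:homog} is obtained by substituting the homogeneous expansion into the inviscid equation \eqref{eq:inviscid} and collecting terms of like homogeneity. The paper's proof is extremely terse (two sentences), and your write-up simply fills in the homogeneity bookkeeping and the Taylor-expansion combinatorics that the paper leaves implicit.
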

\begin{proof}
  The first two statements restate \ref{hyp:expansion}.
  To find \eqref{eq:homog}, we substitute the homogeneous expansion into the inviscid equation \eqref{eq:inviscid} and collect terms with like homogeneity.
  These homogeneous equations can be compared with the criminal equations \eqref{eq:inviscid-row-ideal}.
\end{proof}
The homogeneous equations \eqref{eq:homog} only differ from \eqref{eq:inviscid-row-ideal} in their treatment of the nonshocking component: it is advected horizontally by $A_\sperp^\sperp(0)\partial_x$ in \eqref{eq:homog} and diagonally by $\partial_t + A_\sperp^\sperp(0) \partial_x$ in \eqref{eq:inviscid-row-ideal} and \eqref{eq:inviscid-row}.
The tilted characteristics of the latter are essential for our subsequent viscous analysis.
\begin{proof}[Proof of Proposition~\textup{\ref{prop:inviscid-row}}]
  To construct $\psi_{0,\ell}$, we first identify its leading part $\h\psi_{\ell}$, which will be a polynomial in $(t, \cub, \cubder,\log \dist)$ of degree at most $\ell - 1$ in $\log \dist$.
  We have already fixed $\psi_{0,0} = \bar\psi_{0} = (\cub, 0)$, so we set $\h\psi_0 = \bar\psi_0$.
  Because $\bar{\psi}_1$ is the leading part of $\out{\psi}{0} - \psi_{0,0}$, we must set $\h\psi_{1} = \bar\psi_{1}$.
  However, this identity need not hold for $\ell \geq 2.$

  We say a polynomial in $(t, \cub, \cubder, \log \dist)$ has \emph{polyhomogeneity} $h \in \Z$ if it has homogeneity $h$ when we drop the factors of $\log \dist$.
  Given $h \in \Z$ and $d \in \N_0$, let $\m{Q}_{h;d}$ denote the set of polynomials in $(t, \cub, \cubder, \log \dist)$ whose terms have polyhomogeneity at least $h$ and those of polyhomogeneity exactly $h$ have degree at most $d$ in $\log \dist$.
  We construct $\h\psi_\ell$ and $\psi_{0, \ell}$ inductively in the class $\m{Q}_{\ell+1;\ell-1}$.

  For the nonshocking component, we set
  \begin{equation*}
    \h\omega_\ell \coloneqq \poly_\ell^\omega(\h\psi_{<\ell}).
  \end{equation*}
  Using the inductive hypothesis and \eqref{eq:poly}, this lies in $\m{Q}_{\ell + 1; \ell - 2}$.
  For the shocking component, we wish to solve
  \begin{equation}
    \label{eq:sigma-hat-pre}
    \partial_t \h\sigma_\ell + \partial_\Is A_{\Is}^{\Is}(0) \partial_x(\sigma_{0,0} \h\sigma_{\ell}) = \partial_x[\poly_\ell^\sigma(\h\psi_{<\ell},\h\omega_{\ell})] \in \m{Q}_{\ell - 1; \ell - 2}.
  \end{equation}
  Let $\bar{\partial}_t = \partial_t + \partial_\Is A_{\Is}^{\Is}(0) \sigma_{0,0} \partial_x$ denote the $(t, \cub)$-coordinate vector field for $t$, so $\bar{\partial}_tt = 1$ and $\bar{\partial}_t \cub = 0$.
  Using Lemma~\ref{lem:fuest}, we can write the left side of \eqref{eq:sigma-hat-pre} as
  \begin{equation*}
    \bar{\partial}_t \h \sigma_\ell + \partial_1 A_1^1(0) (\partial_x \sigma_{0,0}) \h \sigma_\ell = \bar{\partial}_t \h \sigma_\ell - \cubder \h \sigma_\ell.
  \end{equation*}
  By the same lemma,
  \begin{equation}
    \label{eq:m-inverse}
    \cubder^{-1} = \abs{t} + 3 a^{-1} b \cub^2,
  \end{equation}
  so $\bar{\partial}_t\cubder^{-1} = -1$.
  Multiplying \eqref{eq:sigma-hat-pre} by the integrating factor $\cubder^{-1}$, the left side simplifies to $\bar\partial_t(\cubder^{-1}\h\sigma_\ell)$, so
  \begin{equation}
    \label{eq:sigma-hat-ev}
    \bar\partial_t(\cubder^{-1}\h\sigma_\ell) = \cubder^{-1}\partial_x[\poly_\ell^\sigma(\h\psi_{<\ell},\h\omega_{\ell})] \in \m{Q}_{\ell + 1; \ell - 2}.
  \end{equation}
  By \eqref{eq:m-inverse}, $\m{Q}_{\ell + 1; \ell - 2}$ is spanned by terms of the form $\cubder^p \cub^q \log^r \dist$ for $p \in \Z$ and $q,r \in \N_0$ satisfying $q-2p \geq \ell+1$ and $r \leq \ell - 2$ if $q-2p = \ell + 1$.

  We wish to find a  $\bar{\partial}_t$-antiderivative of $\cubder^p \cub^q \log^r \dist$.
  Factoring out constants, it suffices to antidifferentiate $(\abs{t} + c \cub^2)^{-p} \cub^q \log^r(\abs{t} + c \cub^2)$ in the class $\m{Q}_{\ell + 3; \ell - 1}$.
  When we integrate along $\bar\partial_t$, $\cub$ is constant.
  It follows that
  \begin{equation*}
    \int_{-\infty}^t \cub^q \frac{\log^r(\abs{s} + c \cub^2)}{(\abs{s} + c \cub^2)^p} \, \bar{\text{d}} s \in \m{Q}_{\ell + 3; \ell - 2} \quad \text{when } p \geq 2.
  \end{equation*}
  Similarly,
  \begin{equation*}
    -\int_t^0 \cub^q \frac{\log^r(\abs{s} + c \cub^2)}{(\abs{s} + c \cub^2)^p} \, \bar{\text{d}} s \in \m{Q}_{\ell + 3; \ell - 2} \quad \text{when } p \leq 0.
  \end{equation*}
  Finally, when $p = 1$,
  \begin{equation*}
    \frac{\cub^q}{r+1} \log^{r+1}(\abs{t} + c \cub^2) \in \m{Q}_{\ell + 3, \ell - 1}
  \end{equation*}
  is a suitable antiderivative.
  This is the only case in which the logarithmic degree increases.
  Given a $\bar{\partial}_t$-antiderivative $P \in \m{Q}_{\ell + 3; \ell - 1}$ of the right side of \eqref{eq:sigma-hat-ev}, we set $\h\sigma_\ell = \cubder P \in \m{Q}_{\ell + 1; \ell - 1}$.
  
  This completes the inductive construction of the leading parts $(\h\psi_\ell)$, which satisfy
  \begin{equation*}
    \begin{aligned}
      A_\sperp^\sperp(0) \partial_x \hat\omega_{\ell} &= \partial_x[\poly_\ell^\omega(\hat\psi_{<\ell})]\\
      \partial_t \hat\sigma_{\ell} + \partial_\Is A_{\Is}^{\Is}(0) \partial_x(\bar\sigma_{0}\hat\sigma_{\ell}) &= \partial_x[\poly_\ell^\sigma(\hat\psi_{<\ell},\hat\omega_{\ell})].
    \end{aligned}
  \end{equation*}
  This is similar to \eqref{eq:homog}, but the nonshocking equation is missing a $\partial_t$ term on the right hand side.

  Now define
  \begin{equation}
    \label{eq:correction}
    \psi_{0,\ell} \coloneqq \h \psi_\ell + (\bar \psi_{\ell + 1} - \hat\psi_{\ell + 1}) \in \m{Q}_{\ell + 1; \ell - 1}.
  \end{equation}
  Because $\bar \psi_{\ell + 1} - \hat\psi_{\ell + 1} \in \m{Q}_{\ell + 2; \ell}$, we can readily verify \eqref{eq:inviscid-row-size}.
  Moreover, because $\bar\psi_0 = \h\psi_0$, the telescoping structure in \eqref{eq:correction} ensures that
  \begin{equation*}
    \psi_{0,[L]} = \bar\psi_{[L]} - \bar\psi_0 + \h \psi_0 + \bar\psi_{L+1} - \hat\psi_{L+1} = \bar\psi_{[L]} + (\bar\psi_{L+1} - \hat\psi_{L+1}).
  \end{equation*}
  Thus \eqref{eq:inviscid-row-matching} follows from \eqref{eq:homog-matching}.
\end{proof}
Ultimately, we must account for the residues $F_\ell$.
We do so in the first row, where we include them in the forcing for $\psi_{1,\ell}$.
To show that $\psi_{1,[L]}$ approximates $\out{\psi}{1}$, we must show that the cumulative residue largely cancels.
\begin{lemma}
  \label{lem:force-cancels}
  For all $L,m,n \in \N_0$, we have
  \begin{equation*}
    \absb{\partial_t^m \partial_x^n F_{[L]}^\omega} \lesssim_L \abs{\log \dist}^{L + 1} \dist^{L - 2m - 3n} \And \absb{\partial_t^m \partial_x^n F_{[L]}^\sigma} \lesssim_L \abs{\log \dist}^{L + 1} \dist^{L - 2m - 3n + 1}.
  \end{equation*}
\end{lemma}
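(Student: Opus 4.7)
The plan is to interpret $F_{[L]}^\omega$ (and analogously $F_{[L]}^\sigma$) as the inviscid residue of the partial sum $\psi_{0,[L]}$ minus a Taylor tail, and then to extract cancellation using the matching estimate \eqref{eq:inviscid-row-matching} together with the homogeneous inviscid relations \eqref{eq:homog}. Summing \eqref{eq:inviscid-row} over $\ell = 0,\ldots,L$ and reorganizing the multilinear Taylor structure \eqref{eq:poly} yields
\begin{equation*}
  F_{[L]}^\omega = \bigl[\partial_t \omega_{0,[L]} + \partial_x f^\sperp(\psi_{0,[L]})\bigr] - \m{T}_L^\omega,
\end{equation*}
where
\begin{equation*}
  \m{T}_L^\omega \coloneqq \partial_x \sum_{\substack{r \geq 1,\, \alpha_j \in [0,L]\\ r + |\alpha| > L}} \Aop^\sperp \psi_{0,\alpha}
\end{equation*}
collects the portion of the Taylor expansion of $\partial_x f^\sperp(\psi_{0,[L]})$ beyond order $L$. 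An analogous identity holds for $F_{[L]}^\sigma$.

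The telescoping structure in the proof of Proposition~\ref{prop:inviscid-row} can be reorganized into the identity $\psi_{0,[L]} = \bar\psi_{[L+1]} - \h\psi_{L+1}$, so by \eqref{eq:homog-matching} the error $\delta_L \coloneqq \out{\psi}{0} - \psi_{0,[L]}$ equals $\h\psi_{L+1} + \m{O}(\dist^{L+3})$ with $\h\psi_{L+1} \in \m{Q}_{L+2;L}$. Since $\out{\psi}{0}$ solves the inviscid equation exactly, the bracketed residue above linearizes in $\delta_L$ and its leading $\sperp$-component reduces to $-(\partial_t + A_\sperp^\sperp(0)\partial_x)\delta_L^\sperp$ modulo nonlinear cross-terms of order $\abs{\log \dist}^L \dist^L$, estimated using $\abs{\psi_{0,[L]}} \lesssim \dist$ and $\abs{\partial_x \psi_{0,[L]}} \lesssim \dist^{-2}$. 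The contribution $\partial_t \delta_L^\sperp \approx \partial_t \h\omega_{L+1}$ is already of the target order.

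The crux is to match the leading $(L{-}1)$-homogeneous part of $A_\sperp^\sperp(0)\partial_x \delta_L^\sperp$ against $\m{T}_L^\omega$. The $(L{+}2)$-homogeneous projection of \eqref{eq:homog} reads
\begin{equation*}
  A_\sperp^\sperp(0) \partial_x \bar\omega_{L+1} + \sum_{\substack{r \geq 1\\ |\alpha| = L+1-r}} \partial_x \Aop^\sperp \bar\psi_\alpha = -\partial_t \bar\omega_L,
\end{equation*}
while the construction of $\h\psi$ gives the parallel identity without the $\partial_t \bar\omega_L$ term. Subtracting these and using $\delta_L^\sperp \sim \h\omega_{L+1}$, the $\partial_t \bar\omega_L$ leftover cancels exactly, leaving only $\partial_x[\poly_{L+1}^\omega(\h\psi) - \poly_{L+1}^\omega(\bar\psi)]$. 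This residual is multilinear in the logarithmic discrepancies $\h\psi_k - \bar\psi_k \in \m{Q}_{k+1;k-1}$, which vanish for $k \leq 1$ and are controlled by the polyhomogeneous class $\m{Q}_{h;d}$ otherwise. Tracking these corrections iteratively (and applying the homogeneous identity at successively lower orders to absorb further $\partial_t \bar\omega_\bullet$ residuals) produces the target $\abs{F_{[L]}^\omega} \lesssim \abs{\log \dist}^{L+1} \dist^L$. Higher derivatives follow by differentiating \eqref{eq:inviscid-row} and repeating the argument, with $\partial_t$ and $\partial_x$ reducing $\dist$-powers by $2$ and $3$. The bound on $F_{[L]}^\sigma$ is obtained analogously; because the shocking equation carries no criminal $\partial_t$-tilt, no residual $\partial_t \bar\sigma_L$ appears, and the target gains the extra factor of $\dist$.

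The main obstacle is the iterated cancellation tracking in the third step: one must simultaneously exploit the $(L{+}2)$-homogeneous inviscid identity, the telescoping identity $\psi_{0,[L]} = \bar\psi_{[L+1]} - \h\psi_{L+1}$, and the polyhomogeneous calculus of $\m{Q}_{h;d}$, carefully accounting for each logarithmic degree introduced during the construction of the $\h\psi_\ell$ and propagated through the multilinear $\poly_\ell^\omega$.
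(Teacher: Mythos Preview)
Your decomposition
\[
  F_{[L]}^\omega = \bigl[\partial_t \omega_{0,[L]} + \partial_x f^\sperp(\psi_{0,[L]})\bigr] - \m{T}_L^\omega
\]
is exactly the paper's starting point, and from it one immediately obtains the crude bound $\abs{F_{[L]}^\omega} \lesssim_L \abs{\log\dist}^L \dist^{L-1}$ by subtracting the inviscid equation and invoking \eqref{eq:inviscid-row-matching}. The issue is how to gain the missing factor of $\dist$.

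You propose to extract this factor by explicit leading-order cancellation: linearize in $\delta_L$, invoke the $(L{+}2)$-homogeneous projection of \eqref{eq:homog}, and track the logarithmic discrepancies $\h\psi_k - \bar\psi_k$ through the multilinear forms. This program is not obviously wrong, but as written it is incomplete. The step ``the $\partial_t\bar\omega_L$ leftover cancels exactly'' needs more care: you are comparing $A_\sperp^\sperp(0)\partial_x\h\omega_{L+1}$ with $A_\sperp^\sperp(0)\partial_x\bar\omega_{L+1}$, but $\h\omega_{L+1} = \poly_{L+1}^\omega(\h\psi_{<L+1})$ directly, so matching against \eqref{eq:homog} leaves you with $A_\sperp^\sperp(0)\partial_x[\poly_{L+1}^\omega(\h\psi) - \poly_{L+1}^\omega(\bar\psi)] + \partial_t\bar\omega_L$, not the cleaner expression you claim. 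And the phrase ``applying the homogeneous identity at successively lower orders to absorb further $\partial_t\bar\omega_\bullet$ residuals'' is a sketch, not an argument; you would need to show this iteration terminates with the right log-power.

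The paper bypasses all of this with a one-line bootstrap. Having established the crude bound $\abs{F_{[L']}^\omega} \lesssim \abs{\log\dist}^{L'} \dist^{L'-1}$ for every $L'$, apply it at $L' = L+1$ to get $\abs{F_{[L+1]}^\omega} \lesssim \abs{\log\dist}^{L+1}\dist^L$. Proposition~\ref{prop:inviscid-row} gives $\abs{F_{L+1}^\omega} \lesssim \abs{\log\dist}^{L+1}\dist^L$ directly. Hence
\[
  F_{[L]}^\omega = F_{[L+1]}^\omega - F_{L+1}^\omega \lesssim_L \abs{\log\dist}^{L+1}\dist^L.
\]
No cancellation tracking is needed; the gain comes purely from shifting the index. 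The $\sigma$ case is identical with one extra power of $\dist$ throughout, and derivative bounds follow because each $F_\ell$ is a polynomial in $(t,\cub,\cubder,\log\dist)$.
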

\begin{proof}
  The polynomials $\poly_\ell$ are chosen so that the sum of the grid equations \eqref{eq:inviscid-row-ideal} closely approximates the true inviscid conservation law.
  For example, if we sum the equations for $\omega_{0,\ell}$ up to $\ell = L$ using \eqref{eq:multilinear}, we obtain
  \begin{equation*}
    \partial_t \omega_{0,[L]} + \partial_x \sum_\bigsub{0 \leq r \leq L\\ \al_0 + \ldots + \al_r \leq L - r} \frac{\Der^r A^\sperp(0)}{(r + 1)!} \prod_{j=0}^r \psi_{0,\al_j} = F_{[L]}^\omega.
  \end{equation*}
  On the other hand, recalling the flux $f$ satisfying $\Der f = A$, we have
  \begin{equation*}
    f(\psi_{0,[L]}) - f(0) = \sum_\bigsub{0 \leq r \leq L\\ \al_j \leq L} \frac{\Der^r A (0)}{(r + 1)!} \prod_{j=0}^r \psi_{0,\al_j} + \m{O}_L\big(\abss{\psi_{0,[L]}}^{L + 2}\big).
  \end{equation*}
  Therefore Proposition~\ref{prop:inviscid-row} yields
  \begin{align*}
    \partial_t \omega_{0,[L]} + \partial_x f^\sperp(\psi_{0,[L]}) &= F_{[L]}^\omega + \partial_x \sum_\bigsub{0 \leq r \leq L\\\al_j \leq L\\\al_0 + \ldots + \al_r > L - r} \frac{\Der^r A^\sperp(0)}{(r + 1)!} \prod_{j=0}^r \psi_{0,\al_j} + \m{O}_L(\dist^{L - 1})\\
                                                                  &= F_{[L]}^\omega + \m{O}_L(\abs{\log \dist}^L \dist^{L-1}).
  \end{align*}
  Subtracting the conservation law \eqref{eq:inviscid}, we find
  \begin{equation*}
    \partial_t (\out{\omega}{0} - \omega_{0,[L]}) + \partial_x [f^\sperp(\out{\psi}{0}) - f^\sperp(\psi_{0,[L]})] = F_{[L]}^\omega + \m{O}_L\big(\abs{\log \dist}^L \dist^{L-1}\big).
  \end{equation*}
  Now using \eqref{eq:inviscid-row-matching} and rearranging, we see that $\absb{F_{[L]}^\omega} \lesssim_L \abs{\log \dist}^L \dist^{L - 1}.$

  In fact, we handled the above equations somewhat sloppily (as detailed in the next subsection) and can improve the result \emph{a posteriori}.
  Indeed, Proposition~\ref{prop:inviscid-row} states that $F_{L + 1}^\omega \lesssim_L \abs{\log \dist}^{L + 1} \dist^L$.
  Bounding $F_{[L+1]}^\omega$ as above, we therefore obtain the improved bound
  \begin{equation*}
    F_{[L]}^\omega = F_{[L+1]}^\omega - F_{L + 1}^\omega \lesssim_{L} \abs{\log \dist}^{L + 1} \dist^L.
  \end{equation*}

  The $\sigma$-equations involve an approximation of $F$ that is one-degree higher order, so similar reasoning yields $\absb{F_{[L]}^\omega} \lesssim_L \abs{\log \dist}^{L + 1} \dist^{L + 1}.$
  Finally, these estimates behave as expected under differentiation because $F_\ell$ is a polynomial in $(t, \cub, \cubder, \log \dist)$, so the lemma follows.
\end{proof}

\subsection{The zeroth column}
\label{sec:zeroth-column}
We next analyze the zeroth column $(\psi_{k,0})_{k \in \N_0}$.
Because $\inn{\Omega}{0} = 0$, we take $\omega_{k,0} = 0$.
The resulting equations for the shocking components $\sigma_{k,0}$ are identical to those encountered in the Burgers equation in~\cite{CG23}.
There, the second and third authors constructed solutions $\sigma_{k,0}$ as polynomials in $(\cubder, \cub)$ of homogeneity $-4k + 1$.
This corresponds to solving the grid equation for $\sigma_{k,0}$ from zero data at past infinity.
We work with the same solutions here; the following restates part of \cite[Proposition~3.9]{CG23}.
\begin{proposition}
  \label{prop:zeroth-col}
  For all $k,m,n \in \N_0$, $\omega_{k,0} = 0$ and $\abss{\partial_t^m \partial_x^n \sigma_{k,0}} \lesssim_{k,n,m} \dist^{-4k - 2m - 3n + 1}.$
\end{proposition}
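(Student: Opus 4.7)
The plan is to reduce the zeroth column to the scalar Burgers grid hierarchy already treated in \cite[Proposition 3.9]{CG23}. First I set $\omega_{k,0} \equiv 0$ for every $k \geq 0$. This is consistent with the criminal design of the inner ansatz: since $\inn{\Omega}{0} = 0$, the formal expansion \eqref{eq:in-grid} forces every $\omega_{k,0}$ to vanish. The choice is unconstrained at this stage, because Proposition~\ref{prop:omega-grid} covers only the rows $\ell \geq 1$.

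Next I unpack the shocking grid equation \eqref{eq:sigma-grid} at $\ell = 0$ for $k \geq 1$. The $A$-sum constraint $1 \leq r \leq \ell + 1$ forces $r = 1$, and then $\al_0 + \al_1 = \ell - r + 1 = 0$ forces $\al_0 = \al_1 = 0$; similarly the $B$-sum retains only $r = 0$ with $\kappa_0 = k-1$ and $\al_0 = 0$. Projecting onto the shocking subspace and using $\omega_{k',0} = 0$ for every $k'$ to kill the off-diagonal $A$- and $B$-contributions, the equation collapses to a scalar quadratic recursion in $(\sigma_{k',0})_{k' \leq k}$ with diffusive forcing $B_\Is^\Is(0) \partial_x^2 \sigma_{k-1,0}$. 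Because Proposition~\ref{prop:inviscid-row} gives $F_0 = 0$, no residue force appears. This is precisely, up to renaming of scalar coefficients, the Burgers grid hierarchy of \cite{CG23}.

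I then invoke \cite[Proposition 3.9]{CG23} to construct $\sigma_{k,0}$ inductively in $k$, each as a polynomial in $(\cub, \cubder)$ of homogeneity $-4k + 1$. Starting from $\sigma_{0,0} = \cub$, which is $1$-homogeneous, every step lowers homogeneity by $4$: both the forcing $\partial_x^2 \sigma_{k-1,0}$ and the quadratic self-interactions carry the matching homogeneity $-4k-1$ inside the outer $\partial_x$. After multiplication by the integrating factor $\cubder^{-1}$, exactly as in the proof of Proposition~\ref{prop:inviscid-row}, the equation reduces to finding a $\bar{\partial}_t$-antiderivative of a polynomial in $(\cub, \cubder)$, where $\bar{\partial}_t = \partial_t + \partial_\Is A_\Is^\Is(0) \sigma_{0,0} \partial_x$. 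The zero-data-at-past-infinity prescription yields a genuine polynomial antiderivative of the correct homogeneity; no logarithms arise, since the purely homogeneous setting avoids the $p = 1$ resonance that forced them in Proposition~\ref{prop:inviscid-row}.

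The stated derivative bound is then immediate from the general principle recalled after Definition~\ref{def:rhom}: any $r$-homogeneous function continuous on $\R_{\leq 0} \times \R \setminus \{0\}$ satisfies $|f| \lesssim \dist^r$. Since $\sigma_{k,0}$ is a polynomial in $(\cub, \cubder)$ it is smooth away from the origin, and $\partial_t$ and $\partial_x$ preserve the polynomial structure while lowering homogeneity by $2$ and $3$ respectively, producing the exponent $-4k - 2m - 3n + 1$. The only substantive step is the algebraic reduction of \eqref{eq:sigma-grid} at $\ell = 0$ to the scalar hierarchy, which is combinatorial bookkeeping; beyond this the result is essentially a citation of \cite{CG23}, and I foresee no genuine obstacle.
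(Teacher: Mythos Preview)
Your proposal is correct and follows essentially the same approach as the paper: both set $\omega_{k,0}=0$ by fiat (since $\inn{\Omega}{0}=0$), observe that the $\ell=0$ shocking grid equations collapse to the scalar Burgers hierarchy, and then cite \cite[Proposition~3.9]{CG23} for the construction of $\sigma_{k,0}$ as $(-4k+1)$-homogeneous polynomials in $(\cub,\cubder)$. The paper treats this proposition as a direct restatement of that result, while you supply more of the combinatorial bookkeeping behind the reduction; the substance is the same.
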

Because the difference $\out{\psi}{k} - \psi_{k,0}$ serves as data for the next column $\psi_{k,1}$, we must control it before constructing the remaining grid terms.
The outer expansion \eqref{eq:outer-eq} is governed by the linear operator $\partial_t + \partial_x[A(\out{\psi}{0})\anon]$.
We therefore state a general estimate for this operator, which we prove in Section~\ref{sec:integrate-residue}.
\begin{proposition}
  \label{prop:outer-op}
  Fix $k \in \N$ and let $\partial_t p + \partial_x[A(\out{\psi}{0}) p] = \partial_x h + F$ in a  causally closed domain $\m{D}$ with smooth Cauchy hypersurface $\Sigma$.
  Let $a_k = \tbf{1}(k \geq 2)$ and suppose
  \begin{gather*}
    \abss{\nab^n(p |_{\Sigma})} \lesssim_{n} \euc^{-2k + 1} \dist^{-3n}, \quad \abss{\partial_t^m \partial_x^n h_\sperp} \lesssim_{m,n} \euc^{-2k - m + 1} \dist^{-3n},\\
    \abss{\partial_t^m \partial_x^n F_\sperp} \lesssim_{m,n} \euc^{-2k - m} \dist^{-3n}, \quad \abss{\partial_t^m \partial_x^n (\partial_x h_1 + F_1)} \lesssim_{m,n} \abs{\log \euc}^{a_k} \euc^{-2k - m + 1} \dist^{-3n-2}
  \end{gather*}
  for all $m,n \in \N_0$.
  Then $\abss{\partial_t^m \partial_x^n p} \lesssim_{m,n} \abs{\log \euc} \euc^{-2k - m + 1} \dist^{-3n}$ in $\m{D}$.
\end{proposition}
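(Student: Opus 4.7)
The plan is to prove this linear estimate via the method of characteristics, decomposing $p = (p_1, p_\sperp)$ in the basis from \ref{hyp:gauge}. There, $A(0)$ is diagonal with $\lambda_{(\Is)}(0) = 0$ and $\lambda_{(I)}(0) \neq 0$ for $I \neq \Is$, so the shocking characteristic becomes tangent to the preshock at the origin while the nonshocking characteristics remain uniformly transverse. Rewriting the equation as
\[
\partial_t p + A(\out{\psi}{0}) \partial_x p + (\Der A)(\out{\psi}{0})(\partial_x \out{\psi}{0}) p = \partial_x h + F,
\]
we observe from \ref{hyp:expansion} and Proposition~\ref{prop:outersize} that $A(\out{\psi}{0}) = A(0) + \m{O}(\dist)$ near the origin (so the off-diagonal coupling between shocking and nonshocking components is perturbative of order $\dist$) and that the shocking diagonal entry of the zero-order term is $(\partial_\Is A_\Is^\Is)(0)\cubder = -a\cubder \sim -\dist^{-2}$, singular at the origin.

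For the nonshocking component $p_\sperp$, the characteristics of $\partial_t + A_\sperp^\sperp(\out{\psi}{0})\partial_x$ have speeds bounded away from zero, so a characteristic reaching $(t,x) \in \m{D}$ from $\Sigma$ has arc length comparable to $\abs{t}$ along which $\euc$ grows linearly. Duhamel's formula yields $\abs{p_\sperp} \lesssim \euc^{-2k+1}$ by direct integration of $\abs{F_\sperp} \lesssim \euc^{-2k}$. The term $\partial_x h_\sperp$ is handled via integration by parts along the characteristic: $\partial_x$ decomposes as a characteristic derivative (contributing a boundary term $\lesssim \abs{h_\sperp} \lesssim \euc^{-2k+1}$) minus $\partial_t h_\sperp$ divided by the characteristic speed, which satisfies the same bound as $F_\sperp$.

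For the shocking component $p_1$, let $\bar{\partial}_t \coloneqq \partial_t + (\partial_\Is A_\Is^\Is)(0)\sigma_{0,0}\partial_x$ denote the shocking characteristic field from Section~\ref{sec:inviscid-row}. The shocking scalar equation takes the form $\bar{\partial}_t p_1 - a\cubder\,p_1 = (\partial_x h + F)_1 + (\text{coupling to } p_\sperp)$. Following the integrating-factor method of Proposition~\ref{prop:inviscid-row} and using $\bar{\partial}_t \cubder^{-1} = -1$ (Lemma~\ref{lem:fuest}), the substitution $q \coloneqq \cubder^{-1}p_1$ removes the zero-order term, giving $\bar{\partial}_t q = \cubder^{-1}[(\partial_x h + F)_1 + \cdots]$. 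Along the shocking characteristic, $\fu$ is approximately constant and Lemma~\ref{lem:fuest}(iii) gives $\dist \asymp \abs{t}^{1/2}$ when $\abs{x}^{2/3} \lesssim \abs{t}$; combining $\cubder^{-1} \asymp \dist^2$ with the anisotropic hypothesis $\abs{(\partial_x h + F)_1} \lesssim \abs{\log\euc}^{a_k}\euc^{-2k+1}\dist^{-2}$, the integrand becomes $\abs{\log\euc}^{a_k}\euc^{-2k+1}$ and integration from $\Sigma$ generates at most one additional log, yielding $\abs{p_1} = \cubder\abs{q} \lesssim \abs{\log\euc}\euc^{-2k+1}$. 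Derivatives $\partial_t^m \partial_x^n p$ then follow by induction on $m+n$, commuting through the equation: each $\partial_t$ produces a commutator of size $\dist^{-2} \lesssim \euc^{-1}$ (matching the $\euc^{-m}$ loss) and each $\partial_x$ produces $\dist^{-3}$ from differentiating the coefficient $(\Der A)(\out{\psi}{0})\partial_x\out{\psi}{0}$, reproducing the claimed $\dist^{-3n}$ factor.

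The main obstacle is the shocking integration near the origin, where the characteristic lingers. Ensuring that the log power in the conclusion is exactly one, whether $a_k = 0$ or $1$, requires splitting the characteristic integral into the two regimes $\abs{t} \gtrsim \fu^2$ and $\abs{t} \ll \fu^2$ of Lemma~\ref{lem:fuest}(iii) and carefully tracking the transition between the $\abs{t}^{1/2}$ and $\abs{x}^{1/3}$ scalings of $\dist$. A secondary subtlety is that the $\m{O}(\dist)$ off-diagonal coupling feeds $p_\sperp$ into the shocking equation; after applying the nonshocking bound, these contributions are lower-order and can be absorbed into the inductive forcing.
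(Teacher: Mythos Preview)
Your overall architecture matches the paper's: decompose into shocking and nonshocking components, renormalize $\partial_x h$ along nonshocking characteristics, use the integrating factor $\cubder^{-1}=\dist^2$ along the shocking characteristic, and appeal to Lemma~\ref{lem:integrals}. However, there is a genuine gap in the nonshocking step.

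You treat the nonshocking equation as if its only forcing were $\partial_x h_\sperp + F_\sperp$, but the zero-order matrix $(\Der A)(\out{\psi}{0})(\partial_x\out{\psi}{0})$ couples $p_1$ into $p_\sperp$ at strength $\dist^{-2}$ (the entry $(\partial_1 A_1^{I'})\partial_x\sigma^{(0)}$ is generically of this size). In addition, with frozen coefficients the first-order cross term $A_1^\sperp(\out{\psi}{0})\partial_x p_1 = \m{O}(\dist)\,\partial_x p_1$ appears, which is a derivative of the unknown you have not yet controlled. You only discuss the coupling in the opposite direction ($p_\sperp$ into the shocking equation), which is the easy one; the dangerous direction is $p_1$ into $p_\sperp$, and neither Duhamel nor a naive Gr\"onwall closes there, because $\int \dist^{-2}\circ\gamma_{I'}$ diverges.

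The paper resolves both issues. First, it projects via the \emph{variable} left eigenvectors $l^{I'}(\out{\psi}{0})$ rather than the frozen basis; this eliminates the first-order cross term entirely and avoids derivative loss, at the cost of extra zero-order commutators of size $\dist^{-2}$. Second, to close the remaining zero-order coupling it runs a bootstrap: assume $|p|\lesssim \euc^{-2k+9/10}$, feed this into the $\dist^{-2}p$ term, and invoke the special estimate $\int \dist^{-2}\euc^{-r}\circ\tilde\gamma_{I'}\lesssim \euc^{-r+1/3}$ from Lemma~\ref{lem:integrals}. This returns $\euc^{-2k+9/10+1/3}$, which beats $\euc^{-2k+1}$ and closes the loop. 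Your proposal is missing both the variable-eigenvector projection and the bootstrap, and without them the nonshocking estimate does not go through.
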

With this, we can control the data $\out{\psi}{k} - \psi_{k,0}$ of $\psi_{k,1}$.
\begin{proposition}
  \label{prop:out-bound}
  For all $k \in \N$ and $m,n \in \N_0$,
  \begin{equation}
    \label{eq:out-bound}
    \abss{\partial_t^m \partial_x^n (\out{\psi}{k} - \psi_{k,0})} \lesssim_{k,m,n} \abs{\log \euc} \euc^{-2k - m + 1} \dist^{-3n}.
  \end{equation}
\end{proposition}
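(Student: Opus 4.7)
The plan is to argue by induction on $k \in \N$, applying Proposition~\ref{prop:outer-op} at each step to $p \coloneqq \out{\psi}{k} - \psi_{k,0}$. Since $\psi_{k,0} = (\sigma_{k,0}, 0)$ is purely shocking, $p^\sperp$ coincides with $\out{\psi}{k}^\sperp$, while $p^\Is = \out{\psi}{k}^\Is - \sigma_{k,0}$. The goal is to produce
\begin{equation*}
  \partial_t p + \partial_x[A(\out{\psi}{0}) p] = \partial_x h + F
\end{equation*}
with $h$ and $F$ satisfying the hypotheses of Proposition~\ref{prop:outer-op}. To identify these, I subtract the outer operator applied to $\psi_{k,0}$ from equation~\eqref{eq:outer-eq} and cancel the pieces already absorbed by the grid equations~\eqref{eq:omega-grid}--\eqref{eq:sigma-grid} at $\ell = 0$. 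Explicitly, the outer operator acting on $\psi_{k,0}$ yields $\partial_t\sigma_{k,0}\, e_\Is + \partial_x[A_\Is(\out{\psi}{0})\sigma_{k,0}]$; the split
\begin{equation*}
  A_\Is(\out{\psi}{0}) = \partial_\Is A(0)\sigma_{0,0} + \bigl[A_\Is(\out{\psi}{0}) - \partial_\Is A(0)\sigma_{0,0}\bigr]
\end{equation*}
sends the shocking row of the first summand into the grid transport operator, routes its nonshocking projection $\partial_x[\partial_\Is A^\sperp(0)\sigma_{0,0}\sigma_{k,0}]$ into $h^\sperp$, and puts the $\m{O}(\dist^2)$ remainder (small by \ref{hyp:expansion}, since $A(0)e_\Is = 0$ while $\omega_{0,0}$ and $\sigma_{0,0}^2$ are of homogeneity at least $2$) multiplied by $\sigma_{k,0}$ into $h^\Is$.

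The bulk of the work is then to estimate each contribution to $h$ and $F$ using Proposition~\ref{prop:outersize} for $\out{\psi}{<k}$, Proposition~\ref{prop:zeroth-col} for $\sigma_{<k,0}$, the inductive hypothesis for $\out{\psi}{<k} - \psi_{<k,0}$, and the interpolation $\dist^2 \lesssim \euc$ from~\eqref{eq:distance-relation}. In each Taylor product I substitute $\out{\psi}{j} = \psi_{j,0} + (\out{\psi}{j} - \psi_{j,0})$, which cleanly separates the homogeneous grid piece (controlled by $\dist$) from the nonshocking sweep (controlled by $\euc$, with a logarithm). The target bounds are $\abs{\partial_t^m \partial_x^n h^\sperp} \lesssim \euc^{-2k-m+1}\dist^{-3n}$, $\abs{\partial_t^m \partial_x^n F^\sperp} \lesssim \euc^{-2k-m}\dist^{-3n}$, and the sharper shocking estimate $\abs{\partial_t^m \partial_x^n (\partial_x h_\Is + F_\Is)} \lesssim \abs{\log \euc}^{a_k}\euc^{-2k-m+1}\dist^{-3n-2}$. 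For data, on $\{t = t_0\}$ one has $\out{\psi}{k} \equiv 0$ while $\psi_{k,0}$ is smooth and bounded, so $\abs{\nab^n(p|_{\{t = t_0\}})} \lesssim 1 \lesssim \euc^{-2k+1}\dist^{-3n}$. Proposition~\ref{prop:outer-op} then delivers~\eqref{eq:out-bound}.

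The main obstacle is the sharp shocking estimate on $\partial_x h_\Is + F_\Is$. Recovering the $\dist^{-3n-2}$ power (rather than merely $\dist^{-3n}$) hinges on the $\m{O}(\dist^2)$ smallness of the coefficient discrepancy $A_\Is(\out{\psi}{0}) - \partial_\Is A(0)\sigma_{0,0}$: combined with $\sigma_{k,0} \lesssim \dist^{-4k+1}$ and an $\partial_x$, this contribution alone is of size $\dist^{-4k}$, which must then be redistributed between $\euc$ and $\dist$ via $\dist^2 \lesssim \euc$ to meet the target. Careful bookkeeping is also required to confirm that the $(\log \dist)$-factors inherited from the grid construction (Proposition~\ref{prop:inviscid-row}) consolidate into the single $\abs{\log\euc}^{a_k}$ tolerated by Proposition~\ref{prop:outer-op}, and that the forcing from cross terms involving $(\out{\psi}{j} - \psi_{j,0})$ for $j < k$ cooperates with the inductive hypothesis without losing powers of $\euc$.
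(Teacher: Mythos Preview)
Your proposal is correct and follows essentially the same approach as the paper: induct on $k$, write $p_k = \out{\psi}{k} - \psi_{k,0}$, substitute $\out{\psi}{\kappa_j} = \psi_{\kappa_j,0} + p_{\kappa_j}$ in the right side of \eqref{eq:outer-eq}, separate terms with no $p$-factor from those with at least one, exploit the cancellation between the no-$p$ terms and the grid equation for $\sigma_{k,0}$ via the $\m{O}(\dist^2)$ discrepancy $A_\Is(\out{\psi}{0}) - \partial_\Is A_\Is(0)\sigma_{0,0}$, and then apply Proposition~\ref{prop:outer-op}. Two minor remarks: the paper localizes to the domain $\m{D} = c\m{I}$ with Cauchy surface $c\Gamma$ rather than $\{t=t_0\}$ (since all estimates are stated for $\{\euc \leq 1/2\}$ and the exterior is handled trivially by finite speed of propagation), and the log factors you worry about do not come from Proposition~\ref{prop:inviscid-row} but from the inductive hypothesis itself---the base case $p_0 \lesssim \dist^2$ carries no log, which is precisely why $a_k = \tbf{1}(k \geq 2)$.
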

\noindent
Together, Propositions~\ref{prop:zeroth-col} and \ref{prop:out-bound} imply Proposition~\ref{prop:outersize}.
\begin{proof}
  Let $\ti{\m{A}}$ denote a variation on $\m{A}$ from \eqref{eq:multilinear} defined with $\Der^r A(\out{\psi}{0})$ in place of $\Der^r A(0)$, and similarly for $\ti{\m{B}}$.
  Then by \eqref{eq:outer-eq}, $p_k \coloneqq \out{\psi}{k} - \psi_{k,0}$ satisfies
  \begin{equation}
    \label{eq:diff-eq-long}
    \begin{aligned}
        \partial_t(\psi_{k,0} + p_k) + \partial_x[A(\out{\psi}{0})(\psi_{k,0} + p_k)] &=
  - \partial_x \sum_\bigsub{\# \kappa \geq 1, \, \kappa_j \geq 1\\ \abs{\kappa} = k} \ti{\m{A}}(\psi_{\kappa,0} + p_\kappa)\\[-4pt]
                                                                                &\hspace{1.5cm}+ \partial_x \sum_\bigsub{\kappa_{\geq 1} \hspace{0.5pt}\geq 1\\ \abs{\kappa} = k-1} \ti{\m{B}}(\psi_{\kappa,0} + p_\kappa).
    \end{aligned}
  \end{equation}
  We expand the the multilinear terms on the right and group them into terms with no factors of $p$ and terms with at least one factor of $p$, writing:
  \begin{equation}
    \label{eq:diff-eq}
    \partial_t(\psi_{k,0} + p_k) + \partial_x[A(\out{\psi}{0})(\psi_{k,0} + p_k)] = \partial_x(h_0 + h_{\geq 1}).
  \end{equation}
  Proceeding by induction, suppose we have proved the proposition for $k' < k$.
  Because $\mr{\psi}$ is smooth and $\out{\psi}{0}$ is smooth away from the origin, finite speed of propagation implies that the desired estimate holds on any closed set disjoint from the origin.
  It thus suffices to prove the estimate in $\m{D} \coloneqq c \m{I}$ for the region $\m{I}$ defined in \eqref{eq:data-contour} and a constant $c > 0$ chosen so that $c \m{I} \subset \{\euc \leq 1/2\}$.
  By the construction of $\m{I}$, $\m{D}$ is causally closed with Cauchy hypersurface $c\Gamma$, also from \eqref{eq:data-contour}.

  First consider $h_{\geq 1}$, which only involves differences $p_{k'}$ with $k' < k$.
  If $k' = 0$, \eqref{eq:inviscid-row-matching} from Proposition~\ref{prop:inviscid-row} implies that $p_0 \lesssim \dist^2$, while $\psi_{0,0} \lesssim \dist$.
  Note that there is no log-factor in $p_0$.
  If $1 \leq k' < k$, \eqref{eq:distance-relation} and Proposition~\ref{prop:zeroth-col} yield
  \begin{equation*}
    \abss{\psi_{k',0}} \lesssim \dist^{-4k' + 1} \lesssim \euc^{-2k'} \dist \gg \abs{\log \euc} \euc^{-2k' + 1}.
  \end{equation*}
  In each case, our bound on $\psi_{k',0}$ exceeds that on $p_{k'}$, so $h_{\geq 1}$ is dominated by terms with a single factor of $p$.

  These terms of $h_{\geq 1}$ involve products of the form $p_{\kappa_i} \prod_{j \neq i} \psi_{\kappa_j,0}$.
  Using \eqref{eq:distance-relation}, the inductive hypothesis, and Proposition~\ref{prop:zeroth-col}, we find
  \begin{align*}
    p_{\kappa_i} \prod_{j \neq i} \psi_{\kappa_j,0} \lesssim \abs{\log \dist} \euc^{-2\kappa_i + 1} \dist^{\sum_{j \neq i}(-4\kappa_j + 1)} &= \abs{\log \dist} \euc^{-2\kappa_i + 1} \dist^{-4(\abs{\kappa} - \kappa_i) + \# \kappa}\\[-2pt]
                                                                                                                                            &\lesssim \abs{\log \dist} \euc^{-2 \abs{\kappa} + 1} \dist^{\# \kappa}.
  \end{align*}
  The $\ti{\m{A}}$-terms in \eqref{eq:diff-eq-long} satisfy $\abs{\kappa} = k$ and $\#\kappa \geq 1$, so we conclude that these contribute no more than $\abs{\log \euc} \euc^{-2 k + 1} \dist$ to $h_{\geq 1}$.

  For $\ti{\m{B}}$, we note that $\abs{\kappa} = k - 1$ and $\# \kappa \geq 0$, while the additional $\partial_x$ causes a loss of $\dist^{-3}$ by the inductive hypothesis.
  Hence a similar calculation yields
  \begin{equation*}
   \max\Big\{p_{\kappa_i} \partial_x \psi_{\kappa_0,0} \prod_{j \neq i,0} \psi_{\kappa_j,0}, \,\partial_x p_{\kappa_0} \prod_{j > 0} \psi_{\kappa_j,0}\Big\} \lesssim \abs{\log \euc} \euc^{-2k + 3} \dist^{-3} \lesssim \abs{\log \euc} \euc^{-2k + 1} \dist.
  \end{equation*}
  Therefore $\abs{h_{\geq 1}} \lesssim_k \abs{\log \euc} \euc^{-2 k + 1} \dist.$
  Moreover, as noted earlier, $p_0$ has no log-loss.
  It follows that when $k = 1$, we in fact have $\abs{h_{\geq 1}} \lesssim \euc^{-2 k + 1} \dist.$
  Recalling the indicator function $a_k = \tbf{1}(k \geq 2)$, we can thus write
  \begin{equation*}
    \abs{h_{\geq 1}} \lesssim \abs{\log \euc}^{a_k} \euc^{-2 k + 1} \dist \And \abs{\partial_x h_{\geq 1}} \lesssim \abs{\log \euc}^{a_k} \euc^{-2 k + 1} \dist^{-2}.
  \end{equation*}

  Next, we show that $h_0$ nearly balances the terms on the left of \eqref{eq:diff-eq} that do not involve $p_k$.
  We wish to define $h^\res$ by
  \begin{equation*}
    \partial_x h^\res = \partial_x h_0 - \partial_t \psi_{k,0} - \partial_x[A(\out{\psi}{0}) \psi_{k,0}] = h_0 - \partial_t \sigma_{k,0} e_1 - \partial_x[A_1(\out{\psi}{0})\sigma_{k,0}].
  \end{equation*}
  To select a solution of this equation, we note that \eqref{eq:sigma-grid} yields
  \begin{equation*}
    \partial_t \sigma_{k,0} = -\frac{1}{2} \partial_1 A_1^1(0) \partial_x \sum_\bigsub{0 \leq k' \leq k} \sigma_{k',0} \sigma_{k - k',0} + B_1^1(0) \partial_x^2 \sigma_{k-1,0}.
  \end{equation*}
  We therefore set
  \begin{equation*}
    h^\res \coloneqq h_0 + \frac{1}{2} \partial_1 A_1^1(0) \sum_\bigsub{0 \leq k' \leq k} \sigma_{k',0} \sigma_{k - k',0} e_1 - B_1^1(0) \partial_x \sigma_{k-1,0} e_1 - A_1(\out{\psi}{0}) \sigma_{k,0}.
  \end{equation*}
  Projecting on the nonshocking directions, we find $h_\sperp^\res = (h_0)_\sperp - A_1^\sperp(\out{\psi}{0}) \sigma_{k,0}.$
  Note that \eqref{eq:homog-matching} yields $\out{\psi}{0} \lesssim \dist$.
  Because $A_1^\sperp(0) = 0$, the mean value theorem implies that $A_1^\sperp(\out{\psi}{0}) \lesssim \dist$, so $A_1^\sperp(\out{\psi}{0}) \sigma_{k,0} \lesssim \dist^{-4k + 2}$.
  Next, \eqref{eq:diff-eq-long} shows that the $\ti{\m{A}}$-terms in $h_0$ are controlled by
  \begin{equation*}
    \prod_j \psi_{\kappa_j,0} \lesssim \dist^{-4\abs{\kappa} + \# \kappa + 1} \lesssim \dist^{-4k + 2}.
  \end{equation*}
  One can readily check the same bound for the $\ti{\m{B}}$-terms, so in total \eqref{eq:distance-relation} yields
  \begin{equation*}
    \abss{h_\sperp^\res} \lesssim \dist^{-4k + 2} \lesssim \euc^{-2k + 1}.
  \end{equation*}
  Now consider the shocking component.
  We view $\ti{\m{A}}$-terms with $\#\kappa \geq 2$ and $\ti{\m{B}}$-terms with $\#\kappa \geq 1$ as error.
  Using Proposition~\ref{prop:zeroth-col}, these are bounded by $\dist^{-4k + 3}$.
  For the low-order terms, we exploit cancellation with $\partial_t \sigma_{k,0}$, writing
  \begin{align*}
    h_1^\res = - \frac{1}{2} \sum_{0 \leq k' \leq k} [\partial_1 A_1^1(\out{\psi}{0}) - \partial_1 A_1^1(0)] \sigma_{k',0} \sigma_{k - k',0} + [B_1^1(\out{\psi}{0}) - B_1^1&(0)] \partial_x \sigma_{k-1,0}\\[-2pt]
    &+ \m{O}(\dist^{-4k + 3}).
  \end{align*}
  Because $\out{\psi}{0} \lesssim \dist$, the differences involving $A$ and $B$ are order $\dist$, so Proposition~\ref{prop:zeroth-col} yields $h_1^\res \lesssim \dist^{-4k + 3}$.
  Taking a derivative, we find
  \begin{equation*}
    \abss{\partial_x h_1^\res} \lesssim \dist^{-4k} = \dist^{-4k + 2} \dist^{-2} \lesssim \euc^{-2k + 1} \dist^{-2}.
  \end{equation*}
  Now let $h \coloneqq h^\res + h_{\geq 1}$, so $\partial_t p_k + \partial_x[A(\out{\psi}{0})p_k] = \partial_x h$.
  Gathering the above results, we have
  \begin{equation*}
    \abs{h_\sperp} \lesssim \euc^{-2k +1} \And \abs{\partial_x h_1} \lesssim \abs{\log \euc}^{a_k} \euc^{-2k + 1} \dist^{-2}.
  \end{equation*}
  We therefore satisfy the hypotheses of Proposition~\ref{prop:outer-op} with $F \equiv 0$ and $m = n = 0$.
  Throughout our estimates, $\partial_t$ and $\partial_x$ cause $\dist^{-2} \lesssim \euc^{-1}$ and $\dist^{-3}$ losses, respectively, so the hypotheses in fact hold for all $m,n \in \N_0$.
  Thus \eqref{eq:out-bound} follows from Proposition~\ref{prop:outer-op}.
\end{proof}

\subsection{Renormalization}

To treat the remaining grid terms, we require a few more technical tools.
We will analyze hyperbolic equations of the form
\begin{equation}
  \label{eq:model}
  \partial_t \phi + M \partial_x \phi = \partial_x F,
\end{equation}
where $M$ is a constant full-rank matrix and $\phi$ is a vector-valued unknown.
For example, from \eqref{eq:omega-grid}, we might have $\phi = \omega_{k,\ell}$, $M = A_\sperp^\sperp(0)$, and $F$ a spatial derivative of a function of $\psi_{\leq k, < \ell}$.

Near the origin, dynamics are largely driven by the shocking component $\sigma$.
Its characteristic vector field is parallel to $\partial_t$ but transverse to $\partial_x$ at the origin, so $\partial_t\sigma$ is better behaved than $\partial_x \sigma$.
For this region, it is often advantageous to rearrange or ``renormalize'' \eqref{eq:model} so that it is driven by time derivatives rather than space derivatives.
We formalize this procedure in the following lemma.
\begin{lemma}
  \label{lem:renorm}
  Let $n \in \N_0$, $M$ be a constant full-rank matrix, and $F$ and $G$ be sufficiently smooth.
  If $\phi$ satisfies $\rd_t \phi + M \rd_x \phi = \rd_x^{n + 1} F + G,$ then
  \begin{equation}
    \label{eq:renorm}
    (\rd_t + M \rd_x) (\phi - \bR_n [M, F]) = (-1)^{n+1} M^{-(n+1)} \rd_t^{n+1} F + G,
  \end{equation}
  where $\bR_n[M, \anon] \colon \m{C}^k \to \m{C}^{k - n}$ is the linear map
  \begin{equation*}
    \bR_n [M, F] \coloneqq \sum_{j = 0}^n (-1)^j M^{-(j+1)} \rd_t^j \rd_x^{n - j} F.
  \end{equation*}
\end{lemma}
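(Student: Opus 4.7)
The identity is purely algebraic: since $M$ is a constant matrix, the operators $\partial_t$, $\partial_x$, and $M^{\pm 1}$ all commute. My plan is therefore to verify the lemma by computing $(\partial_t + M\partial_x)\bR_n[M,F]$ directly and observing that the defining sum for $\bR_n$ telescopes under this operator. The PDE for $\phi$ is linear with constant coefficients, so once the identity for $\bR_n[M,F]$ is in hand, the conclusion follows by subtraction.

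Concretely, write $\bR_n[M,F] = \sum_{j=0}^{n} (-1)^j M^{-(j+1)} \partial_t^j \partial_x^{n-j} F$ and distribute $\partial_t + M\partial_x$ termwise. The $\partial_t$ piece produces a sum indexed by $j \in \{0,\dots,n\}$ with general term $(-1)^j M^{-(j+1)} \partial_t^{j+1} \partial_x^{n-j} F$; reindexing $k = j+1$ rewrites it as a sum over $k \in \{1,\dots,n+1\}$ with general term $(-1)^{k-1} M^{-k} \partial_t^k \partial_x^{n-k+1} F$. The $M\partial_x$ piece shifts the $M$-power up by one while incrementing $\partial_x$, yielding a sum over $k \in \{0,\dots,n\}$ with general term $(-1)^k M^{-k} \partial_t^k \partial_x^{n-k+1} F$. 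In the overlap $k \in \{1,\dots,n\}$ the two general terms agree up to opposite sign and hence cancel; only the endpoints survive. The $k = 0$ endpoint of the $M\partial_x$ sum contributes $\partial_x^{n+1} F$, and the $k = n+1$ endpoint of the $\partial_t$ sum contributes $(-1)^n M^{-(n+1)} \partial_t^{n+1} F$.

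Thus $(\partial_t + M\partial_x)\bR_n[M,F] = \partial_x^{n+1} F + (-1)^n M^{-(n+1)} \partial_t^{n+1} F$. Subtracting this identity from the hypothesis $\partial_t \phi + M \partial_x \phi = \partial_x^{n+1} F + G$ cancels the $\partial_x^{n+1} F$ forcing and yields \eqref{eq:renorm}; the sign $(-1)^{n+1}$ arises because the surviving time-derivative contribution is moved to the right-hand side.

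No substantive obstacle is anticipated --- the argument is a one-step telescoping calculation, and the only real risk is a bookkeeping slip in the exponents of $M$ or the alternating signs. As a sanity check, the $n = 0$ case reduces to $(\partial_t + M\partial_x)(\phi - M^{-1} F) = G - M^{-1}\partial_t F$, which is immediate. One could alternatively proceed by induction on $n$ (apply the $n-1$ case with $F$ replaced by $\partial_x F$, then the $n=0$ case to absorb the resulting residue), but this route is strictly more verbose than the direct telescoping and offers no additional insight, so I would not pursue it.
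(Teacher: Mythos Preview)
Your proof is correct. The telescoping computation is carried out accurately: the cancellation over $k\in\{1,\dots,n\}$ and the identification of the two surviving endpoints are exactly right, and the final sign is handled properly.

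The paper instead proves the lemma by induction on $n$: it verifies the $n=0$ case directly, then for the inductive step writes $\partial_x^{n+2}F = \partial_x^{n+1}(\partial_x F)$, applies the hypothesis for $n$ with $\partial_x F$ in place of $F$, and then applies the $n=0$ case once more to absorb the remaining $\partial_x$-derivative; the identity $\bR_{n+1}[M,F] = \bR_n[M,\partial_x F] + (-1)^{n+1} M^{-(n+2)}\partial_t^{n+1}F$ closes the loop. This is precisely the alternative you describe and dismiss at the end of your proposal. Your direct telescoping is slightly more efficient and makes the mechanism (that $\bR_n$ is designed so that $\partial_t + M\partial_x$ collapses all interior terms) more transparent; the paper's induction is a bit more modular but, as you note, more verbose. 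Either approach is entirely adequate for this elementary identity.
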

\begin{proof}
  Proceeding by induction, we first consider $n = 0$.
  Direct computation yields
  \begin{equation*}
    (\rd_t + M \rd_x) (\phi - M^{-1}  F) = -M^{-1} \rd_t F + G.
  \end{equation*}
  Next, assuming we have shown \eqref{eq:renorm} for some $n \in \N_0$, we show it for $n + 1$.
  Using $\rd_x^{n+2}F = \rd_x^{n+1} (\rd_x F)$, the inductive hypothesis yields
  \begin{equation*}
    (\rd_t + M \rd_x) (\phi - \bR_n [M, \rd_x F]) = \rd_x[(-1)^{n+1} M^{-(n+1)} \rd_t^{n+1} F] + G.
  \end{equation*}
  Applying the $n = 0$ case, we find
  \begin{equation*}
    (\rd_t + M \rd_x) (\phi - \bR_n [M, \rd_x F] - (-1)^{n+1} M^{-(n + 2)} \rd_t^{n+1} F) = (-1)^n M^{-(n + 2)} \rd_t^{n + 2} F + G.
  \end{equation*}
  Noting that $\bR_{n + 1} [M, F] = \bR_n [M, \rd_x F] + (-1)^{n+1} M^{-(n + 2)} \rd_t^{n+1} F$, the result follows from induction.
\end{proof}

\subsection{Integral bounds}

After renormalization, we project \eqref{eq:model} onto an eigenvector of $A$ and integrate along its characteristic.
Here, we record bounds on the resulting integrals.
In the following, let $\m{D} \subset \{\euc \leq 1/2\}$ be a causally closed domain with Cauchy hypersurface $\Sigma$.
Let $\ti\gamma_I$ denote the characteristic of $\partial_t + \lambda_{(I)}(\out{\psi}{0})\partial_x$ beginning at $\Sigma$ and ending at some $(t, x) \in \m{D}$.
Let $\gamma_{I'}$ and $\gamma_1$ denote ``frozen'' characteristics corresponding to $\partial_t + \lambda_{(I')}(0) \partial_x$ and $\partial_t + \partial_1 A_1^1(0) \sigma_{0,0} \partial_x$, respectively, with the same initial and terminal behavior.
All are affinely parameterized, meaning the parameter increases at rate one along the defining vector field.
\begin{lemma}
  \label{lem:integrals}
  Fix $2 \leq I' \leq N$ and $(t, x) \in \m{D}$.
  For all $r > 1$, and $a,q \geq 0$,
  \begin{align}
    \int \euc^{-r} \circ \ti\gamma_{I'} &\lesssim_r \euc^{-r + 1}(t, x),\label{eq:nonshock}\\
    \int \dist^{-2}\euc^{-r} \circ \ti\gamma_{I'} &\lesssim_r \euc^{-r+1/3}(t,x),\label{eq:nonshock-e-and-d}\\
    \int \abs{\log \euc}^a \euc^{-r} \dist^{-q}  \circ \ti\gamma_1 &\lesssim_{r,a,q} \abs{\log \euc(t,x)}^a \euc^{-r + 1}(t,x) \dist^{-q}(t, x),\label{eq:shock}\\
    \int \euc^{-1} \dist^{-q} \circ \ti\gamma_1 &\lesssim_q \abs{\log \euc(t,x)} \dist^{-q}(t,x).\label{eq:shock-log}
  \end{align}
  Moreover, the same estimates hold with $\gamma$ in place of $\ti\gamma$.
\end{lemma}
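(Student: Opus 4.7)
\emph{Approach.} Each bound follows by parametrizing the relevant characteristic and exploiting monotonicity of $\euc$ (for nonshocking) or conservation of $\cub$ (for shocking). I would first reduce the $\ti\gamma$ bounds to the frozen $\gamma$ bounds: since $|\lambda_{(I)}(\out{\psi}{0}) - \lambda_{(I)}(0)| \lesssim |\out{\psi}{0}| \lesssim \dist$, a Gronwall comparison of the two flows shows the perturbed and frozen trajectories agree to sufficient precision that their integrals coincide up to multiplicative constants, so it suffices to work with the straight-line frozen curves.

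\emph{Nonshocking case.} For $\gamma_{I'}$ with $I' \geq 2$, parametrize backward from $(t,x)$ as $\tau \mapsto (t - \tau, x - \lambda\tau)$ with $\lambda = \lambda_{(I')}(0) \neq 0$. Since $\Sigma$ is uniformly spacelike with respect to $A(0)$, the integration range $[0,T]$ satisfies $T \lesssim \mathsf{S}^{-1}$. Direct computation yields
\begin{equation*}
  \euc^2(\tau) = \euc(t,x)^2 - 2\tau(t + \lambda x) + (1+\lambda^2)\tau^2,
\end{equation*}
and a case analysis on the sign of $t + \lambda x$, combined with the restriction $\m{D} \subset \{\euc \leq 1/2\}$ and the spacelike geometry of $\Sigma$, yields a lower bound of the form $\euc(\tau) \gtrsim \euc(t,x) + \tau$ on $[0,T]$. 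Then $\int_0^T \euc^{-r}\,d\tau \lesssim_r \euc(t,x)^{-r+1}$ for $r > 1$, proving \eqref{eq:nonshock}. For \eqref{eq:nonshock-e-and-d}, I use $\dist^{-2} \lesssim \euc^{-2/3}$ from Lemma~\ref{lem:fuest} to reduce to \eqref{eq:nonshock} at exponent $r + 2/3 > 1$.

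\emph{Shocking case.} For $\gamma_1$ with speed $\partial_1 A_1^1(0)\sigma_{0,0} = -a\cub$, the proof of Proposition~\ref{prop:inviscid-row} gives $\bar{\partial}_t \cub = 0$, so $\cub$ is conserved along $\gamma_1$; via \eqref{eq:fd}, so is $\dist^2 - |t|$. Since $|t|$ increases backward from $(t,x)$, $\dist(\tau) \geq \dist(t,x)$ throughout the integration, and I factor $\dist(t,x)^{-q}$ outside. Parametrizing $\gamma_1$ backward as $(t - \tau, x + a\cub\tau)$ with $\cub = \cub(t,x)$ fixed, I compute
\begin{equation*}
  \euc^2(\tau) = \euc(t,x)^2 + (1 + a^2\cub^2)\tau^2 + 2\tau(a\cub x - t).
\end{equation*}
Because $a$ and $b$ have the same sign and $t \leq 0$, the relation $x = a|t|\cub + b\cub^3$ forces $\cub$ to share the sign of $x$, so $a\cub x \geq 0$ and hence $a\cub x - t \geq 0$. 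Thus $\euc^2(\tau) \geq \euc(t,x)^2 + \tau^2$, yielding $\euc(\tau) \gtrsim \euc(t,x) + \tau$. For $r > 1$, dominating $|\log \euc(\tau)|^a$ by $|\log \euc(t,x)|^a$ on the range produces \eqref{eq:shock}. For $r = 1$, the standard bound $\int_0^T (\euc(t,x) + \tau)^{-1}\,d\tau \lesssim |\log \euc(t,x)|$ gives \eqref{eq:shock-log}.

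\emph{Main obstacle.} The most delicate ingredient is establishing $\euc(\tau) \gtrsim \euc(t,x) + \tau$ along backward characteristics. The shocking case is clean thanks to the conservation of $\cub$ and the resulting sign information for $a\cub x - t$. The nonshocking case is more delicate because the characteristic can pass through a point of closest approach to the origin before reaching $\Sigma$; controlling this requires the uniform spacelikeness of $\Sigma$ (keeping $T$ bounded) together with the domain restriction $\m{D} \subset \{\euc \leq 1/2\}$ to ensure the closest-approach distance stays comparable to $\euc(t,x)$ on the integration range.
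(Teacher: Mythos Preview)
Your treatment of \eqref{eq:nonshock-e-and-d} has a genuine error: the pointwise bound $\dist^{-2} \lesssim \euc^{-2/3}$ is false. Lemma~\ref{lem:fuest}(iii) gives $\dist \asymp |t|^{1/2}$ in the region $|t| \gtrsim |x|^{2/3}$, and there $\euc \asymp |t|$, so $\dist^{-2} \asymp \euc^{-1} \gg \euc^{-2/3}$. (Equivalently, \eqref{eq:distance-relation} gives $\dist \lesssim \euc^{1/3}$, which is $\dist^{-2} \gtrsim \euc^{-2/3}$ --- the reverse inequality.) Substituting the correct bound $\dist^{-2} \lesssim \euc^{-1}$ into \eqref{eq:nonshock} loses everything and yields only $\euc^{-r}$. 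The paper's proof cannot avoid this: it locates the two points where the nonshocking characteristic crosses the cubic horns $|\sfx| = |\sft|^{3/2}$, splits the integral into three pieces, and estimates each separately using the correct asymptotic for $\dist$ on that piece. The middle piece (inside the horns, where your bound fails worst) is short --- of length $\asymp \euc(t,x)^{3/2}$ --- which is what rescues the estimate.

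Your reduction from $\ti\gamma$ to $\gamma$ via Gr\"onwall is also underspecified, and for the shocking characteristic it is not clear it works. The speed perturbation along $\ti\gamma_1$ is $\lambda_{(1)}(\out{\psi}{0}) - (-a\cub) = \m{O}(\dist^2)$, but the integration runs back to $\Sigma$ over a time interval of order one, on which $\dist^2 \asymp |t|$ grows to order one; the accumulated deviation in $x$ is therefore $\m{O}(1)$, which is not obviously negligible. The paper does not attempt this reduction. Instead it introduces the eikonal function $u$ (constant along $\ti\gamma_1$) and proves $\dist^2 \asymp |t| + u^2$ in Lemma~\ref{lem:eikonal-distance}; this plays exactly the role your conservation of $\cub$ plays for $\gamma_1$, but for the true characteristic. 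With that in hand, the shocking integrals are handled by a case split on the relative sizes of $|t|$ and $|u|$. Your argument for the frozen $\gamma_1$ is clean and correct, but the passage to $\ti\gamma_1$ needs either this eikonal lemma or a much more careful comparison than the one you sketch.
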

To treat the true shocking characteristic $\ti \gamma_1$, it is helpful to introduce a corresponding Lagrangian coordinate $u$, termed the eikonal function.
It satisfies
\begin{equation}
  \label{eq:eikonal}
  \partial_t u + \lambda_{(1)}(\out{\psi}{0}) \partial_x u = 0, \quad u(t_0, x) = x - c,
\end{equation}
where $c \in \R$ is chosen so that $u(0,0) = 0$.
We expect $u$ to resemble a multiple of $\cub$ near the origin.
We only require a weaker result, that the ``cubic distance'' associated to $u$ is similar to $\dist$.
\begin{lemma}
  \label{lem:eikonal-distance}
  In $\m{D}$, $\dist^2 \asymp \abs{t} + u^2$.
\end{lemma}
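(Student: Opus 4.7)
The goal reduces to comparing $\fu^2$ with $u^2$ up to $\m{O}(\abs{t})$: since $\dist^2 = \abs{t} + 3 a^{-1} b \fu^2$ with $3 a^{-1} b > 0$ (as $a$ and $b$ have the same sign), the claimed equivalence $\dist^2 \asymp \abs{t} + u^2$ follows from establishing $\fu \asymp u$ in the regime where both exceed $\abs{t}^{1/2}$. The key observation is that $u$ and $\fu$ are Lagrangian labels for closely related characteristic vector fields. Indeed, \eqref{eq:coeff} gives $\partial_1 \lambda_{(1)}(0) = -a$, and combined with $\sigma^{(0)} = \fu + \m{O}(\dist^2)$ from \ref{hyp:expansion}, Taylor expansion yields
\[
  \lambda_{(1)}(\out{\psi}{0}) = -a\fu + \m{O}(\dist^2),
\]
so the eikonal field $L = \partial_t + \lambda_{(1)}(\out{\psi}{0}) \partial_x$ differs from the cubic transport $L^{\mathrm{cub}} = \partial_t - a \fu \partial_x$ by a perturbation of size $\dist^2$.

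First I would establish $u \asymp \fu$ on the initial slice at $t = t_0$. There $u(t_0, x) = x - c$ is linear in $x$, while $\fu(t_0, x)$ is defined by the cubic relation $x = a\abs{t_0}\fu + b\fu^3$, which is a smooth diffeomorphism of $\R$ since $a\abs{t_0}$ is a fixed nonzero constant. Both vanish at the $t_0$-initial position of the characteristic through the spacetime origin (by the choice of $c$ together with the concentration of the incipient shock along $\fu = 0$), so $u$ and $\fu$ are smoothly and invertibly related near this point on the initial slice, which gives $u \asymp \fu$ there.

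To propagate the comparison into $\m{D}$, I would exploit that $u$ is exactly constant along $\ti\gamma_1$ while $\fu$ is constant along the nearby cubic characteristic $\gamma^{\mathrm{cub}}$. For any terminal point $(t, x) \in \m{D}$, the two characteristics through it differ in initial position by at most
\[
  \int_{t_0}^{t} \abs{\lambda_{(1)}(\out{\psi}{0}) + a\fu} \, dt' \lesssim \int_{t_0}^{t} \dist^2 \, dt'.
\]
In the regime $\fu^2 \gtrsim \abs{t}$, where $\dist \asymp \abs{\fu}$ by Lemma~\ref{lem:fuest}, the cubic relation forces $\abs{x} \gtrsim \abs{\fu}^3$, so the characteristic displacement is a lower-order correction to the relevant spatial scale and $u \asymp \fu$ survives at the terminal point via Step~1. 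In the complementary regime $\fu^2 \lesssim \abs{t}$, both sides of the desired equivalence are already $\asymp \abs{t}$ and the lemma holds automatically.

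The main obstacle is that the direct evolution computation gives $L\fu = \fm \cdot \m{O}(\dist^2) = \m{O}(1)$: bounded but not small, because $\fm \dist^2 = 1$. Consequently one cannot integrate $L\fu$ along $\ti\gamma_1$ to conclude that $\fu$ is nearly constant in absolute terms. The remedy is to compare characteristic \emph{curves} rather than values of $\fu$ along them, and to exploit the different spatial scalings in the two regimes above so that the $\m{O}(\dist^2)$ perturbation between $L$ and $L^{\mathrm{cub}}$ controls the label displacement precisely where $u$ and $\fu$ would otherwise be comparable to each other.
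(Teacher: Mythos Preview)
Your characteristic-displacement argument has a genuine gap. The bound
\[
  |\tilde\gamma_1(t_0) - \gamma^{\mathrm{cub}}(t_0)| \;\le\; \int_{t_0}^{t} |\lambda_{(1)}(\out{\psi}{0}) + a\cub|\,dt'
\]
is not valid without a Gr\"onwall factor, and the relevant Lipschitz constant here is $|\partial_x(-a\cub)| = \cubder \asymp \dist^{-2}$, which blows up at the origin. If you carry out the computation carefully the amplification and the forcing partially cancel, but the result is still only a displacement of order~$1$ at $t_0$, hence $|u - \alpha\cub| = \m O(1)$ for some $\alpha>0$. That estimate is useless near the origin: it does not give $|t| + u^2 \asymp |t| + \cub^2$ when both $|t|$ and $\cub^2$ are small. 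Your case split does not rescue this. In the regime $\cub^2 \lesssim |t|$ you assert the conclusion is automatic, but it is not: you still need $u^2 \lesssim |t|$, and an $\m O(1)$ control on $u - \alpha\cub$ gives only $u^2 = \m O(1)$. In the regime $\cub^2 \gtrsim |t|$ the displacement (of order $1$) is not small compared to the initial position $x_0^c \asymp \cub_*$, which is itself bounded in $\m D$.

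The paper avoids this by working with $\partial_x u$ rather than $u$ directly. The quantity $\dist^2\,\partial_x u$ satisfies $L(\dist^2 \partial_x u) = \m O(\dist^{-1})\cdot \dist^2 \partial_x u$, and $\dist^{-1}$ \emph{is} integrable along the shocking characteristic, so Gr\"onwall closes and gives $\partial_x u \asymp \dist^{-2} \asymp \partial_x\cub$. A separate ODE argument then controls the zero-curve of $u$, showing it satisfies $\gamma(t) = \m O(|t|^2)$ rather than merely $\m O(1)$; this is the precision needed near the origin. Integrating $\partial_x u$ from $\gamma(t)$ and inverting yields the two directions of the lemma. The point you are missing is that comparing Lagrangian labels of nearby but singular flows loses exactly the information you need; comparing their Jacobians does not.
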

\begin{proof}
  We first control $\dist^{2}\partial_x u$.
  By \ref{hyp:expansion}, \eqref{eq:fm}, and \eqref{eq:coeff},
  \begin{equation}
    \label{eq:lambda-expansion}
    \lambda_{(1)}(\out{\psi}{0}) = -a\cub + \m{O}(\dist^2) \And \partial_x[\lambda_{(1)}(\out{\psi}{0})] = -\cubder + \m{O}(\dist^{-1}).
  \end{equation}
  Also, \eqref{eq:cubic-Burgers} yields $(\partial_x - a \cub\partial_x)\cub = 0$.
  Writing $L \coloneqq \partial_t + \lambda_{(1)}(\out{\psi}{0}) \partial_x$, \eqref{eq:fd} thus implies
  \begin{equation*}
    L(\dist^2) = [\partial_t - a \cub \partial_x + \m{O}(\dist^2) \partial_x](\dist^2) = -1 + \m{O}(\dist).
  \end{equation*}

  Using these observations, we commute \eqref{eq:eikonal} with $\dist^2 \partial_x$ to find
  \begin{equation*}
    L(\dist^2 \partial_x u) = \m{O}(\dist^{-1}) \dist^2 \partial_x u.
  \end{equation*}
  Now $\dist^{-1}$ is integrable along the characteristic $\ti{\gamma}_1$, which resembles $\partial_t$ near the origin.
  It follows from Gr\"onwall that $\dist^2 \partial_x u$ is continuous.
  Since it is initially positive, it must remain so.
  Hence
  \begin{equation}
    \label{eq:eikonal-deriv}
    \partial_x u \sim \const \dist^{-2} = \const \partial_x \cub
  \end{equation}
  near the origin, for some constant $\const > 0$.

  Next, let $\gamma(t)$ denote the $x$-coordinate of the unique root of $u$ at time $t$, so $u\big(t,\gamma(t)\big) = 0$.
  By \eqref{eq:eikonal}, this satisfies $\dot\gamma = \lambda_{(1)}\circ\out{\psi}{0}\big(t, \gamma(t)\big)$ with $\gamma(0) = 0$.
  Now \eqref{eq:fu} implies $\abs{a\cub(t, x)} \leq \abs{x}\abs{t}^{-1}$.
  Thus \eqref{eq:lambda-expansion} yields
  \begin{equation*}
    \abs{\dot\gamma} \leq \abs{t}^{-1}\abs{\gamma} + F
  \end{equation*}
  with $F \lesssim \dist^2$.
  By Lemma~\ref{lem:fuest}, there exists $C > 0$ such that $F \leq C \abs{t}$ where $\abs{\gamma} \leq \abs{t}^{3/2}$.
  For the moment, cut off $F$ by $0$ where $\abs{t} > \abs{t}^{3/2}$ and let $\h\gamma$ denote the corresponding solution.
  Then Gr\"onwall yields $\h\gamma \lesssim \abs{t}^2$.
  Because $\abs{t} \leq \abs{t}^{3/2}$ for $\abs{t}$ small, $\h\gamma$ does not experience the $F$-cutoff.
  It follows that $\h\gamma = \gamma$ near the origin and $\gamma \lesssim \abs{t}^2$.

  We now fix $t < 0$ and integrate \eqref{eq:eikonal-deriv} in $x$ from $\gamma(t)$ to find
  \begin{equation*}
    u \lesssim \cub + \cub(t,\gamma) \lesssim \cub + \abs{t}.
  \end{equation*}
  The second bound follows from Lemma~\ref{lem:fuest}.
  Then
  \begin{equation*}
    \abs{t} + u^2 \lesssim \abs{t} + (\cub + \abs{t})^2 \lesssim \dist^2.
  \end{equation*}

  To show the other direction, we invert \eqref{eq:eikonal-deriv} to find
  \begin{equation*}
    \partial_u x \lesssim \dist^2 \lesssim \max\{\abs{t}, \abs{x}^{2/3}\}.
  \end{equation*}
  Holding $t$ constant and integrating in $u$ from $u = 0$, this yields
  \begin{equation*}
    x - \gamma(t) \lesssim \abs{t u} + \abs{u}^3.
  \end{equation*}
  By Young's inequality,
  \begin{equation*}
    x \lesssim \abs{tu} + \abs{u}^3 + \abs{t}^2 \lesssim \abs{t}^{3/2} + \abs{u}^3.
  \end{equation*}
  Therefore Lemma~\ref{lem:fuest} yields
  \begin{equation*}
    \dist^2 \lesssim \abs{t} + \abs{x}^{2/3} \lesssim \abs{t} + u^2.
    \qedhere
  \end{equation*}
\end{proof}
With this bound in place, we can control the integrals \eqref{eq:nonshock}--\eqref{eq:shock-log}.
\begin{proof}[Proof of Lemma~\ref{lem:integrals}]
  We only treat the unfrozen characteristics $\ti\gamma$, as the proofs in the frozen case $\gamma$ are similar but simpler.
  Let $s \in [s_0, s_1]$ denote the affine parameter, which is only determined up to a shift, and let the upright $\big(\sft(s), \sfx(s)\big)$ denote the coordinates of $\ti\gamma$.

  We first consider the nonshocking bounds.
  Set $s_1 = -\euc(t, x)$.
  Modulo a constant factor, the time component of $\partial_t + \lambda_{(I')} \partial_x$ is no smaller than its space component.
  One can thus readily check that $\abs{s} \asymp \euc \circ \ti\gamma_{I'}(s)$ for all $s \in [s_0, s_1]$.
  This immediately implies \eqref{eq:nonshock}, as
  \begin{equation*}
    \int \euc^{-r} \circ \ti\gamma_{I'} \asymp \int_{s_0}^{s_1} \abs{s}^{-r} \d s \lesssim \abs{s_1}^{-r + 1} \asymp \euc^{-r+1}(t, x).
  \end{equation*}
  
  Next, it suffices to focus on a neighborhood of the origin on which $\lambda_{(I')}(\out{\psi}{0})$ is nonzero.
  By symmetry, we may assume $\lambda_{(I')}(\out{\psi}{0}) \geq c > 0$ for some $c > 0$ that may change from line to line.
  We then claim it suffices to treat $x \geq 0$.
  Indeed, suppose we parametrize by time instead, which changes our integrals by no more than a constant factor.
  Given $x > 0$, let $\ti{\gamma}_{I'}^{\pm}$ denote the characteristics ending at $(t, \pm x)$.
  Then $\sft^- = \sft^+$, so by ODE uniqueness, $\sfx^-$ can never coincide with $\sfx^+$.
  It follows that $\abs{\sfx^-} \geq \abs{\sfx^+}$, so the integrand of \eqref{eq:nonshock-e-and-d} is larger for $\ti{\gamma}_{I'}^+$ than for $\ti{\gamma}_{I'}^-$.
  We are thus free to assume that $x \geq 0$ in $(t, x)$.

  To show \eqref{eq:nonshock-e-and-d}, we return to an affine parametrization ending at $s_1 = -\euc(t, x)$, so $\euc \asymp \abs{s}$ in the integral.
  We thus need to control $\dist$.
  It suffices to work within a small ball $B$ about the origin, as contributions from outside $B$ are order $1$.
  Assume $(t, x) \in B$ with $x \geq 0$.
  Within $B$, $\ti{\gamma}_{I'}$ is roughly a straight line with positive slope, while the curves $\sfx = \pm \abs{\sft}^{3/2}$ are nearly vertical.
  The characteristic thus crosses these at two locations, with parameters $s_- < s_+$.
  We can easily check that
  \begin{equation}
    \label{eq:inside-horn}
    \abs{\sft}\big|_{[s_-,s_+]} \asymp \euc(t, x) \And s_+ - s_- \asymp \abs{\sft}^{3/2} \asymp \euc(t, x)^{3/2}.
  \end{equation}
  
  If $x < \abs{t}^{3/2}$, then $s_+ > s_1$, and we instead set $s_+ = s_1$.
  The characteristic leaves $B$ at some $s_B < s_-$.
  Let $I_1$, $I_2$, and $I_3$ denote the contributions to the integral in \eqref{eq:nonshock-e-and-d} from $s$ in $[s_B, s_-],[s_-,s_+],$ and $[s_+,s_1]$, respectively.
  In $I_1$, $\dist \asymp \abs{\sfx}^{1/3}$ and $\abs{\sfx} \asymp s_+ - s$.
  Using Lemma~\ref{lem:fuest}, we find
  \begin{equation*}
    I_1 = \int_{s_B}^{s_-} \dist^{-2} \euc^{-r} \circ \ti{\gamma}_{I'} \lesssim \int_{-\infty}^{s_-} \abs{s}^{-r} (s_+ - s)^{-2/3} \d s \asymp \abs{s_-}^{-r + 1/3} \asymp \euc(t, x)^{-r + 1/3}.
  \end{equation*}
  For $I_2$, $\dist \asymp \abs{\sft}^{1/2}$, so \eqref{eq:inside-horn} yields
  \begin{equation*}
    I_2 = \int_{s_-}^{s_+} \dist^{-2} \euc^{-r} \circ \ti{\gamma}_{I'} \asymp \euc(t, x)^{-r-1}(s_+ - s_-) \asymp \euc(t, x)^{-r+1/2}.
  \end{equation*}
  Finally, in $I_3$, $\dist \asymp \abs{\sfx}^{1/3}$ and $\abs{\sfx} \asymp s - s_-$, so mimicking $I_1$,
  \begin{equation*}
    I_3 = \int_{s_+}^{s_1} \dist^{-2} \euc^{-r} \circ \ti{\gamma}_{I'} \lesssim \int_{s_+}^{s_1} \abs{s}^{-r} (s - s_-)^{-2/3} \d s \lesssim \euc(t, x)^{-r + 1/3}.
  \end{equation*}
  In combination, we obtain \eqref{eq:nonshock-e-and-d}.

  Turning to the shocking characteristic $\ti{\gamma}_1$, we recall the eikonal function $u$ defined in \eqref{eq:eikonal} and switch to coordinates $(\sft, u)$.
  Then $\ti{\gamma}_1$ is constant in $u$, so throughout we let $u = u(t, x)$ be the value of the eikonal function at the endpoint.
  Because $\ti{\gamma}_1$ is constant in $u$, we can use $s = \sft$ as our parameter.

  For simplicity, we focus on \eqref{eq:shock} with $a = 0$.
  When $a > 0$, the result will follow from analogous computations.
  Combining Lemmas~\ref{lem:fuest} and \ref{lem:eikonal-distance}, we can check that
  \begin{equation*}
    \euc \asymp \abs{t} + \dist^3 \asymp \abs{t} + \abs{u}^3.
  \end{equation*}
  It thus suffices to control
    \begin{equation*}
    \int \euc^{-r} \dist^{-q} \circ \ti{\gamma_1} \asymp \int_{t_0}^t \frac{\dn s}{(\abs{s}^r + \abs{u}^{3r})(\abs{s}^{q/2} + \abs{u}^q)},
  \end{equation*}
  where $t_0$ is the earliest time in our integral.
  We generally assume $r > 1$, which corresponds to \eqref{eq:shock}; we make brief asides to treat the $r = 1$ case \eqref{eq:shock-log}.
  \smallskip
  
  \noindent
  \textit{Case 1}: Suppose $\abs{t} < \abs{u}^3$, in which case $\dist(t, x) \asymp \abs{u}$ and $\euc(t, x) \asymp \abs{u}^3$.
  We break the integral into $I_1,I_2$, and $I_3$ over the intervals $[-\abs{u}^3,t]$, $[-\abs{u}^2, -\abs{u}^3],$ $[t_0, -\abs{u}^2]$, respectively.
  We may assume that these three intervals are nontrivial: otherwise we set their contribution to zero, and the calculation simplifies.

  In $I_1$, we have $\dist(s, u) \asymp \abs{u}$ and $\euc(s, u) \asymp \abs{u}^3$, so
  \begin{equation*}
    I_1 \asymp \abs{u}^{-3r - q} \int_{-\abs{u}^3}^{t} \d s \lesssim \abs{u}^{-3(r-1) - q} \asymp \euc(t, x)^{-r+1} \dist(t, x)^{-q}.
  \end{equation*}
  In $I_2$, $\dist \asymp \abs{u}$ but $\euc \asymp \abs{s}$.
  Hence
  \begin{equation*}
    I_2 \asymp \abs{u}^{-q} \int_{-\abs{u}^2}^{-\abs{u}^3} \abs{s}^{-r} \d s \lesssim u^{-3(r-1) - q} \asymp \euc(t, x)^{-r+1} \dist(t, x)^{-q}.
  \end{equation*}
  Additionally, if $r = 1$, we pick up an additional factor of $\abs{\log \abs{u}} \asymp \abs{\log \euc(t, x)}$, as in \eqref{eq:shock-log}.
  Finally, in $I_3$, $\dist \asymp \abs{s}^{1/2}$ and $\euc \asymp \abs{s}$, so
  \begin{equation*}
    I_3 \asymp \int_{t_0}^{-\abs{u}^2} \abs{s}^{-r - \frac{q}{2}} \d s \lesssim u^{-2(r - 1) - q} \lesssim \euc(t, x)^{-\frac{2(r-1)}{3}} \dist(t, x)^{-q}.
  \end{equation*}
  Similarly, we generate a log if $r = 1$ and $q =  0$.
  Collecting these bounds, we find \eqref{eq:shock} and \eqref{eq:shock-log}.
  \smallskip
  
  \noindent
  \textit{Case 2}: Next suppose $\abs{u}^3 \leq \abs{t} < \abs{u}^2$, so $\dist(t, x) \asymp \abs{u}$ but $\euc(t, x) \asymp \abs{t}$.
  We break the integral into $I_2$ and $I_3$ over $[-\abs{u}^2, t]$ and $[t_0, -\abs{u}^2]$.
  By the bounds used in Case 1,
  \begin{equation*}
    I_2 \asymp \abs{u}^{-q} \int_{-\abs{u}^2}^t \abs{s}^{-r} \d s \lesssim u^{-q} \abs{t}^{-r + 1} \asymp \euc(t, x)^{-r+1} \dist(t, x)^{-q}.
  \end{equation*}
  Again, we pick up $\abs{\log \abs{t}} \asymp \abs{\log \euc(t, x)}$ if $r = 1$.
  Next,
  \begin{equation*}
    I_3 \asymp \int_{t_0}^{-\abs{u}^2} \abs{s}^{-r - \frac{q}{2}} \d s \lesssim u^{-2(r - 1) - q} \lesssim \euc(t, x)^{-r+1} \dist(t, x)^{-q}.
  \end{equation*}
  And again, $r = 1$ and $q = 0$ produces a log.
  Together, these yield \eqref{eq:shock} and \eqref{eq:shock-log}.
  \smallskip

  \noindent
  \textit{Case 3}: Finally, suppose $\abs{t} \geq \abs{u}^2$, so $\dist(t, x) \asymp \abs{t}^{1/2}$ and $\euc(t, x) \asymp \abs{t}$.
  Let $I_3$ denote the entire integral.
  Using the bounds above,
  \begin{equation*}
    I_3 \lesssim \int_{t_0}^t \abs{s}^{-r - \frac{q}{2}} \d s \lesssim \abs{t}^{-r - \frac{q}{2} + 1} \asymp \euc(t, x)^{-r+1} \dist(t, x)^{-q}.
  \end{equation*}
  If $r = 1$ and $q = 0$, we accrue $\abs{\log \euc}$.
  This completes the proof of \eqref{eq:shock} and \eqref{eq:shock-log}, and the lemma.
\end{proof}

\subsection{Bulk grid estimates}
We now control the ``bulk'' grid terms $\psi_{k,\ell} = (\sigma_{k,\ell}, \omega_{k,\ell})^\top$ with $k,\ell \geq 1$.
Recall that $\omega_{k,\ell}$ and $\sigma_{k,\ell}$ satisfy the linear advection equations \eqref{eq:omega-grid} and \eqref{eq:sigma-grid} in $\m{I}_\beta$ with data \eqref{eq:grid-data} on $\Gamma_\beta$.
\begin{proposition}
  \label{prop:grid-est}
  Fix $\eps \in \big(0, \tfrac{1}{6}\big]$ and $\beta \in \big(\tfrac{1}{2 + \eps},\tfrac{1}{2}\big)$.
  Then for all $k, \ell \in \N$, $m, n \in \N_0$,
  \begin{equation}
    \label{eq:grid-est}
    \absb{\partial_t^m\partial_x^n \psi_{k,\ell}} \lesssim_{\eps,\beta,k,\ell,m,n} \nu^{(\ell - 1 - 3\eps)\beta/3} \euc^{-2k - m + 1}\dist^{-3n} \quad \text{in } \m{I}_\beta \setminus \m{I}_{1/2}.
  \end{equation}
\end{proposition}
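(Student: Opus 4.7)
The plan is to establish \eqref{eq:grid-est} by nested induction: first on $\ell$, and within each $\ell$, first on the component (treating $\omega_{k,\ell}$ before $\sigma_{k,\ell}$) and then on $k$. This ordering closes the induction because the right-hand side of \eqref{eq:omega-grid} depends only on $\psi_{\kappa,\al}$ with $\al<\ell$, while the starred sum in \eqref{eq:sigma-grid} admits $\omega_{\kappa,\ell}$ for $\kappa\le k$ and $\sigma_{\kappa,\ell}$ only for $\kappa<k$; the omitted self-term is precisely the coefficient on the left of \eqref{eq:sigma-grid}.

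For the base case $\ell=1$, the data $\psi_{k,1}|_{\Gamma_\beta}=\out{\psi}{k}-\psi_{k,0}$ is supplied by Proposition~\ref{prop:out-bound}, which gives $\abs{\log\euc}\,\euc^{-2k+1}$ on $\Gamma_\beta$ where $\euc\asymp\nu^\beta$; the factor $\nu^{-\eps\beta}$ in the target bound at $\ell=1$ is precisely what absorbs this logarithm. The forcing at this stage involves only $\psi_{0,0}, \psi_{0,1}$ and the zeroth column, controlled by Propositions~\ref{prop:inviscid-row} and \ref{prop:zeroth-col}. For the inductive step $\ell\ge 2$, the data on $\Gamma_\beta$ vanishes by \eqref{eq:grid-data}, and the forcing depends only on grid terms already controlled by the inductive hypothesis.

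The estimation mechanism at each level is characteristic integration. For $\omega_{k,\ell}$, I would integrate along the nonshocking characteristics of the constant-coefficient operator $\partial_t+A_\sperp^\sperp(0)\partial_x$, applying the bounds \eqref{eq:nonshock}--\eqref{eq:nonshock-e-and-d} from Lemma~\ref{lem:integrals}. For $\sigma_{k,\ell}$, the shocking characteristic $\gamma_1$ is nearly vertical near the preshock, so allowing the $\partial_x$ on the right-hand side of \eqref{eq:sigma-grid} to act as written would cost a prohibitive $\dist^{-3}$. I would therefore invoke Lemma~\ref{lem:renorm} to trade $\partial_x F$ for $\partial_t F$ (costing only $\dist^{-2}\lesssim\euc^{-1}$), then integrate along $\gamma_1$ using \eqref{eq:shock}--\eqref{eq:shock-log}. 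Higher derivatives $\partial_t^m\partial_x^n$ follow by commuting through the evolution equations and tracking the standard $\partial_t\mapsto\euc^{-1}$, $\partial_x\mapsto\dist^{-3}$ losses.

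The principal obstacle is the combinatorial bookkeeping under the double index. Each $\ell$-step should gain one additional factor of $\dist$, because every summand of the forcing decomposes into products of lower-order grid terms, at least one of which has $\ell'<\ell$ and carries an improved power of $\dist$; since $\dist\lesssim\nu^{\beta/3}$ on $\m{I}_\beta\setminus\m{I}_{1/2}$, this converts into the required $\nu^{\beta/3}$ per level, with the lower bound $\beta>1/(2+\eps)$ ensuring that this gain beats the $\euc^{-1}$ losses from $\partial_t$-commutation. The slack $\nu^{-3\eps\beta/3}$ in the target absorbs the $\abs{\log\euc}^{O(\ell)}$ factors that accumulate from the characteristic integrals (cf.~\eqref{eq:shock-log}). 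A further delicate point is the singular prefactor $\nu^{-1}$ in front of the residue $-\delta_{k,1}F_\ell$ in the first row: after one integration along $\gamma_1$, the $\euc$-gain from Lemma~\ref{lem:integrals} together with the $\dist^{\ell-1}$ size of $F_\ell^\omega$ (respectively $\dist^\ell$ for $F_\ell^\sigma$) from Proposition~\ref{prop:inviscid-row} should compensate the $\nu^{-1}$ up to exactly the $\nu^{-\eps\beta}$ slack, making this the tightest estimate in the argument.
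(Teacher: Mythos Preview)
Your overall structure---the induction scheme, the identification of the residue $\nu^{-1}F_\ell$ as the tightest constraint forcing $\beta>\tfrac{1}{2+\eps}$, and the $\nu^{-\eps\beta}$ slack absorbing accumulated logarithms---is correct and matches the paper. However, you have the mechanism for the shocking component wrong, and this is a genuine gap.

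Lemma~\ref{lem:renorm} requires a constant \emph{invertible} matrix $M$; it builds $\mathbf{R}_n$ out of powers of $M^{-1}$. For the shocking equation \eqref{eq:sigma-grid} the transport coefficient is $\partial_1A_1^1(0)\,\sigma_{0,0}$, which is neither constant nor invertible (indeed $\lambda_{(1)}(0)=0$ by \ref{hyp:gauge}), so the lemma simply does not apply. The real difficulty in \eqref{eq:sigma-grid} is not the outer $\partial_x$ on the forcing but the singular zeroth-order term $\partial_1A_1^1(0)(\partial_x\sigma_{0,0})\,\sigma_{k,\ell}=-\cubder\,\sigma_{k,\ell}$ that appears when you expand $\partial_x(\sigma_{0,0}\sigma_{k,\ell})$. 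The paper handles this with the integrating factor $\dist^2=\cubder^{-1}$: since $\bar{\partial}_t\dist^2=-1$ along the frozen shocking characteristic, one obtains $\bar{\partial}_t(\dist^2\sigma_{k,\ell})=\dist^2 F$. A direct estimate on the forcing gives $F\lesssim\dist^{-2}\nu^{(\ell+1)\beta/3-b\beta-o_\ell(1)}\euc^{-2k+1}$ (the extra factor of $\dist$ in the $\Aop^1$ products, coming from $\sum_j(\al_j+1)=\ell+2$ rather than $\ell+1$, together with the constraint $\abs{\s{B}}\le 2$ requiring $\eps\le 1/6$, is precisely what keeps this at $\dist^{-2}$ rather than $\dist^{-3}$), so $\dist^2 F$ is purely $\euc^{-2k+1}$ and integrates along $\gamma_1$ via \eqref{eq:shock}--\eqref{eq:shock-log}. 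Conversely, renormalization \emph{is} what the paper uses for $\omega_{k,\ell}$, where you did not mention it: the matrix $A_\sperp^\sperp(0)$ is constant and invertible by strict hyperbolicity, so Lemma~\ref{lem:renorm} converts $\partial_x h$ to $-A_\sperp^\sperp(0)^{-1}\partial_t h\lesssim\euc^{-1}h$, after which \eqref{eq:nonshock} alone suffices. Without this step your citation of \eqref{eq:nonshock-e-and-d} would not close, since that bound handles only $\dist^{-2}\euc^{-r}$, not the $\dist^{-3}\euc^{-r}$ that a raw $\partial_x h$ produces.
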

\noindent
We often take $\eps = 1/6$ and suppress the dependence of the implied constant on $\beta \in (6/13, 1/2)$.

This estimate differs from its predecessors, Propositions~\ref{prop:inviscid-row} and \ref{prop:zeroth-col}, in important respects.
First, as $\ell$ increases, we do not show a higher order of vanishing at the origin \emph{per se}.
Rather, we show that we gain a positive power of $\nu$.
In other words, we do not describe the germ of $\out{u}{k}$ at the origin, but content ourselves with uniform improvement in the region $\m{I}_\beta \setminus \{\euc \leq \nu^{1/2}\}$.
The latter is simpler to show and suffices for our purposes.
Note that we cut out the region $\m{I}_{1/2}$ where the viscosity is nonperturbative.
The approximations underlying the grid break down in $\m{I}_{1/2}$, and we make no use of bulk grid terms there.

Second, the estimate \eqref{eq:grid-est} involves the Euclidean distance $\euc$ in addition to the cubic distance $\dist$.
This reflects the transverse non-shocking characteristics, which are not adapted to $\dist$.
Integrating in non-shocking directions naturally leads to expressions controlled by $\euc$ (\emph{Cf.} Lemma~\ref{lem:integrals}), and much of the coming analysis involves the interplay between $\dist$ and $\euc$ along various characteristics.
\begin{proof}
  Given $\eps \in \big(0, \frac{1}{6}\big]$ and $\beta \in \big(\tfrac{1}{2 + \eps},\tfrac{1}{2}\big)$, let $\eps'$ be the mean of $\eps$ and $\beta^{-1} - 2 < \eps$, so that $\eps' < \eps$ but $\beta > \frac{1}{2 + \eps'}$.
  Let $b \coloneqq \frac{2}{3} + \eps'$, so $b \in \big(\frac{2}{3}, \frac{5}{6}\big)$.
  We show
  \begin{equation}
    \label{eq:grid-tight}
    \absb{\partial_t^m\partial_x^n \psi_{k,\ell}} \lesssim_{b,\beta,k,\ell,m,n}  \nu^{(\ell + 1)\beta/3 - b\beta - o_\ell(1)} \euc^{-2k - m + 1}\dist^{-3n} \quad \text{in } \m{I}_\beta \setminus \m{I}_{1/2}\,.
  \end{equation}
  The proposition follows after relaxing $b$ to $\frac{2}{3} + \eps$.
  
  We induct on $(k,\ell)$ according to the lexicographic ordering on $\N \times \N$, so $(k', \ell') < (k, \ell)$ if $k' < k$ or $k' = k$ and $\ell' < \ell$.
  Fix $(k, \ell) \in \N \times \N$ and suppose we have shown \eqref{eq:grid-tight} for all $(k',\ell') \in \N \times \N$ such that $(k',\ell') < (k,\ell)$ and all $m, n \in \N_0$.
  We show \eqref{eq:grid-tight} for $(k, \ell)$ with $m = n = 0$.
  Derivative estimates with $m > 0$ or $n > 0$ follow from nearly identical calculations, so we only sketch their proofs.
  Recall from Propositions~\ref{prop:omega-grid} and \ref{prop:sigma-grid} that $\omega_{k,\ell}$ depends only on $\psi_{<(k,\ell)}$ while $\sigma_{k,\ell}$ depends also on $\omega_{k,\ell}$.
  We therefore first analyze $\omega$ and then $\sigma$.
  \smallskip

  \noindent\emph{Step 1: Nonshocking estimates.}
  Recalling \eqref{eq:omega-grid}, define $h = h_A + h_B$ for
  \begin{equation}
    \label{eq:omega-driver}
    h_A \coloneqq -\sum_\bigsub{1 \leq r \leq \ell\\ \kappa_0 + \ldots + \kappa_r = k\\ \al_0 + \ldots + \al_r = \ell - r} \Aop_\sperp\psi_{\kappa,\al}
    \And
    h_B \coloneqq \sum_\bigsub{0 \leq r \leq \ell-1\\ \kappa_0 + \ldots + \kappa_r = k-1\\ \al_0 + \ldots + \al_r = \ell - r - 1} \Bop_\sperp\psi_{\kappa,\al}.
  \end{equation}
  Using Lemma~\ref{lem:renorm}, let $R \coloneqq \textbf{R}_0[A_\sperp^\sperp(0), h]$ and $F \coloneqq -A_\sperp^\sperp(0)^{-1} \partial_th - \nu^{-1} \delta_{k,1} F_\ell^\omega,$ so 
  \begin{equation}
    \label{eq:omega-simple}
    [\partial_t \omega_{k,\ell} + A_\sperp^\sperp(0) \partial_x](\omega_{k,\ell} - R) = F.
  \end{equation}
  We first control the renormalization term $R$, which is simply a linear transformation of $h$.
  We want to show that $\abs{h} \lesssim_{k,\ell} \nu^{(\ell + 1)\beta/3 - b\beta - o_\ell(1)} \euc^{-2k + 1}.$
  We begin with $h_A$.

  Our approach depends on the number of terms in $\psi_{\kappa_j,\alpha_i}$ in the zeroth column or zeroth row, as we use different estimates in these cases.
  Let $\s{C}$ denote the indices $j$ such that $\al_j = 0$ (the zeroth column), $\s{R}$ those such that $\al_j \geq 1$ and $\kappa_j = 0$ (the inviscid row), and $\s{B}$ the remainder (the bulk).
  Using Propositions~\ref{prop:inviscid-row} and \ref{prop:zeroth-col} as well as the inductive hypothesis, we can estimate the product in $\Aop \psi_{\kappa,\al}$ by
  \begin{equation}
    \label{eq:prod-A-star}
    \begin{aligned}
      \prod_{j=0}^r &\psi_{\kappa_j,\al_j} =  \prod_{\s{C}} \psi_{\kappa_j,0}\cdot \prod_{\s{R}} \psi_{0,\al_j} \cdot \prod_{\s{B}} \psi_{\kappa_j,\al_j}\\
                                                           &\lesssim_{k,\ell} \dist^{-4 \sum_{\s{C}} \kappa_j} \dist^{\abs{\s{C}}} (\abs{\log \dist}\dist)^{\sum_{\s{R}}(\al_j + 1)} \nu^{\frac{\beta}{3}\sum_{\s{B}} (\al_j + 1)} \nu^{-b\beta \abs{\s{B}}} \euc^{-2 \sum_{\s{B}} \kappa_j} \euc^{\abs{\s{B}}}.
    \end{aligned}
  \end{equation}
  On $\m{I}_\beta \setminus \m{I}_{1/2}$, we have $\abs{\log \dist} \asymp \abs{\log \nu} = \nu^{-o(1)}$.
  We can thus absorb logarithmic terms in the factor $\nu^{-o_\ell(1)}$ in \eqref{eq:grid-tight}, and we ignore them in the remainder of the proof.

  To simplify \eqref{eq:prod-A-star}, we extract $2k-1$ factors of $\euc^{-1}$ and show that the remaining terms involve only nonnegative powers of $\dist$ or $\euc$, which we bound by suitable powers of $\nu$ on $\m{I}_\beta$.
  We sometimes convert factors of $\dist^{-2}$ to $\euc^{-1}$, so the precise calculation depends on the relative powers of $\dist$ and $\euc$ in \eqref{eq:prod-A-star}.
  We therefore modulate our argument depending on the balance between $\s{C} \cup \s{R}$ to $\s{B}$.
  This theme runs throughout the proof.
  
  The terms in $h_A$ satisfy $\abs{\s{C} \cup \s{R} \cup \s{B}} = r + 1$, $\sum_j (\al_j + 1) = \ell + 1$, and $\sum_j \kappa_j = k$.
  If $\s{B} = \emptyset$, \eqref{eq:prod-A-star} simplifies.
  Noting that $\al_j = 0$ for $j \in \s{C}$ and $\kappa_j = 0$ for $j \in \s{R}$, \eqref{eq:distance-relation} and $\dist \lesssim \nu^{\beta/3}$ yield
  \begin{equation} \label{eq:no-bulk}
    \begin{aligned}
    \prod_{j=0}^r \psi_{\kappa_j,\al_j} &\lesssim_{k,\ell} \nu^{-o_\ell(1)} \dist^{-4k} \dist^{\ell + 1} = \nu^{-o_\ell(1)} \dist^{\ell - 1} \dist^{-4k+2}
    \\ &\lesssim_{k,\ell} \nu^{(\ell - 1)\beta/3 - o_\ell(1)} \euc^{-2k + 1}.
    \end{aligned}
  \end{equation}
  This is sufficiently small because $b > 2/3$.
  Now suppose $\abs{\s{B}} \geq 1$.
  Using $\euc \lesssim \nu^\beta$ and $\beta < 1$, we find
  \begin{align*}
    \prod_{j=0}^r \psi_{\kappa_j,\al_j} &\lesssim_{k,\ell} \nu^{-o_\ell(1)} \euc^{-2k + 1} \dist^{\sum_{\s{C}\cup\s{R}} (\al_j + 1)} \nu^{\frac{\beta}{3} \sum_j (\al_j + 1)} \nu^{-b\beta\abs{\s{B}}} \euc^{\abs{\s{B}}-1}\\
                                        &\lesssim_{k,\ell} \nu^{-o_\ell(1)} \euc^{-2k + 1} \nu^{(\ell + 1)\beta/3} \nu^{(1 - b)\beta\abs{\s{B}}-\beta} \lesssim \nu^{(\ell + 1)\beta/3 - b\beta -o_\ell(1)} \euc^{-2k + 1}.
  \end{align*}
  
  We can treat $h_B$ in \eqref{eq:omega-driver} similarly, but we get an additional factor of $\dist^{-3}$ from $\partial_x$, while $\sum_j (\al_j + 1) = \ell$ and $\sum_j \kappa_j = k - 1$.
  When $\s{B} = \emptyset$, these changes cancel and we obtain an identical bound to \eqref{eq:no-bulk}.
  If $\abs{\s{B}} \geq 1$, we find
  \begin{align}
    (\partial_x \psi_{\kappa_0,\al_0}) \prod_{j=1}^r\psi_{\kappa_j, \al_j} &\lesssim_{k, \ell} \dist^{-3} \nu^{-o_\ell(1)} \euc^{-2k + 3} \dist^{\sum_{\s{C}\cup\s{R}} (\al_j + 1)} \nu^{\frac{\beta}{3} \sum_j (\al_j + 1)} \nu^{-b\beta\abs{\s{B}}} \euc^{\abs{\s{B}}-1}\nonumber\\
                                                                           &\lesssim_{k, \ell} \dist^{-3} \euc^2 \nu^{\beta\ell/3} \nu^{-b\beta - o_\ell(1)} \euc^{-2k + 1}.\label{eq:prod-B-star}
  \end{align}
  Now \eqref{eq:distance-relation} implies $\dist^{-3}\euc^2 \lesssim \euc^{1/2} \lesssim \nu^{\beta/2} < \nu^{\beta/3}$, so the same bound holds.
  That is:
  \begin{equation}
    \label{eq:omega-renorm-fine}
    \abs{R} \lesssim \abs{h} \lesssim_{k,\ell} \nu^{(\ell + 1)\beta/3 - b\beta -o_\ell(1)} \euc^{-2k + 1}.
  \end{equation}

  Now consider $F \coloneqq -A_\sperp^\sperp(0)^{-1} \partial_th - \nu^{-1} \delta_{k,1} F_\ell^\omega$ in \eqref{eq:omega-simple}.
  Throughout Propositions~\ref{prop:inviscid-row} and \ref{prop:zeroth-col} and the inductive hypothesis, $\partial_t$ contributes $\euc^{-1}$, so
  \begin{equation*}
    \abs{\partial_t h} \lesssim_{k, \ell} \nu^{(\ell + 1)\beta/3 - b\beta -o_\ell(1)} \euc^{-2k}.
  \end{equation*}
  Meanwhile, when $k = 1$, Proposition~\ref{prop:inviscid-row} states that the residue $\nu^{-1} F_\ell^\omega$ contributes $\nu^{-1-o(1)} \dist^{\ell - 1}$.
  Using $1 \leq (2 + b - 2/3)\beta$, we find
  \begin{equation*}
    \nu^{-1-o(1)} \dist^{\ell - 1} \lesssim_\ell \nu^{-(2 + b - 2/3)\beta+(\ell-1)\beta/3-o(1)} \lesssim \nu^{(\ell + 1)\beta/3 - b\beta - o(1)} \euc^{-2}.
  \end{equation*}
  So this contribution is no worse, and we obtain
  \begin{equation}
    \label{eq:omega-force}
    \abs{F} \lesssim_{k, \ell} \nu^{(\ell + 1)\beta/3 - b\beta - o_\ell(1)} \euc^{-2k}.
  \end{equation}
  The above control of the inviscid residue $F_\ell$ is the only step requiring $\beta \geq \frac{1}{2 + \eps}$.
  It therefore determines the lower bound on $\beta$ throughout the paper.
  
  Fix $2 \leq I' \leq N$.
  Using the transport equation \eqref{eq:omega-simple}, we can write
  \begin{equation}
    \label{eq:omega-integral}
    \omega_{k,\ell}^{I'} = \omega_{k,\ell}^{I'}|_{\Gamma_\beta} + R^{I'} - R^{I'}|_{\Gamma_\beta} + \int_{\Gamma_\beta}^\cdot F^{I'} \circ \gamma_{I'},
  \end{equation}
  where $\gamma_{I'}$ denotes the characteristic of $\partial_t + \lambda_{(I')}(0) \partial_x$ and the integral limits indicate we integrate from $\Gamma_\beta$ to the implied argument $(t, x)$ of $\omega_{k,\ell}$.
  Using \eqref{eq:grid-data}, Proposition~\ref{prop:out-bound}, and $b > 2/3$,
  \begin{equation*}
    \abs{\omega_{k,\ell}|_{\Gamma_\beta}} \lesssim_k \nu^{-(2k-1)\beta} \lesssim \nu^{(1 + 1)\beta/3 - b\beta - o(1)} \euc^{-2k + 1} \quad \text{if } \ell = 1
  \end{equation*}
  and $\omega_{k,\ell}|_{\Gamma_\beta} = 0$ otherwise.
  Next, \eqref{eq:omega-renorm-fine} yields
  \begin{equation*}
    \abs{R - R|_{\Gamma_\beta}} \lesssim_{k,\ell} \nu^{(\ell + 1)\beta/3 - b\beta - o_\ell(1)} \euc^{-2k+1}.
  \end{equation*}
  Finally, we apply \eqref{eq:nonshock} from Lemma~\ref{lem:integrals} to \eqref{eq:omega-force} to find
  \begin{equation*}
    \Big|\int_{\Gamma_\beta}^\cdot F^{I'} \circ \gamma_{I'}\Big| \lesssim_{k, \ell} \nu^{(\ell + 1)\beta/3 - b\beta - o_\ell(1)} \int_{\Gamma_\beta}^\cdot \euc^{-2k} \circ \gamma_{I'} \lesssim_{k, \ell} \nu^{(\ell + 1)\beta/3 - b\beta - o_\ell(1)} \euc^{-2k + 1}.
  \end{equation*}
  Now \eqref{eq:omega-integral} yields $\abss{\omega_{k,\ell}} \lesssim_{k, \ell} \nu^{(\ell + 1)\beta/3 - b\beta - o_\ell(1)} \euc^{-2k + 1}.$

  We briefly discuss the corresponding derivative bounds.
  Fix $m,n \in \N_0$ and apply $\partial_t^m\partial_n^x$ to \eqref{eq:omega-grid}.
  Using Lemma~\ref{lem:renorm}, we can write
  \begin{equation*}
    [\partial_t + A_\sperp^\sperp(0)](\partial_t^m\partial_x^n \omega_{k,\ell} - R_{m,n}) = F_{m,n}
  \end{equation*}
  for
  \begin{align*}
    R_{m,n} &\coloneqq \textbf{R}_n[A_\sperp^\sperp(0), \partial_t^m h] - \textbf{R}_{n-1}[A_\sperp^\sperp(0), \nu^{-1} \delta_{k,1} \partial_t^m F_\ell^\omega],\\
    F_{m,n} &\coloneqq (-1)^{n+1} A_\sperp^\sperp(0)^{-(n+1)} \partial_t^{m + n + 1} h + (-1)^{n+1} \nu^{-1} \delta_{k,1} A_\sperp^\sperp(0)^{-n} \partial_t^{m + n} F_\ell^\omega.
  \end{align*}
  By Propositions~\ref{prop:inviscid-row} and \ref{prop:zeroth-col} and the inductive hypothesis, $\partial_t$ and $\partial_x$ contribute factors of $\euc^{-1}$ and $\dist^{-3}$, respectively.
  Repeating our analysis with these additional factors, we find
  \begin{align*}
    \abss{R_{m,n}} &\lesssim_{k,\ell,m,n} \nu^{(\ell + 1)\beta/3 - b\beta - o_\ell(1)} \euc^{-2k - m + 1} \dist^{-3n},\\
    \abss{F_{m,n}} &\lesssim_{k,\ell,m,n} \nu^{(\ell + 1)\beta/3 - b\beta - o_\ell(1)} \euc^{-2k - m - n}.
  \end{align*}

  To complete the picture, we must control the derivative data, namely $\partial_t^m \partial_x^n\omega_{k,\ell}|_{\Gamma_\beta}$.
  We illustrate the procedure for $\Der \omega_{k,\ell}$; higher derivatives follow from induction.
  The equation \eqref{eq:omega-simple} provides a linear combination of $\partial_x(\omega_{k,\ell} - R)$ and $\partial_t(\omega_{k,\ell} - R)$.
  Moreover, we can differentiate $\omega_{k,\ell} - R$ along $\Gamma_\beta$ to obtain a second, linearly independent combination of the same quantities.
  By construction, $\Gamma_\beta$ is uniformly timelike with respect to the characteristics of $A$.
  This ensures that we can invert the above linear relations to uniformly express $\Der (\omega_{k,\ell} - R)$ in terms of $F$ and the derivative of $\omega_{k,\ell} - R$ along $\Gamma_\beta$.
  Using \eqref{eq:grid-data}, \eqref{eq:out-bound}, and \eqref{eq:omega-force}, we obtain
  \begin{equation}
    \label{eq:deriv-data}
    \Der (\omega_{k,\ell} - R) |_{\Gamma_\beta} \lesssim_{k,\ell} \nu^{(\ell + 1)\beta/3 - b\beta - o_\ell(1)} \euc^{-2k}.
  \end{equation}
  Differentiating $R$, we obtain the desired bounds on $\partial_t \omega_{k,\ell}|_{\Gamma_\beta}$ and $\partial_x \omega_{k,\ell}|_{\Gamma_\beta}$.
  Integrating as in \eqref{eq:omega-integral}, we extend these bounds to the interior $\m{I}_\beta$, and to higher derivatives by induction.
  This completes the analysis of the nonshocking component (Step 1).
  \smallskip

  \noindent\emph{Step 2: Estimates on the shocking component.}
  We recycle notation from Step 1.
  Recalling \eqref{eq:sigma-grid}, define $F \coloneqq F_\omega + F_A + F_B - \nu^{-1} \delta_{k,1} F_\ell^\sigma$ for $F_\omega \coloneqq -\partial_\Is A_\Is^\sperp(0) \partial_X(\sigma_{0,0} \omega_{k,\ell})$,
  \begin{equation*}
    F_A \coloneqq - \partial_x\sum_\bigsub{1 \leq r \leq \ell + 1\\ \kappa_0 + \ldots + \kappa_r = k\\ \al_0 + \ldots + \al_r = \ell - r + 1\\ (\kappa_j,\al_j) \neq (k,\ell)} \Aop_1\psi_{\kappa,\al}
    \And
    F_B \coloneqq \partial_x \sum_\bigsub{0 \leq r \leq \ell\\ \kappa_0 + \ldots + \kappa_r = k-1\\ \al_0 + \ldots + \al_r = \ell - r} \Bop_1\psi_{\kappa,\al}.
  \end{equation*}
  Then we can write \eqref{eq:sigma-grid} as
  \begin{equation}
    \label{eq:sigma-simple}
    \partial_t \sigma_{k,\ell} + \partial_1 A_1^1(0)\partial_x(\sigma_{0,0} \sigma_{k,\ell}) = F.
  \end{equation}
  Recall that each $\partial_x$ contributes a factor of $\dist^{-3}$.
  Using Step 1, the first term $F_\omega$ in $h$ is simple to bound: $F_\omega \lesssim_{k,\ell} \dist^{-2} \nu^{(\ell - 2)\beta/3 - b \beta - o_\ell(1)} \euc^{-2k + 1}.$
  To treat $F_A$, we follow Step 1 and write
  \begin{equation}
    \label{eq:prod-bd}
    \begin{aligned}
    \hspace{-7pt}\prod_{j=0}^r \psi_{\al_j,\kappa_j} \!\lesssim_{\ell,k} \nu^{-o_\ell(1)} \dist^{-4 \sum_{\s{C}}\kappa_j} \dist^{\sum_{\s{C}\cup\s{R}} (\al_j + 1)} \nu^{\frac{\beta}{3}\sum_{\s{B}} (\al_j + 1)} \nu^{-b\beta\abs{\s{B}}} \euc^{-2\sum_{\s{B}} \kappa_j} \euc^{\abs{\s{B}}}.
    \end{aligned}
  \end{equation}
  In $h_A$, $\abs{\s{C} \cup \s{R} \cup \s{B}} = r + 1 \geq 2$, $\sum_j \kappa_j = k$, and $\sum_j (\al_j + 1) = \ell + 2$.
  We analyze \eqref{eq:prod-bd} by cases depending on $\abs{\s{B}}$.
  If $\s{B} = \emptyset$, we follow \eqref{eq:no-bulk} and find
  \begin{equation*}
  \begin{aligned}
    \prod_{j=0}^r \psi_{\al_j,\kappa_j} &\lesssim_{\ell,k} \nu^{-o_\ell(1)} \dist^{-4k} \dist^{\ell + 2} = \dist \nu^{-o_\ell(1)} \dist^{\ell - 1} \dist^{-4k + 2}
    \\ &\lesssim_{k,\ell} \dist \nu^{(\ell + 1)\beta/3 - b \beta - o_\ell(1)} \euc^{-2k + 1}.
    \end{aligned}
  \end{equation*}
  If $\abs{\s{B}} = 1$, then $\abs{\s{C} \cup \s{R}} = r + 1 - \abs{\s{B}} \geq 1$, so $\sum_{\s{C} \cup \s{R}} (\al_j + 1) \geq 1$ and
  \begin{equation*}
  \begin{aligned}
    \prod_{j=0}^r \psi_{\al_j,\kappa_j} &\lesssim_{\ell,k} \dist \nu^{-o_\ell(1)} \dist^{\sum_{\s{C} \cup \s{R}}(\al_j + 1) - 1} \nu^{\frac{\beta}{3} \sum_{\s{B}}(\al_j + 1)} \nu^{-b\beta} \euc^{-2k + 1}
    \\ &\lesssim_{k,\ell} \dist \nu^{(\ell + 1)\beta/3 - b\beta - o_\ell(1)} \euc^{-2k + 1}.
    \end{aligned}
  \end{equation*}
  Finally, if $\abs{\s{B}} \geq 2$, we have
  \begin{equation*}
  \begin{aligned}
    \prod_{j=0}^r \psi_{\al_j,\kappa_j}\! &\lesssim_{\ell,k} \euc^{1/2} \nu^{(\ell + 2)\beta/3 - b\beta\abs{\s{B}} - o_\ell(1)} \euc^{\abs{\s{B}} - 3/2}\! \euc^{-2k + 1}
    \\ &\lesssim_{k,\ell} \dist \nu^{(\ell + 1)\beta/3 - o_\ell(1)} \nu^{[1/3 + (1-b)\abs{\s{B}}-3/2]\beta} \euc^{-2k + 1}.
    \end{aligned}
  \end{equation*}
  Now $b < 5/6$ and $\abs{\s{B}} \geq 2$, so $(1-b)\abs{\s{B}} \geq 2(1-b)$ and
  \begin{equation*}
    b + \frac{1}{3} + 2(1 - b) - \frac{3}{2} = 5/6 - b > 0.
  \end{equation*}
  It follows that
  \begin{equation}
    \label{eq:prod-A-1}
    \prod_{j=0}^r \psi_{\al_j,\kappa_j} \lesssim_{k,\ell} \dist \nu^{(\ell + 1)\beta/3 - b\beta - o_\ell(1)} \euc^{-2k + 1}.
  \end{equation}
  Collecting these bounds and applying $\partial_x$, we have shown that
  \begin{equation*}
    F_A \lesssim_{k,\ell} \dist^{-2} \nu^{(\ell + 1)\beta/3 - b\beta - o_\ell(1)} \euc^{-2k + 1}.
  \end{equation*}
  We note that $\abs{\s{B}} \geq 2$ is the only case that demands $b \leq 5/6$, and thus $\eps \leq 1/6$.
  We cannot be too lax with $\eps$ lest repeated products of \eqref{eq:grid-est} become unmanageable.

  Now consider $F_B$, for which $r \geq 0$, $\sum_j \kappa_j = k - 1$, $\sum_j (\al_j + 1) = \ell + 1$, and we pick up an additional factor of $\dist^{-3}$ from $\partial_x$.
  We have
  \begin{equation*}
  \begin{aligned}
    (\partial_x \psi_{\kappa_0,\al_0}) \prod_{j=1}^r &\psi_{\kappa_j, \al_j}\\[-8pt]
                                                     &\lesssim_{k, \ell} \nu^{-o(1)} \dist^{-3} \dist^{-4 \sum_{\s{C}}\kappa_j} \dist^{\abs{\s{C} \cup \s{R}}} \dist^{\sum_{\s{R}} \al_j} \nu^{\frac{\beta}{4}\sum_{\s{B}} \al_j} \nu^{-\beta\abs{\s{B}}/2} \euc^{-2\sum_{\s{B}} \kappa_j} \euc^{\abs{\s{B}}}.
    \end{aligned}
  \end{equation*}
  When $\abs{B} = \emptyset$, this simplifies to powers of $\dist$ alone (and $\nu^{-o(1)}$).
  The result is identical to the corresponding case in $F_A$.
  When $\abs{\s{B}} \geq 1$, we write
  \begin{align}
    (\partial_x \psi_{\kappa_0,\al_0}) \prod_{j=1}^r &\psi_{\kappa_j, \al_j}\nonumber\\[-8pt]
                                                     &\lesssim_{k, \ell} \dist (\dist^{-4} \euc^2) \nu^{-o_\ell(1)} \dist^{\sum_{\s{C}\cup\s{R}} (\al_j + 1)} \nu^{\frac{\beta}{3}\sum_{\s{B}} (\al_j + 1)} \nu^{-b\beta\abs{\s{B}}} \euc^{\abs{\s{B}}-1} \euc^{-2k + 1}\nonumber\\
                                                     &\lesssim_{k,\ell} \dist \nu^{(\ell + 1)\beta/3 - b\beta - o_\ell(1)} \nu^{(1 - b)(\abs{\s{B}} - 1)\beta} \euc^{-2k + 1}\nonumber\\
                                                     &\lesssim_{k,\ell} \dist \nu^{(\ell + 1)\beta/3 - b\beta - o_\ell(1)} \euc^{-2k + 1}.\label{eq:prod-B-1}
  \end{align}
  Applying $\partial_x$, we see that $F_\omega + F_A + F_B \lesssim_{k, \ell} \dist^{-2} \nu^{(\ell + 1)\beta/3 - b\beta - o_\ell(1)} \euc^{-2k + 1}.$
  
  In Step 1, Proposition~\ref{prop:inviscid-row} provided $\nu^{-1}\delta_{k,1}F_\ell^\omega \lesssim \nu^{(\ell + 1)\beta/3 - b\beta - o_\ell(1)} \euc^{-2k}.$
  Because $F_\ell^\sigma$ is smaller than $F_\ell^\omega$ by a factor of $\dist$, \eqref{eq:distance-relation} yields
  \begin{equation*}
    \nu^{-1}\delta_{k,1}F_\ell^\sigma \lesssim_\ell \dist \nu^{(\ell + 1)\beta/3 - b\beta - o_\ell(1)} \euc^{-2k} \lesssim_\ell \dist^{-2} \nu^{(\ell + 1)\beta/3 - b\beta - o_\ell(1)} \euc^{-2k + 1}.
  \end{equation*}
  Hence \eqref{eq:sigma-simple} yields
  \begin{equation*}
    \partial_t \sigma_{k,\ell} + \partial_1 A_1^1(0)\partial_x(\sigma_{0,0} \sigma_{k,\ell}) = F \lesssim_{k,\ell} \dist^{-2} \nu^{(\ell + 1)\beta/3 - b\beta - o_\ell(1)} \euc^{-2k + 1}.
  \end{equation*}
  We now follow the derivation of \eqref{eq:sigma-hat-ev} to write this as
  \begin{equation}
    \label{eq:scaled-force}
    \bar\partial_t(\dist^2 \sigma_{k,\ell}) = \dist^2 F \lesssim_{k,\ell} \nu^{(\ell + 1)\beta/3 - b\beta - o_\ell(1)} \euc^{-2k + 1},
  \end{equation}
  where $\bar\partial_t = \partial_t + \partial_1 A_1^1(0)\sigma_{0,0}\partial_x$ and $\dist^2 = \cubder^{-1}$.
  Integrating along the corresponding shocking characteristic $\gamma_1$, we find
  \begin{equation*}
    \sigma_{k,\ell} = \sigma_{k, \ell}|_{\Gamma_\beta} + \dist^{-2} \int_{\Gamma_\beta}^\cdot (\dist^2 F) \circ \gamma_1.
  \end{equation*}
  Using \eqref{eq:grid-data}, Lemma~\ref{lem:integrals}, and \eqref{eq:scaled-force}, we obtain $\abss{\sigma_{k,\ell}} \lesssim_{k, \ell} \nu^{(\ell + 1)\beta/3 - b\beta - o_\ell(1)} \euc^{-2k + 1}$, as desired.
  We note from \eqref{eq:shock-log} that we pick up a factor of $\abs{\log \euc} \lesssim \abs{\log \nu}$ when $k = 1$.
  We absorb this in $\nu^{-o_\ell(1)}$.

  To control derivatives of $\sigma_{k,\ell}$, we commute \eqref{eq:sigma-simple} with $\partial_t^m\partial_x^n$.
  Inducting on $(m,n)$ and recalling \eqref{eq:fm}, we can check that
  \begin{equation*}
    \bar\partial_t(\partial_t^m\partial_x^n \sigma_{k,\ell}) - (n + 1) \dist^{-2} \partial_t^m\partial_x^n \sigma_{k,\ell} = F_{n,m} \lesssim_{k,\ell,n,m} \nu^{(\ell + 1)\beta/3 - b\beta - o_\ell(1)} \euc^{-2k - m} \dist^{-3n}.
  \end{equation*}
  Multiplying by the integrating factor $\dist^{2(n+1)}$, this yields
  \begin{equation*}
    \bar\partial_t(\dist^{2(n+1)} \partial_t^m\partial_x^n \sigma_{k,\ell}) \lesssim_{k,\ell,n,m} \nu^{(\ell + 1)\beta/3 - b\beta - o_\ell(1)} \euc^{-2k - m} \dist^{2-n}.
  \end{equation*}
  We control the data $\sigma_{k,\ell}|_{\Gamma_\beta}$ as described for the nonshocking components above.
  By Lemma~\ref{lem:integrals}, $\abss{\partial_t^m\partial_x^n \sigma_{k,\ell}} \lesssim_{k,\ell,m,n} \nu^{(\ell + 1)\beta/3 - b\beta - o_\ell(1)} \euc^{-2k - m + 1} \dist^{-3n}.$
\end{proof}
For future convenience, we record bounds on $\Aop$ and $\Bop$ evaluated on the grid.
\begin{lemma}
  \label{lem:prod}
  Fix $\eps \in (0,1/6]$, $\beta \in \big(\tfrac{1}{2 + \eps}, \tfrac{1}{2}\big)$, $k,\ell \in \N$, and $m,n \in \N_0$.
  \begin{enumerate}[label = \textnormal{(\roman*)}]
  \item
    If $\abs{\al} + \#\al = \ell$ and $\abs{\kappa} = k$,
    \begin{equation*}
      \abss{\partial_t^m \partial_x^n \Aop_\sperp\psi_{\kappa,\al}} \lesssim_{\eps,\beta,k,\ell,m,n} \nu^{(\ell - 1 - 3\eps)\beta/3} \euc^{-2k - m + 1} \dist^{-3n.}
    \end{equation*}

  \item
    If $\abs{\al} + \#\al = \ell - 1$ and $\abs{\kappa} = k - 1$,
    \begin{equation*}
      \abss{\partial_t^m \partial_x^n \Bop_\sperp\psi_{\kappa,\al}} \lesssim_{\eps,\beta,k,\ell,m,n} \nu^{(\ell - 1 - 3\eps)\beta/3} \euc^{-2k - m + 1} \dist^{-3n}.
    \end{equation*}

  \item
    If $\abs{\al} + \#\al = \ell + 1$ and $\abs{\kappa} = k$,
    \begin{equation*}
      \abss{\partial_t^m \partial_x^n \Aop_1\psi_{\kappa,\al}} \lesssim_{\eps,\beta,k,\ell,m,n} \dist \nu^{(\ell - 1 - 3\eps)\beta/3} \euc^{-2k - m + 1} \dist^{-3n}.
    \end{equation*}
    
  \item
    If $\abs{\al} + \#\al = \ell$ and $\abs{\kappa} = k - 1$,
    \begin{equation*}
      \abss{\partial_t^m \partial_x^n \Bop_1\psi_{\kappa,\al}} \lesssim_{\eps,\beta,k,\ell,m,n} \dist \nu^{(\ell - 1 - 3\eps)\beta/3} \euc^{-2k - m + 1} \dist^{-3n}.
    \end{equation*}
  \end{enumerate}
\end{lemma}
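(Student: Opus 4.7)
All four bounds can be extracted directly from the product estimates already performed inside the proof of Proposition~\ref{prop:grid-est}. A quick inspection of the index constraints shows that (i)--(iv) match, in order, the sums $h_A, h_B, F_A, F_B$ analyzed in Steps~1 and 2 of that proof: in each case the multi-index conditions $\abs{\al}+\#\al = \ell,\ell-1,\ell+1,\ell$ and $\abs{\kappa}=k$ or $k-1$ are precisely the ones appearing in Propositions~\ref{prop:omega-grid} and \ref{prop:sigma-grid}. In this sense the lemma is a clean restatement of estimates already derived, packaged for use in the matching arguments of Sections~\ref{sec:outer-matching} and \ref{sec:inner-matching}.

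The plan is to write each $\Aop\psi_{\kappa,\al}$ or $\Bop\psi_{\kappa,\al}$ as a product $\prod_j\psi_{\kappa_j,\al_j}$ (with a $\partial_x$ on the leading factor in the $\Bop$ case) and then partition the factor indices into the zeroth-column set $\s{C} = \{j : \al_j = 0\}$, the zeroth-row set $\s{R} = \{j : \al_j \geq 1,\, \kappa_j = 0\}$, and the bulk $\s{B}$. I would then invoke Proposition~\ref{prop:zeroth-col} for factors in $\s{C}$ (contributing $\dist^{-4\kappa_j+1}$), Proposition~\ref{prop:inviscid-row} for factors in $\s{R}$ (contributing $\dist^{\al_j+1}$, with logarithms absorbed into $\nu^{-o_\ell(1)}$), and Proposition~\ref{prop:grid-est} for factors in $\s{B}$ (contributing $\nu^{(\al_j-3\eps)\beta/3}\euc^{-2\kappa_j+1}$). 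After multiplying and extracting $\euc^{-2k+1}$ from $\abs{\kappa} = k$ or $k-1$, the remaining task is to show that the leftover factor has the desired $\nu$-power on $\m{I}_\beta\setminus\m{I}_{1/2}$, using $\euc\lesssim\nu^\beta$, $\dist\lesssim\nu^{\beta/3}$, and the conversion $\dist^{-2}\lesssim\euc^{-1}$ from \eqref{eq:distance-relation} where advantageous. The extra factor of $\dist$ appearing in parts (iii) and (iv) relative to (i) and (ii) accounts for the fact that $\abs{\al}+\#\al$ is one higher in the shocking cases, which supplies one additional $\dist$ from the zeroth-column/row contribution.

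The main obstacle, such as it is, lies in the case analysis on $\abs{\s{B}}$. The regimes $\s{B}=\emptyset$ and $\abs{\s{B}}=1$ are handled by direct exponent manipulation, mirroring \eqref{eq:no-bulk} and the subsequent computation in Step~1. The borderline case $\abs{\s{B}}\geq 2$ in parts (iii)--(iv) is where the inequality barely closes: the argument requires $5/6 - b > 0$, equivalently $\eps \leq 1/6$, which is precisely the parameter restriction imposed in the statement. Derivative bounds for $m+n\geq 1$ then follow from the Leibniz rule, since each $\partial_t$ and $\partial_x$ contributes $\euc^{-1}$ and $\dist^{-3}$ uniformly across all three types of factors by the cited propositions, and the exponent bookkeeping above is unaffected; no new case analysis is required.
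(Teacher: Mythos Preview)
Your proposal is correct and follows the same approach as the paper: the bounds are read off from the product estimates \eqref{eq:prod-A-star}, \eqref{eq:prod-B-star}, \eqref{eq:prod-A-1}, and \eqref{eq:prod-B-1} established inside the proof of Proposition~\ref{prop:grid-est}, with the $\s{C}/\s{R}/\s{B}$ decomposition and the $\abs{\s{B}}$ case analysis exactly as you describe. The paper's proof is terser but flags two small extensions you should make explicit: in part~(i) the case $\#\al = 0$ (a single factor $\psi_{k,\ell}$) was not present in $h_A$ since that sum had $r\geq 1$, and in part~(iii) the terms containing $\sigma_{k,\ell}$ were excluded by the $\sum^*$ in $F_A$; both are now covered because Proposition~\ref{prop:grid-est} is available in full rather than only as an inductive hypothesis, and your plan to invoke it on bulk factors handles them without modification.
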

\begin{proof}
  These essentially follow from \eqref{eq:prod-A-star}, \eqref{eq:prod-B-star}, \eqref{eq:prod-A-1}, and \eqref{eq:prod-B-1}.
  There are two small cases to fill in.
  First, we assumed in \eqref{eq:prod-A-star} that $\#\al \geq 1$.
  When $\# \al = 0$, the bound follows directly from Proposition~\ref{prop:grid-est}.
  Similarly, when estimating $\Aop_1$ in \eqref{eq:prod-A-1}, we omitted terms involving $\sigma_{k,\ell}$.
  Now with Proposition~\ref{prop:grid-est} we can include these terms without changing the bound.
\end{proof}

\section{Outer-grid matching}
\label{sec:outer-matching}
We designed the grid $(\psi_{k,\ell})$ so that summation in $\ell$ yields $\out{\psi}{k}$ and summation in $k$ yields $\inn{\psi}{\ell}$.
In this section we prove the first fact: summation in $\ell$ closely matches the outer expansion in the matching zone $\m{I}_\beta$.
Given $L \in \N$, we study the difference $p_{k,L}$ between $\out{\psi}{k}$ and the partial sum
\begin{equation}
  \label{eq:grid-outer-partial}
  \psi_{k,[L]} \coloneqq \sum_{\ell = 0}^L \psi_{k,\ell}.
\end{equation}
In the inviscid $k=0$ case, we already bounded $p_{0,L}$ in \eqref{eq:inviscid-row-matching} of Proposition~\ref{prop:inviscid-row}.
Thus we need only consider $k \geq 1$.
If $\out{\psi}{k} \sim \sum_{\ell \geq 0} \psi_{k,\ell}$, then we should expect $p_{k,L}$ to be commensurate with the largest term omitted from \eqref{eq:grid-outer-partial}, namely $\psi_{k,L + 1}$.
\begin{proposition}
  \label{prop:outer-matching}
  Fix $\eps \in \big(0, \tfrac{1}{6}\big]$ and $\beta \in \big(\tfrac{1}{2 + \eps},\tfrac{1}{2}\big)$.
  For all $k, L \in \N$ and $m,n \in \N_0$, the difference $p_{k,L} \coloneqq \out{\psi}{k} - \psi_{k,[L]}$ satisfies
  \begin{equation}
    \label{eq:outer-matching}
    \absb{\partial_t^m \partial_x^n p_{k,L}} \lesssim_{k,L,m,n} \nu^{(L - 3\eps)\beta/3} \euc^{-2k - m + 1} \dist^{-3n} \quad \text{in } \m{I}_\beta \setminus \m{I}_{1/2}.
  \end{equation}
\end{proposition}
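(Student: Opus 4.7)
The plan is to induct on $k$, with the base case $k = 0$ already given by \eqref{eq:inviscid-row-matching}. For $k \geq 1$, assume \eqref{eq:outer-matching} holds for all $k' < k$. I would derive a transport equation for $p_{k,L}$ by subtracting the sum over $\ell \in \{0, \ldots, L\}$ of the grid equations \eqref{eq:omega-grid} and \eqref{eq:sigma-grid} (supplemented by the zeroth-column equation for $\psi_{k,0}$) from the outer equation \eqref{eq:outer-eq}. The boundary data is handled directly by \eqref{eq:grid-data}: since $\psi_{k,1}|_{\Gamma_\beta} = \out{\psi}{k} - \psi_{k,0}$ and $\psi_{k,\ell}|_{\Gamma_\beta} = 0$ for $\ell \geq 2$, one has $p_{k,L}|_{\Gamma_\beta} = 0$ for every $L \geq 1$, so no boundary contribution enters the estimate.

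The forcing of the resulting equation splits into three pieces. First, the difference $[A(\out{\psi}{0}) - A(0)]\,\psi_{k,[L]}$, reflecting that the grid uses the frozen matrix $A(0)$ while the outer equation uses $A(\out{\psi}{0})$; by \ref{hyp:expansion} one has $\out{\psi}{0} \lesssim \dist$, giving an extra factor of $\dist$ that renders this term small by Proposition~\ref{prop:grid-est}. Second, the mismatch between the multilinear Taylor sums on the right of \eqref{eq:outer-eq} and those on the right of \eqref{eq:omega-grid}--\eqref{eq:sigma-grid}; after substituting $\out{\psi}{k'} = \psi_{k',[L]} + p_{k',L}$ for each $k' < k$, this contribution decomposes into a part matching the grid forcing exactly, a part involving $p_{k',L}$ that is controlled by the inductive hypothesis, and a truncation part consisting of products of grid terms whose combined $\alpha$-index exceeds $L - \#\alpha$. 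Each truncation product is bounded by Lemma~\ref{lem:prod}, which yields the target size $\nu^{(L - 3\eps)\beta/3}$ on the correct $(\euc, \dist)$-scale. Third, the inviscid-row residue $\nu^{-1}\delta_{k,1} F_{[L]}$, controlled by Lemma~\ref{lem:force-cancels}, which contributes an additional factor of order $\dist^L \lesssim \nu^{L\beta/3}$.

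With the forcing bounded at the target size, I would invoke Proposition~\ref{prop:outer-op} to transport the estimate from the vanishing data on $\Gamma_\beta$ into the interior of $\m{I}_\beta \setminus \m{I}_{1/2}$. Derivative bounds for $m, n \geq 1$ follow by commuting $\partial_t^m \partial_x^n$ through the equation, using Lemma~\ref{lem:renorm} to trade spatial for temporal derivatives where advantageous and reconstructing boundary data on $\Gamma_\beta$ via the PDE and tangential differentiation as in \eqref{eq:deriv-data}, in direct parallel with the derivative argument in the proof of Proposition~\ref{prop:grid-est}.

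The main obstacle is the bookkeeping in the truncation step: each multilinear product $\Aop\psi_{\kappa,\alpha}$ or $\Bop\psi_{\kappa,\alpha}$ in \eqref{eq:outer-eq} must be matched, term-by-term and index-by-index, with the corresponding sums on the right of \eqref{eq:omega-grid}--\eqref{eq:sigma-grid}, which requires the same Taylor re-summation used in the proof of Lemma~\ref{lem:force-cancels}. The shocking component is more delicate because of the starred sum $\sum^*$ in \eqref{eq:sigma-grid} and the coupling through $\omega_{k,\ell}$; I expect to handle the $\omega$-components first and then feed the resulting bound into the $\sigma$-equation, mirroring the triangular structure exploited in the proof of Proposition~\ref{prop:grid-est}.
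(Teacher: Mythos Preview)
Your approach is essentially the paper's: the paper packages your three forcing pieces into $h_{\geq 1}$ (terms with at least one factor of $p$, controlled inductively in Lemma~\ref{lem:diff-prod}) and a residual $h^{\mathrm{res}}, F^{\mathrm{res}}$ collecting the Taylor-truncation, frozen-coefficient, and $\nu^{-1}\delta_{k,1}F_{[L]}$ contributions (Lemma~\ref{lem:residual}), then applies Proposition~\ref{prop:outer-op} from the zero data on $\Gamma_\beta$ exactly as you describe. One technical point you should make explicit: Proposition~\ref{prop:outer-op} outputs an extra factor of $|\log\euc| \asymp |\log\nu|$, so a direct induction at parameter $\eps$ would accumulate logs; the paper handles this by running the inductive hypothesis with a slightly smaller $\eps' \in (\beta^{-1}-2,\eps)$ and then absorbing $|\log\nu|$ into the gap $\nu^{(\eps-\eps')\beta}$ at the final step.
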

Our approach is similar to the proof of Proposition~\ref{prop:out-bound}.
Recalling the outer equation \eqref{eq:outer-eq}, we write
\begin{equation}
  \label{eq:outer-diff}
  \begin{aligned}
    \partial_t(\psi_{k,[L]} + &p_{k,L}) + \partial_x[A(\out{\psi}{0})(\psi_{k,[L]} + p_{k,L})]
                                       \\ &= - \partial_x \sum_\bigsub{r \geq 1, \, \kappa_j \geq 1\\ \kappa_0 + \ldots + \kappa_r = k} \frac{\Der^rA(\out{\psi}{0})}{(r + 1)!} \prod_{j=0}^r (\psi_{\kappa_j,[L]} + p_{\kappa_j,L})\\
                                     &\hspace{12pt}+ \partial_x \sum_\bigsub{r \geq 0, \, \kappa_{\geq 1} \hspace{0.5pt}\geq 1\\ \kappa_0 + \ldots + \kappa_r = k-1} \frac{\Der^rB(\out{\psi}{0})}{r!} \partial_x(\psi_{\kappa_0,[L]} + p_{\kappa_0,L}) \prod_{j=1}^r (\psi_{\kappa_j,[L]} + p_{\kappa_j,L})
                                     \\ &\eqqcolon \partial_x(h_0 + h_{\geq 1}).
  \end{aligned}
\end{equation}
To define $h_0$ and $h_{\geq 1}$, we expand products on the right side and obtain terms mixing factors of $\psi_{<k,[L]}$ and $p_{<k,L}$.
Let $h_0$ denote the collection of terms with no factors of $p$ and $h_{\geq 1}$ the remaining terms with at least one $p$-factor.

Each difference $p$ is relatively small, and we can readily bound $h_{\geq 1}$ through the inductive hypothesis.
In contrast, $h_0$ has no reason to be small.
However, it is nearly balanced by the terms involving $\psi_{k,[L]}$ on the left.
Equivalently, $\psi_{k,[L]}$ approximately solves \eqref{eq:outer-eq}.

Together, these observations imply that $\partial_tp_{k,L} + \partial_x[A(\out{\psi}{0})p_{k,L}] \approx 0$.
We chose grid data in \eqref{eq:grid-data} so that $p_{k,[L]} = 0$ on $\Gamma_\beta$.
Integrating, we conclude that $p_{k,L}$ is small.
Throughout, we focus on the case $m = n = 0$ in Proposition~\ref{prop:outer-matching}.
Derivative estimates follow from very similar calculations.

\subsection{Difference estimates}
We first control the terms $h_{\geq 1}$ in the right side of \eqref{eq:outer-diff} containing at least one factor of $p$.
We begin with a simple bound on the partial sums $\psi_{k,[L]}$.
\begin{lemma}
  \label{lem:partial-bound}
  For all $L \in \N$ and $k, m,n \in \N_0$,
  \begin{equation}
    \label{eq:partial-bound}
    \absb{\partial_t^m \partial_x^n \psi_{k,[L]}} \lesssim_{k,L,m,n} \euc^{-2k - m} \dist^{-3n + 1}.
  \end{equation}
\end{lemma}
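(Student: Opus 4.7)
The plan is to split $\psi_{k,[L]} = \sum_{\ell = 0}^L \psi_{k,\ell}$ and bound each summand individually, leveraging Propositions~\ref{prop:inviscid-row}, \ref{prop:zeroth-col}, and \ref{prop:grid-est} together with the conversion $\dist^{-2} \lesssim \euc^{-1}$ from \eqref{eq:distance-relation}. The target bound $\euc^{-2k-m}\dist^{-3n+1}$ essentially captures the worst $\ell = 0$ behavior, so every higher-$\ell$ contribution should either vanish to higher order in $\dist$ (for $k = 0$) or gain a positive power of $\nu$ (for $k \geq 1$).

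First I would handle $k = 0$ using Proposition~\ref{prop:inviscid-row}: $|\partial_t^m \partial_x^n \psi_{0,\ell}| \lesssim_{\ell,m,n} |\log \dist|^{(\ell - 1)_+}\dist^{\ell - 2m - 3n + 1}$. Since $\dist$ is bounded on the region of interest, the extra factor $\dist^\ell$ absorbs the logarithm, so the dominant contribution is $\ell = 0$, namely $\dist^{1 - 2m - 3n}$. Using $\dist^{-2m} \lesssim \euc^{-m}$ converts this to $\euc^{-m}\dist^{1 - 3n}$, the claimed bound.

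Next I would treat $k \geq 1$ in two pieces. For $\ell = 0$, Proposition~\ref{prop:zeroth-col} provides $\omega_{k,0} = 0$ and $|\partial_t^m \partial_x^n \sigma_{k,0}| \lesssim \dist^{-4k - 2m - 3n + 1}$; invoking $\dist^{-2(2k+m)} \lesssim \euc^{-(2k+m)}$ reduces this to $\euc^{-2k-m}\dist^{1-3n}$. For $\ell \geq 1$, Proposition~\ref{prop:grid-est} (with any admissible $\eps \leq 1/6$) yields $|\partial_t^m \partial_x^n \psi_{k,\ell}| \lesssim \nu^{(\ell - 1 - 3\eps)\beta/3}\euc^{-2k - m + 1}\dist^{-3n}$, so it suffices to verify that $\nu^{(\ell - 1 - 3\eps)\beta/3}\,\euc/\dist$ is uniformly bounded on $\m{I}_\beta \setminus \m{I}_{1/2}$. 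From $\dist \gtrsim \euc^{1/2}$ (read out of \eqref{eq:distance-relation}) and $\euc \lesssim \nu^\beta$, one gets $\euc/\dist \lesssim \nu^{\beta/2}$, so the ratio is controlled by $\nu^{\beta[(\ell - 1 - 3\eps)/3 + 1/2]}$. At $\ell = 1$ the exponent is $\beta(1/2 - \eps) \geq 0$, and for $\ell \geq 2$ it is strictly positive; both are harmless.

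The only subtle case is $\ell = 1$, where Proposition~\ref{prop:grid-est} costs a negative power $\nu^{-\eps\beta}$ that must be absorbed by the smallness of $\euc/\dist$ on the matching region. The margin $\eps \leq 1/6 < 1/2$ built into the earlier grid estimate is more than sufficient. Once the per-term bounds are in hand, summing the finitely many $\ell \in \{0, \ldots, L\}$ absorbs all $\ell$-dependent prefactors into the implicit constant, which is allowed to depend on $k$, $L$, $m$, $n$.
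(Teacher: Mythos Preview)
Your proposal is correct and follows essentially the same approach as the paper: bound each $\psi_{k,\ell}$ separately via Propositions~\ref{prop:inviscid-row}, \ref{prop:zeroth-col}, and \ref{prop:grid-est}, convert excess powers of $\dist^{-2}$ to $\euc^{-1}$ via \eqref{eq:distance-relation}, and for the bulk terms check that $\nu^{(\ell-1-3\eps)\beta/3}\euc/\dist \lesssim 1$ on $\m{I}_\beta$. The paper carries out the same computation with the specific choice $\eps = 1/6$, writing the key step as $\nu^{-\beta/6}\euc \lesssim \euc^{5/6} \lesssim \euc^{1/2} \lesssim \dist$, which is exactly your $\ell=1$ case.
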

\begin{proof}
  We show that the bound \eqref{eq:partial-bound} holds for each $\psi_{k,\ell}$.
  Using \eqref{eq:distance-relation} and Propositions~\ref{prop:inviscid-row} and \ref{prop:zeroth-col}, it is immediate when $k$ or $\ell$ vanishes.
  For the remaining cases, we observe that \eqref{eq:grid-outer-partial} and $\euc \lesssim \nu^\beta$ imply that $\nu^{-\beta/6} \euc \lesssim \euc^{5/6} \lesssim \euc^{1/2} \lesssim \dist$.
  Hence when $k,\ell \geq 1$, Proposition~\ref{prop:grid-est} yields
  \begin{equation*}
    \begin{aligned}
      \absb{\partial_t^m \partial_x^n \psi_{k,\ell}} &\lesssim_{k,\ell,m,n} \nu^{(\ell - 3/2)\beta3} \euc^{-2k - m + 1} \dist^{-3n} \lesssim_{k,\ell,m,n} \nu^{-\beta/6} \euc  \euc^{-2k - m} \dist^{-3n}
      \\ &\lesssim_{k,\ell,m,n} \euc^{-2k - m} \dist^{-3n + 1}.
    \end{aligned}
    \qedhere
  \end{equation*}
\end{proof}
We can now bound the terms on the right side of \eqref{eq:outer-diff} containing factors of $p$.
\begin{lemma}
  \label{lem:diff-prod}
  Fix $k, L \in \N$.
  If Proposition~\ref{prop:outer-matching} holds for all $k' < k$, then for all $m,n \in \N_0$,
  \begin{equation*}
    \absb{\partial_t^m \partial_x^n h_{\geq 1}} \lesssim_{k,L,m,n} \nu^{(L - 3\eps)\beta/3} \euc^{-2k - m + 1} \dist^{-3n + 1}.
  \end{equation*}
\end{lemma}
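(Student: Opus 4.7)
The plan is to expand the products on the right-hand side of \eqref{eq:outer-diff} via the distributive law, indexing the resulting terms by the subset $S \subseteq \{0,\ldots,r\}$ of factor positions where we pick the $p$-difference rather than the partial sum $\psi_{\cdot,[L]}$. By definition, $h_{\geq 1}$ collects those terms with $|S| \geq 1$. In any term with $\kappa_0 = 0$ in the $B$-sum, I recombine the two choices via $\psi_{0,[L]} + p_{0,L} = \out{\psi}{0}$ and assign that factor to $h_0$, so every $p$-factor contributing to $h_{\geq 1}$ has index $\kappa_j \geq 1$. The constraints $\sum_j \kappa_j \in \{k, k-1\}$ together with $\kappa_i \geq 1$ at some other position then force $\kappa_j \leq k-1$ at every $p$-position, legitimizing the use of the inductive hypothesis.

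I bound each factor using Lemma~\ref{lem:partial-bound} for $\psi_{\kappa_j,[L]}$ and the inductive hypothesis for $p_{\kappa_j,L}$. Take first $m=n=0$. For a term of the $A$-sum ($r \geq 1$, $\sum_j \kappa_j = k$) with $|S| \geq 1$, multiplication gives
\[
\prod_{j \in S}\bigl(\nu^{(L-3\eps)\beta/3}\, \euc^{-2\kappa_j+1}\bigr) \prod_{j \notin S}\bigl(\euc^{-2\kappa_j}\, \dist\bigr) \lesssim \nu^{|S|(L-3\eps)\beta/3}\, \euc^{-2k+|S|}\, \dist^{\,r+1-|S|}.
\]
When $|S|=1$ this reduces to $\nu^{(L-3\eps)\beta/3}\euc^{-2k+1}\dist^{r}$, which is at most $\nu^{(L-3\eps)\beta/3}\euc^{-2k+1}\dist$ since $r \geq 1$ and $\dist \lesssim 1$. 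When $|S| \geq 2$ I trade the surplus $\euc^{|S|-1}$ for $\dist^{2(|S|-1)}$ via $\euc \lesssim \dist^2$ from \eqref{eq:distance-relation}, and the extra powers of $\dist$ are harmless. The $B$-sum is handled in the same way: the additional $\partial_x$ on the $\kappa_0$-factor costs $\dist^{-3}$, but the shifted constraint $\sum_j \kappa_j = k-1$ supplies an $\euc^2$, and the combination $\dist^{-3}\euc^2 \lesssim \dist$ (again by \eqref{eq:distance-relation}) reproduces the $A$-sum bound.

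Derivative estimates follow by Leibniz. Each $\partial_t$ applied to any factor costs $\euc^{-1}$ and each $\partial_x$ costs $\dist^{-3}$, both in Lemma~\ref{lem:partial-bound} and in the inductive hypothesis, so any distribution of $\partial_t^m \partial_x^n$ across the product incurs the uniform loss $\euc^{-m}\dist^{-3n}$. Summing over all Leibniz distributions yields the claimed bound.

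I do not anticipate a substantive obstacle: the only delicate point is the \textbf{$\euc$--$\dist$ bookkeeping}, where $\euc \lesssim \dist^2$ is used precisely to absorb surplus powers of $\euc$ (arising from $|S| \geq 2$ or from the $B$-sum constraint shift), while the single guaranteed $p$-factor supplies the one full power of $\nu^{(L-3\eps)\beta/3}$ required by the statement. No finer cancellation is needed.
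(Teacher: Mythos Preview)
Your argument is essentially the paper's, and the $\euc$--$\dist$ bookkeeping for the $A$-sum and for the $B$-sum with $\kappa_0 \geq 1$ is correct. But there is a genuine gap in your treatment of $\kappa_0 = 0$ in the $B$-sum.

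You write that you ``recombine the two choices via $\psi_{0,[L]} + p_{0,L} = \out{\psi}{0}$ and assign that factor to $h_0$.'' This is not a bounding step; it is a redefinition of the decomposition $h_0 + h_{\geq 1}$. Concretely, the term
\[
\frac{\Der^r B(\out{\psi}{0})}{r!}\,(\partial_x p_{0,L}) \prod_{j=1}^r \psi_{\kappa_j,[L]}
\]
lies in $h_{\geq 1}$ as defined in \eqref{eq:outer-diff} (it has exactly one $p$-factor), and your recombination silently moves it into your modified $h_0$. Since the decomposition must match the one used in Lemma~\ref{lem:residual}, you cannot simply reassign this term; you must bound it where it sits. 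Moreover, the inductive hypothesis (Proposition~\ref{prop:outer-matching} for $k' < k$) does not apply here, because $\kappa_0 = 0 \notin \N$.

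The fix is exactly what the paper does: invoke \eqref{eq:inviscid-row-matching} from Proposition~\ref{prop:inviscid-row}, which gives $\partial_x p_{0,L} \lesssim_L \abs{\log \dist}^L \dist^{L-1}$. Then
\[
(\partial_x p_{0,L}) \prod_{j=1}^r \psi_{\kappa_j,[L]}
\lesssim_{k,L} \abs{\log \dist}^L \dist^{L-1}\,\euc^{-2(k-1)}
\lesssim_{k,L} \nu^{-o_L(1)} \dist^{L}\,\euc^{-2k+1}\,\dist,
\]
using $\dist^{-2} \lesssim \euc^{-1}$ once. Since $\dist \lesssim \nu^{\beta/3}$ in $\m{I}_\beta$, one has $\nu^{-o_L(1)}\dist^L \lesssim_L \nu^{(L-3\eps)\beta/3}$, which matches the target. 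With this case added, your proof is complete and coincides with the paper's.
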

\begin{proof}
  Comparing \eqref{eq:outer-matching} and \eqref{eq:partial-bound}, we see that the inductive bound on $p_{k',L}$ is always tighter than that on $\psi_{k',[L]}$ for $k' < k$.
  So terms in $h_{\geq 1}$ with multiple factors of $p$ are easier to control than those with exactly one.
  We therefore only explicitly analyze terms with exactly one factor of $p_{<k,L}$.

  First consider the contribution to $h_{\geq 1}$ from $A$.
  Given $0 \leq i \leq r$, the inductive hypothesis \eqref{eq:outer-matching} and \eqref{eq:partial-bound} from Lemma~\ref{lem:partial-bound} yield
  \begin{equation*}
    p_{\kappa_i,L} \prod_\bigsub{0 \leq j \leq r\\ j \neq i} \psi_{\kappa_j,[L]} \lesssim_{k,L} \nu^{(L - 3\eps)\beta/3} \euc^{-2\kappa_i + 1} \prod_{j \neq i} \euc^{-2\kappa_j}\dist \lesssim_{k,L} \nu^{(L - 3\eps)\beta/3} \euc^{-2k + 1} \dist.
  \end{equation*}
  Here we use $\kappa_i \geq 1$ to apply \eqref{eq:outer-matching}.
  In the contribution to $h_{\geq 1}$ from $B$, we sometimes have $\kappa_0 = 0$.
  If $\kappa_0 \geq 1$, similar manipulations yield
  \begin{equation*}
  \begin{aligned}
    (\partial_x p_{\kappa_0,L}) \prod_j \psi_{\kappa_j,[L]}, \, (\partial_x \psi_{\kappa_0,[L]}) p_{k_i,L} \prod_{j \neq i} \psi_{\kappa_j,[L]} &\lesssim_{k,L} \dist^{-3} \euc^2 \nu^{(L - 3\eps)\beta/3} \euc^{-2k + 1}
    \\ &\lesssim_{k,L} \nu^{(L - 3\eps)\beta/3} \euc^{-2k + 1}\dist.
    \end{aligned}
  \end{equation*}
  Here we used \eqref{eq:distance-relation} to conclude that $\dist^{-3} \euc^2 \lesssim \dist$.
  If $\kappa_0 = 0$, we use \eqref{eq:inviscid-row-matching} to similarly write
  \begin{equation*}
  \begin{aligned}
    (\partial_x p_{0,L}) \prod_j \psi_{\kappa_j,[L]} &\lesssim_{k,L} \nu^{-o_L(1)} \dist^{L-1} \prod_j \euc^{-2\kappa_j} \lesssim_{k,L} \nu^{-o_L(1)} \dist^{L-1} \euc^{-2k + 2}
    \\ &\lesssim_{k,L} \nu^{-o_L(1)} \dist^L \euc^{-2k + 1} \dist.
    \end{aligned}
  \end{equation*}
  Now $\nu^{-o_L(1)} \dist^L \lesssim_L \nu^{L\beta/3 - o_L(1)} \lesssim_L \nu^{(L - 3\eps)\beta/3}$, so this term is bounded like the rest.

  We have now treated all terms in $h_{\geq 1}$ with exactly one factor of $p$, and those with multiple factors satisfy identical bounds.
  As usual, $\partial_t^m\partial_x^n$ adds a factor of $\euc^{-m}\dist^{-3n}$, so the lemma follows.
\end{proof}

\subsection{Partial sum equation}
We next show that $\psi_{k,[L]}$ approximately satisfies \eqref{eq:outer-eq}.
Recall that $h_0$ denotes the terms in the right side of \eqref{eq:outer-diff} with no factors of $p$.
\begin{lemma}
  \label{lem:residual}
For all $k, L \in \N$, there exist $h^{\textnormal{res}}$ and $F^{\textnormal{res}}$ such that
  \begin{equation}
    \label{eq:residual}
    \partial_t \psi_{k,[L]} + \partial_x[A(\out{\psi}{0}) \psi_{k,[L]}] = \partial_x h_0 + \partial_x h^{\textnormal{res}} + F^{\textnormal{res}}
  \end{equation}
  and for all $m,n \in \N_0$,
  \begin{equation*}
    \begin{aligned}
      \absb{\partial_t^m \partial_x^n h_\sperp^{\textnormal{res}}} &\lesssim_{k,L,m,n} \nu^{(L - 1/2)\beta/3} \euc^{-2k - m + 1} \dist^{-3n},\\
      \absb{\partial_t^m \partial_x^n F_\sperp^{\textnormal{res}}} &\lesssim_{k,L,m,n} \nu^{(L - 1/2)\beta/3} \euc^{-2k - m} \dist^{-3n},
    \end{aligned}
  \end{equation*}
  and
  \begin{equation}
    \label{eq:sigma-residual}
    \abs{\partial_t^m \partial_x^n(\partial_xh_1^{\textnormal{res}} + F_1^{\textnormal{res}})} \lesssim_{k,L,m,n} \dist^{-2} \nu^{(L - 1/2)\beta/3} \euc^{-2k - m + 1} \dist^{-3n}.
  \end{equation}
\end{lemma}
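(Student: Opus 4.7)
The strategy is to sum the grid equations \eqref{eq:omega-grid} and \eqref{eq:sigma-grid} over $\ell \in \{0,\ldots,L\}$ and compare the result, term by term, with the left-hand side of \eqref{eq:residual}. The grid equations use Taylor coefficients frozen at the origin $0 \in \m{V}$, while $h_0$ in \eqref{eq:outer-diff} involves Taylor coefficients at $\out{\psi}{0}$. The bridge is a double expansion: Taylor-expand $\Der^r A(\out{\psi}{0}) = \sum_{s \geq 0} \frac{1}{s!} \Der^{r+s} A(0)(\out{\psi}{0})^s$ and $\Der^r B$ similarly, then substitute $\out{\psi}{0} = \psi_{0,[L]} + p_{0,L}$ using \eqref{eq:inviscid-row-matching}. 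Expanding also $\prod_j \psi_{\kappa_j,[L]} = \sum_\alpha \prod_j \psi_{\kappa_j,\alpha_j}$, $h_0$ becomes a sum of the multilinear expressions $\Aop \psi_{\kappa,\alpha}$, $\Bop \psi_{\kappa,\alpha}$ from the grid equations, plus Taylor tails and terms carrying at least one factor of $p_{0,L}$.

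After reindexing, the multilinear terms with $\abs{\kappa} \leq k$ and $\abs{\alpha} + \#\alpha \leq L$ agree exactly with the right-hand sides of the summed grid equations --- this is precisely how the polynomials $\poly_\ell^\omega$, $\poly_\ell^\sigma$ were designed. The residual $\partial_x h^{\textnormal{res}} + F^{\textnormal{res}}$ then collects four types of leftover contributions: (a)~multilinear products of grid terms with $\abs{\alpha} + \#\alpha \geq L + 1$ that appear in the expansion of $h_0$ but are truncated from the grid sums; (b)~higher-order Taylor tails of $A$ and $B$; (c)~terms carrying at least one factor of $p_{0,L}$; and (d)~the inviscid-row residues $-\nu^{-1}\delta_{k,1} F_{[L]}^\omega$ and $-\nu^{-1}\delta_{k,1} F_{[L]}^\sigma$.

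Bounds on (a)-(c) for the nonshocking component follow from Lemma~\ref{lem:prod}(i)-(ii). The dominant contribution comes from case~(a) at $\abs{\alpha} + \#\alpha = L + 1$, producing a factor $\nu^{(L - 3\eps)\beta/3}$, which with $\eps = 1/6$ is precisely $\nu^{(L - 1/2)\beta/3}$. Type~(d) is controlled by Lemma~\ref{lem:force-cancels}: on $\m{I}_\beta$ one has $\dist \lesssim \nu^{\beta/3}$, so $\nu^{-1}|F_{[L]}^\omega| \lesssim \nu^{L\beta/3 - 1 - o(1)}$, which is $\lesssim \nu^{(L-1/2)\beta/3}\euc^{-2}$ precisely because $\beta > 6/13$; the $\sigma$-force $F_{[L]}^\sigma$ gains an extra factor of $\dist$. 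Derivative estimates follow because $\partial_t$ and $\partial_x$ lose $\euc^{-1}$ and $\dist^{-3}$ in all relevant bounds. For the shocking component, Lemma~\ref{lem:prod}(iii)-(iv) yields an extra factor of $\dist$; however, expanding $\partial_x[A^1(\out{\psi}{0})\psi_{k,[L]}]$ and comparing with $\partial_1 A_1^1(0)\partial_x(\sigma_{0,0}\sigma_{k,[L]})$ leaves a difference in which some residual contributions naturally pair a spatial derivative with a non-derivative force, so that only the combination $\partial_x h_1^{\textnormal{res}} + F_1^{\textnormal{res}}$ admits a clean bound --- the $\dist^{-2}$ in \eqref{eq:sigma-residual} reflecting the worse of the two alternatives.

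The main obstacle is the combinatorial bookkeeping: each summand in the multinomial expansion of $\prod_j \psi_{\kappa_j,[L]}$ and each Taylor tail of $A(\out{\psi}{0})$, $B(\out{\psi}{0})$ must be matched against the correct grid multi-index $(\kappa,\alpha)$, verifying that nothing is double-counted or omitted. Once this matching is accomplished, the estimates reduce to systematic applications of Lemma~\ref{lem:prod} and \eqref{eq:inviscid-row-matching}, closely paralleling the treatment of $h_0$ in the proof of Proposition~\ref{prop:out-bound}.
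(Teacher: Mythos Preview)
Your proposal is correct and follows essentially the same approach as the paper's proof: sum the grid equations over $\ell \le L$, Taylor-expand $\Der^r A(\out{\psi}{0})$ about $0$ and substitute $\out{\psi}{0} \approx \psi_{0,[L]}$, then match the resulting multilinear terms with those in $h_0$ and bound the leftovers (truncated multi-indices with $\abs{\alpha}+\#\alpha > L$, Taylor tails, and the inviscid residues $-\nu^{-1}\delta_{k,1}F_{[L]}$) using Lemma~\ref{lem:prod} and Lemma~\ref{lem:force-cancels}. The paper handles the combinatorics via the identity $\tfrac{1}{(r+1)!\,s!} = \tfrac{1}{(r+s+1)!}\binom{r+s+1}{s}$ and absorbs your category~(c) directly into the Taylor tail rather than tracking $p_{0,L}$ separately, but this is only a minor organizational difference.
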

\begin{proof}
  We first analyze the nonshocking components.
  Using Proposition~\ref{prop:omega-grid}, we can write
  \begin{equation*}
    \partial_t \omega_{k,\ell} =
    -\partial_x \sum_\bigsub{0 \leq r \leq \ell\\ \kappa_0 + \ldots + \kappa_r = k\\ \al_0 + \ldots + \al_r = \ell - r} \Aop_\sperp\psi_{\kappa,\al}
    +\partial_x \sum_\bigsub{0 \leq r \leq \ell-1\\ \kappa_0 + \ldots + \kappa_r = k-1\\ \al_0 + \ldots + \al_r = \ell - r - 1} \Bop_\sperp\psi_{\kappa,\al}
    - \nu^{-1} \delta_{k,1} F_\ell^\omega.
  \end{equation*}
  Recall the notation $\abs{\al} \coloneqq \al_0 + \ldots + \al_r$ and $\# \al \coloneqq r$.
  Then summing over $0 \leq \ell \leq L$, we can compactly write
  \begin{equation*}
    \partial_t \omega_{k,[L]} =
    -\partial_x \sum_\bigsub{\abs{\kappa} = k\\ \abs{\al} + \# \al \leq L} \Aop_\sperp\psi_{\kappa,\al}
    +\partial_x \sum_\bigsub{\abs{\kappa} = k-1\\ \abs{\al} + \# \al \leq L-1} \Bop_\sperp\psi_{\kappa,\al}
    - \nu^{-1} \delta_{k,1} F_{[L]}^\omega.
  \end{equation*}
  To compare this with $h_0$ in \eqref{eq:outer-diff}, we use the smallness of $\out{\psi}{0}$ in $\m{I}_\beta$ to expand $\Der^r A_\sperp(\out{\psi}{0})$ (after projecting onto the nonshocking subspace $\m{V}_\sperp$).
  Using Proposition~\ref{prop:inviscid-row}, we have
  \begin{equation*}
  \begin{aligned}
    \frac{\Der^rA_\sperp(\out{\psi}{0})}{(r + 1)!} &= \sum_{s = 0}^{L - r} \frac{\Der^{r + s}A_\sperp(0)}{(r + 1)!\,s!} (\out{\psi}{0})^s + \m{O}_L\big(\abss{\out{\psi}{0}}^{L - r + 1}\big)
    \\ &= \sum_{s = 0}^{L - r} \frac{\Der^{r + s}A_\sperp(0)}{(r + 1)!\,s!} \psi_{0,[L]}^s + \m{O}_L\big(\abss{\psi_{0,[L]}}^{L - r + 1}\big).
    \end{aligned}
  \end{equation*}
  Now $\frac{1}{(r + 1)!\, s!} = \frac{1}{(r + s + 1)!}\binom{r + s + 1}{s}$, and the binomial coefficient counts the number of ways of inserting $s$ factors of $\psi_{0,[L]}$ into a product $\prod_{j=0}^{r + s} \psi_{\kappa_j,[L]}$.
  Replacing $r + s$ by $r$, we can therefore write
  \begin{equation*}
    \sum_\bigsub{r \geq 0, \, \kappa_j \geq 1\\ \kappa_0 + \ldots + \kappa_r = k} \frac{\Der^rA_\sperp(\out{\psi}{0})}{(r + 1)!} \prod_{j=0}^r \psi_{\kappa_j,[L]} =
    \sum_\bigsub{\# \kappa \leq L\\ \abs{\kappa} = k} \Aop_\sperp\psi_{\kappa,[L]} + \m{O}_L\Big(\sup_{\substack{\#\kappa = L + 1\\ \abs{\kappa} = k}} \prod_j \abss{\psi_{\kappa_j,[L]}}\Big).
  \end{equation*}
  Expanding each $\psi_{\kappa,[L]} = \sum_{\ell \leq L} \psi_{\kappa,\ell}$, we find
  \begin{equation*}
  \begin{aligned}
    \sum_\bigsub{r \geq 0, \, \kappa_j \geq 1\\ \kappa_0 + \ldots + \kappa_r = k} \frac{\Der^rA_\sperp(\out{\psi}{0})}{(r + 1)!} &\prod_{j=0}^r \psi_{\kappa_j,[L]}
    - \sum_\bigsub{\abs{\kappa} = k\\ \abs{\al} + \# \al \leq L} \Aop_\sperp\psi_{\kappa,\al}\\
                                                                                                                                 &\lesssim_{k,L} \sup_{\substack{\#\al \leq L,\, \al_j \leq L\\\abs{\al} + \# \al > L\\\abs{\kappa} = k}} \prod_j \abss{\psi_{\kappa_j, \alpha_j}}
    + \sup_{\substack{\#\kappa = L + 1\\ \abs{\kappa} = k}} \prod_j \abss{\psi_{\kappa_j,[L]}}.
    \end{aligned}
  \end{equation*}
  Using Lemma~\ref{lem:prod}, we find
  \begin{equation*}
    \sup_{\substack{\#\kappa = \#\al \leq L\\\abs{\kappa} = k, \, \al_j \leq L\\ \abs{\al} + \# \al > L}} \prod_j \abss{\psi_{\kappa_j, \alpha_j}}
    \lesssim_{\eps,k,L} \nu^{(L - 3\eps)\beta/3} \euc^{-2k + 1}.
  \end{equation*}
  (Though the lemma is stated for $\m{A}_\sperp$, the proof controls the product above.)
  Next, \eqref{eq:distance-relation} and \eqref{eq:partial-bound} yield
  \begin{equation*}
    \sup_{\substack{\#\kappa = L + 1\\ \abs{\kappa} = k}} \prod_j \abss{\psi_{\kappa_j,[L]}} \lesssim_{k,L} \sup_{\substack{\#\kappa = L + 1\\ \abs{\kappa} = k}} \prod_j \dist \euc^{-2\kappa_j} \lesssim_{k, L} \dist^{L + 2} \euc^{-2k} \lesssim_{k,L} \nu^{\beta L/3} \euc^{-2k + 1}.
  \end{equation*}
  Combining these bounds, we have thus shown:
  \begin{equation*}
    \sum_\bigsub{r \geq 0, \, \kappa_j \geq 1\\ \kappa_0 + \ldots + \kappa_r = k} \frac{\Der^rA_\sperp(\out{\psi}{0})}{(r + 1)!} \prod_{j=0}^r \psi_{\kappa_j,[L]}
    = \sum_\bigsub{\abs{\kappa} = k\\ \abs{\al} + \# \al \leq L} \Aop_\sperp\psi_{\kappa,\al}
    + \m{O}_{k,L}\big(\nu^{(L - 3\eps)\beta/3} \euc^{-2k + 1}\big).
  \end{equation*}
  In a similar fashion, we can check that
  \begin{equation*}
    \hspace{14pt}\sum_\bigsub{r \geq 0, \, \kappa_{\geq 1} \hspace{0.5pt}\geq 1\\ \kappa_0 + \ldots + \kappa_r = k-1} \frac{\Der^rB(\out{\psi}{0})}{r!} (\partial_x\psi_{\kappa_0,[L]}) \prod_{j=1}^r \psi_{\kappa_j,[L]}
    = \sum_\bigsub{\abs{\kappa} = k-1\\ \abs{\al} + \# \al \leq L-1} \Bop_\sperp\psi_{\kappa,\al} + \m{O}_{k,L}\big(\nu^{(L - 3\eps)\beta/3} \euc^{-2k + 1}\big).
  \end{equation*}
  The details are sufficiently similar that we do not repeat them.
  We let $h_\sperp^{\text{res}}$ denote the sum of the error terms in the above two displays.
  
  Finally, when $k = 1$, \eqref{eq:distance-relation}, Lemma~\ref{lem:force-cancels}, and $\beta > (2 + \eps)^{-1}$, imply that
  \begin{equation*}
  \begin{aligned}
    F_\sperp^{\text{res}} \coloneqq -\nu^{-1} \delta_{k,1} F_{[L]}^\omega &\lesssim_L \nu^{-1 - o_L(1)} \dist^L \lesssim_L \nu^{(2 + \eps)\beta - 1-o_L(1)} \nu^{(L - 3\eps)\beta/3}\euc^{-2}
    \\ &\lesssim_L \nu^{(L - 3 \eps)\beta/3}\euc^{-2}.
    \end{aligned}
  \end{equation*}
  Now \eqref{eq:residual} follows from Proposition~\ref{prop:omega-grid} and \eqref{eq:outer-diff}.
  Derivative bounds follow as usual.

  We take a similar approach with the shocking component.
  Using Lemma~\ref{lem:prod}, we pick up an additional factor of $\dist$ in $h_1^{\text{res}}$ relative to the nonshocking estimates.
  Applying $\partial_x$, we obtain \eqref{eq:sigma-residual}.
\end{proof}

\subsection{Integrating residue equation}
\label{sec:integrate-residue}

Now our matching estimate is a simple consequence of the hyperbolic estimate Proposition~\ref{prop:outer-op} we stated earlier.
\begin{proof}[Proof of Proposition~\ref{prop:outer-matching}]
  Fix $k, L \in \N$.
  Let $\eps'$ be the mean of $\eps$ and $\beta^{-1} - 2$, so $\eps' \in (0, \eps)$ and $\beta > (2 + \eps')^{-1}$.
  Proceeding by induction, we assume that \eqref{eq:outer-matching} holds for all $k' < k$ with $\eps'$ in place of $\eps$.
  Note that $p_{k,L} = 0$ on $\Gamma_\beta$.
  Combining Lemmas~\ref{lem:diff-prod} and \ref{lem:residual}, we see that $\nu^{-(L - 3\eps')\beta/3} p_{k,L}$ satisfies the hypotheses of Proposition~\ref{prop:outer-op} with $\m{D} = \m{I}_\beta \setminus \m{I}_{1/2}$ and $\Sigma = \Gamma_\beta$.
  By said proposition,
  \begin{equation*}
    \abss{\partial_t^m \partial_x^n p_{k,L}} \lesssim_{k,L,m,n} \abs{\log \euc} \nu^{(L - 3\eps')\beta/3} \euc^{-2k - m + 1} \dist^{-3n}.
  \end{equation*}
  In $\m{I}_\beta \setminus \m{I}_{1/2}$, $\abs{\log \euc} \asymp \abs{\log \nu}$.
  Using $\abs{\log \nu} \nu^{-\eps'/\beta} \lesssim \nu^{-\eps/\beta}$, we obtain \eqref{eq:outer-matching}.
\end{proof}
It remains to prove the hyperbolic estimate.
\begin{proof}[Proof of Proposition~\ref{prop:outer-op}]
  For the reader's convenience, we recall the setting.
  Let $p$ solve $\partial_t p + \partial_x[A(\out{\psi}{0}) p] = \partial_x h + F$ in a causally closed domain $\m{D}$ with Cauchy hypersurface $\Sigma$, and suppose
  \begin{equation}
    \label{eq:residue-data}
    \abss{\nab^n (p|_{\Sigma}}) \lesssim_{n} \euc^{-2k + 1} \dist^{-3n}.
  \end{equation}
  Setting $a_k = \tbf{1}(k \geq 2)$, assume
  \begin{equation}
    \label{eq:residue-ns}
    \abss{\partial_t^m \partial_x^n h^\sperp} \lesssim_{m,n} \euc^{-2k - m + 1} \dist^{-3n}, \quad \abss{\partial_t^m \partial_x^n F^\sperp} \lesssim_{m,n} \euc^{-2k - m} \dist^{-3n},
  \end{equation}
  and
  \begin{equation}
    \label{eq:residue-sh}
    \abss{\partial_t^m \partial_x^n (\partial_x h^1 + F^1)} \lesssim_{m,n} \abs{\log \euc}^{a_k} \euc^{-2k - m + 1} \dist^{-3n-2}.
  \end{equation}
  We wish to show that
  \begin{equation}
    \label{eq:residue-bd}
    \abss{\partial_t^m \partial_x^n p} \lesssim_{m,n} \abs{\log \euc} \euc^{-2k - m + 1} \dist^{-3n}.
  \end{equation}
  We first suppose $m = n =0$ and employ the following bootstrap assumption for a time $\tb$:
  \begin{equation}
    \label{eq:residue-boot}
    \abss{p} \lesssim_{m,n} \euc^{-2k + 9/10} \quad \text{in } \m{D} \cap \{t < t_{\mathrm{b}}\}.
  \end{equation}
  We show that the stronger estimate \eqref{eq:residue-bd} holds until $\tb$.
  By continuity, we can push $\tb$ to $0$ and complete the proof.

  To avoid derivative loss, we work with the left eigenvectors $l$ and eigenvalues $\lambda$ associated to the variable matrix $A(\out{\psi}{0})$ rather than the frozen matrix $A(0)$.
  For the sake of brevity, in the following we implicitly compose $A$, $l$, $\lambda$, and their derivatives with $\out{\psi}{0}$.
  So, for example, $A$ represents $A(\out{\psi}{0})$.
  We note that Proposition~\ref{prop:inviscid-row} implies that
  \begin{equation}
    \label{eq:inviscid-derivs}
    \partial_t \out{\psi}{0} \lesssim \dist^{-1} \And \partial_x \out{\psi}{0} \lesssim \dist^{-2}.
  \end{equation}

  First fix a nonshocking index $2 \leq I' \leq N$.
  Let $p^{I'} \coloneqq l^{I'}p$, and similarly $h^{I'}$ and $F^{I'}$.
  Commuting the $p$-equation with $l^{I'}$, we find
  \begin{equation}
    \label{eq:ns-pre-renorm}
    (\partial_t + \lambda_{(I')}\partial_x) p^{I'} = [\Der \ell_{I'} \partial_t \out{\psi}{0} + \Der \ell_{I'} \partial_x \out{\psi}{0} A - \Der \lambda_{(I')} \partial_x \out{\psi}{0}] p + l^{I'} \partial_x h + F^{I'}.
  \end{equation}
  Let $M$ denote the matrix coefficient of $p$ in brackets.
  To renormalize, we write
  \begin{equation*}
    l^{I'} \partial_x h = \lambda_{(I')} \lambda_{(I')}^{-1} l^{I'} \partial_x h = \lambda_{(I')}\partial_x(\lambda_{(I')}^{-1} h^{I'}) - \lambda_{(I')} \Der(\lambda_{(I')}^{-1} l^{I'}) \partial_x \out{\psi}{0} h.
  \end{equation*}
  Then we can write \eqref{eq:ns-pre-renorm} as
  \begin{equation*}
    (\partial_t + \lambda_{(I')} \partial_x)(p - \lambda_{(I')} h^{I'}) = Mp - \partial_t (\lambda_{(I')}^{-1} h^{I'}) - \lambda_{(I')} \Der(\lambda_{(I')}^{-1} l^{I'}) \partial_x \out{\psi}{0} h + F^{I'}.
  \end{equation*}
  Because $I'$ is a nonshocking index, strict hyperbolicity implies that $\lambda_{(I')}^{-1}$ is uniformly bounded in a neighborhood of the origin.
  Combining \eqref{eq:residue-ns} and \eqref{eq:inviscid-derivs} with the bootstrap \eqref{eq:residue-boot}, we see that
  \begin{equation*}
    (\partial_t + \lambda_{(I')} \partial_x)(p - \lambda_{(I')} h^{I'}) \lesssim \dist^{-2} \euc^{-2k + 9/10} + \euc^{-2k}
  \end{equation*}
  and $\lambda_{(I')} h^{I'} \lesssim \euc^{-2k + 1}$.
  Combining \eqref{eq:nonshock} and \eqref{eq:nonshock-e-and-d} of Lemma~\ref{lem:integrals} and \eqref{eq:residue-data}, we obtain
  \begin{equation*}
    p^{I'} \lesssim \euc^{-2k + 1/3 + 9/10} + \euc^{-2k + 1} \lesssim \euc^{-2k + 1}.
  \end{equation*}
  We have thus improved on the bootstrap in the nonshocking component.

  Similar reasoning in the shocking component yields
  \begin{equation*}
    (\partial_t + \lambda_{(1)} \partial_x) p^1 = [\Der l^1 \partial_t \out{\psi}{0} + \Der l^1 \partial_x \out{\psi}{0} A - \Der \lambda_{(1)} \partial_x \out{\psi}{0}] p + l^1 \partial_x h + F^1.
  \end{equation*}
  Recall the right eigenvectors $r_I$ of $A$.
  Using our bounds on the nonshocking components and $\lambda_{(1)} \asymp \sigma_{0,0} \lesssim \dist$, we have
  \begin{equation*}
    \Der l^1 \partial_x \out{\psi}{0} A p = \Der l^1 \partial_x \out{\psi}{0} (\lambda_{(1)} p^1 r_1 + \lambda_{(I')} p^{I'}r_{I'}) = \m{O}(\dist^{-1}) p^1 + \m{O}(\dist^{-2} \euc^{-2k + 1}).
  \end{equation*}
  Next, we use Proposition~\ref{prop:inviscid-row} to write
  \begin{equation*}
    - \Der \lambda_{(1)} \partial_x \out{\psi}{0} p = -\partial_1 A_1^1(0) \partial_x \sigma_{0,0} p^1 + \m{O}(\dist^{-1}p^1) + \m{O}(\dist^{-2} \euc^{-2k + 1}).
  \end{equation*}
  Finally, \eqref{eq:residue-ns}, \eqref{eq:residue-sh}, and \eqref{eq:inviscid-derivs} yield
  \begin{equation*}
    l^1 \partial_x h = \partial_x h^1 - \Der l^1 \partial_x \out{\psi}{0} h \lesssim \abs{\log \euc}^{a_k} \dist^{-2} \euc^{-2k + 1}.
  \end{equation*}
  Collecting these observations and using \eqref{eq:residue-sh}, we have
  \begin{equation*}
    (\partial_t + \lambda_{(1)} \partial_x) p^1 = -\partial_1 A_1^1(0) \partial_x \sigma_{0,0} p^1 + \m{O}(\dist^{-1}p^1) + \m{O}(\abs{\log \euc}^{a_k} \dist^{-2} \euc^{-2k + 1}).
  \end{equation*}
  Now Proposition~\ref{prop:inviscid-row} implies that $\lambda_{(1)} = \partial_1 A_1^1(0)\sigma_{0,0} + \m{O}(\dist^2)$.
  Following the logic leading to \eqref{eq:sigma-hat-ev}, we multiply by the integrating factor $\dist^2$ to obtain
  \begin{equation*}
    (\partial_t + \lambda_{(1)} \partial_x) (\dist^2 p^1) = \m{O}(\dist p^1) + \m{O}(\abs{\log \euc}^{a_k} \euc^{-2k + 1}).
  \end{equation*}
  The bootstrap \eqref{eq:residue-boot} implies that the second term on the right dominates the first, so in fact
  \begin{equation*}
    (\partial_t + \lambda_{(1)} \partial_x) (\dist^2 p^1) \lesssim \abs{\log \euc}^{a_k} \euc^{-2k + 1}.
  \end{equation*}
  Using \eqref{eq:shock} or \eqref{eq:shock-log} of Lemma~\ref{lem:integrals} (if $k \geq 2$ or $k = 1$) and \eqref{eq:residue-data}, we conclude that
  \begin{equation*}
    \dist^2 p^1 \lesssim \abs{\log \euc} \euc^{-2k}.
  \end{equation*}
  The desired bound follows from \eqref{eq:distance-relation}.
  Having improved on the bootstrap, we see that in fact \eqref{eq:residue-bd} holds throughout $\m{D}$.

  Derivative estimates follow from analogous calculations.
  In the nonshocking case, we use a variant of $\bR_n$ from Lemma~\ref{lem:renorm} to renormalize $\partial_t^m \partial_x^n p$.
  Bounds on the derivatives along the Cauchy hypersurface $\Sigma$ follow from the reasoning leading to \eqref{eq:deriv-data}.
  We omit these similar calculations.
\end{proof}

\section{Inner estimates and inner-grid matching}
\label{sec:inner-matching}

We now show that summing the grid $(\psi_{k,\ell})$ in $k$ asymptotically yields the inner expansion $\psi_\ell$.
We work in inner coordinates, and therefore study the inner grid $\Psi_{k,\ell}$ defined in \eqref{eq:inner-conversion}.
Recalling the partial sum $\Psi_{[K],\ell} \coloneqq \sum_{k \leq K} \Psi_{k,\ell}$ and \eqref{eq:in-grid}, we expect $\inn{\Psi}{\ell} \approx \Psi_{[K],\ell}$ in the matching region $\nu^{1/2} \ll \euc \lesssim \nu^\beta$ when $K \gg 1$.

It will be convenient to extend statements to the ``true inner zone'' where $\euc \lesssim \nu^{1/2}$.
However, the terms $\Psi_{k,\ell}$ are singular at the origin while $\inn{\Psi}{\ell}$ remains regular.
To circumvent this difficulty, we cut off $\Psi_{k, \ell}$ in the true inner zone.
Let
\begin{equation*}
\begin{aligned}
  \Dist(T, X) \coloneqq \nu^{-1/4}\dist\big(\nu^{1/2}T, \nu^{3/4}X\big) &= \dist(T, X),
  \\ \Euc(T, X) \coloneqq \nu^{-1/2} \euc\big(\nu^{1/2}T, \nu^{3/4}X\big) &= \big(\abs{T}^2 + \nu^{1/2}\abs{X}^2\big)^{1/2},
  \end{aligned}
\end{equation*}
noting that $\Euc$ depends implicitly on $\nu$ because $\euc$ is not a homogeneous function under our cubic convention.
The true inner region $\{\euc \lesssim \nu^{1/2}\}$ corresponds to $\{\Euc \lesssim 1\}$.
Let $\theta \in \m{C}^\infty(\R)$ be a smooth cutoff function satisfying $\theta|_{[0, 1/2]} \equiv 0$ and $\theta|_{[1, \infty)} \equiv 1$.
We write $\theta_\Euc \coloneqq \theta \circ \Euc$ and $\theta_\Dist \coloneqq \theta \circ \Dist$ and define the cutoff grid
\begin{equation*}
  \cut{\Psi}{k,\ell} \coloneqq 
  \begin{cases}
    \theta_\Dist \Psi_{0,\ell} & \text{if } k = 0,\\
    \theta_\Euc \Psi_{k,\ell} & \text{if } k \geq 1
  \end{cases}
  \And
  \cut{\Psi}{[K],\ell} \coloneqq \sum_{k \leq K} \cut{\Psi}{k,\ell}.
\end{equation*}
We emphasize that we use a less aggressive cutoff $\theta_\Dist$ for the inviscid terms $\Psi_{0,\ell}$, which are known quite precisely through Proposition~\ref{prop:inviscid-row}.

Given $K \in \N$, we construct solutions of the inner equations that agree closely with $\cut{\Psi}{[K],\ell}$ away from the cutoff regions.
There is one wrinkle, however.
Recalling \eqref{eq:fit}, we will only employ the inner expansion where $\euc \leq \frac{\mathsf{R}}{2} \nu^\beta$.
Our shocking inner solution $\inn{\Sigma}{\ell}$ will solve \eqref{eq:Sigma-inner} exactly where $\euc \leq \tfrac{\mathsf{R}}{2} \nu^\beta$, but with additional forcing $G_\ell^\match$ where $\tfrac{\mathsf{R}}{2} \nu^\beta < \euc < \mathsf{R} \nu^\beta$.
So our solutions will satisfy
\begin{align}
  [\nu^{1/4} \partial_T + A_\sperp^\sperp(0)&] \partial_X \inn{\Omega}{\ell} = -\partial_X \sum_\bigsub{1 \leq r \leq \ell\\ \al_0 + \ldots + \al_r = \ell - r} \Aop_\sperp\inn{\Psi}{\al}
  + \partial_X \sum_\bigsub{0 \leq r \leq \ell - 1\\ \al_0 + \ldots + \al_r = \ell - r - 1} \Bop_\sperp\inn{\Psi}{\al},\label{eq:Omega-true}\\
  \partial_T \inn{\Sigma}{\ell} + \partial_\Is A_\Is^\Is(0) \partial_x(\inn{\Sigma}{0}\inn{\Sigma}{\ell})&\nonumber\\[-3pt]
  - B_\Is^\Is(0&)\partial_X^2 \inn{\Sigma}{\ell}
  = -\partial_X \sum^*_\bigsub{1 \leq r \leq \ell + 1\\ \al_0 + \ldots + \al_r = \ell - r + 1} \Aop_\Is\inn{\Psi}{\al}
  + \partial_X \sum^*_\bigsub{0 \leq r \leq \ell\\ \al_0 + \ldots + \al_r = \ell - r} \Bop_1\inn{\Psi}{\al} + G_\ell^\match.\label{eq:Sigma-true}
\end{align}
With this notation, we can state the main result of the section.
\begin{proposition}
  \label{prop:inner-matching}
  Fix $\eps \in (0, 1/6]$ and $\beta \in (\tfrac{1}{2 + \eps}, \tfrac{1}{2})$.
  For all $K \in \N$, there exists a solution $(\inn{\Psi}{\ell})_\ell$ of \eqref{eq:Omega-true} and \eqref{eq:Sigma-true} in $\m{I}_\beta$ with $G_\ell^\match \equiv 0$ where $\euc \leq \tfrac{\mathsf{R}}{2} \nu^\beta$ such that for all $\ell,m,n \in \N_0$,
  \begin{equation}
    \label{eq:inner-matching}
    \begin{aligned}
    \absb{\partial_T^m &\partial_X^n(\inn{\Psi}{\ell} - \cut{\Psi}{[K],\ell})}
    \\ &\lesssim_{\eps,\beta,K,\ell,m,n} \nu^{-(\ell + 1)(1/4 - \beta/3) - 2\beta/3 - \eps \beta + 1/2} (\Euc \vee 1)^{-2K - 2m - 1} (\Dist \vee 1)^{-3n}.
    \end{aligned}
  \end{equation}
\end{proposition}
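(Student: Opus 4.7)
The plan is to build $(\inn{\Psi}{\ell})$ by induction on $\ell$, at each stage exploiting the triangular structure revealed in Propositions~\ref{prop:Omega-inner} and~\ref{prop:Sigma-inner}: first construct the nonshocking part $\inn{\Omega}{\ell}$ from the already known $\inn{\Psi}{<\ell}$, then the shocking part $\inn{\Sigma}{\ell}$ using $\inn{\Omega}{\ell}$ as an additional source. The base case $\ell = 0$ amounts to quoting the viscous Burgers solution of \cite{CG23}. For the inductive step, fix $\ell$, assume the conclusion for all $\ell' < \ell$, and write $V_{\ell'} \coloneqq \inn{\Psi}{\ell'} - \cut{\Psi}{[K],\ell'}$.

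The preliminary step is to measure how badly the cutoff partial sum $\cut{\Psi}{[K],\ell}$ fails to satisfy \eqref{eq:Omega-true}--\eqref{eq:Sigma-true}. Substituting it into the inner equations and expanding $\Aop$, $\Bop$ multilinearly, the residue splits into three pieces: a tail from grid entries $\psi_{k,\alpha}$ with $k > K$, bounded by combining Lemma~\ref{lem:prod} with the inner rescaling \eqref{eq:inner-conversion}; cutoff errors from derivatives of $\theta_\Euc$ and $\theta_\Dist$, supported in the compact annuli $\Euc \asymp 1$ and $\Dist \asymp 1$ where every grid quantity is regular; and the inviscid-row residues $F_\ell^\omega,\, F_\ell^\sigma$ present already at $k = 0$ by Proposition~\ref{prop:inviscid-row}, handled via Lemma~\ref{lem:force-cancels}. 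Each contribution meets the required precision after passing through the inner scaling.

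For the nonshocking step, $V_\ell^\omega$ satisfies a constant-coefficient linear transport equation whose forcing combines the grid residue above with the expansion of the multilinear sources of \eqref{eq:Omega-true} about $\cut{\Psi}{[K],\alpha} + V_\alpha$; the inductive hypothesis on $V_{<\ell}$ controls every nonlinear cross-term. I would project onto each eigenvector of $A_\sperp^\sperp(0)$ and integrate along its characteristic, initializing $V_\ell^\omega = 0$ on $\Gamma_\beta$ via \eqref{eq:grid-data}. Translating Lemma~\ref{lem:integrals} to the inner scales $(\Euc, \Dist)$ delivers the pointwise bound, and commuting with $\partial_T^m \partial_X^n$ upgrades it to derivatives, much as in the proof of Proposition~\ref{prop:grid-est}.

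The shocking step is the main obstacle. Here $V_\ell^\sigma$ satisfies a linearization of viscous Burgers about $\inn{\Sigma}{0}$ with forcing $H_\ell^\sigma + G_\ell^\match$, where $H_\ell^\sigma$ now includes the freshly built $V_\ell^\omega$. The auxiliary force $G_\ell^\match$, supported in the matching annulus $\tfrac{\mathsf{R}}{2}\nu^\beta < \euc < \mathsf{R}\nu^\beta$, is chosen to force agreement between $V_\ell^\sigma$ and $\cut{\Sigma}{[K],\ell}$ to the stated precision on the outer boundary of $\m{I}_\beta$: in practice, prescribe an extension of $V_\ell^\sigma$ tapering smoothly to zero through the annulus and read off $G_\ell^\match$ as the resulting residue. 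In the zone $\euc \leq \tfrac{\mathsf{R}}{2}\nu^\beta$ we enforce $G_\ell^\match \equiv 0$ and solve the genuine linear parabolic Cauchy problem with data on $\Gamma_\beta$, invoking Green's function estimates for the linearized viscous Burgers operator developed in \cite{CG23}. The difficult part is obtaining these estimates with the sharp anisotropic decay $(\Euc \vee 1)^{-2K-2m-1}(\Dist \vee 1)^{-3n}$, uniformly as $\nu \to 0$: the coefficient $\inn{\Sigma}{0}$ interpolates between a genuinely parabolic regime near the origin and the inviscid cubic $\cub$ at spatial infinity, and balancing the two distance scales across this transition is exactly what produces the claimed exponent $-(\ell+1)(\tfrac{1}{4} - \tfrac{\beta}{3}) - \tfrac{2\beta}{3} - \eps\beta + \tfrac{1}{2}$.
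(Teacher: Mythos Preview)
Your inductive scaffolding and treatment of the nonshocking component match the paper's approach closely: induction on $\ell$, residue estimates for $\cut{\Psi}{[K],\ell}$ (the paper's Lemma~\ref{lem:residual-inner}), control of the difference terms via the inductive hypothesis (Lemma~\ref{lem:inner-diff}), and integration along the nonshocking characteristics after renormalization.

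The shocking step, however, diverges from the paper in two linked respects. First, the paper's choice of $G_\ell^\match$ is more concrete and serves a different purpose than you describe: it simply sets $G_\ell^\match \coloneqq -\theta_\beta G$, where $G$ is the total forcing in the $P^\Sigma$-equation and $\theta_\beta$ is a cutoff supported in the matching annulus. This makes the forcing $(1-\theta_\beta)G$ vanish where $\Euc \geq \mathsf{R}\nu^{-(1/2-\beta)}$, in particular for all sufficiently negative $T$. One can then set $P_{K,\ell}^\Sigma \equiv 0$ at such times and solve a standard Cauchy problem on $\R$ from zero data at a \emph{fixed time slice}, sidestepping the awkwardness of posing parabolic data on the spacelike curve $\Gamma_\beta$ that you propose. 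Second, the paper does not invoke Green's function estimates from \cite{CG23}. Instead it constructs an explicit supersolution (Lemma~\ref{lem:super}) of the form $Q = \kappa(T)(\Euc \vee 1)^{-r}(\Dist \vee 1)^{-3s}$ for a bounded increasing $\kappa$, verifies the differential inequality by hand using Lemma~\ref{lem:unit-constant}, and closes via the scalar comparison principle. This is more elementary than the Green's function route and directly produces the anisotropic decay; your sketch leaves the hardest analytic step (``balancing the two distance scales'') unaddressed, whereas the paper's supersolution absorbs it into a short explicit computation.
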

We follow the approach to outer matching in Section~\ref{sec:outer-matching}.
Let $P_{K,\ell} \coloneqq \inn{\Psi}{\ell} - \cut{\Psi}{[K],\ell}$ denote the difference of interest with $P = (P^\Sigma, P^\Omega)$.
Then we can write \eqref{eq:Omega-inner} as
\begin{equation}
  \label{eq:Omega-diff}
  \begin{aligned}
    [\nu^{1/4} \partial_T& + A_\sperp^\sperp(0)\partial_X](\cut{\Omega}{[K],\ell} + P_{K,\ell}^\Omega)\\
                         &= -\partial_X \sum_\bigsub{1 \leq r \leq \ell\\ \al_0 + \ldots + \al_r = \ell - r} \Aop_\sperp(\cut{\Psi}{[K],\al} + P_{K,\al})
    + \partial_X \sum_\bigsub{0 \leq r \leq \ell - 1\\ \al_0 + \ldots + \al_r = \ell - r - 1} \Bop_\sperp(\cut{\Psi}{[K],\al} + P_{K,\al})\\[-10pt]
                         &\hspace{7.5cm}\eqqcolon \partial_X h_0 + \partial_X h_{\geq 1},
  \end{aligned}
\end{equation}
where $h_0$ denotes the collection of terms on the right with no factors of $P$, and $h_{\geq 1}$ the collection with at least one factor of $P$.
Similarly, \eqref{eq:Sigma-inner} yields
\begin{equation}
  \label{eq:Sigma-diff}
  \begin{aligned}
    \partial_T (&\cut{\Sigma}{[K],\ell} + P_{K,\ell}^\Sigma) + \partial_\Is A_\Is^\Is(0) \partial_x[\inn{\Sigma}{0}(\cut{\Sigma}{[K],\ell} + P_{K,\ell}^\Sigma)] - B_\Is^\Is(0)\partial_X^2 (\cut{\Sigma}{[K],\ell} + P_{K,\ell}^\Sigma)\\
                                                                   &=-\partial_X \sum^*_\bigsub{1 \leq r \leq \ell + 1\\ \al_0 + \ldots + \al_r = \ell - r + 1} \Aop_1(\cut{\Psi}{[K],\al} + P_{K,\al}) + \partial_X \sum^*_\bigsub{1 \leq r \leq \ell\\ \al_0 + \ldots + \al_r = \ell - r} \Bop_1(\cut{\Psi}{[K],\al} + P_{K,\al}) + G_\ell^\match
                                                                   \\ &\eqqcolon G_0 + G_{\geq 1} + G_\ell^\match.
  \end{aligned}
\end{equation}
We first inductively show that $h_{\geq 1}$ and $G_{\geq 1}$ are small.
We then argue that $\partial_X h_0$ and $G_0$ nearly cancel the partial-sum terms on the left of \eqref{eq:Omega-diff} and \eqref{eq:Sigma-diff}.
Finally, we integrate \eqref{eq:Omega-diff} and use the comparison principle in \eqref{eq:Sigma-diff} to show that $P$ is small.

Above, we assume $\ell \geq 1$.
In the zeroth column ($\ell = 0$), $\Psi_{k,0} = (\Sigma_{k,0}, 0)^\top$ and we constructed both $(\Sigma_{k,0})$ and $\inn{\Sigma}{0}$ in \cite{CG23}.
\begin{proposition}
  \label{prop:Burgers-inner}
  For all $K \in \N_0$ and $m,n \in \N_0$, $\abss{\partial_T^m \partial_X^n\inn{\Sigma}{0}} \lesssim_{m,n} (\Dist \vee 1)^{1 - 2m - 3n}$ and
  \begin{equation}
    \label{eq:Burgers-inner}
    \absb{\partial_T^m \partial_X^n(\inn{\Sigma}{0} - \cut{\Sigma}{[K],0})} \lesssim_{K,m,n} (\Dist \vee 1)^{-4K - 2m - 3n - 3} \quad \text{in } \m{I}_\beta.
  \end{equation}
\end{proposition}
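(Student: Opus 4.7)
The plan is to reduce both estimates to results already established in the companion paper \cite{CG23}, combined with a brief case analysis on the cutoff $\theta_\Dist$. The key observation is that $\inn{\Sigma}{0}$ solves the scalar viscous Burgers equation \eqref{eq:s0}, and we take it to be precisely the unique solution constructed in \cite{CG23} matching the cubic profile $\cub$ to high order at infinity; similarly, the grid terms $\Sigma_{k,0}$ are (by Proposition~\ref{prop:zeroth-col}) homogeneous polynomials in $(\cub, \cubder)$ of homogeneity $-4k+1$, coinciding with the formal asymptotic expansion of $\inn{\Sigma}{0}$ constructed there.

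For the first inequality, I would cite the pointwise bound on this specific Burgers solution from \cite{CG23}. Its content is straightforward: away from the origin, $\inn{\Sigma}{0}$ is asymptotic to the $1$-homogeneous function $\cub$, and since $\partial_T$ and $\partial_X$ lower homogeneity by $2$ and $3$ respectively (Definition~\ref{def:rhom}), one obtains $\Dist^{1-2m-3n}$ scaling. Near the origin, uniform bounds on all derivatives follow from standard parabolic regularity for viscous Burgers, which is absorbed into the factor $(\Dist \vee 1) = 1$.

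For the matching estimate \eqref{eq:Burgers-inner}, I would split $\m{I}_\beta$ into three regions based on $\Dist$. In the region $\{\Dist \geq 1\}$, the cutoff $\theta_\Dist$ is inactive and $\cut{\Sigma}{[K],0} = \sum_{k \leq K} \Sigma_{k,0}$; the desired decay $\Dist^{-4K - 2m - 3n - 3}$ is then exactly the high-order asymptotic matching established in \cite{CG23} between the viscous Burgers solution $\inn{\Sigma}{0}$ and the formal series $\sum_k \Sigma_{k,0}$. In the region $\{\Dist \leq 1/2\}$, the cutoffs vanish so $\cut{\Sigma}{[K],0} = 0$, and the bound reduces to $\abss{\partial_T^m \partial_X^n \inn{\Sigma}{0}} \lesssim_{K,m,n} 1$, which follows immediately from the first estimate since $(\Dist \vee 1) = 1$ there. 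In the transition zone $\Dist \in [1/2, 1]$, all quantities $\inn{\Sigma}{0}$, $\Sigma_{k,0}$, and derivatives of $\theta_\Dist$ are smooth and of order unity, so the inequality holds trivially after absorbing constants.

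The main obstacle is not analytical but a matter of careful normalization: one must verify that the cubic distance $\Dist$ arising from the blow-up \eqref{eq:inner-coord} and the constants $a$, $b$, $B_1^1(0)$ appearing in \eqref{eq:fu} and \eqref{eq:s0} align with the (possibly rescaled) conventions used in \cite{CG23}. Since \eqref{eq:s0} is equivalent modulo anisotropic scaling to the viscous Burgers equation treated there, this amounts to a bookkeeping exercise that does not affect the stated exponents. Modulo these identifications, both inequalities are direct restatements of \cite[Proposition~4.2]{CG23}.
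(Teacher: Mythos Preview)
Your approach is essentially the paper's own: both reduce the proposition to \cite[Proposition~4.2 and Corollary~4.3]{CG23}. Two points of discrepancy are worth noting.

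First, the paper explicitly flags that \cite{CG23} does not treat time derivatives ($m \geq 1$); it fills this gap by induction on $m$, using the evolution equation \eqref{eq:s0} to trade $\partial_T \inn{\Sigma}{0}$ for spatial derivatives of $\inn{\Sigma}{0}$ (and similarly for the difference $\inn{\Sigma}{0} - \cut{\Sigma}{[K],0}$, whose equation picks up residual forcing controlled by the inductive hypothesis). Your proposal asserts that \cite{CG23} delivers the full $\Dist^{-4K-2m-3n-3}$ decay directly, which overstates what is recorded there; you should make the induction step explicit.

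Second, your cutoff case analysis treats $\cut{\Sigma}{[K],0}$ as if all terms carry the cutoff $\theta_\Dist$, but by the paper's convention $\cut{\Sigma}{k,0} = \theta_\Euc \Sigma_{k,0}$ for $k \geq 1$. Since $\{\Dist \geq 1\}$ does not imply $\{\Euc \geq 1\}$ (in inner coordinates one has $\Euc \asymp \nu^{1/4}\Dist^3$ along $T = 0$, so $\Euc$ can be small even when $\Dist$ is moderately large), the region split you describe does not cleanly separate the active and inactive cutoff zones. In practice this wrinkle is harmless for how the proposition is used downstream, and the paper's own remark glosses over it as well, but your case analysis as written does not literally close.
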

\noindent
This is essentially Corollary~4.3 and Proposition~4.2 in \cite{CG23}.
There we did not explicitly treat time derivatives, but the estimate \eqref{eq:Burgers-inner} follows from the evolution equation for $\inn{\Sigma}{0}$ and induction in $m$.
We note that the region $\m{I}_\beta$ looks large in the inner coordinates, for there $\Euc \lesssim \nu^{-(1/2 - \beta)}$.

We also record bounds in the outer coordinates, scaling $\cut{\psi}{0,\ell}$ in the natural way.
\begin{corollary}
  \label{cor:worst-inner-size}
  Fix $\eps \in (0, 1/6]$ and $\beta \in (\tfrac{1}{2 + \eps}, \tfrac{1}{2})$.
  For all $K,\ell \in \N$ and $m,n \in \N_0$,
  \begin{align}
    \absb{\partial_t^m \partial_x^n(\inn{\psi}{\ell} - \cut{\psi}{0,\ell})} &\lesssim_{K,\ell,m,n} \nu^{1 + (\ell - 1)\beta/3 - \eps \beta} (\euc \vee \nu^{1/2})^{-m-1} (\dist \vee \nu^{1/4})^{-3n}\label{eq:cut-diff},\\
    \absb{\partial_t^m \partial_x^n \inn{\psi}{\ell}} &\lesssim_{K,\ell,m,n} \abs{\log \nu}^{\ell - 1} \nu^{(\ell + 1)\beta/3} (\euc \vee \nu^{1/2})^{-m} (\dist \vee \nu^{1/4})^{-3n}\label{eq:inner-bounds}
  \end{align}
  in $\m{I}_\beta$.
  Moreover, in $\m{I}_\beta \setminus \big\{\euc \leq \tfrac{\mathsf{R}}{2} \nu^\beta\big\}$,
  \begin{equation}
    \label{eq:inner-matching-outer-form}
    \absb{\partial_t^m \partial_x^n(\inn{\psi}{\ell} - \psi_{[K],\ell})} \lesssim_{K,\ell,m,n} \nu^{(1 - 2\beta)K + (\ell + 1)\beta/3 - m/2 - 3n/4}.
  \end{equation}
\end{corollary}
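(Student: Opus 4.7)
The plan is to derive all three estimates of the corollary from Proposition~\ref{prop:inner-matching} by rescaling to outer coordinates and combining with Propositions~\ref{prop:inviscid-row} and \ref{prop:grid-est}. The basic scaling relations
\begin{equation*}
  \partial_t^m\partial_x^n \inn{\psi}{\ell} = \nu^{(\ell+1)/4 - m/2 - 3n/4}\,\partial_T^m\partial_X^n\inn{\Psi}{\ell}, \quad \Euc \vee 1 = \nu^{-1/2}(\euc \vee \nu^{1/2}), \quad \Dist \vee 1 = \nu^{-1/4}(\dist \vee \nu^{1/4})
\end{equation*}
convert \eqref{eq:inner-matching} with $K = 1$ into a bound on $\partial_t^m\partial_x^n(\inn{\psi}{\ell} - \cut{\psi}{[1],\ell})$, where in outer coordinates $\cut{\psi}{[1],\ell} = \cut{\psi}{0,\ell} + \nu\cut{\psi}{1,\ell}$. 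The resulting $\nu$-exponent works out to $(\ell-1)\beta/3 + 2 + m/2 - \eps\beta$ paired with $(\euc \vee \nu^{1/2})^{-3-2m}(\dist \vee \nu^{1/4})^{-3n}$, and the trivial inequality $(\euc \vee \nu^{1/2})^{-2-m} \leq \nu^{-1-m/2}$ then trades surplus negative $\euc$-weight for $\nu$-prefactor to yield exactly the form of \eqref{eq:cut-diff} for the partial-sum difference. To recover \eqref{eq:cut-diff} itself I would add back the single missing term $\nu\cut{\psi}{1,\ell}$, estimated via Proposition~\ref{prop:grid-est} and the Leibniz rule; cutoff derivatives $\partial^j\theta_\Euc$ have size $\nu^{-j/2}\sim(\euc \vee \nu^{1/2})^{-j}$ on their support and so fit into the same form.

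The estimate \eqref{eq:inner-bounds} will then follow from the triangle inequality $\inn{\psi}{\ell} = \cut{\psi}{0,\ell} + (\inn{\psi}{\ell} - \cut{\psi}{0,\ell})$. The second summand is covered by \eqref{eq:cut-diff} and is smaller than the target of \eqref{eq:inner-bounds} by a factor of $\nu^{1/2 - 2\beta/3 - \eps\beta}/|\log\nu|^{\ell-1}$, which is positive for $\beta < 3/4$ and small $\eps$, and can therefore be absorbed. For $\cut{\psi}{0,\ell}$ itself I plan to start from Proposition~\ref{prop:inviscid-row}'s bound $|\partial_t^m\partial_x^n\psi_{0,\ell}| \lesssim |\log\dist|^{\ell-1}\dist^{\ell+1-2m-3n}$, split $\dist^{\ell+1-2m-3n} = \dist^{\ell+1}\cdot\dist^{-2m}\cdot\dist^{-3n}$, and apply three elementary facts valid on the support of $\theta_\Dist$ inside $\m{I}_\beta$: $\dist \lesssim \nu^{\beta/3}$ (giving $\dist^{\ell+1}\lesssim\nu^{(\ell+1)\beta/3}$), $\dist^{-2} \lesssim (\euc \vee \nu^{1/2})^{-1}$ (verifiable case-by-case via Lemma~\ref{lem:fuest}), and $|\log\dist| \lesssim |\log\nu|$ since $\dist \geq \nu^{1/4}/2$ there. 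Derivatives of $\theta_\Dist$ produce factors of size $\nu^{-m/2}$ and $\nu^{-3n/4}$ respectively, which align with $(\euc \vee \nu^{1/2})^{-m}$ and $(\dist \vee \nu^{1/4})^{-3n}$.

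For \eqref{eq:inner-matching-outer-form}, I plan to observe that on $\m{I}_\beta \setminus \{\euc \leq \mathsf{R}\nu^\beta/2\}$ both cutoffs are identically $1$: indeed $\Euc \geq \mathsf{R}\nu^{\beta - 1/2}/2 \gg 1$, while a case analysis from Lemma~\ref{lem:fuest} gives $\dist \gtrsim \nu^{\beta/2}$, and $\nu^{\beta/2} \gg \nu^{1/4}$ since $\beta < 1/2$, so $\Dist \gg 1$ as well. Hence $\cut{\psi}{[K],\ell} = \psi_{[K],\ell}$ throughout this region, and substituting the worst-case bounds $\euc \vee \nu^{1/2} \geq c\nu^\beta$ and $\dist \vee \nu^{1/4} \gtrsim \nu^{\beta/2}$ into the rescaled Proposition~\ref{prop:inner-matching} yields a $\nu$-exponent strictly exceeding $(1-2\beta)K + (\ell+1)\beta/3 - m/2 - 3n/4$ by the positive quantity $m(1-2\beta) + (1 - 5\beta/3 - \eps\beta) + n(3/4 - 3\beta/2)$, which can simply be discarded to give \eqref{eq:inner-matching-outer-form}.

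The main bookkeeping concern is the compatibility of the $\dist^{-3n}$ bound emerging from Proposition~\ref{prop:grid-est} with the sharper $(\dist \vee \nu^{1/4})^{-3n}$ demanded by the target: the former is a weaker bound when $\dist < \nu^{1/4}$. This is resolved by the geometric observation that whenever $\euc \geq \nu^{1/2}$—the only regime in which the cutoff $\theta_\Euc$ is active—a case split based on which of $|t|$ and $|x|$ dominates, together with Lemma~\ref{lem:fuest}, forces $\dist \gtrsim \nu^{1/6}$; since $\nu^{1/6} > \nu^{1/4}$ for $\nu < 1$, this gives $\dist \vee \nu^{1/4} = \dist$ and the two distance conventions agree on the relevant region. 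Modulo this verification, the corollary is a routine rescaling exercise built on top of the substantive Proposition~\ref{prop:inner-matching}.
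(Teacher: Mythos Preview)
Your approach is correct and essentially mirrors the paper's: rescale Proposition~\ref{prop:inner-matching} to outer coordinates, combine with the grid bounds (the paper packages this via Lemma~\ref{lem:inner-partial} rather than going back to Propositions~\ref{prop:inviscid-row} and \ref{prop:grid-est} directly, but the content is the same), and for \eqref{eq:inner-matching-outer-form} observe the cutoffs are inactive and substitute $\euc \asymp \nu^\beta$, $\dist \gtrsim \nu^{\beta/2}$. Two small arithmetic slips to fix: in your surplus exponent the coefficient of $m$ should be $(1/2 - \beta)$, not $(1 - 2\beta)$; and your claim $\dist \gtrsim \nu^{1/6}$ on $\{\euc \geq \nu^{1/2}\}$ fails when $|t|$ dominates (take $t = -\nu^{1/2}$, $x = 0$, where $\dist \asymp \nu^{1/4}$), but the weaker bound $\dist \gtrsim \euc^{1/2} \gtrsim \nu^{1/4}$ from \eqref{eq:distance-relation} is what you actually need and suffices for $\dist \vee \nu^{1/4} \asymp \dist$.
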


\subsection{Difference estimates}

In this section we control the size of $h_{\geq 1}$ in \eqref{eq:Omega-diff} and $G_{\geq 1}$ in \eqref{eq:Sigma-diff}.
We use of the following bounds on the partial sums $\cut{\Psi}{[K],\ell}$.
\begin{lemma}
  \label{lem:inner-partial}
  For all $K \in \N$ and $\ell, m, n \in \N_0$, in $\m{I}_\beta$ we have
  \begin{align}
    \absb{\partial_T^m \partial_X^n \cut{\Psi}{[K],\ell}} &\lesssim_{K,\ell,m,n} \abs{\log \nu}^{\ell - 1} \nu^{-\ell(1/4 - \beta/3)} (\Euc \vee 1)^{-2m} (\Dist \vee 1)^{-3n + 1},
    \label{eq:inner-partial-fine}\\
    &\lesssim_{K,\ell,m,n} \abs{\log \nu}^{\ell - 1} \nu^{-(\ell + 1)(1/4 - \beta/3)} (\Euc \vee 1)^{-2m} (\Dist \vee 1)^{-3n}.
    \label{eq:inner-partial}
  \end{align}
\end{lemma}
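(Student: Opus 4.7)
The plan is to bound each $\partial_T^m\partial_X^n\cut{\Psi}{k,\ell}$ for $0\leq k\leq K$ individually and apply the triangle inequality, splitting between the inviscid row ($k=0$, cut off by $\theta_\Dist$) and the bulk ($k\geq 1$, cut off by $\theta_\Euc$). The core mechanism is the change of variables
\[
\Psi_{k,\ell}(T,X)=\nu^{k-(\ell+1)/4}\psi_{k,\ell}(\nu^{1/2}T,\nu^{3/4}X),\qquad \partial_T=\nu^{1/2}\partial_t,\qquad \partial_X=\nu^{3/4}\partial_x,
\]
together with $\dist=\nu^{1/4}\Dist$ and $\euc=\nu^{1/2}\Euc$, which pulls the outer estimates of Propositions~\ref{prop:inviscid-row} and \ref{prop:grid-est} directly into the inner coordinates. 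The controlling size hypothesis is the matching-region constraint $\dist\lesssim\nu^{\beta/3}$ on $\m{I}_\beta$, equivalently $\Dist\vee 1\lesssim\nu^{\beta/3-1/4}$.

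For the inviscid contribution, the conversion yields
\[
\absb{\partial_T^m\partial_X^n\Psi_{0,\ell}}\lesssim_{\ell,m,n}\abs{\log\nu}^{(\ell-1)_+}\Dist^{\ell-2m-3n+1}
\]
on $\{\Dist\geq 1/2\}$. I would trade $\ell$ of the $\ell+1$ positive powers of $\Dist$ for $\nu^{-\ell(1/4-\beta/3)}$ using the constraint above, leaving exactly one factor of $\Dist\vee 1$ to match the $+1$ exponent in \eqref{eq:inner-partial-fine}. The cubic distance relation \eqref{eq:distance-relation}, which gives $\dist^2\gtrsim\euc$, then lets me convert the residual $\Dist^{-2m}$ into a power of $\Euc\vee 1$, with any imbalance absorbed into the $\nu$-factor via $\Dist\vee 1,\Euc\vee 1\lesssim\nu^{\beta/3-1/4}$.

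For the bulk terms, Proposition~\ref{prop:grid-est} after scaling produces
\[
\absb{\partial_T^m\partial_X^n\Psi_{k,\ell}}\lesssim_{k,\ell,m,n}\nu^{-\ell(1/4-\beta/3)+(1/4-\beta/3-\eps\beta)}\Euc^{-2k-m+1}\Dist^{-3n}
\]
on $\{\Euc\geq 1/2\}$. The positive excess $\nu^{1/4-\beta/3-\eps\beta}>0$ (guaranteed by $\beta<1/2$ and $\eps\leq 1/6$) renders the entire bulk contribution pointwise dominated in $\nu$ by the inviscid term, so summing over $1\leq k\leq K$ costs only a constant $K$-dependent factor. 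The coarse bound \eqref{eq:inner-partial} then follows from the fine one \eqref{eq:inner-partial-fine} by one further substitution $\Dist\vee 1\lesssim\nu^{\beta/3-1/4}$.

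The main technical obstacle is bookkeeping the product-rule terms that arise when $\partial_T$ or $\partial_X$ falls on the cutoffs $\theta_\Euc$ or $\theta_\Dist$. These contributions are supported in the bounded shells $\{\Euc\asymp 1\}$ or $\{\Dist\asymp 1\}$, where all the negative powers of $\Euc\vee 1$ or $\Dist\vee 1$ appearing in the target bound are order one; it therefore suffices to apply the above estimates with fewer derivatives to $\Psi_{k,\ell}$ itself on these shells. The delicate point throughout is reconciling the distinct origins of the $\nu$-smallness (the cubic homogeneity of \ref{hyp:expansion} for the inviscid row, versus the bulk bound of Proposition~\ref{prop:grid-est}) into the common exponent $\nu^{-\ell(1/4-\beta/3)}$.
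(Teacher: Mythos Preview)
Your approach is essentially the same as the paper's: split into the inviscid row ($k=0$) and the bulk ($k\geq 1$), apply Propositions~\ref{prop:inviscid-row} and~\ref{prop:grid-est} respectively in outer coordinates, convert via the scaling $\Psi_{k,\ell}=\nu^{k-(\ell+1)/4}\psi_{k,\ell}$, and use $\Dist\lesssim\nu^{-(1/4-\beta/3)}$ on $\m{I}_\beta$ to extract the common $\nu$-factor; the coarse bound \eqref{eq:inner-partial} then follows from the fine one \eqref{eq:inner-partial-fine} by one more application of the same inequality. The paper's proof is terser---it simply observes that the cutoffs reduce to the region $\nu^{1/2}\lesssim\euc\lesssim\nu^\beta$ and then says ``derivative estimates follow in the usual manner''---whereas you spell out the product-rule terms from differentiating $\theta_\Dist$ and $\theta_\Euc$; but the substance is identical. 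One minor slip: you write $\Euc\vee 1\lesssim\nu^{\beta/3-1/4}$, but the correct bound is $\Euc\vee 1\lesssim\nu^{\beta-1/2}$ (the exponent $\beta/3-1/4$ governs $\Dist$, not $\Euc$).
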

\begin{proof}
  Thanks to the cutoff, it suffices to treat $\nu^{1/2} \lesssim \euc \lesssim \nu^{\beta}$.
  There, Propositions~\ref{prop:inviscid-row} and \ref{prop:grid-est} yield $\psi_{0,\ell} \lesssim \abs{\log \nu}^{\ell - 1} \nu^{\ell \beta/3} \dist$ and $\nu^k\psi_{k,\ell} \lesssim \nu^{1/2 + (\ell - 3/2)\beta/3} \lesssim \nu^{\ell\beta/3 + 1/4}$, using $\beta < 1/2$ in the last bound.
  Switching to inner coordinates, we multiply by $\nu^{-(\ell + 1)/4}$ and use $\dist = \nu^{1/4}\Dist$ to obtain \eqref{eq:inner-partial-fine} for $m = n = 0$.
  Derivative estimates follow in the usual manner.
  Because $\Dist \lesssim \nu^{-(1/4 - \beta/3)}$ in $\m{I}_\beta$, \eqref{eq:inner-partial} follows from \eqref{eq:inner-partial-fine}.
\end{proof}
We now establish an inductive bound on the terms in \eqref{eq:Omega-diff} and \eqref{eq:Sigma-diff} containing at least one factor of $P$.
Throughout this section, we use the shorthand
\begin{equation}
  \label{eq:exponent}
  \eta \coloneqq \frac{1}{2} - \frac{2 \beta}{3} - \eps \beta \in \Big(\frac{1}{12}, \frac{1}{6}\Big).
\end{equation}
The given bounds follow from the conditions $\beta > \tfrac{1}{2 + \eps}$ and $\beta < 1/2$, $\eps \leq 1/6$.
\begin{lemma}
  \label{lem:inner-diff}
  Fix $\eps \in (0, 1/6],$ $\beta \in (\tfrac{1}{2 + \eps}, \tfrac{1}{2})$, and $K,\ell \in \N$.
  If Proposition~\ref{prop:inner-matching} holds for all $\ell' < \ell$, then for all $m, n \in \N_0$,
  \begin{equation}
    \label{eq:Omega-diff-prod}
    \absb{\partial_T^m \partial_X^n h_{\geq 1}} \lesssim_{\eps,\beta,K,\ell,m,n} \nu^{-(\ell + 1)(1/4 - \beta/3) + \eta} (\Euc \vee 1)^{-2K - 2m - 1} (\Dist \vee 1)^{-3n}.
  \end{equation}
  If moreover Proposition~\ref{prop:inner-matching} holds for $P_{K,\ell}^\Omega$, then
  \begin{equation}
    \label{eq:Sigma-diff-prod}
    \absb{\partial_T^m \partial_X^n G_{\geq 1}} \lesssim_{\eps,\beta,K,\ell,m,n} \nu^{-(\ell + 1)(1/4 - \beta/3) + \eta} (\Euc \vee 1)^{-2K - 2m - 1} (\Dist \vee 1)^{-3n - 2}.
  \end{equation}
\end{lemma}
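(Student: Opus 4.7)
The plan is to follow the approach of Lemma~\ref{lem:diff-prod} from the outer matching section, which handled an essentially analogous task. Both $h_{\geq 1}$ and $G_{\geq 1}$ expand as sums of multilinear products of factors $\cut{\Psi}{[K],\alpha_j}$ and $P_{K,\alpha_j}$, each product containing at least one $P$-factor. I plan to bound each product by combining the inductive estimate \eqref{eq:inner-matching} on the $P$-factor with the partial-sum bound \eqref{eq:inner-partial-fine} on the remaining factors, then converting powers of $(\Dist \vee 1)$ into powers of $\nu$ via $(\Dist \vee 1) \lesssim \nu^{-(1/4-\beta/3)}$ in $\m{I}_\beta$. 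A preliminary observation is that terms with two or more $P$-factors are smaller than single-$P$ ones, since each replacement of a $\cut{\Psi}$ factor by a $P$ factor yields extra smallness by $(\Euc \vee 1)^{-2K-1}(\Dist \vee 1)^{-1}$ (the ratio $\nu$-exponents reduce to a nonnegative power under the constraints $\beta < \tfrac{1}{2}$, $\eps \leq \tfrac{1}{6}$); it therefore suffices to treat single-$P$ contributions.

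For $h_{\geq 1}$, I would separately treat $\Aop_\sperp$ and $\Bop_\sperp$ terms. A single-$P$ $\Aop_\sperp$ term has $r+1$ factors with $\sum_j \alpha_j = \ell - r$, $r \geq 1$; combining bounds yields $\nu^{-(\ell-r+1)(1/4-\beta/3) + \eta}(\Euc \vee 1)^{-2K-1}(\Dist \vee 1)^r$, and the residual $(\Dist \vee 1)^r$ absorbs into $\nu^{-r(1/4-\beta/3)}$ to produce exactly the target exponent $-(\ell+1)(1/4-\beta/3)+\eta$. For a $\Bop_\sperp$ term, the internal $\partial_X$ costs $(\Dist \vee 1)^{-3}$ while the shifted constraint $\sum_j \alpha_j = \ell - r - 1$ (with $r \geq 0$) compensates; a brief case check on whether $\partial_X$ falls on the $P$ factor or a non-$P$ factor confirms the target bound in every configuration.

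For $G_{\geq 1}$ the plan is parallel, now with $\Aop_1, \Bop_1$ and an outer $\partial_X$ attached to each product. The shocking projection contributes an extra $\Dist$ (the inner analog of Lemma~\ref{lem:prod}(iii)--(iv)), while the outer $\partial_X$ costs an extra $(\Dist \vee 1)^{-3}$; combined with the internal $\partial_X$ in $\Bop_1$ terms, the net effect is the $(\Dist \vee 1)^{-2}$ in \eqref{eq:Sigma-diff-prod}. Terms involving $P_{K,\ell}^\Omega$, not excluded by $\sum^*$, are handled by the supplementary hypothesis. Derivative estimates follow by direct differentiation: $\partial_T$ costs $(\Euc \vee 1)^{-2}$ and $\partial_X$ costs $(\Dist \vee 1)^{-3}$ uniformly across the bounds invoked, in agreement with Lemma~\ref{lem:inner-partial} and \eqref{eq:inner-matching}.

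The main obstacle will be bookkeeping: tracking $\Dist$ and $\Euc$ powers through every index configuration and verifying that the $\eta$-margin is preserved. The tightest regime is the low-$r$ $\Bop_1$ case, where two $\partial_X$'s conspire to give $(\Dist \vee 1)^{r-6}$, leaving only a slim target when $r \in \{0,1\}$; the balance works precisely because $1/4 - \beta/3 > 0$ and $\eta > 0$, conditions that together encode the standing constraint $\beta \in (\tfrac{1}{2+\eps}, \tfrac{1}{2})$ with $\eps \leq \tfrac{1}{6}$. Since the analogous outer estimate was carried out in Lemmas~\ref{lem:diff-prod}--\ref{lem:residual}, I expect no essentially new obstruction.
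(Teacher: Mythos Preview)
Your approach is essentially the paper's, but you gloss over one technical point that matters for the induction to close. The partial-sum bounds in Lemma~\ref{lem:inner-partial} carry factors of $\abs{\log\nu}^{\ell-1}$, which accumulate when you multiply $r$ of them together. Your target \eqref{eq:Omega-diff-prod}--\eqref{eq:Sigma-diff-prod} has no log factors, and you cannot simply absorb $\nu^{-o_\ell(1)}$ into $\eta$ without eroding the inductive hypothesis. The paper handles this by first choosing $\eps'\in(0,\eps)$ with $\beta>\tfrac{1}{2+\eps'}$, applying the inductive hypothesis (Proposition~\ref{prop:inner-matching}) with $\eps'$ to obtain the strictly larger margin $\eta'>\eta$, and then spending $\eta'-\eta$ to kill the accumulated logs. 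Without this intermediate step your bound would read $\nu^{-(\ell+1)(1/4-\beta/3)+\eta-o_\ell(1)}$, which does not reproduce the statement.

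A second, smaller point: your attribution of the extra $(\Dist\vee 1)$ in $G_{\geq 1}$ to the ``shocking projection'' is imprecise. The projection onto $\m V^1$ does not by itself produce the factor. The paper obtains it by observing that $r\geq 1$ in the $\Aop_1$ sum guarantees at least one non-$P$ factor $\cut{\Psi}{[K],\al_j}$, on which one applies the \emph{fine} bound \eqref{eq:inner-partial-fine} (gaining $(\Dist\vee 1)$) while using the coarser \eqref{eq:inner-partial} on the rest. This is indeed the inner-coordinate mechanism behind Lemma~\ref{lem:prod}(iii)--(iv), but it lives in the factor bounds, not in the projection. With these two adjustments your plan goes through as the paper's does.
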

\begin{proof}
  Fix $\eps \in (0, 1/6]$ and $\beta \in \big(\tfrac{1}{2 + \eps}, \tfrac{1}{2}\big)$.
  Choose $\eps' \in (0, \eps)$ such that $\tfrac{1}{2 + \eps'}$ is midway between $\tfrac{1}{2 + \eps}$ and $\beta$.
  Define $\eta' > \eta$ with $\eps'$ in place of $\eps$ in \eqref{eq:exponent}.
  For $1 \leq \ell' < \ell$, Proposition~\ref{prop:inner-matching} yields
  \begin{equation}
    \label{eq:diff-induction}
    \abss{P_{K,\ell'}} \lesssim_{\eps,\beta,K,\ell} \nu^{-(\ell' + 1)(1/4 - \beta/3) + \eta'} (\Euc \vee 1)^{-2K - 1}.
  \end{equation}
  Comparing with \eqref{eq:inner-partial-fine} and using $\eta' - (1/4 - \beta/3) > \eta' - 1/12 > 0$, we see that the dominant contributions to $h_{\geq 1}$ and $F_{\geq 1}$ involve exactly one factor of $P$ when $\ell' \geq 1$.
  Thanks to Proposition~\ref{prop:Burgers-inner}, the same holds for $\ell' = 0$.
  We therefore only treat terms with exactly one factor of $P$.
  
  First consider the contributions to $h_{\geq 1}$ from $\Aop$ in \eqref{eq:Omega-diff}.
  Given $1 \leq i \leq r$,
  Lemma~\ref{lem:inner-partial} yields
  \begin{equation*}
    P_{K,\al_i} \prod_{j \neq i} \cut{\Psi}{[K],\al_j} \lesssim_{\eps,\beta,K,\ell} \nu^{-(\al_i + 1)(1/4 - \beta/4) + \eta'} (\Euc \vee 1)^{-2K - 1} \prod_{j \neq i} \nu^{-(\al_j + 1)(1/4 - \beta/3) - o_\ell(1)}
  \end{equation*}
  For these terms, $\sum_j(\al_j + 1) = \ell + 1$, so this simplifies to
  \begin{equation*}
    P_{K,\al_i} \prod_{j \neq i} \cut{\Psi}{[K],\al_j} \lesssim_{\eps,\beta,K,\ell} \nu^{-(\ell + 1)(1/4 - \beta/4) + \eta' - o_\ell(1)} (\Euc \vee 1)^{-2K - 1}.
  \end{equation*}
  Now $\eta' > \eta$, so the right side of \eqref{eq:Omega-diff-prod} controls these terms.
  For the contributions to $h_{\geq 1}$ from $\Bop$, the derivative $\partial_X$ is helpful in the inner coordinates (multiplying by $(\Dist \vee 1)^{-3} \lesssim 1$).
  Moreover, these terms satisfy $\sum_j(\al_j + 1) = \ell$, leaving us with a less singular $\nu$-factor.
  So these terms are likewise controlled by the right side of \eqref{eq:Omega-diff-prod}, completing the proof for $m = n = 0$.
  Derivatives multiply our estimates by factors of $(\Euc \vee 1)^{-1}$ or $(\Dist \vee 1)^{-3}$ in the usual fashion.
  
  We now turn to $G_{\geq 1}$ in \eqref{eq:Sigma-diff}.
  First consider contributions from $P_{K,\ell}^\Omega$, for which we can assume
  \begin{equation*}
    \abss{P_{K,\ell}^\Omega} \lesssim_{\eps,\beta,K,\ell} \nu^{-(\ell + 1)(1/4 - \beta/3) + \eta'} (\Euc \vee 1)^{-2K - 1}.
  \end{equation*}
  Hence by Proposition~\ref{prop:Burgers-inner}, we have
  \begin{align*}
    \partial_X(\inn{\Sigma}{0} P_{K,\ell}^\Omega) \lesssim_{\eps,\beta,K,\ell} \nu^{-(\ell + 1)(1/4 - \beta/3) + \eta'} (\Euc \vee 1)^{-2K - 1} (\Dist \vee 1)^{-2},\\
    \partial_X^2 P_{K,\ell}^\Omega \lesssim_{\eps,\beta,K,\ell} \nu^{-(\ell + 1)(1/4 - \beta/3) + \eta'} (\Euc \vee 1)^{-2K - 1} (\Dist \vee 1)^{-6}.
  \end{align*}
  Next consider the $\Aop$ contributions to $G_{\geq 1}$ in \eqref{eq:Sigma-diff}.
  First, $\partial_X$ contributes a factor of $(\Dist \vee 1)^{-3}$.
  Second, because $r \geq 1$ in that sum, these terms include at least one factor of $\cut{\Psi}{[K],\ell'}$.
  We use \eqref{eq:inner-partial-fine} for one such factor and \eqref{eq:inner-partial} for the rest.
  Combining this with \eqref{eq:diff-induction} and $\sum_j (\al_j + 1) = \ell + 2$, we find
  \begin{equation*}
    P_{K,\al_i} \prod_{j \neq i} \cut{\Psi}{[K],\al_j} \lesssim_{\eps,\beta,K,\ell} (\Dist \vee 1)^{-2} \nu^{-(\ell + 1)(1/4 - \beta/3) + \eta' - o_\ell(1)} (\Euc \vee 1)^{-1}.
  \end{equation*}
  Compared to our earlier $\Omega$-calculation, the refined estimate \eqref{eq:inner-partial-fine} allows us to trade one factor of $\nu^{-(1/4 - \beta/3)}$ for $\Dist \vee 1$ and thus overcome the larger value of $\sum_i (\al_i + 1) = \ell + 2$.
  We now use $\eta' > \eta$ to absorb $\nu^{-o_\ell(1)}$.
  
  The $\Bop$ contributions satisfy $\sum_j (\al_j + 1) = \ell + 1$, so we can bound them using \eqref{eq:inner-partial} and \eqref{eq:diff-induction}, much as in the nonshocking calculation.
  We obtain \eqref{eq:Sigma-diff-prod} for $m = n = 0$, and derivative estimates routinely follow.
\end{proof}

\subsection{Partial sum equation}
Next, we show that $\cut{\Omega}{[K],\ell}$ and $\cut{\Sigma}{[K],\ell}$ approximately solve \eqref{eq:Omega-inner} and \eqref{eq:Sigma-inner}, respectively.
Recall that $h_0$ and $G_0$ account for terms with no factors of $P$ in \eqref{eq:Omega-diff} and \eqref{eq:Sigma-diff}.
\begin{lemma}
  \label{lem:residual-inner}
  For all $K, \ell \in \N$, there exist $h^\res$, $F^\res$, and $G^\res$ such that
  \begin{gather}
    [\nu^{1/4}\partial_T + A_\sperp^\sperp(0)\partial_X]\cut{\Omega}{[K],\ell} = \partial_X(h_0 + h^\res) + F^\res,\label{eq:residual-nonshocking}\\
    \partial_T \cut{\Sigma}{[K],\ell} + \partial_1 A_1^1(0)\partial_X(\inn{\Sigma}{0} \cut{\Sigma}{[K],\ell}) - B_1^1(0) \partial_X^2 \cut{\Sigma}{[K],\ell} = G_0 + G^\res,\label{eq:residual-shocking}
  \end{gather}
  and for all $m, n \in \N_0$,
  \begin{align*}
    \absb{\partial_T^m \partial_X^n h^\res} &\lesssim_{\eps,\beta,k,L,m,n} \nu^{-(\ell + 1)(1/4 - \beta/3) + \eta} (\Euc \vee 1)^{-2K - 2m - 1} (\Dist \vee 1)^{-3n},\\
    \absb{\partial_T^m \partial_X^n F^\res} &\lesssim_{\eps,\beta,k,L,m,n} \nu^{-(\ell + 1)(1/4 - \beta/3) + \eta + 1/4} \tbf{1}_{\Euc \leq 1} (\Dist \vee 1)^{-3n},\\
    \absb{\partial_T^m \partial_X^n G^\res} &\lesssim_{\eps,\beta,k,L,m,n} \nu^{-(\ell + 1)(1/4 - \beta/3) + \eta} (\Euc \vee 1)^{-2K - 2m - 1} (\Dist \vee 1)^{-3n - 2}.
  \end{align*}
\end{lemma}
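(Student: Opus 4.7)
The plan is to adapt the proof of Lemma~\ref{lem:residual} to the inner setting. The key observation is that summing the grid equations of Propositions~\ref{prop:omega-grid} and \ref{prop:sigma-grid} over $0 \leq k \leq K$, then converting to inner coordinates via \eqref{eq:inner-conversion}, produces equations that closely resemble \eqref{eq:Omega-true} and \eqref{eq:Sigma-true}. The discrepancies fall into three categories that populate $h^\res$, $F^\res$, and $G^\res$: (i) Taylor-truncation terms from the restriction $\abs{\kappa} \leq K$ in the grid sums, (ii) commutator terms arising from the cutoffs $\theta_\Dist$ and $\theta_\Euc$, and (iii) the discrepancy between $\sigma_{0,0}$ (appearing in the grid transport) and $\inn{\Sigma}{0}$ (appearing in \eqref{eq:Sigma-true}).

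First I would sum \eqref{eq:omega-grid} over $k \leq K$. A crucial algebraic cancellation occurs: the forcing $-\nu^{-1}F_\ell^\omega$ in row $k = 1$ exactly cancels the inviscid residue $F_\ell^\omega$ from row $k = 0$, after accounting for the $\nu^k$ weighting built into $\Psi_{k,\ell}$. This is why the full (uncut) grid sum satisfies the inner equation without any leftover from the inviscid tail. Next, I would compare the grid sum $\sum_{\abs{\kappa} \leq K} \Aop_\sperp \Psi_{\kappa,\al}$ with the multilinear expansion of $\sum_\al \Aop_\sperp \cut{\Psi}{[K],\al}$: the difference consists of terms with each $\kappa_j \leq K$ but $\abs{\kappa} > K$. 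Using Lemma~\ref{lem:prod} in inner coordinates and the decay $(\Euc \vee 1)^{-2k+1}$ of $\Psi_{k,\ell}$ inherited from the rescaled Proposition~\ref{prop:grid-est}, these contribute the required $\nu^{-(\ell+1)(1/4-\beta/3)+\eta}(\Euc \vee 1)^{-2K-1}$ bound for $h^\res$, with the analogous estimate for $G^\res$ (the improved $(\Dist \vee 1)^{-2}$ factor coming from the sharper bounds of Lemma~\ref{lem:prod}(iii)--(iv) in the shocking case).

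The cutoff commutators comprise $F^\res$. Applying the transport or viscous operator to $\theta_\Dist \Psi_{0,\ell}$ or $\theta_\Euc \Psi_{k,\ell}$ produces terms supported where the cutoffs vary, namely $\{\Dist \asymp 1\}$ or $\{\Euc \asymp 1\}$. Both sets sit inside $\{\Euc \lesssim 1\}$, accounting for the indicator $\tbf{1}_{\Euc \leq 1}$ in the $F^\res$ bound. Their pointwise size is controlled using Lemma~\ref{lem:inner-partial} and Proposition~\ref{prop:Burgers-inner}, and the extra $\nu^{1/4}$ factor gained in $F^\res$ relative to $h^\res$ reflects both the criminal time derivative $\nu^{1/4}\partial_T$ and the unit scale $\nab \theta_\Euc \sim 1$ of the cutoff gradients in inner coordinates. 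For the shocking equation I would additionally handle the mismatch between $\sigma_{0,0}$ and $\inn{\Sigma}{0}$ in the nonlinear term $\partial_X(\inn{\Sigma}{0} \cut{\Sigma}{[K],\ell})$: the difference $\inn{\Sigma}{0} - \cut{\Sigma}{[K],0}$ satisfies the sharp bound of Proposition~\ref{prop:Burgers-inner}, decaying like $(\Dist\vee 1)^{-4K-3}$, and is therefore absorbed into $G^\res$ once $K$ is large.

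The main obstacle I anticipate is tracking the inviscid residue cancellation \emph{after} the cutoffs are inserted. The cancellation between $F_\ell^\omega$ (row $k = 0$, cut by $\theta_\Dist$) and $-\nu^{-1}F_\ell^\omega$ (row $k = 1$, cut by $\theta_\Euc$) is perfect where both cutoffs equal $1$, but it leaves behind $(\theta_\Dist - \theta_\Euc) F_\ell^\omega$ in the region where the cutoffs disagree. Because $\theta_\Dist$ is far less aggressive than $\theta_\Euc$, this region includes the transverse annulus $\{\Dist \gtrsim 1,\,\Euc \lesssim 1\}$, strictly larger than either individual transition region. Verifying that the leftover fits within the $F^\res$ budget $\nu^{-(\ell+1)(1/4-\beta/3)+\eta+1/4}\tbf{1}_{\Euc \leq 1}$ relies on the sharpness of Lemma~\ref{lem:force-cancels} and the window $\beta > (2 + \eps)^{-1}$; this is precisely the step where the careful choice of cutoffs in Section~\ref{sec:inner-matching} earns its keep.
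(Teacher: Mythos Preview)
Your overall strategy matches the paper's: sum the grid equations, commute cutoffs, collect truncation terms with $\abs{\kappa} > K$, and handle the $\inn{\Sigma}{0}$ versus $\cut{\Sigma}{[K],0}$ mismatch. You have correctly identified all the error sources, including the subtle leftover $(\theta_\Dist - \theta_\Euc)F_\ell^\omega$.

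There is one genuine allocation issue. You write ``the cutoff commutators comprise $F^\res$,'' but not all of them carry the extra $\nu^{1/4}$. The paper splits cutoff errors into two families. Commutators of the form $\nu^{1/4}[\theta,\partial_T]$ and $[\theta,\partial_X]\Aop$ do gain $\nu^{1/4}$: the first from the criminal prefactor, the second because $\partial_X\theta_\Euc \lesssim \nu^{1/4}$ (not $\sim 1$ as you wrote; recall $\Euc$ depends on $X$ only through $\nu^{1/2}X^2$). These populate $F^\res$. But the commutators $[\theta,\Aop_\sperp]$, obtained by pulling the cutoff through the multilinear operator, carry no such factor; the paper places these in $h^\res$ as $h_1,h_2$, kept under the outer $\partial_X$. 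If you route them to $F^\res$ the stated bound with $+\eta+1/4$ fails by exactly $\nu^{1/4}$. The fix is immediate once noticed, but the distinction is what makes the decomposition work.

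Two smaller corrections. The bound on $(\theta_\Dist - \theta_\Euc)F_\ell^\omega$ uses the \emph{individual} estimate $F_\ell^\omega \lesssim \abs{\log\dist}^\ell \dist^{\ell-1}$ from Proposition~\ref{prop:inviscid-row}, not Lemma~\ref{lem:force-cancels} (which concerns the cumulative sum $F_{[L]}^\omega$ and is used in outer matching, not here). The relevant numerology is that on $\{\Euc \lesssim 1\}$ one has $\Dist \lesssim \nu^{-1/12}$, and then the inequality reduces to $\eta < 1/6$, which is where the window on $\beta$ enters.
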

We make use of the following restatement of Lemma~\ref{lem:prod}, which is immediate from the multilinearity of $\Aop$ and $\Bop$.
\begin{lemma}
  \label{lem:inner-prod}
  Fix $\eps \in (0,1/6]$, $\beta \in \big(\tfrac{1}{2 + \eps}, \tfrac{1}{2}\big)$, $k,\ell \in \N$, and $m,n \in \N_0$.
  \begin{enumerate}[label = \textnormal{(\roman*)}]
  \item
    If $\abs{\al} + \#\al = \ell$ and $\abs{\kappa} = k$,
    \begin{equation*}
      \abss{\partial_T^m \partial_T^n \Aop_\sperp\Psi_{\kappa,\al}} \lesssim_{\eps,\beta,k,\ell,m,n} \nu^{-(\ell + 1)(1/4 - \beta/3) + \eta} \Euc^{-2k - m + 1} \Dist^{-3n}.
    \end{equation*}

  \item
    If $\abs{\al} + \#\al = \ell - 1$ and $\abs{\kappa} = k - 1$,
    \begin{equation*}
      \abss{\partial_T^m \partial_X^n \Bop_\sperp\Psi_{\kappa,\al}} \lesssim_{\eps,\beta,k,\ell,m,n} \nu^{-(\ell + 1)(1/4 - \beta/3) + \eta} \Euc^{-2k - m + 1} \Dist^{-3n}.
    \end{equation*}

  \item
    If $\abs{\al} + \#\al = \ell + 1$ and $\abs{\kappa} = k$,
    \begin{equation*}
      \abss{\partial_T^m \partial_X^n \Aop_1\Psi_{\kappa,\al}} \lesssim_{\eps,\beta,k,\ell,m,n} \Dist \nu^{-(\ell + 1)(1/4 - \beta/3) + \eta} \Euc^{-2k - m + 1} \Dist^{-3n}.
    \end{equation*}

  \item
    If $\abs{\al} + \#\al = \ell$ and $\abs{\kappa} = k - 1$,
    \begin{equation*}
      \abss{\partial_T^m \partial_X^n \Bop_1\Psi_{\kappa,\al}} \lesssim_{\eps,\beta,k,\ell,m,n} \Dist \nu^{-(\ell + 1)(1/4 - \beta/3) + \eta} \Euc^{-2k - m + 1} \Dist^{-3n}.
    \end{equation*}
  \end{enumerate}
\end{lemma}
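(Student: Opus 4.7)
The plan is to leverage the multilinearity of $\Aop$ and $\Bop$ to reduce Lemma~\ref{lem:inner-prod} to a direct rescaling of Lemma~\ref{lem:prod}. Because $\Aop_\sperp \Psi_{\kappa,\al}$ is a fixed multilinear form in its $r+1$ arguments, the definition $\Psi_{k,\ell}(T,X) = \nu^{k-(\ell+1)/4}\psi_{k,\ell}(\nu^{1/2}T, \nu^{3/4}X)$ gives
\begin{equation*}
  \Aop_\sperp \Psi_{\kappa,\al}(T,X) = \nu^{\abs{\kappa} - (\abs{\al} + \#\al + 1)/4}\,\Aop_\sperp \psi_{\kappa,\al}(t,x),
\end{equation*}
with $(t,x) = (\nu^{1/2}T, \nu^{3/4}X)$, and analogously for $\Bop_\sperp$, $\Aop_1$, $\Bop_1$, where the extra $\partial_x$ in $\Bop$ produces an additional factor of $\nu^{3/4}$. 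Combined with $\partial_T^m \partial_X^n = \nu^{m/2 + 3n/4}\,\partial_t^m\partial_x^n$ when acting on functions of $(t,x)$ through the chain rule, each case reduces to an exponent bookkeeping problem.

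For case~(i), where $\abs{\al} + \#\al = \ell$ and $\abs{\kappa} = k$, I would apply Lemma~\ref{lem:prod}(i) to obtain
\begin{equation*}
  \absb{\partial_T^m \partial_X^n \Aop_\sperp \Psi_{\kappa,\al}} \lesssim \nu^{k - (\ell+1)/4 + m/2 + 3n/4} \nu^{(\ell-1-3\eps)\beta/3}\,\euc^{-2k-m+1}\dist^{-3n},
\end{equation*}
and then convert distances through $\euc = \nu^{1/2}\Euc$ and $\dist = \nu^{1/4}\Dist$. The powers of $\nu$ coming from the conversion ($\nu^{(1/2)(-2k-m+1)}$ and $\nu^{-3n/4}$) conspire with the scaling prefactor and the $m$, $n$ contributions so that the dependence on $k$, $m$, $n$ cancels, leaving exponent $-(\ell+1)/4 + (\ell-1)\beta/3 - \eps\beta + 1/2$, which rearranges to $-(\ell+1)(1/4 - \beta/3) + \eta$ in the notation \eqref{eq:exponent}. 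The remaining cases~(ii)--(iv) are entirely analogous: case~(ii) sees the same total index count after accounting for the extra $\partial_x$ in $\Bop$ and the shift $\abs{\kappa} = k - 1$; cases~(iii) and (iv) raise $\abs{\al} + \#\al$ by one compared to (i) and (ii), which costs one power of $\nu^{-(1/4 - \beta/3)}$ but gains an extra factor of $\Dist$ via the identity $\dist = \nu^{1/4}\Dist$, leading to the prefactor $\Dist$ on the right side.

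Since Lemma~\ref{lem:prod} is already established and the argument is purely an exercise in rescaling, there is no substantive obstacle. The only point of care is keeping track of the exponents of $\nu$ from three distinct sources (the scaling of each $\Psi_{\kappa_j,\al_j}$, the derivatives, and the conversion of $\euc$ and $\dist$), and confirming that the terms involving $k$, $m$, $n$ indeed cancel so that the resulting exponent depends only on $\ell$ and $\eps$. This cancellation is automatic once one observes that the definition $\Psi_{k,\ell} = \nu^{k-(\ell+1)/4}\psi_{k,\ell}$ was designed precisely so that homogeneous terms in the outer expansion match corresponding terms in the inner expansion at leading order, i.e. the rescaling is self-consistent with the relation $\inn{\psi}{\ell} \sim \sum_k \nu^k \psi_{k,\ell}$.
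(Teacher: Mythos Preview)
Your proposal is correct and follows exactly the approach the paper intends: the paper states that Lemma~\ref{lem:inner-prod} is ``a restatement of Lemma~\ref{lem:prod}, which is immediate from the multilinearity of $\Aop$ and $\Bop$,'' and you have simply supplied the rescaling computation that makes this explicit. The exponent bookkeeping you carry out is accurate and matches the claimed bounds.
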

\begin{proof}[Proof of Lemma~\textnormal{\ref{lem:residual-inner}}]
  The residues in the lemma come from two sources: the cutoff and the fact that the multilinear operators $\Aop$ and $\Bop$ do not commute with partial summation.
  We treat cutoff error first.
  
  The inner equations involve the operators $\partial_T$, $\partial_X \Aop$, and $\partial_X\Bop$.
  We multiply these equations by a cutoff $\theta$ and move it through the operators to land on $\Psi_{k,\ell}$, forming $\cut{\Psi}{k,\ell}$.
  In the process, we generate commutators of the form $[\theta, \partial_T]$, $[\theta,\partial_X] \Aop$, and $\partial_X [\theta, \Aop]$ (resp. $\Bop$).
  In the nonshocking component, we allot the first two to $F^\res$ and the last to $h^\res$ (as a derivative in $X$).
  We group all in $G^\res$ in the shocking component.

  Recall that we use different cutoffs $\theta_\Dist$ and $\theta_{\Euc}$ for $\Psi_{0,\ell}$ and $\Psi_{\geq 1, \ell}$.
  These excise the regions $\Dist \leq 1/2$ and $\Euc \leq 1/2$, respectively.
  To treat the distinction, let $\Psi_{[1,K],\ell} \coloneqq \Psi_{[K],\ell} - \Psi_{0,\ell}$ denote the sum of terms in $\Psi_{[K],\ell}$ with $k \geq 1$.
  Then $\cut{\Psi}{0,\ell} = \theta_\Dist \Psi_{0,\ell}$ and $\cut{\Psi}{[1,K],\ell} = \theta_\Euc \Psi_{[1,K],\ell}$.

  First consider the nonshocking component.
  Writing \eqref{eq:inviscid-row} in the inner coordinates and commuting with $\theta_\Dist$, we have
  \begin{equation}
    \label{eq:Omega-inviscid-commutators}
    \nu^{1/4} \partial_T \cut{\Omega}{0,\ell} = \partial_X \sum_\bigsub{\abs{\al} + \#\al = \ell} \Aop_\sperp \cut{\Psi}{0,\al} + \partial_Xh_1 + F_1 + \theta_\Dist F_\ell^\Omega,
  \end{equation}
  where $F_\ell^\Omega$ denotes the scaling of $F_\ell^\omega$ to inner coordinates,
  \begin{equation*}
    h_1 \coloneqq \sum_\bigsub{\abs{\al} + \#\al = \ell} [\theta_\Dist, \Aop_\sperp] \Psi_{0,\al}, \And F_1 \coloneqq -\nu^{1/4}[\theta_\Dist, \partial_T] \Omega_{0,\ell} + \sum_\bigsub{\abs{\al} + \#\al = \ell} [\theta_\Dist, \partial_X] \Aop_\sperp \Psi_{0,\al}.
  \end{equation*}
  We can scale Proposition~\ref{prop:inviscid-row} to find $\abss{\Psi_{0,\ell}} \lesssim_\ell \abs{\log \nu}^\ell$ where $\Dist \asymp 1$, and likewise for derivatives.
  The above commutators are supported where $1/2 \leq \Dist \leq 1$, so $\abs{h_1} + \abs{F_1} \lesssim_{\ell} \nu^{-o_\ell(1)} \tbf{1}_{\Dist \leq 1}$.
  Because $\ell \geq 1$, $\eta < (\ell + 1)(1/4 - \beta/3)$ and we can bound this by a multiple of $\nu^{-(\ell + 1)(1/4 - \beta/3) + \eta}\tbf{1}_{\Dist \leq 1}$.

  For $k \geq 1$, we write \eqref{eq:omega-grid} in inner coordinates and commute with $\theta_\Euc$ to obtain
  \begin{align}
    \nu^{1/4} \partial_T \cut{\Omega}{[1,K],\ell} &= \overbrace{-\nu^{1/4}[\theta_\Euc, \partial_T] \Omega_{[1,K],\ell}}^{F_2}\nonumber\\
    &+ \sum_\bigsub{\abs{\al} + \#\al = \ell\\ 1 \leq \abs{\kappa} \leq K} \big(\partial_X\Aop_\sperp \cut{\Psi}{\kappa,\al} + \overbrace{[\theta_\Euc,\partial_X] \Aop_\sperp \Psi_{\kappa,\al}}^{F_2} + \partial_X \overbrace{[\theta_\Euc, \Aop_\sperp] \Psi_{\kappa,\al}}^{h_2}\big)\label{eq:Omega-viscous-commutators}\\
    &+ \partial_X \sum_\bigsub{\abs{\al} + \#\al = \ell - 1\\ \abs{\kappa} \leq K - 1} \big(\partial_X\Bop_\sperp \cut{\Psi}{\kappa,\al} + \underbrace{[\theta_\Euc,\partial_X] \Bop_\sperp \Psi_{\kappa,\al}}_{F_2} + \partial_X \underbrace{[\theta_\Euc, \Bop_\sperp] \Psi_{\kappa,\al}}_{h_2}\big)
    - \theta_\Euc F_\ell^\Omega.\nonumber
  \end{align}
  We use braces to label the various contributions to $h_2$ and $F_2$.
  Now $[\theta_\Euc, \partial] = \partial \theta_\Euc$, and $\abs{\partial_T \theta_\Euc} \lesssim 1$ while $\abs{\partial_X \theta_\Euc} \lesssim \nu^{1/4}$.
  Using Lemma~\ref{lem:inner-prod}, we thus find
  \begin{equation*}
    \abs{F_2} \lesssim_{\eps,\beta,K,\ell} \nu^{-(\ell + 1)(1/4 - \beta/3) + \eta + 1/4} \tbf{1}_{\Euc \leq 1}.
  \end{equation*}
  Commuting the cutoff through the multilinear operators, we find $[\theta_\Euc, \Aop] \lesssim \abs{\Aop}$ and $[\theta_\Euc, \Bop] \lesssim \abs{\Bop} + \nu^{1/4}\abs{\Aop},$ with the second term due to the derivative in $\Bop$.
  Thus Lemma~\ref{lem:inner-prod} yields
  \begin{equation*}
    h_2 \lesssim_{\eps,\beta,K,\ell} \nu^{-(\ell + 1)(1/4 - \beta/3) + \eta} \tbf{1}_{\Euc \leq 1}.
  \end{equation*}
  When we add \eqref{eq:Omega-inviscid-commutators} and \eqref{eq:Omega-viscous-commutators}, we obtain the difference $F_3 \coloneqq (\theta_\Dist - \theta_\Euc) F_\ell^\Omega$.
  This is supported where $\Euc \lesssim 1$ and hence $\Dist \lesssim \nu^{-1/12}$.
  Using Proposition~\ref{prop:inviscid-row} and \eqref{eq:exponent}, we can therefore compute
  \begin{equation*}
    F_3 \lesssim_\ell \nu^{1/4 - o_\ell(1)} \Dist^{\ell - 1} \tbf{1}_{\Euc \lesssim 1} \lesssim_\ell \nu^{1/4 - (\ell + 1)/12 + 1/6 - o_\ell(1)} \tbf{1}_{\Euc \lesssim 1} \lesssim_\ell \nu^{-(\ell + 1)(1/4 - \beta/3) + \eta + 1/4} \tbf{1}_{\Euc \lesssim 1}.
  \end{equation*}
  Adding \eqref{eq:Omega-inviscid-commutators} and \eqref{eq:Omega-viscous-commutators}, we can thus write
  \begin{equation}
    \label{eq:Omega-commutators}
    \nu^{1/4} \partial_T \cut{\Omega}{[K],\ell} = \partial_X \sum_\bigsub{\abs{\al} + \#\al = \ell\\ \abs{\kappa} \leq K} \Aop_\sperp \cut{\Psi}{\kappa,\al}
    + \partial_X \sum_\bigsub{\abs{\al} + \#\al = \ell - 1\\ \abs{\kappa} \leq K - 1} \Bop_\sperp \cut{\Psi}{\kappa,\al}
    + \partial_X(h_1 + h_2) + F_1 + F_2 + F_3
  \end{equation}
  for errors $h_i$ and $F_i$ that are suitably small.

  For the shocking component, we collect all the $\theta_\Dist$-commutators from $\Sigma_{0,\ell}$ in $G_1$ and the $\theta_\Euc$-commutators from $\Sigma_{[1,K],\ell}$ in $G_2$.
  The former are order $\nu^{o_\ell(1)} \tbf{1}_{\Dist \leq 1}$, and the latter are comparable to the size of $\Aop$ and $\Bop$ in the relevant sums.
  Using (iii) and (iv) in Lemma~\ref{lem:inner-prod} and accounting for the factor of $\Dist^{-3}$ from $\partial_X$, we find
  \begin{equation*}
    G_1 + G_2 \lesssim_{\eps,\beta,K,\ell} (\Dist \vee 1)^{-2} \nu^{-(\ell + 1)(1/4 - \beta/3)} \tbf{1}_{\Euc \lesssim 1}.
  \end{equation*}
  Let $G_3 \coloneqq (\theta_\Dist - \theta_\Euc) F_\ell^\Sigma$ and note that $\theta_\Dist - \theta_\Euc$ is supported where $\Dist \gtrsim 1$ and $\Euc \lesssim 1$, so $\Dist \lesssim \nu^{-1/12}$.
  Using Proposition~\ref{prop:inviscid-row}, we can compute $F_\ell^\Sigma \lesssim_\ell \nu^{1/4 - o_\ell(1)} \Dist^{\ell}$, so \eqref{eq:exponent} yields
  \begin{equation*}
    G_3 \lesssim_\ell (\Dist \vee 1)^{-2} \nu^{1/4 - (\ell + 1)/12 - 1/12 - o_\ell(1)} \tbf{1}_{\Euc \lesssim 1} \lesssim_{\eps,\beta,K,\ell} (\Dist \vee 1)^{-2} \nu^{-(\ell + 1)(1/4 - \beta/3) + \eta} \tbf{1}_{\Euc \lesssim 1}.
  \end{equation*}
  Then we can write
  \begin{equation}
    \label{eq:Sigma-commutators}
    \partial_T \cut{\Sigma}{[K],\ell} = \partial_X \sum_\bigsub{\abs{\al} + \#\al = \ell + 1\\ \abs{\kappa} \leq K} \partial_X\Aop_1 \cut{\Psi}{\kappa,\al}
    + \partial_X \sum_\bigsub{\abs{\al} + \#\al = \ell\\ \abs{\kappa} \leq K - 1} \Bop_1 \cut{\Psi}{\kappa,\al}
    + G_1 + G_2 + G_3.
  \end{equation}

  We now turn to the algebraic residue arising from the action of $\Aop$ and $\Bop$ on partial sums.
  For a given $\al$, the inner equations \eqref{eq:Omega-diff} and \eqref{eq:Sigma-diff} involve $\Aop \cut{\Psi}{[K],\al}$ while \eqref{eq:Omega-commutators} and \eqref{eq:Sigma-commutators} feature $\sum_{\abs{\kappa} \leq K} \Aop \cut{\Psi}{\kappa,\al}$ (and similarly for $\Bop$).
  If we expand the former, we will find terms with $\abs{\kappa} > K$ not contained in the latter.
  We group these terms into $h_3$ for $\Omega$ and $G_4$ for $\Sigma$:
  \begin{equation*}
    h_3 \coloneqq \sum_\bigsub{\abs{\al} + \#\al = \ell\\\kappa_j \leq K\\ \abs{\kappa} > K} \Aop_\sperp \cut{\Psi}{[K],\al} + \sum_\bigsub{\abs{\al} + \#\al = \ell - 1\\\kappa_j \leq K\\ \abs{\kappa} > K - 1} \Bop_\sperp \cut{\Psi}{[K],\al},
  \end{equation*}
  and
  \begin{equation*}
    G_4 \coloneqq \partial_X \sum_\bigsub{\abs{\al} + \#\al = \ell + 1\\\kappa_j \leq K\\ \abs{\kappa} > K} \Aop_1 \cut{\Psi}{[K],\al} + \partial_X \sum_\bigsub{\abs{\al} + \#\al = \ell\\\kappa_j \leq K\\ \abs{\kappa} > K - 1} \Bop_1 \cut{\Psi}{[K],\al}.
  \end{equation*}
  Now Lemma~\ref{lem:inner-prod} yields
  \begin{equation*}
    h_3 \lesssim_{\eps,\beta,K,\ell} \nu^{-(\ell + 1)(1/4 - \beta/3) + \eta} (\Euc \vee 1)^{-2K-1},
  \end{equation*}
  and
  \begin{equation*}
    G_4 \lesssim_{\eps,\beta,K,\ell} (\Dist \vee 1)^{-2} \nu^{-(\ell + 1)(1/4 - \beta/3) + \eta} (\Euc \vee 1)^{-2K-1}.
  \end{equation*}
  We are able to cut off $\Euc$ and $\Dist$ in these bounds because $\abs{\kappa} > K > 0$ in the above sums, so each product $\Aop$ and $\Bop$ contains at least one factor of $\theta_\Euc$.
  Hence $h_3$ and $G_4$ are supported where $\Euc \gtrsim 1/2$.

  There is one final source of error in the shocking component: \eqref{eq:Sigma-diff} involves advection by $\inn{\Sigma}{0}$.
  We have implicitly replaced this by $\cut{\Sigma}{[K],0}$ in the above sums.
  Therefore let $G_5 \coloneqq  - \partial_1 A_1^1(0) \partial_X(P_{K,0} \Psi_{[K],\ell})$, which accounts for this difference.
  Writing $\Psi_{[K],\ell} = \Psi_{0,\ell} + \Psi_{[1,K],\ell}$ and using Propositions~\ref{prop:inviscid-row}, \ref{prop:grid-est}, and \ref{prop:Burgers-inner}, we find
  \begin{align*}
    G_5 &\lesssim_{\eps,\beta,K,\ell} \nu^{-o_\ell(1)} \Dist^{\ell - 4K - 5} + \nu^{-(\ell + 1)(1/4 - \beta/3) + \eta} \Euc^{-1} \Dist^{-4K - 6}\\
        &\lesssim_{\eps,\beta,K,\ell} [\nu^{-o_\ell(1)} \Dist^{\ell - 1} + \nu^{-(\ell + 1)(1/4 - \beta/3) + \eta}] \Euc^{-2K-1} \Dist^{-2}
        \\ &\lesssim_{\eps,\beta,K,\ell} \nu^{-(\ell + 1)(1/4 - \beta/3) + \eta} \Euc^{-2K-1} \Dist^{-2}.
  \end{align*}
  Here we have used $\Dist \lesssim \nu^{1/4 - \beta/3}$ and $2(1/4 - \beta/3) > \eta$.

  If we now set $h^\res \coloneqq \sum_{i=1}^3 h_i$, $F^\res \coloneqq \sum_{i=1}^3 F_i$, and $G \coloneqq \sum_{i=1}^5 G_i$, \eqref{eq:residual-nonshocking} and \eqref{eq:residual-shocking} hold with the desired bounds for $m = n = 0$.
  Higher derivative estimates require repeated commutation with $\theta_\Euc$, but the outcome is little changed.
\end{proof}

\subsection{Matching estimates}
Throughout this section, we inductively assume that Proposition~\ref{prop:inner-matching} holds for all $\ell' < \ell$.
Combining Lemma~\ref{lem:inner-diff} and \ref{lem:residual-inner}, we can rewrite \eqref{eq:Omega-diff} and \eqref{eq:Sigma-diff} as
\begin{align}
  [\nu^{1/4} \partial_T + A_\sperp^\sperp(0) \partial_X] P_{K,\ell}^\Omega &= \partial_X h + \h F,\label{eq:Omega-diff-simple}\\
  \partial_T P_{K,\ell}^\Sigma + \partial_1 A_1^1(0) \partial_X(\inn{\Sigma}{0} P_{K,\ell}^\Sigma) - B_1^1(0) \partial_X^2 P_{K,\ell}^\Sigma &= G + G_\ell^\match
    \label{eq:Sigma-diff-simple}
\end{align}
with
\begin{align}
  h &\lesssim_{\eps,\beta,K,\ell} \nu^{-(\ell + 1)(1/4 - \beta/3) + \eta} (\Euc \vee 1)^{-2K - 1},\label{eq:h-inner}\\
  \h F &\lesssim_{\eps,\beta,K,\ell} \nu^{-(\ell + 1)(1/4 - \beta/3) + \eta + 1/4} \tbf{1}_{\Euc \leq 1},\nonumber\\
  G &\lesssim_{\eps,\beta,K,\ell} (\Dist \vee 1)^{-2} \nu^{-(\ell + 1)(1/4 - \beta/3) + \eta} (\Euc \vee 1)^{-2K - 1}\nonumber
\end{align}
satisfying analogous derivative bounds.

We first treat the nonshocking component $P_{K,\ell}^\Omega$.
By construction, $P^\Omega|_{\Gamma_\beta} = 0$.
Because the right side of \eqref{eq:Omega-diff-simple} is small, we can integrate \eqref{eq:Omega-diff-simple} from $\Gamma_\beta$ to show that $P^\Omega$ itself is small.
For convenience, we transform \eqref{eq:nonshock} of Lemma~\ref{lem:integrals} into the inner coordinates.
\begin{lemma}
  \label{lem:integral-inner}
  For all $2 \leq I' \leq N$ and $r \in \N$, $\int (\Euc \vee 1)^{-r} \circ \gamma_{I'} \lesssim_b \nu^{-1/4} (\Euc \vee 1)^{-r + 1}$.
\end{lemma}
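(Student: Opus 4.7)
The plan is to reduce the estimate to its outer counterpart \eqref{eq:nonshock} in Lemma~\ref{lem:integrals} through the scaling that defines the inner coordinates. First, I observe that the frozen nonshocking characteristic $\gamma_{I'}$ has tangent $\partial_T + \nu^{-1/4}\lambda_{(I')}(0)\partial_X$ in inner coordinates and $\partial_t + \lambda_{(I')}(0)\partial_x$ in outer coordinates, so the two affine parameters satisfy $dS = \nu^{-1/2}\,ds$. Combined with $\Euc = \nu^{-1/2}\euc$, this converts the integral into
\begin{equation*}
  \int (\Euc \vee 1)^{-r}\circ\gamma_{I'}\,dS = \nu^{(r-1)/2}\int(\euc \vee \nu^{1/2})^{-r}\circ\tilde\gamma_{I'}\,ds.
\end{equation*}

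Next, I split the resulting integrand at the threshold $\euc = \nu^{1/2}$. On $\{\euc \geq \nu^{1/2}\}$ the integrand coincides with $\euc^{-r}$, and \eqref{eq:nonshock} gives the bound $\euc(t,x)^{-r+1}$, which after undoing the rescaling produces $\Euc(T,X)^{-r+1}$. On $\{\euc < \nu^{1/2}\}$ the integrand equals $\nu^{-r/2}$; since $\lambda_{(I')}(0) \ne 0$ by strict hyperbolicity, the characteristic is a bounded-slope line in the $(t,x)$-plane and occupies at most $O(\nu^{1/2})$ of affine parameter inside the disk of radius $\nu^{1/2}$. This inner piece therefore contributes $O(\nu^{(1-r)/2})$, which becomes $O(1)$ after multiplication by the $\nu^{(r-1)/2}$ prefactor.

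Finally, I unify the two cases. On $\{\Euc(T,X) \ge 1\}$ the first contribution is exactly $(\Euc \vee 1)^{-r+1}$, and the residual $O(1)$ from the inner piece is absorbed into $\nu^{-1/4}(\Euc \vee 1)^{-r+1}$ using $\Euc \lesssim \nu^{\beta-1/2}$ in $\m{I}_\beta$ together with $(r-1)(1/2-\beta) \leq 1/4$, which is folded into the $r$-dependent implicit constant. On $\{\Euc(T,X) < 1\}$ the total is $O(1)$, and is dominated by $\nu^{-1/4}(\Euc \vee 1)^{-r+1} = \nu^{-1/4}$.

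The main obstacle is the unavoidable crossover residue of size $O(1)$ contributed by the inner piece of the outer integral, which does not decay with $\Euc(T, X)$. This is precisely what forces the $\nu^{-1/4}$ slack in the statement: unlike \eqref{eq:nonshock}, the inner integral cannot be expressed as a pure function of the endpoint, so the auxiliary $\nu^{-1/4}$ is needed to absorb the ``memory'' of the diffusive zone traversed by the transverse characteristic.
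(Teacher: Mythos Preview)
Your approach—rescaling the outer estimate \eqref{eq:nonshock} to inner coordinates—is exactly what the paper does; it offers no details beyond saying the lemma is obtained by transforming \eqref{eq:nonshock} into inner coordinates. Two points in your execution need correction, however.

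First, the affine parameter. The integral in the lemma is taken along the vector field $\nu^{1/4}\partial_T + \lambda_{(I')}(0)\partial_X$ appearing in \eqref{eq:Omega-diff-simple}, not the rescaled field $\partial_T + \nu^{-1/4}\lambda_{(I')}(0)\partial_X$. With the correct normalization $dT/dS = \nu^{1/4}$ one has $dS = \nu^{-3/4}\,ds$, so your prefactor should be $\nu^{r/2-3/4}$ rather than $\nu^{(r-1)/2}$. The outer piece then scales to exactly $\nu^{-1/4}(\Euc \vee 1)^{-r+1}$, and the inner residual scales to $O(\nu^{-1/4})$ rather than $O(1)$. With your parametrization the main term would give $(\Euc \vee 1)^{-r+1}$ with no $\nu^{-1/4}$, which is inconsistent with how the lemma is used immediately afterward (the $\nu^{-1/4}$ there exactly cancels a $\nu^{1/4}$ in $F$).

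Second, and more seriously, your absorption of the inner residual when $\Euc(T,X) \geq 1$ rests on the inequality $(r-1)(1/2-\beta) \leq 1/4$. This fails once $r$ is large, and the lemma is applied with $r = 2K+2$ for arbitrary $K$, so no $r$-dependent constant can rescue it. The correct argument is geometric: the characteristic can enter $\{\Euc \leq 1\}$ only if the endpoint already satisfies $\Euc(T,X) \lesssim 1$. Indeed, moving backward from the endpoint decreases $T \leq 0$ and hence increases $|T|$; so if some point $(T',X')$ on the characteristic has $\Euc \leq 1$ (in particular $|T'| \leq 1$), then $|T| \leq |T'| \leq 1$, and from $|X - X'| = |\lambda|\,\nu^{-1/4}|T-T'| \leq |\lambda|\,\nu^{-1/4}$ together with $|X'| \leq \nu^{-1/4}$ one gets $\Euc(T,X) \leq \sqrt{1 + (1+|\lambda|)^2}$. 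Hence the residual is present only when $(\Euc \vee 1)^{-r+1} \asymp 1$, and both pieces are then bounded by $\nu^{-1/4}$ as required.
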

With this, we can easily treat the nonshocking component.
\begin{proof}[Proof of Proposition~\textnormal{\ref{prop:inner-matching}} for $\Omega$]
  Applying Lemma~\ref{lem:renorm}, we let $R \coloneqq \mathbf{R}_0[A_\sperp^\sperp(0), h]$ and write \eqref{eq:Omega-diff-simple} as
  \begin{equation*}
    [\nu^{1/4} \partial_T + A_\sperp^\sperp(0) \partial_X] (P_{K,\ell}^\Omega - R) = -\nu^{1/4} A_\sperp^\sperp(0)^{-1} \partial_t h + \h F \eqqcolon F,
  \end{equation*}
  which satisfies $F \lesssim_{\eps,\beta,K,\ell} \nu^{-(\ell + 1)(1/4 - \beta/3) + \eta + 1/4} (\Euc \vee 1)^{-2K - 2}$.
  Integrating from zero data on $\Gamma_\beta$, Lemma~\ref{lem:integral-inner} implies that
  \begin{equation*}
    P_{K,\ell}^\Omega - R \lesssim_{\eps,\beta,K,\ell} \nu^{-(\ell + 1)(1/4 - \beta/3) + \eta + 1/4} (\Euc \vee 1)^{-2K - 1}.
  \end{equation*}
  Now $R$ is a linear transformation of $h$, so \eqref{eq:h-inner} yields \eqref{eq:inner-matching} for $P_{K,\ell}^\Omega$ with $m = n = 0$.
  We inductively control $\partial_T^m \partial_X^n p_{K,\ell}^\Omega$ using the operator $\mathbf{R}_n$ from Lemma~\ref{lem:renorm} to renormalize.
  We bound the derivative data using the procedure described in Step 1 of the proof of Proposition~\ref{prop:grid-est}.
  Then the desired estimates follow from Lemma~\ref{lem:integral-inner}.
\end{proof}
For the shocking component $P^\Sigma$, we exploit the scalar nature of the advection-diffusion equation \eqref{eq:Sigma-diff-simple} to control $P^\Sigma$ via the comparison principle.
We set $P^\Sigma = 0$ below a certain time and use $G_\ell^\match$ to suppress the force near $\Gamma_\beta$.
This simplifies the analysis of higher derivatives.

Far from the origin, the shocking component is dominated by the advective derivative
\begin{equation}
  \label{eq:inner-advective}
  \bar\partial_T \coloneqq \partial_T + \partial_1A_1^1(0) \Sigma_{0,0} \partial_X.
\end{equation}
For our comparison argument, we require a lower bound on its action on $\Euc$.
\begin{lemma}
  \label{lem:unit-constant}
  We have $\abss{\bar\partial_T \Euc} \geq \Euc \Dist^{-2}$.
\end{lemma}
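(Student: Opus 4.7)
The plan is to reduce the bound to a direct algebraic computation in the coordinates $(T,\mathrm{U})$, where $\mathrm{U}(T,X) = \Sigma_{0,0}(T,X)$ is the inner cubic profile satisfying $X = a\abs{T}\mathrm{U} + b\mathrm{U}^3$ (the blow-up of \eqref{eq:fu}). The key observation is that $\bar{\partial}_T$ from \eqref{eq:inner-advective} is the scaled cubic Burgers characteristic: since $\cub$ solves \eqref{eq:cubic-Burgers} and $\partial_1 A_1^1(0) = -a$ by \eqref{eq:coeff}, one easily checks $\bar{\partial}_T\mathrm{U} = 0$ and $\bar{\partial}_T T = 1$. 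These two identities are the workhorse. Moreover $\Dist^2 = \abs{T} + 3a^{-1}b\mathrm{U}^2$ by \eqref{eq:fd}.

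From $\bar{\partial}_T X = -a\mathrm{U}$ and $T \leq 0$, I compute
\begin{equation*}
  \bar{\partial}_T \Euc^2 = 2T - 2a\nu^{1/2}X\mathrm{U} = 2T - 2a\nu^{1/2}\mathrm{U}(a\abs{T}\mathrm{U} + b\mathrm{U}^3) = -2\abs{T} - 2a^2\nu^{1/2}\abs{T}\mathrm{U}^2 - 2ab\nu^{1/2}\mathrm{U}^4.
\end{equation*}
Because $a$ and $b$ have the same sign (so $ab, a^{-1}b > 0$), the three terms share a sign and
\begin{equation*}
  \abs{\bar{\partial}_T \Euc^2} = 2\abs{T} + 2a^2\nu^{1/2}\abs{T}\mathrm{U}^2 + 2ab\nu^{1/2}\mathrm{U}^4.
\end{equation*}

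The desired bound $\abs{\bar{\partial}_T \Euc} \geq \Euc \Dist^{-2}$ is equivalent, after multiplying by $2\Euc$ and then by $\Dist^2$, to
\begin{equation*}
  \Dist^2 \,\abs{\bar{\partial}_T \Euc^2} \geq 2\Euc^2.
\end{equation*}
I would expand the left-hand side using the formula for $\abs{\bar{\partial}_T \Euc^2}$ above and $\Dist^2 = \abs{T} + 3a^{-1}b\mathrm{U}^2$, while the right-hand side $2T^2 + 2\nu^{1/2}X^2$ is expanded using $X^2 = a^2 T^2\mathrm{U}^2 + 2ab\abs{T}\mathrm{U}^4 + b^2\mathrm{U}^6$. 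After cancellation the difference reduces to
\begin{equation*}
  6a^{-1}b\abs{T}\mathrm{U}^2 + 4ab\nu^{1/2}\abs{T}\mathrm{U}^4 + 4b^2\nu^{1/2}\mathrm{U}^6,
\end{equation*}
which is manifestly nonnegative. Dividing by $2\Euc$ then yields the lemma.

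There is no real obstacle here: the proof is a short algebraic identity plus the sign structure $ab > 0$. The only bookkeeping subtlety is ensuring the coordinate change between $(T,X)$ and $(T,\mathrm{U})$ is used consistently (in particular tracking that $\bar{\partial}_T$ annihilates $\mathrm{U}$) and handling the $\sgn(T)$ factor, which I circumvent by writing $T = -\abs{T}$ throughout and noting that the assumption $t\le 0$ (i.e.\ $T \le 0$) is in force on the domain $\m{I}_\beta$.
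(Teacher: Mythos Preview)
Your proposal is correct and follows essentially the same approach as the paper. The paper first scales to the outer coordinates (so that $\Euc$ becomes $\euc = (t^2+x^2)^{1/2}$ without the $\nu^{1/2}$ weight) and then, instead of fully expanding, recognizes the product as $(\lvert t\rvert + 3a^{-1}b\cub^2)(\lvert t\rvert + \lvert ax\cub\rvert) \geq t^2 + \lvert x\rvert \cdot \lvert -at\cub + b\cub^3\rvert = t^2 + x^2$; your version carries the $\nu^{1/2}$ factor through and expands term by term, arriving at the same nonnegative remainder.
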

\begin{proof}
  Scaling to the outer coordinates, it suffices to show that $\dist^2 \euc \abss{\bar\partial_t \euc} \geq \euc^2$.
  Using \eqref{eq:fu}, we write
  \begin{equation*}
    \bar\partial_t = \partial_t + \partial_1A_1^1(0) \sigma_{0,0} \partial_x = \partial_t - a \cub \partial_x,
  \end{equation*}
  with $a$ from \eqref{eq:fu}.
  Hence $\bar\partial_t \euc^2 = \bar\partial_t(t^2 + x^2) = 2 t - 2 a x \cub.$
  The constants $a$ and $b$ in \eqref{eq:fu} have the same sign, so \eqref{eq:fu} implies that $a x \cub \geq 0$ and
  \begin{equation*}
    \euc \abss{\bar\partial_t \euc} = \frac{1}{2} \abs{\bar\partial_t \euc^2} = \abs{t} + \abss{a x \cub}.
  \end{equation*}
  Thus \eqref{eq:fu} and \eqref{eq:fm} yield
  \begin{equation*}
    \dist^2 \euc \abss{\bar\partial_t \euc} = (\abs{t} + 3 a^{-1} b \cub^2) (\abs{t} + \abss{a x \cub}) \geq t^2 + \abs{x} \abs{-a t\cub + b \cub^3} = \euc^2.
    \qedhere
  \end{equation*}
\end{proof}
We use the comparison principle to control the shocking component via the following majorant.
\begin{lemma}
  \label{lem:super}
  For all $r > 1$ and $s \geq 0$, there exists an increasing uniformly bounded Lipschitz function $\kappa \colon (-\infty, 0] \to \R_+$ such that $Q \coloneqq \kappa(T) (\Euc \vee 1)^{-r} (\Dist \vee 1)^{-3s}$ satisfies
  \begin{equation}
    \label{eq:super}
    \begin{aligned}
      \partial_T Q + \partial_1 A_1^1(0) \inn{\Sigma}{0} \partial_X Q + (s + 1) \partial_1 A_1^1(0) (\partial_X&\inn{\Sigma}{0}) Q - B_1^1(0) \partial_X^2 Q\\
      &\geq (\Euc \vee 1)^{-r} (\Dist \vee 1)^{-(3s + 2)}.
    \end{aligned}
  \end{equation}
\end{lemma}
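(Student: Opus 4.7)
The plan is to compute $\mathcal{L} Q$ via the chain rule and exploit the algebraic identity
\[
  r + \tfrac{3s}{2} - (s + 1) \;=\; r - 1 + \tfrac{s}{2} \;>\; 0
\]
(valid since $r > 1$ and $s \geq 0$) that arises from balancing the advective and zeroth-order contributions. First I will replace $\inn{\Sigma}{0}$ by the cubic profile $\Sigma_{0,0}$, so that the principal part of $\mathcal{L}$ becomes $\bar\partial_T - (s+1)\Dist^{-2} - B_1^1(0)\partial_X^2$; here I use $\partial_1 A_1^1(0) \partial_X \Sigma_{0,0} = -\Dist^{-2}$ (from \eqref{eq:fm} and $\partial_1 A_1^1(0) = -a$). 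The discrepancy $\inn{\Sigma}{0} - \Sigma_{0,0}$ decays like $(\Dist \vee 1)^{-3}$ with matching derivative bounds by Proposition~\ref{prop:Burgers-inner}, and $|\partial_X \Euc| = \nu^{1/2}|X|/\Euc \leq \nu^{1/4}$; together these make the substitution harmless for small $\nu$.

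Second, in the bulk region $\{\Euc > 1, \, \Dist > 1\}$, Lemma~\ref{lem:unit-constant} together with the sign computation $\bar\partial_T \euc^2 = 2t - 2ax\cub \leq 0$ (from its proof) yields $\bar\partial_T \Euc \leq -\Euc \Dist^{-2}$, while $\bar\partial_T \Dist^2 = -1$ (since $\cub$ is preserved along $\bar\partial_T$) gives $\bar\partial_T \Dist = -\tfrac{1}{2\Dist}$. Chain rule then produces
\[
  \bar\partial_T\big[(\Euc)^{-r}(\Dist)^{-3s}\big] \;\geq\; \Big(r + \tfrac{3s}{2}\Big)(\Euc)^{-r}(\Dist)^{-3s-2},
\]
so after subtracting the $(s+1)\Dist^{-2}$ contribution we are left with a surplus of $\kappa (r - 1 + s/2)$ times the target right-hand side. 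The diffusion $B_1^1(0) \partial_X^2 Q$ is strictly lower order: since $\partial_X \Dist = O(\Dist^{-2})$, $\partial_X^2 Q$ is of order $\kappa(\Euc \vee 1)^{-r}(\Dist \vee 1)^{-3s-4}$, which is absorbed by the $\Dist^{-2}$ gain when $\Dist \gg 1$ and by $\kappa'$ when $\Dist \asymp 1$.

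Third, handle the cutoff regions $\{\Euc \leq 1\}$ and $\{\Dist \leq 1\}$, both of which lie in the time slab $\{|T| \leq 1\}$ since $\Euc \geq |T|$ and $\Dist^2 \geq |T|$. In $\{\Dist \leq 1\}$ the Burgers smoothing makes $\inn{\Sigma}{0}$ and its $X$-derivatives uniformly $O(1)$ by Proposition~\ref{prop:Burgers-inner}, so the coefficient $(s+1)\partial_1 A_1^1(0) \partial_X \inn{\Sigma}{0}$ is bounded rather than blowing up like $\Dist^{-2}$, and the inequality reduces to $\kappa'(T) \gtrsim 1 + \kappa(T)$. In $\{\Euc \leq 1, \Dist > 1\}$ with $s < 2$ the chain-rule gain on $(\Dist)^{-3s}$ is negative, but the deficit is at most a bounded constant times $\kappa(T)$. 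Both defects are absorbed by taking $\kappa(T) = c_0 + c_1 e^T$ with $c_0 \geq 2/(r - 1 + s/2)$ and $c_1$ large enough that $\kappa'(T) \geq c_1 e^{-1}$ dominates all slack terms on $T \in [-1, 0]$; this $\kappa$ is smooth, increasing, bounded between $c_0$ and $c_0 + c_1$, and Lipschitz, as required.

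The main obstacle will be coordinating the multi-region analysis, particularly across the crossovers $\Dist = 1$ and $\Euc = 1$ where the cutoffs $(\cdot \vee 1)^{-r}$ and $(\cdot \vee 1)^{-3s}$ are only Lipschitz (precisely why the conclusion demands Lipschitz, not smooth, $\kappa$), and confirming that the errors from $\inn{\Sigma}{0} - \Sigma_{0,0}$ together with the $\nu^{1/4}$-small lateral corrections do not disturb the inequality once $\nu$ is sufficiently small.
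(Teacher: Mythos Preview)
Your overall approach coincides with the paper's: replace $\inn{\Sigma}{0}$ by $\Sigma_{0,0}$, use $\bar\partial_T\Dist^2 = -1$ together with Lemma~\ref{lem:unit-constant} to extract a positive $\Dist^{-2}$-contribution with coefficient $r - 1 + s/2 > 0$, treat the diffusion and the $\inn{\Sigma}{0} - \Sigma_{0,0}$ discrepancy as lower-order errors, and absorb the remaining defects near $T = 0$ via $\dot\kappa$. The paper packages the same computation through the integrating factor $\Dist^{2(s+1)}$ rather than a direct chain rule, but the two are equivalent.

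The gap is in your specific choice $\kappa(T) = c_0 + c_1 e^T$. In the region $\{\Dist \leq 1\}$ you correctly note that the inequality reduces to $\dot\kappa \gtrsim 1 + C\kappa$, where $C$ comes from the bounded zeroth-order coefficient $(s+1)\partial_1 A_1^1(0)\partial_X\inn{\Sigma}{0}$ (and similarly from the diffusion and from the $s/2 - 1$ deficit in $\{\Euc \leq 1,\ \Dist > 1\}$). With your $\kappa$ one has $\dot\kappa = c_1 e^T \leq \kappa$, so $\dot\kappa - C\kappa \leq (1 - C)\kappa$, which is negative whenever $C \geq 1$; taking $c_1$ large cannot help because $\kappa$ grows with $c_1$ at the same rate as $\dot\kappa$. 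The paper avoids this by taking $\kappa(T) = \kappa(T_1)e^{\Lambda(T - T_1)}$ on $[T_1, 0]$ with $\Lambda$ chosen large relative to $C$, so that $\dot\kappa = \Lambda\kappa$ dominates $C\kappa$. For $T \leq T_1$ the paper likewise does not rely on the surplus $r - 1 + s/2$ alone to absorb the $\m{O}(\Dist^{-6})$ errors (which are \emph{not} small when $\Dist$ is only slightly larger than $1$, i.e.\ when $|T|$ is only slightly larger than $1$); instead it bounds those errors by $C|T|^{-3}\kappa$ and sets $\kappa(T) = K\exp\bigl(C/(2|T|^2)\bigr)$, so that $\dot\kappa = C|T|^{-3}\kappa$ cancels them exactly for every $T \leq T_1$.
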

\begin{proof}
  There exists $T_1 < 0$ such that $\Euc,\Dist \geq 1$ where $T \leq T_1$.
  First consider $T \leq T_1$.

  By Proposition~\ref{prop:Burgers-inner}, $\inn{\Sigma}{0} = \Sigma_{0,0} + \m{O}(\Dist^{-3})$.
  Using \eqref{eq:inner-advective}, we can write
  \begin{equation*}
  \begin{aligned}
    \partial_T Q + \partial_1 A_1^1(0) \inn{\Sigma}{0} \partial_X Q &+ (s + 1) \partial_1 A_1^1(0) (\partial_X\inn{\Sigma}{0}) Q
    \\ &= \bar\partial_T Q - (s + 1)\Dist^{-2} Q + \m{O}(\Dist^{-6}) Q + \m{O}(\Dist^{-3}) \partial_X Q.
    \end{aligned}
  \end{equation*}
  We can readily verify that $\partial_X Q \lesssim \Dist^{-3} Q$ and $\partial_X^2 Q \lesssim \Dist^{-6} Q$, so there exists $C > 0$ such that
  \begin{equation*}
  \begin{aligned}
    \partial_T Q + \partial_1 A_1^1(0) \inn{\Sigma}{0} \partial_X Q &+ (s + 1) \partial_1 A_1^1(0) (\partial_X\inn{\Sigma}{0}) Q - B_1^1(0) \partial_X^2 Q
    \\ &\geq \bar\partial_T Q - (s + 1)\Dist^{-2} Q - C \abs{T}^{-3} Q.
    \end{aligned}
  \end{equation*}
  Recall that $\Dist^2 = \abs{T} + 3 a^{-1} b \Sigma_{0,0}^2$ and $\bar\partial_T \Sigma_{0,0} = 0$, so $\bar\partial_T \Dist^2 = - 1$.
  Multiplying by the integrating factor $\Dist^{2(s + 1)}$, we therefore wish to arrange
  \begin{equation}
    \label{eq:super-multiplied}
    \bar\partial_T\big(\Dist^{2(s + 1)} Q\big) - C\abs{T}^{-3} \Dist^{2(s + 1)} Q \geq \Euc^{-r} \Dist^{-s}.
  \end{equation}
  Let $\kappa(T) \coloneqq K \exp\big(\frac{C}{2\abs{T}^2}\big)$ for $T < T_1$ and $K > 0$ to be determined, so $\dot \kappa = C \abs{T}^{-3} \kappa$ and
  \begin{equation*}
  \begin{aligned}
    \bar\partial_T\big(\Dist^{2(s + 1)} Q\big) - C\abs{T}^{-3} \Dist^{2(s + 1)} Q &= \kappa \bar{\partial}_T(\Euc^{-r} \Dist^{2-s})
    \\ &= r\kappa \Euc^{-r-1} \Dist^{2-s} \abs{\partial_T \Euc} - \frac{\kappa}{2} (2 -  s) \Euc^{-r} \Dist^{-s}.
    \end{aligned}
  \end{equation*}
  Using Lemma~\ref{lem:unit-constant}, and $s \geq 0$, we find
  \begin{equation*}
    \bar\partial_T\big(\Dist^{2(s + 1)} Q\big) - C\abs{T}^{-3} \Dist^{2(s + 1)} Q \geq \kappa(r-1) \Euc^{-r} \Dist^{-s}.
  \end{equation*}
  Since $r > 1$, \eqref{eq:super-multiplied} follows if we choose $K \coloneqq (r - 1)^{-1}$.
  \smallskip

  Now consider $T \in [T_1, 0]$.
  We can readily check that
  \begin{equation*}
    \abss{\partial_X(\inn{\Sigma}{0} Q)} + \abss{\partial_X^2 Q} + (\Euc \vee 1)^{-r} (\Dist \vee 1)^{-(3s + 2)} \lesssim Q,
  \end{equation*}
  Hence a multiplier of the form $\kappa(T) = \kappa(T_1) \e^{\Lambda(T - T_1)}$ with $\Lambda \gg 1$ ensures \eqref{eq:super}.
\end{proof}
We can finally construct the shocking component of the inner expansion.
\begin{proof}[Proof of Proposition~\textnormal{\ref{prop:inner-matching}} for $\Sigma$]
  Recall the cutoff $\theta$ from the beginning of the section, which is $0$ on $[0, 1/2]$ and $1$ on $[1, \infty)$.
  Let $\theta_\beta \coloneqq \theta \circ \big(\mathsf{R}^{-1} \nu^{1/2-\beta} \Euc\big)$, which is $1$ where $\Euc \geq \mathsf{R} \nu^{-(1/2 - \beta)}$ and $0$ where $\Euc \leq \tfrac{\mathsf{R}}{2} \nu^{-(1/2-\beta)}$.
  Now fix $K,\ell \in \N$.
  Recalling \eqref{eq:Sigma-diff-simple}, let $G_\ell^\match \coloneqq -\theta_\beta G$.
  Then $P_{K,\ell}^\Sigma$ solves
  \begin{equation*}
      \partial_T P_{K,\ell}^\Sigma + \partial_1 A_1^1(0) \partial_X(\inn{\Sigma}{0} P_{K,\ell}^\Sigma) - B_1^1(0) \partial_X^2 P_{K,\ell}^\Sigma = (1 - \theta_\beta)G.
  \end{equation*}
  The forcing $(1 - \theta_\beta)G$ vanishes where $\Euc \geq \mathsf{R} \nu^{-(1/2 - \beta)}$, and in particular where $T \leq - \mathsf{R} \nu^{-(1/2 - \beta)}$.
  We let $P_{K,\ell}^\Sigma$ be identically zero at  such times.
  We can interpret this as a Cauchy problem on the line with zero data at $T = - \mathsf{R} \nu^{-(1/2 - \beta)}$.
  We observe that $\theta_\beta$ vanishes where $\Euc \leq \tfrac{\mathsf{R}}{2} \nu^{-(1/2-\beta)}$, or equivalently $\euc \leq \tfrac{\mathsf{R}}{2} \nu^\beta$.
  So $G_\ell^\match$ satisfies the support condition in Proposition~\ref{prop:inner-matching}.

  We induct on the number of derivatives.
  Fix $m, n \in \N_0$ and suppose we have shown Proposition~\ref{prop:inner-matching} for all $(m', n')$ with $n' < n$ or $n' = n$ and $m' < m$.
  Let $Z \coloneqq \partial_T^m\partial_X^n P_{K,\ell}^\Sigma$.
  When we apply $\partial_T^m\partial_X^n$ to \eqref{eq:Sigma-diff-simple}, we must expand the product in the advection and commute derivatives with $\theta_\beta$.
  The product terms have the form $\partial_X^{n + 1 - n'} \inn{\Sigma}{0} \cdot \partial_X^{n'}P_{K,\ell}^\Sigma$ for $n' \leq n + 1$.
  When $n' < n$, we can use the inductive hypothesis and \eqref{eq:Burgers-inner} to fold these into the forcing.
  For the commutators, we observe that $\partial_T \theta_\beta \lesssim \nu^{1/2 - \beta} \lesssim \Euc^{-1}$ and $\partial_X \theta_\beta \lesssim \nu^{3/4 - \beta} \lesssim \Dist^{-3}$, as both are supported where $\Euc \asymp \nu^{-(1/2 - \beta)}$ and hence $\Dist^3 \lesssim \nu^{-1/4} \Euc \lesssim \nu^{-(3/4 - \beta)}$.
  That is, these commutators contribute the proper factors of $\Euc^{-1}$ and $\Dist^{-3}$.
  In sum, we obtain
  \begin{equation*}
    \partial_T Z + \partial_1 A_1^1(0) \inn{\Sigma}{0} \partial_X Z + (n + 1) \partial_1 A_1^1(0) (\partial_X \inn{\Sigma}{0}) Z - B_1^1(0) \partial_X^2 Z = H
  \end{equation*}
  for $H \lesssim_{\eps,\beta,K,\ell,m,n} \nu^{-(\ell + 1)(1/4 - \beta/3) + \eta} (\Euc \vee 1)^{-2K - m-1} (\Dist \vee 1)^{-3n - 2}$.
  By Lemma~\ref{lem:super} and linearity, there exists $0 < Q \lesssim_{\eps,\beta,K,\ell,m,n} \nu^{-(\ell + 1)(1/4 - \beta/3) + \eta} (\Euc \vee 1)^{-2K - m-1} (\Dist \vee 1)^{-3n}$ such that
  \begin{equation*}
    \partial_T Q + \partial_1 A_1^1(0) \inn{\Sigma}{0} \partial_X Q + (n + 1) \partial_1 A_1^1(0) (\partial_X \inn{\Sigma}{0}) Q - B_1^1(0) \partial_X^2 Q \geq H.
  \end{equation*}
  Because $Z(-\mathsf{R} \nu^{-(1/2 - \beta)}, \anon) \equiv 0$, the comparison principle on $[-\mathsf{R} \nu^{-(1/2 - \beta)}, 0] \times \R$ implies that $Z \leq Q$.
  Replacing $Q$ by $-Q$, a symmetric argument yields $Z \geq -Q$, so $\abs{Z} \leq Q$.
  The proposition follows from induction.
\end{proof}
We next express these estimates in outer coordinates.
\begin{proof}[Proof of Corollary~\textnormal{\ref{cor:worst-inner-size}}]
  Converting \eqref{eq:inner-matching} to outer coordinates, we find
  \begin{equation*}
    \absb{\partial_t^m \partial_x^n(\inn{\psi}{\ell} - \cut{\psi}{[K],\ell})} \lesssim_{\eps,\beta,K,\ell,m,n} \nu^{K + 1} \nu^{(\ell + 1)\beta/3 - 2\beta/3 - \eps \beta} (\euc \vee \nu^{1/2})^{-2K - m - 1} (\dist \vee \nu^{1/4})^{-3n}.
  \end{equation*}
  This implies \eqref{eq:cut-diff}.
  Proposition~\ref{prop:inner-matching} and Lemma~\ref{lem:inner-partial} imply that $\cut{\Psi}{[K],\ell}$ dominates $\inn{\Psi}{\ell}$, so \eqref{eq:inner-partial} implies \eqref{eq:inner-bounds}.

  Next, in $\m{I}_\beta \setminus \{\tfrac{\mathsf{R}}{2} \nu^\beta\}$ we have $\euc^{-1} \asymp \nu^{-\beta}$ and $\dist^{-1} \lesssim \euc^{-1/2} \asymp \nu^{-\beta/2}$.
  We can thus simplify the previous display to
  \begin{align*}
    \absb{\partial_t^m \partial_x^n(\inn{\psi}{\ell} - \cut{\psi}{[K],\ell})} &\lesssim_{\eps,\beta,K,\ell,m,n} \nu^{K + 1} \nu^{(\ell + 1)\beta/3 - 2\beta/3 - \eps \beta} \nu^{-2\beta K - m\beta - \beta} \nu^{-3n\beta/2},\\
                                                                              &\lesssim_{\eps,\beta,K,\ell,m,n} \nu^{(1 - 2\beta)K} \nu^{(\ell + 1)\beta} \nu^{1 - (2/3 + \eps + m + 1 + 3n/2)\beta}.
  \end{align*}
  Since $\eps \leq 1/6$ and $\beta < 1/2$,
  \begin{equation*}
    1 - (2/3 + \eps + m + 1 + 3n/2)\beta > 1 - (m + 3n/2 + 11/6)/2 > -m/2 - 3n/4
  \end{equation*}
  and \eqref{eq:inner-matching-outer-form} follows.
\end{proof}
Finally, we show that the partial outer sum $\out{\psi}{[K]}$ matches the partial inner sum $\inn{\psi}{[L]} \coloneqq \sum_{\ell = 0}^L \inn{\psi}{\ell}$ to high order in the matching zone $\euc \asymp \nu^\beta$ when $K,L \gg 1$.
\begin{proposition}
  \label{prop:outer-inner-matching}
  For all $K,L \in \N$ and $n \in \N_0$,
  \begin{equation}
    \label{eq:outer-inner-matching}
    \abss{\partial_x^n(\out{\psi}{[K]} - \inn{\psi}{[L]})} \lesssim_{K,L,n} (\nu^{(1 - 2 \beta) K} + \nu^{L\beta/3 + 1/4}) \nu^{-3n/4}
  \end{equation}
  where $\frac{\mathsf{R}}{4} \nu^\beta \leq \euc \leq \frac{\mathsf{R}}{2} \nu^\beta.$
\end{proposition}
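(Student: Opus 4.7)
The plan is to use the grid as a common intermediary between the two expansions. Writing the telescoping decomposition
\begin{equation*}
  \out{\psi}{[K]} - \inn{\psi}{[L]} = \underbrace{\sum_{k=0}^K \nu^k\bigl(\out{\psi}{k} - \psi_{k,[L]}\bigr)}_{A} + \underbrace{\sum_{\ell=0}^L\bigl(\psi_{[K],\ell} - \inn{\psi}{\ell}\bigr)}_{B}
\end{equation*}
(with $\psi_{k,[L]} = \sum_\ell \psi_{k,\ell}$ and $\psi_{[K],\ell} = \sum_k \nu^k \psi_{k,\ell}$ in the outer coordinates), I reduce the problem to two bounds already proved: outer-grid matching (Propositions \ref{prop:inviscid-row} and \ref{prop:outer-matching}) for $A$, and inner-grid matching (Corollary~\ref{cor:worst-inner-size}, specifically \eqref{eq:inner-matching-outer-form}) for $B$. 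The numerology in these two bounds is designed precisely so their sum yields the two-parameter rate $\nu^{(1-2\beta)K} + \nu^{L\beta/3 + 1/4}$ in the matching annulus.

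First I would estimate $A$ in the annulus $\frac{\mathsf{R}}{4}\nu^\beta \leq \euc \leq \frac{\mathsf{R}}{2}\nu^\beta$. A direct computation using Lemma~\ref{lem:fuest} shows $\dist \gtrsim \nu^{\beta/2} > \nu^{1/4}$ there, so $\dist^{-3n} \leq \nu^{-3n\beta/2} \leq \nu^{-3n/4}$. For the $k=0$ summand, \eqref{eq:inviscid-row-matching} gives a bound of order $\abs{\log \dist}^L \dist^{L + 2 - 3n}$; since $\dist \lesssim \nu^{\beta/3}$, this is $\lesssim \nu^{L\beta/3 + 2\beta/3 - o(1)} \nu^{-3n/4}$, and $2\beta/3 > 1/4$ since $\beta > 6/13 > 3/8$. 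For each $k \geq 1$, Proposition~\ref{prop:outer-matching} with the weight $\nu^k$ gives $\lesssim \nu^{k(1-2\beta) + \beta + L\beta/3 - \eps\beta} \nu^{-3n/4}$; since $1 - 2\beta > 0$ the sum is controlled by the $k=1$ term, which is $\nu^{1 - \beta(1+\eps) + L\beta/3} \nu^{-3n/4}$, and $1 - \beta(1+\eps) \geq 1/4$ for $\beta < 1/2$ and $\eps \leq 1/6$. Together these yield $\abs{\partial_x^n A} \lesssim \nu^{L\beta/3 + 1/4} \nu^{-3n/4}$.

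For $B$ I would apply \eqref{eq:inner-matching-outer-form} termwise with $m=0$, producing bounds of order $\nu^{(1-2\beta)K + (\ell+1)\beta/3 - 3n/4}$. Summing the finitely many $\ell \leq L$ is dominated by the worst (but still positive) gain in the second exponent, giving $\abs{\partial_x^n B} \lesssim_L \nu^{(1-2\beta)K} \nu^{-3n/4}$. Combining with the estimate on $A$ yields \eqref{eq:outer-inner-matching}.

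The only real subtlety is bookkeeping: one must verify that the constraints on $\beta$ and $\eps$ assumed in Propositions~\ref{prop:outer-matching} and \ref{prop:inner-matching} (namely $\beta \in (\tfrac{1}{2+\eps}, \tfrac{1}{2})$ with $\eps \leq \tfrac{1}{6}$) are enough to make both inequalities $2\beta/3 \geq 1/4$ and $1 - \beta(1+\eps) \geq 1/4$ hold simultaneously, which is exactly what forces the lower threshold $\beta > 6/13 > 3/8$. No fundamentally new estimate is needed; the bulk of the work has already been absorbed into the grid bounds, and the residual computation is essentially a verification that the exponents align.
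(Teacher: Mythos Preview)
Your proposal is correct and follows essentially the same route as the paper: both decompose $\out{\psi}{[K]} - \inn{\psi}{[L]}$ through the grid as $(\out{\psi}{[K]} - \psi_{[K,L]}) - (\inn{\psi}{[L]} - \psi_{[K,L]})$ and then invoke the outer--grid and inner--grid matching estimates. One small caveat: as stated, \eqref{eq:inner-matching-outer-form} is claimed only on $\m{I}_\beta \setminus \{\euc \leq \tfrac{\mathsf{R}}{2}\nu^\beta\}$, which just misses your annulus $\tfrac{\mathsf{R}}{4}\nu^\beta \leq \euc \leq \tfrac{\mathsf{R}}{2}\nu^\beta$; you should either note that its proof uses only $\euc \asymp \nu^\beta$ (so it extends verbatim), or cite Propositions~\ref{prop:Burgers-inner} and~\ref{prop:inner-matching} directly as the paper does.
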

\begin{proof}
  First consider $n = 0$.
  Writing
  \begin{equation*}
    \out{\psi}{K} - \inn{\psi}{L} = (\out{\psi}{[K]} - \psi_{[K,L]}) - (\inn{\psi}{[L]} - \psi_{[K,L]}),
  \end{equation*}
  we reduce to matching between the outer and inner expansions and the grid.
  Using Propositions~\ref{prop:inviscid-row} and \ref{prop:outer-matching}, the outer difference is dominated by
  \begin{equation*}
    \out{\psi}{0} - \psi_{0,[L]} \lesssim_L \abs{\log \nu}^L \nu^{(L + 2)\beta/3}.
  \end{equation*}
  By Propositions~\ref{prop:inner-matching} and \ref{prop:Burgers-inner}, the inner difference is dominated by
  \begin{equation*}
    \inn{\psi}{0} - \psi_{[K],0} \lesssim_K \nu^{(1 - 2\beta)(K + 1)}.
  \end{equation*}
  Combining and simplifying these expressions, we find \eqref{eq:outer-inner-matching}.
  Derivative estimates follow because $\dist^{-3} \lesssim \nu^{-3\beta/2} < \nu^{-3/4}$ in this region.
\end{proof}

\subsection{Approximate inner solution}

To close this section, we show that the inner partial sum is a good approximate solution of \eqref{eq:main} where $\euc \leq \tfrac{\mathsf{R}}{2} \nu^\beta$ provided $L \gg 1$.
\begin{proposition}
  \label{prop:inner-approx}
  Fix $\beta \in (6/13, 1/2)$.
  For all $L \in \N$,
  \begin{equation*}
    \partial_t \inn{\psi}{[L]} + A(\inn{\psi}{[L]}) \partial_x \inn{\psi}{[L]} = \nu \partial_x\big[B(\inn{\psi}{[L]} \partial_x \inn{\psi}{[L]})] + \inn{F}{L} \quad \text{in } \euc \leq \frac{\mathsf{R}}{2} \nu^\beta
  \end{equation*}
  for a force $\inn{F}{L}$ satisfying $\abss{\partial_x^n \inn{F}{L}} \lesssim_{L,n} \nu^{L\beta/3 - 3n/4 - 1/2}$ for all $n \in \N_0$.
\end{proposition}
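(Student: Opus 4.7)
The plan is to substitute $\inn{\psi}{[L]}$ into \eqref{eq:main} and expose the cancellations built into the inner hierarchy. Passing to inner coordinates via \eqref{eq:inner-coord}, \eqref{eq:main} becomes \eqref{eq:inner}, and a direct calculation shows that if $R$ denotes the inner residue obtained by substituting $\Psi = \inn{\Psi}{[L]}$ into \eqref{eq:inner}, then the outer residue is $\inn{F}{L} = \nu^{-1/4}R$. It therefore suffices to control $R$ and its $\partial_X$-derivatives in inner form, since $\partial_x = \nu^{-3/4}\partial_X$.

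Taylor expand $A$ and $B$ about $0$. Using $A(\Psi)\partial_X\Psi = \partial_X f(\Psi)$ with $\Der f = A$, the substituted advective term becomes $\nu^{-1/4}A(\nu^{1/4}\inn{\Psi}{[L]})\partial_X\inn{\Psi}{[L]} = \partial_X\sum_\al \nu^{(\abs{\al}+\#\al-1)/4}\Aop\inn{\Psi}{\al}$, with a shifted analog for $B$, where the sums range over multi-indices $\al$ with $\al_j \in [0, L]$. For the $e_1$-projection, collecting the coefficient of $\nu^{\ell/4}$ and applying \eqref{eq:Sigma-true}---which holds exactly since $G_\ell^\match \equiv 0$ where $\euc \leq \tfrac{\mathsf{R}}{2}\nu^\beta$---annihilates the result for each $\ell \in [0, L]$. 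For the criminal $\sperp$-projection I work with the rescaled form \eqref{eq:inner-small}; the substituted transport $\nu^{1/4}\partial_T\inn{\Omega}{[L]} = \sum_{\ell=0}^L \nu^{(\ell+1)/4}\partial_T\inn{\Omega}{\ell}$ contributes $\partial_T\inn{\Omega}{\ell-1}$ at order $\nu^{\ell/4}$, which \eqref{eq:Omega-true} at level $\ell$ balances against the advection--diffusion terms at the cost of an off-order $-\nu^{1/4}\partial_T\inn{\Omega}{\ell}$. Summing over $\ell$ produces a telescoping cancellation that collapses to $-\nu^{1/4}\partial_T\inn{\Omega}{0} = 0$. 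In both projections, the only surviving contributions to $R$ are \emph{overflow}: products $\Aop\inn{\Psi}{\al}$ or $\Bop\inn{\Psi}{\al}$ with $\al_j \leq L$ and $\abs{\al}+\#\al$ strictly above the level-$L$ threshold (minima $L+1, L+2, L, L+1$ for $\Aop^\sperp, \Aop^1, \Bop^\sperp, \Bop^1$ respectively).

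Size estimates use Lemma~\ref{lem:inner-partial}: each $\inn{\Psi}{\al_j}$ is bounded by $\nu^{-(\al_j+1)(1/4-\beta/3)}$ up to logarithmic factors, so a product with $\abs{\al}+\#\al = M$ is of size at most $\nu^{-(M+1)(1/4-\beta/3)}$ modulo logs. Combined with the Taylor prefactor and the $\nu^{-1/4}$ conversion to outer form, the weakest overflow bound comes from $\Aop^\sperp$ at $M = L+1$, contributing $\nu^{(L+2)\beta/3 - 3/4}$ in outer coordinates; each outer derivative $\partial_x$ loses at most $\nu^{-3/4}$ in the worst case $\dist \vee \nu^{1/4} \asymp \nu^{1/4}$, giving $\abss{\partial_x^n \inn{F}{L}} \lesssim \nu^{(L+2)\beta/3 - 3/4 - 3n/4}$ up to logarithms. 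Since $\beta > 3/8$, this is strictly stronger than the stated bound $\nu^{L\beta/3 - 1/2 - 3n/4}$, with a margin of $2\beta/3 - 1/4 > 0$ that absorbs the logarithmic losses. The principal bookkeeping challenge is verifying the criminal telescoping: one must check that the off-by-one time-derivative placement in \eqref{eq:Omega-true} consumes \emph{exactly} the substituted $\nu^{1/4}\partial_T\inn{\Omega}{[L]}$ rather than leaving a residual term of size $\nu^{(L+1)/4}\partial_T\inn{\Omega}{L}$. This works precisely because the criminal equation at level $\ell$ places its $\partial_T$ at the \emph{next} power $\nu^{(\ell+1)/4}$, so the telescoping consumes everything except the base term $\partial_T\inn{\Omega}{0}$, which vanishes.
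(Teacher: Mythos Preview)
Your approach is essentially the same as the paper's: pass to inner coordinates, Taylor expand the flux, observe that the inner hierarchy \eqref{eq:Omega-true}--\eqref{eq:Sigma-true} consumes all terms up to the level-$L$ threshold, and bound the overflow using the inner size estimates. Your overflow thresholds and the identification of $\Aop^\sperp$ at $\abs{\al}+\#\al = L+1$ as the dominant term are correct, and your final bound $\nu^{(L+2)\beta/3 - 3/4}$ is in fact sharper than the paper's (the paper deliberately weakens to $\nu^{L\beta/3 - 1/2}$ for simplicity).

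Two minor points. First, your ``telescoping'' description of the criminal time derivative is more convoluted than necessary. The direct argument is cleaner: multiplying \eqref{eq:Omega-true} at level $\ell$ by $\nu^{\ell/4}$ and summing over $\ell \leq L$ gives exactly $\nu^{1/4}\partial_T\inn{\Omega}{[L]}$ plus the $\Aop^\sperp$, $\Bop^\sperp$ sums truncated to $\abs{\al}+\#\al \leq L$ (respectively $L-1$). The time derivatives match on the nose; there is no telescoping and no boundary term to worry about. Your framing ``at order $\nu^{\ell/4}$'' is awkward precisely because the criminal equation mixes orders by design. Second, you should explicitly truncate the Taylor expansion of $f$ (and $B$) at a finite order and bound the remainder, as the paper does; the sum over all multi-indices $\al$ with $\al_j \leq L$ is infinite in $\#\al$ and requires this cutoff. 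Also, the size bound $\abs{\inn{\Psi}{\ell}} \lesssim \nu^{-(\ell+1)(1/4-\beta/3)}$ follows from Proposition~\ref{prop:inner-matching} together with Lemma~\ref{lem:inner-partial}, not from Lemma~\ref{lem:inner-partial} alone (which concerns $\cut{\Psi}{[K],\ell}$).
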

\begin{proof}
  We begin in the inner coordinates by substituting $\inn{\Psi}{[L]} = \sum_{\ell=0}^L \nu^{\ell/4} \inn{\Psi}{\ell}$ in \eqref{eq:inner} and controlling the residual error.

  Propositions~\ref{prop:inner-matching} and \ref{prop:Burgers-inner} imply that $\inn{\Psi}{[L]}$ is dominated by its first term:
  \begin{equation*}
    \inn{\Psi}{[L]} \lesssim_L \sup_{\{\euc \leq \mathsf{R}\nu^\beta/2\}} \abss{\inn{\Psi}{0}} \lesssim \sup_{\{\euc \leq \mathsf{R}\nu^\beta/2\}} \abs{\Dist} \lesssim \nu^{-(1/4 - \beta/3)}.
  \end{equation*}
  We now write $A(\nu^{1/4} \inn{\Psi}{[L]}) \partial_X \inn{\Psi}{[L]} = \nu^{-1/4} \partial_X[f(\nu^{1/4}\inn{\Psi}{[L]})]$.
  Expanding the flux about $0$, we find
  \begin{equation*}
    \nu^{-1/4} [f(\nu^{1/4}\inn{\Psi}{[L]}) - f(0)] = \nu^{-1/4}\sum_{r = 0}^{L + 1} \frac{\Der^rA(0)}{(r + 1)!} (\nu^{1/4}\inn{\Psi}{[L]})^{r + 1} + \m{O}_L(\nu^{(L + 3)\beta/3-1/4}).
  \end{equation*}
  Moreover,
  \begin{equation}
    \label{eq:power-decomp-inner}
    \nu^{-1/4}\sum_{r = 0}^{L + 1} \frac{\Der^rA(0)}{(r + 1)!} (\nu^{1/4}\inn{\Psi}{[L]})^{r + 1} = \sum_\bigsub{\al_j \leq L\\ \#\al + \abs{\al} \leq L + 1} \nu^{(\# \al + \abs{\al})/4} \m{A} \inn{\Psi}{\al} + \sum_\bigsub{\al_j \leq L\\ \#\al \leq L + 1\\ \#\al + \abs{\al} > L + 1} \nu^{(\# \al + \abs{\al})/4} \m{A} \inn{\Psi}{\al}.
  \end{equation}
  Now Proposition~\ref{prop:inner-matching} implies that $\inn{\Psi}{\ell} \lesssim_\ell \nu^{-(\ell + 1)(1/4 - \beta/3)}$, so
  \begin{equation*}
    \nu^{(\# \al + \abs{\al})/4} \m{A} \inn{\Psi}{\al} \lesssim_\al \nu^{(\# \al + \abs{\al})\beta/3 + \beta/3 - 1/4}.
  \end{equation*}
  In the second sum on the right in \eqref{eq:power-decomp-inner}, $\# \al + \abs{\al} \geq L+2$, so
  \begin{equation*}
    \sum_\bigsub{\al_j \leq L\\ \#\al \leq L + 1\\ \#\al + \abs{\al} > L + 1} \nu^{(\# \al + \abs{\al})/4} \m{A} \inn{\Psi}{\al} \lesssim_L \nu^{(L + 2)\beta/3 + \beta/3 - 1/4} \lesssim_L \nu^{L\beta/3 + \beta - 1/4}.
  \end{equation*}
  Finally, the first sum on the right in \eqref{eq:power-decomp-inner} is nearly equal to the sums of the $\m{A}$ terms in \eqref{eq:Omega-inner} and \eqref{eq:Sigma-inner}, suitably weighted by factors of $\nu^{1/4}$.
  It only includes one extra term in the non-shocking component of the form $\nu^{(\# \al + \abs{\al})/4} \m{A}_\sperp \inn{\Psi}{\al}$ with $\# \al = L + 1$ and $\al = (0, \ldots, 0)$.
  This term satisfies
  \begin{equation*}
    \nu^{(\# \al + \abs{\al})/4} \m{A}_\sperp \inn{\Psi}{\al} \lesssim_L \nu^{(L + 1)\beta/3 + \beta/3 - 1/4} \lesssim \nu^{L\beta/3}.
  \end{equation*}
  This dominates the earlier errors.
  Accounting for the additional $\nu^{-1/4}\partial_X$ in \eqref{eq:inner}, which contributes $\nu^{-1/4}$, we see that the advective residue from $\inn{\Psi}{[L]}$ in \eqref{eq:inner} is controlled by $\nu^{L\beta/3 - 1/4}$.
  A similar analysis of the diffusive residue due to $B$ yields error of order $\nu^{L\beta/3 + \beta/3 - 1/4} \lesssim \nu^{L\beta/3 - 1/4}$.
  So the residue in the inner coordinates is bounded by $\nu^{L\beta/3 - 1/4}$.
  Using $\partial_T = \nu^{1/2} \partial_t$ and $\Psi = \nu^{-1/4}\psi$, this corresponds to $\inn{F}{L} \lesssim_L \nu^{L\beta/3 - 1/2}$ in the outer coordinates.
  Derivative estimates follow as usual.
\end{proof}

\section{The approximate solution and nonlinear closure}
\label{sec:closure}

We have shown how to construct $\out{\psi}{[K]}$ where $\euc \gtrsim \nu^\beta$ and $\inn{\psi}{[L]}$ where $\euc \lesssim \nu^\beta$ that very nearly solve \eqref{eq:main} on their respective domains and agree to high order on the overlap $\{\euc \asymp \nu^\beta\}$.
Interpolating between these, we now form a highly accurate approximate solution $\app{\psi}{K,L}$ on the entire domain $(t_0, 0) \times \R$.
Drawing on local well-posedness theory, we show that the true solution $\psi^{(\nu)}$ remains arbitrarily close to $\app{\psi}{K,L}$ in any Sobolev norm provided $K,L$ are sufficiently large.

We have defined the outer and inner partial sums
\begin{equation*}
  \out{\psi}{[K]} = \sum_{k=0}^K \nu^k \out{\psi}{k} \And \inn{\psi}{[L]} = \sum_{\ell=0}^L \inn{\psi}{\ell}.
\end{equation*}
Recalling \eqref{eq:fit} and the cutoff $\theta$ from Section~\ref{sec:inner-matching}, define $\zeta \coloneqq \theta\big(\tfrac{2\euc}{\mathsf{R} \nu^\beta} \big)$.
Then $\zeta \equiv 1$ where $\euc \geq \tfrac{\mathsf{R}}{2} \nu^\beta$ and $\zeta \equiv 0$ where $\euc \leq \tfrac{\mathsf{R}}{4} \nu^\beta$.
Given $K,L \in \N$, we define
\begin{equation}
  \label{eq:def-approx}
  \app{\psi}{K,L} \coloneqq \zeta \out{\psi}{[K]} + (1 - \zeta) \inn{\psi}{[L]}.
\end{equation}
Note that $1 - \zeta$ is supported where $\euc \leq \tfrac{\mathsf{R}}{2} \nu^\beta$.
By Proposition~\ref{prop:inner-matching}, $\inn{\Sigma}{\ell}$ solves \eqref{eq:Sigma-inner} exactly in this region.
This underpins the precise choice of the constant in the cutoff $\zeta$.
Before proceeding, we record a simple bound on $\app{\psi}{}$.
\begin{lemma}
  \label{lem:app-size}
  Fix $\beta \in (6/13, 1/2)$.
  For all $K,L \in \N$ and $m,n \in \N_0$,
  \begin{equation*}
    \absb{\partial_t^m \partial_x^n \app{\psi}{[K,L]}} \lesssim_{K,L,m,n} (\dist \vee \nu^{1/4})^{-3n + 1} (\euc \vee \nu^{1/2})^{-m} \quad \text{in } [t_0, 0] \times [-1, 1].
  \end{equation*}
\end{lemma}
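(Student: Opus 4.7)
The plan is to partition $[t_0, 0] \times [-1, 1]$ into three regions determined by the cutoff $\zeta$: the pure outer region $\{\zeta \equiv 1\} = \{\euc \geq \mathsf{R}\nu^\beta/2\}$, the pure inner region $\{\zeta \equiv 0\} = \{\euc \leq \mathsf{R}\nu^\beta/4\}$, and the transition annulus where $0 < \zeta < 1$. On each region, $\app{\psi}{[K,L]}$ is (up to cutoff commutators) either $\out{\psi}{[K]}$, $\inn{\psi}{[L]}$, or a convex combination of the two. I would therefore prove the desired bound separately for each sum in absolute terms, without invoking matching, and then absorb the Leibniz cross-terms involving $\zeta$.

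For $\out{\psi}{[K]}$ on $\{\euc \geq \mathsf{R}\nu^\beta/4\}$: since $\beta < 1/2$, we have $\euc > \nu^{1/2}$, and \eqref{eq:distance-relation} combined with $\dist \gtrsim \euc^{1/2}$ gives $\dist > \nu^{\beta/2} > \nu^{1/4}$, so the cutoffs collapse to $\dist$ and $\euc$. The $k = 0$ estimate of Proposition~\ref{prop:outersize} reads $\dist^{-2m-3n+1} + |\log\euc|\,\euc^{-m+1} \dist^{-3n}$, which is $\lesssim \euc^{-m}\dist^{-3n+1}$ after using $\dist^2 \gtrsim \euc$ to trade $\dist^{-2}$ for $\euc^{-1}$ and absorbing $|\log \euc|\,\euc^{1/2}$ by unity. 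For $k \geq 1$, both $\nu^k \dist^{-4k} \lesssim (\nu/\euc^2)^k \lesssim \nu^{(1-2\beta)k}$ and $\nu^k \euc^{-2k}$ are asymptotically negligible, which is precisely where $\beta < 1/2$ is used.

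For $\inn{\psi}{[L]}$ on $\{\euc \leq \mathsf{R}\nu^\beta/2\}$: the leading $\inn{\psi}{0} = (\inn{\sigma}{0}, 0)$ is handled by rescaling Proposition~\ref{prop:Burgers-inner}. Splitting into the cases $\Dist \leq 1$ (where \eqref{eq:distance-relation} forces $\euc \leq \nu^{1/2}$ whenever $\dist \leq \nu^{1/4}$) and $\Dist > 1$ produces exactly the stated bound in both sub-regimes. For $\ell \geq 1$, estimate \eqref{eq:inner-bounds} supplies the prefactor $|\log\nu|^{\ell-1} \nu^{(\ell+1)\beta/3}$, and the hypothesis $\beta > 6/13$ gives the strict slack $2\beta/3 > 4/13 > 1/4$, so this prefactor is $\lesssim \nu^{1/4} \leq \dist \vee \nu^{1/4}$; the log factor is easily absorbed. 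Hence the correctors are subdominant to the leading bound.

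In the transition annulus, both of the above bounds apply simultaneously, and the cutoffs again collapse. Expanding $\partial_t^m\partial_x^n \app{\psi}{[K,L]}$ by Leibniz produces the diagonal terms $\zeta\,\partial^\alpha\out{\psi}{[K]}$ and $(1-\zeta)\,\partial^\alpha\inn{\psi}{[L]}$ together with cross-terms $(\partial^\beta \zeta)(\partial^{\alpha-\beta}\out{\psi}{[K]})$ and analogously for $\inn{\psi}{[L]}$. Since $\zeta = \theta(2\euc/(\mathsf{R}\nu^\beta))$, one checks $|\partial_t^i\partial_x^j \zeta| \lesssim \euc^{-(i+j)} \lesssim \euc^{-i}\dist^{-3j}$ (using $\dist \lesssim 1$); combined with the pointwise bounds on $\out{\psi}{[K]}$ and $\inn{\psi}{[L]}$, each cross-term is controlled by $\euc^{-m}\dist^{-3n+1}$ after one final application of $\dist^2 \gtrsim \euc$. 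The main obstacle is purely one of bookkeeping: verifying that $\beta \in (6/13, 1/2)$ simultaneously provides the slack $1 - 2\beta > 0$ needed to suppress the outer correctors and the slack $2\beta/3 > 1/4$ needed to suppress the inner correctors, and that no matching information is required since each summand already satisfies the target bound in absolute value.
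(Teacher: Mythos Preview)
Your proposal is correct and follows essentially the same approach as the paper's (much terser) proof, which simply cites Proposition~\ref{prop:outersize} for $\out{\psi}{[K]}$, Corollary~\ref{cor:worst-inner-size} for $\inn{\psi}{[L]}$, and notes $|\Der\zeta| \lesssim \euc^{-1}$ for the cutoff. One minor slip: the step $\euc^{-j} \lesssim \dist^{-3j}$ follows from $\dist \lesssim \euc^{1/3}$ in \eqref{eq:distance-relation}, not from ``$\dist \lesssim 1$'' as you wrote.
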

\begin{proof}
  Proposition~\ref{prop:outersize} and Corollary~\ref{cor:worst-inner-size} establish this bound for $\out{\psi}{[K]}$ and $\inn{\psi}{[L]}$, respectively.
  Because $\abs{\Der \zeta} \lesssim \euc^{-1}$, derivatives landing on the cutoff do not disrupt the estimate.
\end{proof}
We wish to show that $\psi^{(\nu)} \approx \app{\psi}{K,L}$ when $K, L \gg 1$ and $\nu \ll 1$.
To see this, we must treat the full nonlinear system \eqref{eq:main}.
This is not trivial because we permit the diffusion matrix $B$ to be degenerate.
In this situation, it is necessary to impose additional structural assumptions to ensure local well-posedness.
These conditions go back to foundational work of Kawashima and Shizuta \mbox{\cite{Kawashima,ShiKaw85}}.
Here, we work in a related framework of Serre~\cite{Serre10a,Serre10b}.

In \ref{hyp:advection}, we assume that there is a smooth entropy-flux pair $(\eta,q)$ such that $\eta$ is strictly convex.
That is, $\eta,q \colon \m{V} \to \R$, $\nab \eta = (\nab q)^\top A$, and $\Der^2 \eta > 0$.
(In fact we only need this pair in a connected neighborhood containing the range of $\psi^{(\nu)}$; we suppress this point below.)
We interpret the entropy gradient $\nab \eta \colon \m{V} \to \m{V}$ as a change of unknown.
Convexity ensures that its Jacobian does not vanish, so it is a diffeomorphism.

Next, in \ref{hyp:diffusion} we assume that the entropy is \emph{strongly dissipated} by the diffusion $B$.
That is, if we interpret $\Der^2\eta$ as a bilinear form on $\m{V}$,
\begin{equation}
  \label{eq:entropy-dissipation2}
  \Der^2\eta\big(\xi, B \xi\big) \gtrsim \abs{B \xi}^2 \ForAll \xi \in \m{V}.
\end{equation}
Informally, $\eta$ experiences dissipation whenever $\psi$ does.

Finally, we assume that $B$ has constant rank $N - s$ for some $s \in [N]$ and there exists a fixed basis of $\m{V}$ (different from that used above) in which the first $s$ rows of $B$ vanish identically.
For example, in the full Navier--Stokes system \eqref{eq:NSE}, there is dissipation in momentum and energy but not mass, so $s = 1$.
We work in this basis in the remainder of the section, and thus set aside the distinction between shocking and non-shocking components.
As shown in~\cite{Serre10a}, these hypotheses generalize and simplify the classical Kawashima--Shizuta conditions~\cite{Kawashima,ShiKaw85} for local well-posedness.
They similarly yield local well-posedness~\cite{Serre10b}, and we use related \emph{a priori} estimates to show that $\app{\psi}{[K,L]} \approx \psi^{(\nu)}$.

In combination, \eqref{eq:entropy-dissipation2} and the constant range of $B$ imply that \eqref{eq:main} becomes more favorable if we transform the range of $B$ under the entropic diffeomorphism $\nab \eta$.
To this end, let $\Pi$ denote projection onto the last $N-s$ components in the basis fixed above, and let $\iota \colon \R^{N-s} \hookrightarrow \m{V}$ denote the corresponding embedding.
We define a diffeomorphism $\Eta \colon \m{V} \to \m{V}$ by $\Eta \psi \coloneqq (1 - \Pi) \psi + \Pi \nab \eta$.
This preserves the components of $\psi$ that do not directly experience diffusion and entropically transforms those that do.
Then let
\begin{equation*}
  \varphi^{(\nu)} \coloneqq \Eta \psi^{(\nu)}, \quad \app{\varphi}{[K,L]} \coloneqq \Eta \app{\psi}{[K,L]}, \And v \coloneqq \varphi^{(\nu)} - \app{\varphi}{[K,L]}.
\end{equation*}
We will show that $v \ll 1$ provided $K,L \gg 1$.
Because $\Eta$ is a diffeomorphism, it will follow that $\psi^{(\nu)} - \app{\psi}{[K,L]} \asymp v \ll 1$.

To begin, we show that $v$ solves an equation with very small forcing.
We first focus on the equation for $\varphi^{(\nu)}$.
In~\cite{Serre10b}, Serre shows:
\begin{proposition}
  \label{prop:diffeo-eq}
  There exist smooth matrix-valued functions $S_0, S_1 \colon \m{V} \to \R^N \otimes \R^N$ and ${Z \colon \m{V} \to \R^{N - s} \otimes \R^{N-s}}$ such that $S_0$ and $S_1$ are symmetric, $S_0$ is positive-definite and $(s, N - s)$-block diagonal, $\braket{Z \xi, \xi} \gtrsim \abs{\xi}^2$ for all $\xi \in \R^{N-s}$, and
  \begin{equation}
    \label{eq:diffeo-eq}
    S_0\big(\varphi^{(\nu)}\big) \partial_t \varphi^{(\nu)} + S_1\big(\varphi^{(\nu)}\big) \partial_x \varphi^{(\nu)} = \nu \iota \partial_x Z\big(\varphi^{(\nu)}\big) \partial_x \Pi \varphi^{(\nu)}.
  \end{equation}
\end{proposition}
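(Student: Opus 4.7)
The plan is to multiply \eqref{eq:main} by an entropic multiplier and simultaneously change variables to $\varphi^{(\nu)} = \Eta \psi^{(\nu)}$, in a way that symmetrizes the advection and exposes the block structure of the diffusion. Set $H \coloneqq \Der^2 \eta(\psi^{(\nu)})$, which is symmetric positive definite by \ref{hyp:advection}, and $J \coloneqq \Der \Eta$. In the $(s, N{-}s)$-block decomposition of $\m{V}$ induced by \ref{hyp:diffusion}, a direct calculation gives
\[
  J = \begin{pmatrix} I_s & 0 \\ H_{21} & H_{22} \end{pmatrix},
  \qquad
  J^{-T} H = \begin{pmatrix} H_{11} - H_{12} H_{22}^{-1} H_{21} & 0 \\ H_{22}^{-1} H_{21} & I_{N-s} \end{pmatrix},
\]
where $H_{22}$ is symmetric positive definite as a principal submatrix of $H$. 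Multiplying \eqref{eq:main} on the left by $J^{-T} H$ and using $\partial_t \psi^{(\nu)} = J^{-1} \partial_t \varphi^{(\nu)}$ (and likewise in $x$) transforms the left-hand side into $S_0 \partial_t \varphi^{(\nu)} + S_1 \partial_x \varphi^{(\nu)}$ with $S_0 \coloneqq J^{-T} H J^{-1}$ and $S_1 \coloneqq J^{-T} H A J^{-1}$. A second block computation yields that $S_0$ is symmetric and block diagonal with blocks the Schur complement $H_{11} - H_{12} H_{22}^{-1} H_{21}$ and $H_{22}^{-1}$, both symmetric positive definite. Symmetry of $S_1$ reduces to the identity $HA = A^T H$, which is obtained by differentiating the entropy relation $\nabla\eta = (\nabla q)^T A$ in $\psi$ and exploiting $\partial_k A_i^j = \partial_i A_k^j$ (valid because $A = \Der f$).

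For the diffusion, \ref{hyp:diffusion} provides a factorization $B = \iota \tilde B$ for a smooth $\tilde B \colon \m{V} \to \R^{N-s}$. The identity $J^{-T} H \iota = \iota$ (immediate from the block form above) gives
\[
  \nu J^{-T} H \partial_x\big[B \partial_x \psi^{(\nu)}\big] = \nu \iota \partial_x\big[\tilde B \partial_x \psi^{(\nu)}\big],
\]
and since $\partial_x \Pi \varphi^{(\nu)} = \Pi H \partial_x \psi^{(\nu)}$, the desired form reduces to the factorization $\tilde B = Z \, \Pi H$ for some matrix-valued $Z$. To establish this, we introduce $u \coloneqq \tilde B \xi$ and $v \coloneqq \Pi H \xi$, so that the entropy dissipation \eqref{eq:entropy-dissipation} reads $v \cdot u \geq c|u|^2$ for every $\xi \in \m{V}$. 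If $v = 0$ then $c|u|^2 \leq 0$ forces $u = 0$, whence $\ker(\Pi H) \subset \ker \tilde B$. Because $\Pi H$ is surjective (as $H$ is invertible), $\tilde B$ descends uniquely through it as $\tilde B = Z \Pi H$ for a smooth $Z$ valued in $\R^{N-s} \otimes \R^{N-s}$. Coercivity then follows by rewriting the inequality as $v^T Z v \geq c|Zv|^2$; because $B$ has constant rank $N{-}s$ by \ref{hyp:diffusion}, $Z$ is invertible, and substituting $w = Zv$ yields $w^T Z^{-T} w \geq c|w|^2$, which transfers through continuity of $Z^{-1}$ on compact sets of states to the desired bound $\braket{Z \xi, \xi} \gtrsim |\xi|^2$.

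The main obstacle is locating the right multiplier $J^{-T} H$: the three structural conclusions (block diagonality of $S_0$, symmetry of $S_1$, and the clean divergence form $\nu \iota \partial_x[\tilde B \partial_x \psi^{(\nu)}]$) all hinge on this particular combination, which realizes a partial Godunov symmetrization in the dissipative subspace while keeping the non-dissipative components in conservative form. The subsidiary but still delicate step is the extraction of $Z$ and its coercivity, which crucially uses both \eqref{eq:entropy-dissipation} and the constant-rank hypothesis together. Everything else is routine block linear algebra, and our overall approach follows the Kawashima--Shizuta--Serre symmetrization framework, specifically the presentation in~\cite{Serre10b}.
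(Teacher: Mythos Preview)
The paper does not give its own proof of this proposition; it simply attributes the result to Serre~\cite{Serre10b}. Your proposal correctly reconstructs Serre's entropic symmetrization (the multiplier $J^{-T}H$, the block computations yielding the block-diagonal positive-definite $S_0$, the classical identity $HA=A^{T}H$ for symmetry of $S_1$, and the factorization $\tilde B=Z\,\Pi H$ via \eqref{eq:entropy-dissipation} together with the constant-rank hypothesis), so it is entirely consistent with what the paper invokes.
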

The change of unknown $\Eta$ is advantageous because it fixes the \emph{kernel} of the diffusion in addition to its range.
As a result, the diffusion becomes block diagonal and coercive in the last $N - s$ components.
This prevents derivatives of $Z$ from ``leaking'' into its kernel, where they cannot be controlled.
The price we pay for the change of variables is the nonlinear matrix $S_0$ acting on $\partial_t$.
Nonetheless, because $\eta$ is a convex entropy for the hyperbolic system, we can arrange that $S_0$ is PSD and $S_1$ is symmetric.
These properties enable our energy estimates.

To control $v$, we make use of the following $W^{1,\infty}$-bootstrap.
Given $\tb \in (t_0, 0]$, we say $v$ satisfies the bootstrap on $[t_0,\tb]$ if
\begin{equation}
  \label{eq:bootstrap}
  \norm{v}_{L_x^\infty}(t) + \norm{\Der v}_{L_x^\infty}(t) \leq 1 \ForAll t \in [t_0, \tb].
\end{equation}
In particular, the functions $\varphi^{(\nu)}$ and $\app{\varphi}{[K,L]}$ map to a compact subset of $\m{V}$, which may depend on $\beta$, $K$, and $L$ but not on $\nu$.

With this bootstrap, we can check that $\app{\varphi}{[K,L]}$ approximately solves \eqref{eq:diffeo-eq}.
\begin{proposition}
  \label{prop:diffeo-approx}
  Fix $\beta \in (6/13, 1/2)$ and let $S_0, S_1$, and $Z$ be as in Proposition~\textnormal{\ref{prop:diffeo-eq}}.
  Then for all $K,L \in \N$, there is a force $F$ such that
  \begin{equation}
    \label{eq:diffeo-approx}
    S_0\big(\app{\varphi}{[K,L]}\big) \partial_t \app{\varphi}{[K,L]} + S_1\big(\app{\varphi}{[K,L]}\big) \partial_x \app{\varphi}{[K,L]} = \nu \iota \partial_x Z\big(\app{\varphi}{[K,L]}\big) \partial_x  \Pi \app{\varphi}{[K,L]} + F
  \end{equation}
  and $\partial_x^n F \lesssim_{K,L,n} [\nu^{(1 - 2\beta)K - 3/4} + \nu^{L\beta/3 - 1/2}] \nu^{-3n/4}$ on $[t_0, \tb] \times \R$ for all $n \in \N_0$.
\end{proposition}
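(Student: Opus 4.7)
The strategy is first to prove the analogous statement for the original equation \eqref{eq:main}, namely that $\app{\psi}{[K,L]}$ satisfies \eqref{eq:main} with a residue $F_\psi$ obeying the same bounds as $F$, and then to transfer to the Serre form via the smooth diffeomorphism $\Eta$. Under the bootstrap \eqref{eq:bootstrap}, the maps $\Eta$, $\Der\Eta$, and their derivatives stay uniformly bounded on the compact range visited by $\app{\psi}{[K,L]}$. Likewise, the symmetrizer and matrix fields $S_0, S_1, Z$ of Proposition~\ref{prop:diffeo-eq} (obtained by multiplying \eqref{eq:main} by $\Der^2\eta$ and using the entropy identity) are smooth functions of $\varphi$ and thus uniformly bounded on the relevant range. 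Consequently the operation $\psi\text{-equation}\mapsto \varphi\text{-equation}$ multiplies the residue by a smooth matrix depending on $\app{\psi}{[K,L]}$, producing $F = M(\app{\psi}{[K,L]}) F_\psi$, and $\partial_x^n F$ acquires Leibniz terms involving $\partial_x^{j}\app{\psi}{[K,L]}$ for $j\le n$, which by Lemma~\ref{lem:app-size} cost at most $\nu^{-3j/4}$ in the relevant regions. This reduces the proposition to controlling $\partial_x^n F_\psi$ by the same target bound.

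Now I split into three zones according to the support of $\zeta$. In the outer region $\{\euc \geq \tfrac{\mathsf{R}}{2}\nu^\beta\}$ we have $\app{\psi}{[K,L]} \equiv \out{\psi}{[K]}$ and $F_\psi = \out{F}{K}$, controlled by Proposition~\ref{prop:outer-approx}. In the inner region $\{\euc \leq \tfrac{\mathsf{R}}{4}\nu^\beta\}$ we have $\app{\psi}{[K,L]} \equiv \inn{\psi}{[L]}$ and $F_\psi = \inn{F}{L}$, controlled by Proposition~\ref{prop:inner-approx}. The only nontrivial region is the matching annulus $\m{M} \coloneqq \{\tfrac{\mathsf{R}}{4}\nu^\beta \leq \euc \leq \tfrac{\mathsf{R}}{2}\nu^\beta\}$ where $\zeta$ transitions.

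In $\m{M}$ write $d \coloneqq \out{\psi}{[K]} - \inn{\psi}{[L]}$ and compute directly from \eqref{eq:def-approx} that
\begin{equation*}
  F_\psi = \zeta\, \out{F}{K} + (1-\zeta)\,\inn{F}{L} + \m{C}[\zeta, d],
\end{equation*}
where the commutator $\m{C}[\zeta,d]$ collects (i) linear terms $(\partial_t\zeta)d + A(\app{\psi}{[K,L]})(\partial_x\zeta)d$ together with mixed diffusive pieces of the same flavor, and (ii) nonlinear corrections of the form $[A(\app{\psi}{[K,L]}) - \zeta A(\out{\psi}{[K]}) - (1-\zeta)A(\inn{\psi}{[L]})]\partial_x\out{\psi}{[K]}$ and the analogous expression for $B$. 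Since $\zeta$ lives at the scale $\euc\asymp\nu^\beta$, $\abs{\partial_t^m\partial_x^n\zeta}\lesssim\nu^{-(m+n)\beta}$. By Proposition~\ref{prop:outer-inner-matching},
\begin{equation*}
  \abs{\partial_x^n d} \lesssim_{K,L,n} \big(\nu^{(1-2\beta)K} + \nu^{L\beta/3 + 1/4}\big)\,\nu^{-3n/4}
  \quad\text{in } \m{M},
\end{equation*}
with time derivatives yielding further factors of $\nu^{-\beta}\lesssim\nu^{-1/2}$. The nonlinear corrections are Taylor-expanded around, say, $\inn{\psi}{[L]}$ and are quadratic or higher in $d$, so they are strictly smaller than the linear commutators. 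The dominant commutator contribution is therefore $(\partial\zeta)\,d$, which in $\m{M}$ is bounded by $\nu^{-\beta}\bigl(\nu^{(1-2\beta)K}+\nu^{L\beta/3+1/4}\bigr) = \nu^{(1-2\beta)K-\beta} + \nu^{L\beta/3 + 1/4 - \beta}$. Because $\beta < 1/2$, both exponents are larger than the target exponents $(1-2\beta)K - 3/4$ and $L\beta/3 - 1/2$, so the commutator is absorbed. Each additional spatial derivative lands either on $\zeta$ or on $d$ and in either case contributes the allowed $\nu^{-3/4}$ loss per derivative.

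The main technical point is the matching zone bookkeeping: one must verify that the $\nu^{-\beta}$ penalty introduced by differentiating the cutoff is comfortably beaten by the smallness of $d$ supplied by Proposition~\ref{prop:outer-inner-matching}, and that no nonlinear correction in $\m{C}[\zeta,d]$ escapes this balance after accounting for all derivatives. Once $F_\psi$ is controlled in all three zones, the transfer through $\Eta$ via the boundedness of $\Der\Eta, S_0, S_1, Z$ under the bootstrap yields \eqref{eq:diffeo-approx} with the stated bound on $\partial_x^n F$.
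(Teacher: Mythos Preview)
Your proposal is correct and follows essentially the same approach as the paper: reduce to bounding the residue $F_\psi$ in the original equation via the smooth transfer through $\Eta$, then split into outer, inner, and matching zones and invoke Propositions~\ref{prop:outer-approx}, \ref{prop:inner-approx}, and \ref{prop:outer-inner-matching} respectively. Your matching-zone computation is organized as a convex-combination-plus-commutator decomposition $F_\psi = \zeta\,\out{F}{K} + (1-\zeta)\,\inn{F}{L} + \m{C}[\zeta,d]$, whereas the paper writes $\app{\psi}{[K,L]} = \out{\psi}{[K]} + p$ with $p$ proportional to $d$ and identifies $\partial_x p$ as the largest remaining term; these are minor bookkeeping variants yielding the same bound.
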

\begin{proof}
  In~\cite{Serre10b}, the transformation from \eqref{eq:main} to \eqref{eq:diffeo-eq} consists of multiplication by a smooth matrix-valued function $S$ of the solution.
  By \eqref{eq:bootstrap}, $S$ is uniformly bounded.
  It thus suffices to show that
  \begin{equation}
    \label{eq:approx}
    \partial_t \app{\psi}{[K,L]} + A(\app{\psi}{[K,L]}) \partial_x \app{\psi}{[K,L]} = \nu \partial_x[B(\app{\psi}{[K,L]})\partial_x \app{\psi}{[K,L]}] + \h{F}
  \end{equation}
  with $\partial_x^n \h F \lesssim_{K,L,n} \nu^{(1 - 2\beta)K - 1/4 - 3n/4}$.
  This implies \eqref{eq:diffeo-approx} for $n = 0$, and Lemma~\ref{lem:app-size} implies that spatial derivatives landing on $S(\app{\psi}{[K,L]})$ do not alter the estimate.

  We analyze \eqref{eq:approx} separately in the outer, inner, and matching regions, which roughly correspond to $\{\euc \gtrsim \nu^\beta\}$, $\{\euc \lesssim \nu^\beta\}$, and $\{\euc \asymp \nu^\beta\}$.
  In the outer region, $\app{\psi}{[K,L]} = \out{\psi}{[K]}$, so by Proposition~\ref{prop:outer-approx}, $\h{F} = \out{F}{K}$ is suitably small.
  In the inner region, $\app{\psi}{[K,L]} = \inn{\psi}{[L]}$ and the conclusion likewise follows from Proposition~\ref{prop:inner-approx}.

  In the matching region, we perturb $\out{\psi}{[K]}$ by the small quantity $p \coloneqq \zeta (\inn{\psi}{[L]} - \out{\psi}{[K]})$.
  We can thus decompose $\h{F}$ into $\out{F}{K}$ plus terms arising when derivatives land on $p$.
  Because $\partial_t \zeta \lesssim \euc^{-1}$ and $\partial_x \zeta \lesssim \euc^{-1} \zeta \lesssim \dist^{-3} \zeta$, $p$ satisfies the usual derivative estimates.
  Collecting $p$-terms in the matching residue and using Proposition~\ref{prop:outer-inner-matching}, the largest is
  \begin{equation*}
    \partial_x p \lesssim_{K, L} \nu^{(1 - 2\beta)K - 3/4} + \nu^{L\beta/3 - 1/2}.
  \end{equation*}
  Derivative estimates follow as usual.
\end{proof}
For the sake of brevity, we write $S_0^{(\nu)}$ for $S_0(\varphi^{(\nu)})$, $\app{S}{0}$ for $S_0(\app{\varphi}{[K,L]})$, and similarly for $S_1$ and $Z$.
Taking the difference between \eqref{eq:diffeo-eq} and \eqref{eq:diffeo-approx}, we obtain:
\begin{corollary}
  Fix $\beta \in (6/13, 1/2)$ and let $S_0,S_1,Z,$ and $F$ be as in Propositions~\ref{prop:diffeo-eq} and \ref{prop:diffeo-approx}.
  Then there exist smooth 3-tensor-valued functions $Q_0,Q_1,$ and $Q_Z$ such that $v(t_0, \anon) \equiv 0$ and
  \begin{align}
    \label{eq:diff}
      \big[S_0^{(\nu)} \partial_t + S_1^{(\nu)} \partial_x\big] v& + \big[Q_0(\app{\varphi}{[K,L]}, \varphi^{(\nu)}) (\partial_t \app{\varphi}{[K,L]}) + Q_1(\app{\varphi}{[K,L]}, \varphi^{(\nu)}) (\partial_x \app{\varphi}{[K,L]})\big]v\nonumber\\
                                                          &= \nu \iota \partial_x \big[Z^{(\nu)} \partial_x \Pi v + Q_Z(\app{\varphi}{[K,L]}, \varphi^{(\nu)}) (\partial_x \Pi \app{\varphi}{[K,L]}) v\big] + F.
  \end{align}
\end{corollary}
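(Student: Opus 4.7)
The plan is to derive \eqref{eq:diff} by a direct subtraction of \eqref{eq:diffeo-eq} from \eqref{eq:diffeo-approx} and to expand the resulting differences in $v$ via the fundamental theorem of calculus.

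First, I would verify the initial condition. Both $\psi^{(\nu)}$ and $\out{\psi}{0}$ start from $\mr\psi$ at $t = t_0$, while the higher-order outer correctors $\out{\psi}{k}$ vanish on the initial slice by construction; since $\euc(t_0, x) \geq \abs{t_0} > \tfrac{\mathsf{R}}{2}\nu^\beta$ for $\nu$ small, the cutoff $\zeta$ equals $1$ on $\{t = t_0\}$, so $\app{\psi}{[K,L]}(t_0, \anon) = \out{\psi}{[K]}(t_0, \anon) = \mr\psi$. Applying the diffeomorphism $\Eta$ yields $v(t_0, \anon) = 0$.

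Next, I would substitute $\varphi^{(\nu)} = \app\varphi{[K,L]} + v$ into \eqref{eq:diffeo-eq} and subtract \eqref{eq:diffeo-approx}. On the left the $S_0$-terms split as
\begin{equation*}
  S_0^{(\nu)} \partial_t \varphi^{(\nu)} - \app{S}{0} \partial_t \app\varphi{[K,L]}
  = S_0^{(\nu)} \partial_t v + \bigl(S_0^{(\nu)} - \app{S}{0}\bigr)\partial_t \app\varphi{[K,L]},
\end{equation*}
with an identical decomposition for $S_1$. To express the matrix differences linearly in $v$, I would define
\begin{equation*}
  Q_0(\app\varphi{[K,L]}, \varphi^{(\nu)}) \coloneqq \int_0^1 \Der S_0\bigl(\app\varphi{[K,L]} + s v\bigr) \, \d s,
  \qquad
  Q_1 \coloneqq \int_0^1 \Der S_1\bigl(\app\varphi{[K,L]} + s v\bigr) \, \d s,
\end{equation*}
so that $S_0^{(\nu)} - \app{S}{0} = Q_0 v$ and likewise for $S_1$. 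Both $Q_0$ and $Q_1$ are smooth $3$-tensor-valued functions of $(\app\varphi{[K,L]}, \varphi^{(\nu)})$, as required.

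For the diffusion term, I would perform the analogous expansion:
\begin{equation*}
  Z^{(\nu)} \partial_x \Pi \varphi^{(\nu)} - \app{Z}\partial_x \Pi \app\varphi{[K,L]} = Z^{(\nu)} \partial_x \Pi v + \bigl(Z^{(\nu)} - \app{Z}\bigr)\partial_x \Pi \app\varphi{[K,L]},
\end{equation*}
and define $Q_Z \coloneqq \int_0^1 \Der Z\bigl(\app\varphi{[K,L]} + s v\bigr)\,\d s$ so that $Z^{(\nu)} - \app{Z} = Q_Z v$. Then the second bracket in the difference becomes $\bigl(\partial_x \Pi \app\varphi{[K,L]}\bigr) Q_Z v$, which matches the right-hand side of \eqref{eq:diff} up to the inclusion of the $Z^{(\nu)} \partial_x \Pi v$ diagonal term that remained. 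Finally, after subtracting, the forcing appears as $-F$, but since Proposition~\ref{prop:diffeo-approx} only provides bounds on $\abs{\partial_x^n F}$, we may absorb the sign by replacing $F$ with $-F$ without changing those estimates. There is no substantive obstacle here; the entire argument is algebraic. The only point meriting any care is confirming that the bootstrap assumption \eqref{eq:bootstrap} keeps $\app\varphi{[K,L]} + s v$ inside the domain on which $S_0$, $S_1$, and $Z$ are smoothly defined, which is immediate from the uniform bound on $v$ and on $\app\varphi{[K,L]}$ provided by Lemma~\ref{lem:app-size}.
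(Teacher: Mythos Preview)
Your proposal is correct and matches the paper's approach: the paper simply remarks that $Q_0$, $Q_1$, and $Q_Z$ are ``suitable difference quotients'' of $S_0$, $S_1$, and $Z$, and you have spelled this out explicitly via the integral form of the mean value theorem. Your treatment of the initial condition and the sign of $F$ are both fine and in fact more careful than the paper's terse statement.
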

\noindent
Here $Q_0$, $Q_1$, and $Q_Z$ represent suitable difference quotients of $S_0, S_1$, and $Z$.
By \eqref{eq:bootstrap}, these functions are uniformly smooth in our domain.

We now prove an energy estimate for the difference equation \eqref{eq:diff}.
\begin{lemma}
  \label{lem:energy}
  For each $n \in \N_0$, there exists  $C(n) > 0$ such that for all $t \in [t_0, \tb]$,
  \begin{equation*}
    \norms{\partial_x^n v}_{L_x^2}\!(t) \lesssim_{K,L,n} \e^{C \int_{t_0}^t \big(\norms{\Der \app{\varphi}{[K,L]}}_{L_x^\infty} + \norm{\Der v}_{L_x^\infty} + \nu \norms{\partial_x \app{\varphi}{[K,L]}}_{L_x^\infty}^2 + \nu \norms{\partial_x v}_{L_x^\infty}^2\big)} \sup_{[t_0, t]}\norm{F_n}_{L_x^2},
  \end{equation*}
  for $F_n^2 \lesssim (\partial_x^n F)^2 + \sum_{n'=0}^{n-1} [H_{n,n'} \partial_x^{n'} v]^2$, where $H_{n,n'}$ is a function of polynomial growth in $\app{\varphi}{[K,L]},\ldots,\partial_x^{n+2}\app{\varphi}{[K,L]}$ and $v,\ldots,\partial_x^{n'-1}v$.
\end{lemma}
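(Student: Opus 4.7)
The plan is a classical weighted $L_x^2$-energy estimate adapted to the symmetric hyperbolic-parabolic structure of \eqref{eq:diff}, using $S_0^{(\nu)}$ as a time-variable inner-product weight and exploiting the coercivity of $Z^{(\nu)}$ as dissipation in the last $N-s$ components. I first handle the base case $n = 0$ directly, then commute with $\partial_x^n$ and run an inductive Gr\"onwall loop for $n \geq 1$.

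For $n = 0$, I pair \eqref{eq:diff} with $v$ in $L_x^2$. Symmetry of $S_0^{(\nu)}$ converts the time term to $\tfrac12\partial_t\langle S_0^{(\nu)} v,v\rangle - \tfrac12\langle (\partial_t S_0^{(\nu)})v,v\rangle$; symmetry of $S_1^{(\nu)}$ collapses $\langle S_1^{(\nu)}\partial_x v,v\rangle$ to the zeroth-order quantity $-\tfrac12\langle(\partial_x S_1^{(\nu)})v,v\rangle$; and integration by parts in the diffusive term produces the good contribution $-\nu\langle Z^{(\nu)}\partial_x\Pi v,\partial_x\Pi v\rangle$ plus a cross term involving $Q_Z(\partial_x\Pi\app{\varphi}{[K,L]})v$ and $\partial_x\Pi v$. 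Strong dissipation of $Z^{(\nu)}$ yields a coercive bound $c\nu\|\partial_x\Pi v\|_{L_x^2}^2$ that Young's inequality uses to absorb the cross term, leaving a zeroth-order penalty of size $C\nu\|\partial_x\app{\varphi}{[K,L]}\|_{L_x^\infty}^2\|v\|_{L_x^2}^2$. Writing $\partial_t S_0^{(\nu)} = (\Der S_0)(\varphi^{(\nu)})\partial_t\varphi^{(\nu)}$ with $\partial_t\varphi^{(\nu)} = \partial_t\app{\varphi}{[K,L]} + \partial_t v$ (and similarly for $\partial_x S_1^{(\nu)}$), the bootstrap \eqref{eq:bootstrap} bounds all nonlinear coefficients uniformly. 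Then $v(t_0,\anon)\equiv 0$, positivity of $S_0^{(\nu)}$, and Gr\"onwall close the base case with $F_0 = F$.

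For $n \geq 1$, I commute $\partial_x^n$ through \eqref{eq:diff} to obtain
\[
  S_0^{(\nu)}\partial_t\partial_x^n v + S_1^{(\nu)}\partial_x\partial_x^n v = \nu\iota\partial_x\bigl[Z^{(\nu)}\partial_x\Pi\partial_x^n v\bigr] + \partial_x^n F + \m{C}_n,
\]
where $\m{C}_n$ is the sum of Leibniz commutators. Each term of $\m{C}_n$ is of the form (derivative of coefficient)$\times$(derivative of $v$), with the coefficient derivatives spawning at most $n - n' + 2$ derivatives of $\varphi^{(\nu)} = \app{\varphi}{[K,L]} + v$ through the chain rule (the extra $+2$ coming from the second-order diffusion). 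I reorganize $\m{C}_n$ by sorting each summand by the maximum $v$-derivative $n'$ it contains, which forces the accompanying coefficient $H_{n,n'}$ to depend only on $v,\partial_x v,\ldots,\partial_x^{n'-1}v$ and on $\app{\varphi}{[K,L]},\ldots,\partial_x^{n+2}\app{\varphi}{[K,L]}$ with polynomial growth. Repeating the base-case pairing against $\partial_x^n v$: the principal dissipation $c\nu\|\partial_x^{n+1}\Pi v\|_{L_x^2}^2$ absorbs, via Young, the unique top-order diffusive commutator (the one carrying $\partial_x^{n+1}\Pi v$), spawning a lower-order quadratic contribution that fits the $H_{n,n'}$ template; the remaining commutators enter linearly in $\partial_x^n v$ and are absorbed into $\|F_n\|_{L_x^2}\|\partial_x^n v\|_{L_x^2}$. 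A second Gr\"onwall, together with $\langle S_0^{(\nu)}\partial_x^n v,\partial_x^n v\rangle \asymp \|\partial_x^n v\|_{L_x^2}^2$ from positivity of $S_0^{(\nu)}$, delivers the stated bound.

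The main obstacle is the careful organization of $\m{C}_n$. One must simultaneously (i) isolate the single top-order diffusive commutator that interacts with the coercivity of $Z^{(\nu)}$ via a Young absorption, (ii) handle $\partial_t$ acting on the matrix coefficients without invoking the PDE to re-express $\partial_t\varphi^{(\nu)}$ (which would reintroduce $\partial_x^2$-terms and break the regularity count in $H_{n,n'}$), and (iii) verify, by the maximum-$v$-derivative sorting, that $H_{n,n'}$ is supported on $v,\ldots,\partial_x^{n'-1}v$ and on at most $n + 2$ spatial derivatives of $\app{\varphi}{[K,L]}$. Once this bookkeeping is carried out, the rest is routine.
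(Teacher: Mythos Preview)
Your proposal is correct and follows essentially the same approach as the paper: both apply $\partial_x^n$ to \eqref{eq:diff}, pair against $\partial_x^n v$ using the $S_0^{(\nu)}$-weighted inner product, exploit the symmetry of $S_0,S_1$ and the coercivity of $Z^{(\nu)}$, absorb cross terms via Young's inequality, and close with Gr\"onwall---the paper explicitly cites Serre~\cite{Serre10b} for this structure. One small bookkeeping point: the ``lower-order quadratic contribution'' you obtain after Young-absorbing the top diffusive commutator actually involves $\partial_x^n\Pi v$ (not $\partial_x^{n-1}v$), so it feeds the Gr\"onwall exponent as $\nu\|\partial_x\app{\varphi}{[K,L]}\|_{L_x^\infty}^2$ and $\nu\|\partial_x v\|_{L_x^\infty}^2$ rather than the forcing $F_n$; this is exactly how the paper accounts for those two terms.
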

\noindent
In fact, given that we assume \eqref{eq:bootstrap}, there is no need for $\norm{\Der v}$ in the above exponent, as it is bounded by $1$.
We include it to highlight the structure of the argument and the importance of the bootstrap.
\begin{proof}
  This essentially shown in Section~3.1 of \cite{Serre10b}.
  Here we sketch details salient to our context.
  We apply $\partial_x^n$ to \eqref{eq:diff}, take its inner product in $\m{V}$ against $\partial_x^n v$, and integrate in space.
  Applications of the product rule, integration by parts, and Young's inequality produce ``forcing'' terms that only involve derivatives of $v$ of order $n-1$ or less.
  We group these in $F_n$.
  Because $v$ appears in all but the last term of \eqref{eq:diff}, the only term in $F_n$ with no factor of $\partial_x^{n'}v$ is $\partial_x^n F$.
  This explains the functional form of $F_n$ in the lemma.

  We now explain each term in the Gr\"onwall exponent.
  The difference equation \eqref{eq:diff} includes the zeroth-order term $\big(Q_0 \partial_t \app{\varphi}{[K,L]} + Q_1\partial_x \app{\varphi}{[K,L]}\big)v$, which yields $\norms{\Der \app{\varphi}{[K,L]}}_{L_x^\infty}$ in Gr\"{o}nwall.
  When $n \geq 1$, derivatives can land on $S_i$, producing terms such as $\Der S_1 \partial_x v \partial_x^n v$.
  These contribute $\norms{\Der v}_{L_x^\infty}$ in Gr\"onwall.

  The quadratic terms in the exponent arise from the diffusion, and thus include factors of $\nu$.
  To treat the diffusion in our energy estimate, we observe that it is a total derivative.
  Integrating by parts, we obtain
  \begin{equation*}
    -\nu \int_{\R} \partial_x^{n+1} (\Pi v) \cdot \partial_x^n \big[Z^{(\nu)} \partial_x \Pi v + Q_Z \partial_x (\Pi \app{\varphi}{[K,L]}) v\big] \d x.
  \end{equation*}
  When $\partial_x^n$ lands on $\partial_x \Pi v$ in the first term, we obtain $-\nu \int \partial_x^{n+1} (\Pi v) \cdot Z^{(\nu)} \partial_x^{n+1} (\Pi v)$.
  Because $Z^{(\nu)}$ is positive on the range of $\Pi$, this is equivalent to $-\nu \norms{\partial_x^{n+1} \Pi v}_{L_x^2}^2$.
  We use this favorable term to absorb lower-order contributions.

  For example, if $\partial_x^n$ lands on $v$ in the second term, Young's inequality yields
  \begin{equation*}
    \nu \partial_x^{n+1} (\Pi v) Q_Z \partial_x (\Pi \app{\varphi}{[K,L]}) \partial_x^n v \leq \nu \delta (\partial_x^{n+1} \Pi v)^2 + \nu \delta^{-1} Q_Z^2 (\partial_x \Pi \app{\varphi}{[K,L]})^2 (\partial_x^n v)^2.
  \end{equation*}
  Choosing $\delta > 0$ small relative to the ellipticity constant of $Z^{(\nu)}$, we can absorb the former into our favorable term.
  The latter is controlled by $\nu \norms{\partial_x \app{\varphi}{[K,L]}}_{L_x^\infty}^2 (\partial_x^n v)^2$, which contributes $\nu \norms{\partial_x \app{\varphi}{[K,L]}}_{L_x^\infty}^2$ in Gr\"onwall.

  If $\partial_x^n$ instead lands on $Z^{(\nu)}$, we obtain a term of the form
  \begin{equation*}
    \nu \partial_x^{n+1} (\Pi v) \Der Z \partial_x(\Pi v) \partial_x^nv.
  \end{equation*}
  An identical computation shows that we can coercively absorb the higher-order part, and the lower-order part contributes $\nu \norms{\partial_x v}_{L_x^\infty}^2$ to Gr\"onwall.

  Finally, it is straightforward to check that all terms in the calculation can be controlled by one of the above approaches.
\end{proof}
To apply this energy estimate, we require bounds on $\Der \app{\varphi}{[K,L]}$.
\begin{lemma}
  \label{lem:der-app-soln}
  For all $K,L \in \N_0$ and $t \in [t_0, 0]$,
  \begin{equation*}
    \norms{\Der \app{\varphi}{[K,L]}}_{L_x^\infty}(t) \lesssim_{K,L} (\abs{t} \vee \nu^{1/2})^{-1}.
  \end{equation*}
\end{lemma}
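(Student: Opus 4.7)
The plan is to reduce the statement to a pointwise bound on the derivatives of $\app{\psi}{[K,L]}$ and then extract the claimed rate directly from Lemma~\ref{lem:app-size}. Since $\Eta$ is a smooth diffeomorphism and $\app{\psi}{[K,L]}$ is uniformly bounded on $[t_0,0] \times \R$ (inside the strip $[-1,1]$ by Lemma~\ref{lem:app-size} with $m=n=0$, and outside by finite propagation speed, since $\mathring{\psi}$ is constant past $\pm R$), the chain rule gives $|\Der\app{\varphi}{[K,L]}| \lesssim_{K,L} |\Der\app{\psi}{[K,L]}|$. Thus it suffices to show that both $\norm{\partial_t\app{\psi}{[K,L]}}_{L^\infty_x}(t)$ and $\norm{\partial_x\app{\psi}{[K,L]}}_{L^\infty_x}(t)$ are bounded by $(|t| \vee \nu^{1/2})^{-1}$.

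For the time derivative, I would apply Lemma~\ref{lem:app-size} with $(m,n) = (1,0)$ to obtain
\[
|\partial_t\app{\psi}{[K,L]}| \lesssim_{K,L} (\fd \vee \nu^{1/4})(\euc \vee \nu^{1/2})^{-1}.
\]
In our local region $\fd$ is bounded, so $\fd \vee \nu^{1/4} \lesssim 1$, and the trivial inequality $\euc \geq |t|$ yields $(\euc \vee \nu^{1/2})^{-1} \leq (|t| \vee \nu^{1/2})^{-1}$, as desired.

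For the spatial derivative, the $(m,n) = (0,1)$ case of Lemma~\ref{lem:app-size} gives $|\partial_x\app{\psi}{[K,L]}| \lesssim_{K,L} (\fd \vee \nu^{1/4})^{-2}$, so I need the lower bound $(\fd \vee \nu^{1/4})^2 \geq |t| \vee \nu^{1/2}$. This is the one step that is not entirely automatic, and it is where I expect the only substantive work. It follows from the explicit formula $\fd^2 = |t| + 3a^{-1}b\,\fu^2$ in \eqref{eq:fd}: since the discussion after \eqref{eq:cubic-Burgers} establishes that $a$ and $b$ have the same sign, the second term is nonnegative and $\fd^2 \geq |t|$. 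Combining this with the trivial $\nu^{1/2} \leq (\fd \vee \nu^{1/4})^2$ yields $(\fd \vee \nu^{1/4})^2 \geq |t| \vee \nu^{1/2}$, which completes the proof after inverting. Thus the entire argument is essentially a bookkeeping exercise over the two atomic estimates in Lemma~\ref{lem:app-size}, with the only nontrivial ingredient being the geometric comparison $\fd^2 \geq |t|$ extracted from the definition of the cubic distance.
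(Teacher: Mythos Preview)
Your proof is correct and follows essentially the same strategy as the paper: reduce to a pointwise bound on $\Der\app{\psi}{[K,L]}$ via the smoothness of $\Eta$, and then convert to the claimed rate using $\dist^2 \geq |t|$. The one difference is that you invoke Lemma~\ref{lem:app-size} directly, whereas the paper re-derives the bound $\abs{\Der\app{\psi}{[K,L]}} \lesssim (\dist\vee\nu^{1/4})^{-2}$ by hand from the outer and inner estimates (Propositions~\ref{prop:out-bound}, \ref{prop:zeroth-col}, \ref{prop:Burgers-inner}, and~\eqref{eq:inner-bounds}); your route is more economical. One small citation slip: the fact that $a$ and $b$ share a sign is not established in the discussion after~\eqref{eq:cubic-Burgers}; it is stated in the introduction (following~\eqref{eq:fu}) and is implicit in the well-definedness of $\dist$ in~\eqref{eq:fd}.
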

\begin{proof}
  First consider the outer sum $\out{\psi}{[K]} = \sum_{k=0}^K \nu^k \out{\psi}{k}$ where $\euc \gtrsim \nu^\beta$.
  By \eqref{eq:out-bound}, $\Der \out{\psi}{k} = \Der \psi_{k,0} + \m{O}(\euc^{-2k + 1} \dist^{-3})$.
  Note that $\nu \dist^{-4} \lesssim \nu \euc^{-2} \lesssim \nu^{1-2\beta} \lesssim 1.$
  Because $\psi_{k,0}$ is $(-4k+1)$-homogeneous, $\nu^k \Der \psi_{k,0} \lesssim_k \nu^k \dist^{-4k-2} \lesssim_k \dist^{-2}$.
  Treating the error,
  \begin{equation*}
    \nu^k \euc^{-2k + 1} \dist^{-3} \lesssim_k (\nu \euc^{-2})^k \euc \dist^{-3} \lesssim_k \dist^{-1} \lesssim \dist^{-2}.
  \end{equation*}
  Therefore $\abss{\out{\psi}{[K]}} \lesssim_K \dist^{-2}$ where $\euc \gtrsim \nu^\beta$.

  Next consider the inner sum $\inn{\psi}{[L]} = \sum_{\ell = 0}^K \inn{\psi}{\ell}$ where $\euc \lesssim \nu^\beta$.
  Interpreting Proposition~\ref{prop:Burgers-inner} in the outer coordinates, we find $\abss{\Der \inn{\psi}{0}} \lesssim (\dist \vee \nu^{1/4})^{-2}$.
  For higher inner terms, \eqref{eq:inner-bounds} yields
  \begin{equation*}
    \abss{\Der \inn{\psi}{\ell}} \lesssim_\ell \abs{\log \nu}^{\ell-1} \nu^{(\ell + 1)\beta/3} (\dist \vee \nu^{1/4})^{-3}.
  \end{equation*}
  Now $2\beta/3 > 4/13 > 1/4$, so
  \begin{equation*}
    \abss{\Der \inn{\psi}{\ell}} \lesssim_{\ell,\eps} \nu^{(\ell + 1)\beta/3 - \eps} (\dist \vee \nu^{1/4})^{-3} \lesssim_\ell \nu^{1/4} (\dist \vee \nu^{1/4})^{-3} \lesssim (\dist \vee \nu^{1/4})^{-2}.
  \end{equation*}
  We conclude that $\abss{\Der \inn{\psi}{[L]}} \lesssim_L (\dist \vee \nu^{1/4})^{-2}$.

  Our matching estimates imply that $\out{\psi}{[K]}$ and $\inn{\psi}{[L]}$ agree with $\psi_{[K,L]}$ to higher order where $\euc \asymp \nu^\beta$, so the cutoff in $\app{\psi}{[K,L]}$ does not change the estimate:
  \begin{equation*}
    \abss{\Der \app{\psi}{[K,L]}} \lesssim_{K,L} (\dist \vee \nu^{1/4})^{-2} \lesssim (\abs{t} \vee \nu^{1/2})^{-1}.
  \end{equation*}
  Finally, the transformation $\Eta$ is smooth, so the same bound holds for $\Der \app{\varphi}{[K,L]}$.
\end{proof}
We now show that the difference $v$ is small.
\begin{theorem}
  \label{thm:small}
  Fix $\beta \in (6/13, 1/2)$.
  There exists a sequence of exponents $(\Lambda_n)_{n \in \N_0}$ depending only on $t_0$, $A,$ $B$, and $\mr{\psi}$ such that for all $K,L,n \in \N_0$ and $\nu \in (0, 1]$,
  \begin{equation*}
    \norms{v}_{L_t^\infty H_x^n([t_0, 0] \times \R)} \lesssim_{K, L, n} \nu^{\min\{\!(1 - 2 \beta) K,\, L\beta/3\} - \Lambda_n}.
  \end{equation*}
\end{theorem}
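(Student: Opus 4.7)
The plan is to prove Theorem~\ref{thm:small} by combining the entropic energy estimate of Lemma~\ref{lem:energy} with a continuity/bootstrap argument that simultaneously extends $\psi^{(\nu)}$ to time $0$ and propagates \eqref{eq:bootstrap}. Since $v(t_0,\anon)\equiv 0$ and Serre's local well-posedness~\cite{Serre10b} produces $\psi^{(\nu)}$ on a maximal interval $[t_0,T^*(\nu))$, the bootstrap holds near $t_0$ by continuity. I would let $\tb$ be the supremum of times at which \eqref{eq:bootstrap} is satisfied with $1$ replaced by $1/2$, and aim to show $\tb=0$ for $\nu$ small enough relative to $K,L,n$; for $\nu$ above that threshold the claim is vacuous after enlarging the implicit constant. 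Note that outside a fixed compact $x$-interval, finite speed of propagation makes $v$ and all its $x$-derivatives vanish, so $L^2_x$ bounds on forcing follow from $L^\infty_x$ bounds without further argument.

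On $[t_0,\tb]$, I would induct on $n$. For $n=0$, Lemma~\ref{lem:energy} together with Proposition~\ref{prop:diffeo-approx} bounds the forcing by $\sup\norms{F}_{L^2_x}\lesssim_{K,L}\nu^{(1-2\beta)K-3/4}+\nu^{L\beta/3-1/2}$. The Gr\"onwall exponent is dominated by $\int_{t_0}^0\norms{\Der\app{\varphi}{[K,L]}}_{L^\infty_x}\,ds$, which by Lemma~\ref{lem:der-app-soln} is $\lesssim\int_{t_0}^0(|s|\vee\nu^{1/2})^{-1}\,ds\lesssim|\log\nu|$; the viscous contribution $\int\nu\norms{\partial_x\app{\varphi}{[K,L]}}_{L^\infty_x}^2\,ds\lesssim\int\nu(|s|\vee\nu^{1/2})^{-2}\,ds\lesssim 1$; and the two $v$-contributions are $O(1)$ by the bootstrap. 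Exponentiation yields a factor $\nu^{-\Lambda_0}$ for a universal $\Lambda_0$, since the leading behavior of $\Der\app{\varphi}{[K,L]}$ is driven by $\Der\cub$. For $n\geq 1$, the auxiliary term $H_{n,n'}\partial_x^{n'}v$ in $F_n$ depends polynomially on $\partial_x^{\leq n+2}\app{\varphi}{[K,L]}$, which by Lemma~\ref{lem:app-size} enjoys $L^\infty_x$ bounds with integer powers of $(|s|\vee\nu^{1/2})^{-1}$; coupled with the inductive hypothesis for $n'<n$, this produces only a polynomial $\nu$-loss that can be absorbed into an updated exponent $\Lambda_n$, still independent of $K,L$.

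The main obstacle is closing the bootstrap. For this I would apply the energy estimate at some order $n_*\geq 2$ so that the one-dimensional Sobolev embedding $H^{n_*}_x\hookrightarrow W^{1,\infty}_x$ controls $\norms{v}_{L^\infty_x}+\norms{\partial_x v}_{L^\infty_x}$ by $\norms{v}_{H^{n_*}_x}$. The time derivative in \eqref{eq:bootstrap} is then recovered by solving \eqref{eq:diff} for $\partial_t v$: inverting the uniformly positive-definite $S_0^{(\nu)}$ and inserting the spatial bounds just obtained, together with $\norms{F}_{L^\infty_x}$ and $\nu\norms{\partial_x^2 v}_{L^\infty_x}\lesssim\nu\norms{v}_{H^{n_*+2}_x}$. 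For $\nu$ small enough that $\nu^{\min\{(1-2\beta)K,L\beta/3\}-\Lambda_{n_*+2}}<1/4$, this strictly improves the bootstrap, and the standard continuation principle extends $\tb$ up to $0$. The delicate point, on which the universality of $\Lambda_n$ rests, is that the only logarithmically divergent term in Gr\"onwall comes from $\norms{\Der\psi_{0,0}}_{L^\infty_x}$, fixed by the inviscid preshock profile with a constant independent of $K,L$; all higher outer and inner correctors contribute only $O(1)$ integrated losses by virtue of the $\nu$-powers in Propositions~\ref{prop:inviscid-row} and \ref{prop:inner-matching}.
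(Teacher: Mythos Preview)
Your proposal is correct and follows essentially the same route as the paper: impose the bootstrap \eqref{eq:bootstrap}, feed Lemma~\ref{lem:der-app-soln} into the Gr\"onwall exponent of Lemma~\ref{lem:energy} to obtain a $\log\nu^{-1}$ loss (hence a factor $\nu^{-\Lambda_n}$), induct on $n$ using Proposition~\ref{prop:diffeo-approx} and Lemma~\ref{lem:app-size} for the lower-order pieces of $F_n$, and close the bootstrap via Sobolev embedding. One small difference worth flagging: the paper does not argue that the small-$K,L$ case is ``vacuous after enlarging the constant'' (which tacitly assumes $\psi^{(\nu)}$ already exists on $[t_0,0]$ for those $K,L$), but instead first closes the bootstrap for a single large pair $(K_0,L_0)$---thereby securing existence---and then recovers all smaller $K,L$ by the triangle inequality $\|v_{K,L}\|\leq\|v_{K_0,L_0}\|+\|\app{\varphi}{[K_0,L_0]}-\app{\varphi}{[K,L]}\|$.
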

\begin{proof}
  Suppose for the moment that the bootstrap \eqref{eq:bootstrap} holds on $[t_0, t]$ for some $t \in (t_0, 0].$
  Using Lemma~\ref{lem:der-app-soln} in Lemma~\ref{lem:energy}, we see that
  \begin{equation*}
    \int_{t_0}^t \big(\norms{\Der \app{\varphi}{[K,L]}}_{L_x^\infty} + \norm{\Der v}_{L_x^\infty} + \nu \norms{\partial_x \app{\varphi}{[K,L]}}_{L_x^\infty}^2 + \nu \norms{\partial_x v}_{L_x^\infty}^2\big) \lesssim \log \nu^{-1}.
  \end{equation*}
  Thus Lemma~\ref{lem:energy} implies that $\norms{\partial_x^n v}_{L_t^\infty L_x^2} \lesssim \nu^{-C(n)} \norms{F_n}_{L_t^\infty L_x^2}$, where the norms apply on $[t_0, t] \times \R$.
  Recall that $F_n$ depends polynomially on $\partial_x^n F$, derivatives of $\app{\phi}{[K,L]}$, and lower derivatives of $v$.
  We controlled $\partial_x^n F$ in Proposition~\ref{prop:diffeo-approx}, and derivatives of  $\app{\phi}{[K,L]}$, like those of $\app{\psi}{[K,L]}$, can be bounded by powers of $\nu^{-1}$.
  Thus we lose a power of $\nu^{-1}$ with each additional derivative.
  Using Proposition~\ref{prop:diffeo-approx} and induction, there exists a sequence $(\Lambda_n)_{n \in \N_0}$ independent of $K$ and $L$ such that
  \begin{equation}
    \label{eq:small}
    \norms{v}_{L_t^\infty H_x^n([t_0, t] \times \R)} \lesssim_{K, L, n} \nu^{\min\{\!(1 - 2 \beta) K,\, L\beta/3\} - \Lambda_n}.
  \end{equation}
  Taking $K,L \gg 1$, we can arrange $\norms{v}_{L_t^\infty H_x^2} \ll 1$.
  Then Sobolev embedding yields
  \begin{equation*}
    \norm{v}_{L_x^\infty}(s) + \norm{\Der v}_{L_x^\infty}(s) \ll 1 \ForAll s \in [t_0, t].
  \end{equation*}
  That is, we have improved on the bootstrap \eqref{eq:bootstrap}.

  Because $v(t_0, \anon) = 0$, a standard argument from local well-posedness and continuity implies that in fact \eqref{eq:bootstrap} holds for all $t \in (t_0, 0]$, and in particular for $t = 0$.
  So \eqref{eq:small} holds on $[t_0, 0] \times \R$ for sufficiently large $K$ and $L$.
  For smaller $K$ and $L$, the theorem follows from the triangle inequality.
\end{proof}

\subsection{Inviscid limit}
We now use Theorem~\ref{thm:small} to study the inviscid limit $\psi^{(\nu)} \to \psi^{(0)}$ first in $L^\infty$, then in H\"older norms.
\begin{proof}[Proof of Corollary~\ref{cor:rate}]
  By Theorem~\ref{thm:small} and Sobolev embedding, we can choose $\beta \in (6/13, 1/2)$ and $K,L \in \N_0$ such that $\norms{\psi^{(\nu)} - \app{\psi}{[K,L]}}_{L^\infty} \lesssim \nu^{1/3}$, where the norm applies to $[t_0, 0] \times \R$.
  Consider the difference $\app{\psi}{[K,L]} - \out{\psi}{0}$.
  In the outer region where $\euc \gtrsim \nu^\beta$, $\out{\psi}{[K]} - \out{\psi}{0} = \sum_{k=1}^K \nu^k \out{\psi}{k}$.
  Using \eqref{eq:outerest1}, this partial sum is dominated by the $k=1$ term:
  \begin{equation*}
    \out{\psi}{[K]} - \out{\psi}{0} \lesssim \nu \dist^{-3} + \nu \abs{\log \nu} \euc^{-1} \lesssim \nu^{1/4}.
  \end{equation*}
  Here we use \eqref{eq:distance-relation} and $\euc \gtrsim \nu^\beta \geq \nu^{1/2}$.
  Next, in the inner region where $\euc \lesssim \nu^\beta$, we can write
  \begin{equation}
    \label{eq:inner-breakdown}
    \inn{\psi}{[L]} - \out{\psi}{0} = \sum_{\ell = 0}^L (\inn{\psi}{\ell} - \psi_{0,\ell}) - (\out{\psi}{0} - \psi_{0,[L]}).
  \end{equation}
  By Proposition~\ref{prop:inviscid-row}, $\out{\psi}{0} - \psi_{0,[L]}$ is quite small once $L$ is large, so we need only treat $\inn{\psi}{\ell} - \psi_{0,\ell}$.
  At leading order, Proposition~\ref{prop:Burgers-inner} implies that
  \begin{equation*}
    \inn{\psi}{0} - \psi_{0,0} \lesssim \nu (\dist \vee \nu^{1/4})^{-3} \lesssim \nu^{1/4}.
  \end{equation*}
  For $\ell \geq 1$, Proposition~\ref{prop:inviscid-row}, Corollary~\ref{cor:worst-inner-size}, and the triangle inequality yield
  \begin{equation*}
    \inn{\psi}{\ell} - \psi_{0,\ell} \lesssim \abs{\log \nu} \nu^{2\beta/3} \lesssim \nu^{1/4}.
  \end{equation*}
  Since $\app{\psi}{[K,L]}$ is a convex combination of $\out{\psi}{[K]}$ and $\inn{\psi}{[L]}$ in the matching zone, the upper bound in the corollary follows.

  For the lower bound, we work with the shocking component in the diffusive zone $\{\dist \leq \nu^{1/4}\},$ which we implicitly assume in the coming bounds.
  In this region $\app{\sigma}{[K,L]} = \inn{\sigma}{[L]}$, so by our choice of $K$ and $L$, $\sigma^{(\nu)} = \inn{\sigma}{[L]} + \m{O}(\nu^{1/3})$.
  Using \eqref{eq:inner-bounds} from Lemma~\ref{cor:worst-inner-size}, we see that
  \begin{equation*}
    \sigma^{(\nu)} = \inn{\sigma}{0} + \m{O}(\nu^{1/3}).
  \end{equation*}
  On the other hand, \ref{hyp:expansion} yields $\sigma^{(0)} = \sigma_{0,0} + \m{O}(\nu^{1/2})$, so
  \begin{equation}
    \label{eq:inner-reduction}
    \sigma^{(\nu)} - \sigma^{(0)} = \inn{\sigma}{0} - \sigma_{0,0} + \m{O}(\nu^{1/3}).
  \end{equation}
  It thus suffices to show that $\abss{\inn{\sigma}{0} - \sigma_{0,0}} \gtrsim \nu^{1/4}$ somewhere in the diffusive zone.

  In the inner coordinates, $\partial_X \inn{\Sigma}{0} \lesssim 1$ while $\partial_X \Sigma_{0,0} \gg 1$ near the origin.
  Hence there exists $X_1 \in \R$ such that $\Dist(0, X_1) \leq 1$ and $\inn{\Sigma}{0}(0, X_1) \neq \Sigma_{0,0}(0, X_1)$.
  Scaling to the original coordinates, it follows that
  \begin{equation}
    \label{eq:gap}
    \abss{\inn{\sigma}{0}(0, x_1) - \sigma_{0,0}(0, x_1)} \gtrsim \nu^{1/4}
  \end{equation}
  for $x_1 \coloneqq \nu^{3/4} X_1$.
  Since $\dist(0, x_1) \leq \nu^{1/4}$, the lower bound in the corollary follows from \eqref{eq:inner-reduction}.
\end{proof}
For our H\"older estimates, we make the following simple observation:
\begin{lemma}
  \label{lem:homog-Holder}
  If $\abs{\partial_x f} \lesssim \dist^{-a}$ for some $a \in [0, 3)$, then $f \in L_t^\infty \m{C}_x^{1 - a/3}$.
\end{lemma}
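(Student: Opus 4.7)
The approach is direct: bound $\dist^{-a}$ by a purely spatial quantity, then integrate $\partial_x f$ along a horizontal segment. By Lemma~\ref{lem:fuest}, $\dist(t,x) \gtrsim \abs{x}^{1/3}$ uniformly in $t$ (one has $\dist \asymp \abs{x}^{1/3}$ in the regime $\abs{x}^{2/3} \gtrsim \abs{t}$, and $\dist \asymp \abs{t}^{1/2} \geq \abs{x}^{1/3}$ otherwise), so the hypothesis yields
\begin{equation*}
  \abs{\partial_x f(t, x)} \lesssim \abs{x}^{-a/3}
\end{equation*}
with a constant independent of $t$.

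Fix $t$ and $x_1 < x_2$. First suppose $x_1$ and $x_2$ have the same sign; by symmetry, assume $0 \leq x_1 < x_2$. Then the fundamental theorem of calculus and the above bound give
\begin{equation*}
  \abs{f(t,x_2) - f(t,x_1)} \lesssim \int_{x_1}^{x_2} \abs{x}^{-a/3} \dn x = \frac{x_2^{1-a/3} - x_1^{1-a/3}}{1 - a/3}.
\end{equation*}
Since $a \in [0, 3)$, the exponent $1 - a/3$ lies in $(0, 1]$, and subadditivity of $y \mapsto y^{1-a/3}$ on $\R_{\geq 0}$ yields $x_2^{1-a/3} - x_1^{1-a/3} \leq (x_2 - x_1)^{1 - a/3}$. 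For the mixed case $x_1 < 0 < x_2$, split the integral at $0$ and apply the same argument on each piece, using $\abs{a}^{1-a/3} + \abs{b}^{1-a/3} \leq 2^{a/3}(\abs{a} + \abs{b})^{1-a/3}$.

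This produces a uniform bound on the H\"older seminorm $[f(t, \anon)]_{\m{C}^{1 - a/3}}$, which together with any available $L^\infty$ control on $f$ gives $f \in L_t^\infty \m{C}_x^{1 - a/3}$. There is no real obstacle: the only subtle point is that the anisotropic estimate $\abs{\partial_x f} \lesssim \dist^{-a}$, which could \emph{a priori} blow up strongly near $t = 0$ if interpreted isotropically, is in fact controlled spatially because the $t$-scaling in $\dist$ is weaker than the $x$-scaling.
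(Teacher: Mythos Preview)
Your proof is correct and follows essentially the same route as the paper's: both reduce $\dist^{-a}$ to $|x|^{-a/3}$, integrate $\partial_x f$ along a horizontal segment, invoke subadditivity of $z \mapsto z^{1-a/3}$ for the same-sign case, and split at the origin for the mixed-sign case. Your constant $2^{a/3}$ in the mixed case is even slightly sharper than the paper's $2$.
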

\begin{proof}
  Fix $t < 0$ and let $g(x) \coloneqq f(t, x)$, so $\partial_x g \lesssim \abs{x}^{-a/3}$.
  Let $b \coloneqq 1 - a/3$, and first suppose $y > x \geq 0$.
  Integrating, we see that $g(y) - g(x) \lesssim y^b - x^b \leq (y - x)^b$, where the last inequality comes from comparing the derivative of $z^b$ on different intervals of width $y - x$.
  If $y > 0 > x$, we instead write
  \begin{equation*}
    g(y) - g(x) = [g(y) - g(0)] + [g(0) - g(x)] \lesssim y^b + \abs{x}^b \leq 2 (y + \abs{x})^b = (y - x)^b.
  \end{equation*}
  The remaining cases follow symmetrically.
\end{proof}
We now examine the vanishing viscosity limit in H\"older spaces.
We do not pursue sharp decay rates in $\nu$, but they can in principle be obtained from Theorem~\ref{thm:small}.
\begin{proof}[Proof of Corollary~\ref{cor:sharp-conv}]
  Fix $\delta \in (0, 1/3)$ and take $\beta \in (\max\{6/13, (2 + \delta)^{-1}\}, 1/2)$.
  Using this $\beta$ in Theorem~\ref{thm:small}, Morrey's inequality implies that we can choose $K,L$ sufficiently large that $\psi^{(\nu)} - \app{\psi}{[K,L]} \to 0$ in $L_t^\infty \m{C}_x^{2/3}$.
  It thus suffices to consider $\app{\psi}{} \coloneqq \app{\psi}{[K,L]}$.
  Recall that $\psi^{(0)} = \out{\psi}{0}$.

  In the outer region $\euc \gtrsim \nu^\beta$, $\out{\psi}{[K]} - \out{\psi}{0} = \sum_{k=1}^K \nu^k \out{\psi}{k}$.
  Note that \eqref{eq:distance-relation} yields $\dist \gtrsim \nu^{\beta /2} \gg \nu^{1/4}$.
  For the leading parts $\nu^k \psi_{k,0}$, $(-4k+1)$-homogeneity implies that
  \begin{equation*}
    \nu^k \partial_x \psi_{k,0} \lesssim_k \nu^k \dist^{-4k - 2} \lesssim \nu^\eta \dist^{-2}
  \end{equation*}
  for some $\eta(\delta,\beta) > 0$ that may change from line to line.
  By \eqref{eq:distance-relation} and Proposition~\ref{prop:out-bound},
  \begin{equation}
    \label{eq:outer-diff-Holder}
    \nu^k \partial_x(\out{\psi}{k} - \psi_{k,0}) \lesssim_k \abs{\log \nu} \nu^k \euc^{-2k + 1} \dist^{-3} \lesssim \nu^\eta \euc \dist^{-3} \lesssim \nu^\eta \dist^{-1}.
  \end{equation}
  So $\partial_x(\out{\psi}{[K]} - \out{\psi}{0}) \lesssim \nu^\eta \dist^{-2}$ in the outer region.

  In the inner region where $\euc \lesssim \nu^\beta$, we use \eqref{eq:inner-breakdown}.
  By Proposition~\ref{prop:inviscid-row}, $\out{\psi}{0} - \psi_{0,[L]}$ is quite regular once $L$ is large, so we need only treat $\inn{\psi}{\ell} - \psi_{0,\ell}$.
  At leading order, Proposition~\ref{prop:Burgers-inner} implies that
  \begin{equation*}
    \partial_x(\inn{\psi}{0} - \psi_{0,0}) \lesssim \nu (\dist \vee \nu^{1/4})^{-6} \lesssim (\dist \vee \nu^{1/4})^{-2} \lesssim \nu^\eta \dist^{-2 - 3\delta}.
  \end{equation*}
  For $\ell \geq 1$, we take $\eps = \delta$ in Corollary~\ref{cor:worst-inner-size} to find
  \begin{equation*}
    \partial_x(\inn{\psi}{\ell} - \cut{\psi}{0,\ell}) \lesssim \nu^{(1 - \delta)/2} (\dist \vee \nu^{1/4})^{-3},
  \end{equation*}
  recalling that $\cut{\psi}{0,\ell}$ is $\psi_{0,\ell}$ but cut off where $\dist \leq \nu^{1/4}$.
  By Proposition~\ref{prop:inviscid-row},
  \begin{equation*}
    \partial_x(\cut{\psi}{0,\ell} - \psi_{0,\ell}) \lesssim \dist^{-1} \tbf{1}_{\dist \leq \nu^{1/4}}.
  \end{equation*}
  Thus by the triangle inequality,
  \begin{equation*}
    \partial_x(\inn{\psi}{\ell} - \psi_{0,\ell}) \lesssim \nu^{(1 - \delta)/2} (\dist \vee \nu^{1/4})^{-3} + \dist^{-1} \tbf{1}_{\dist \leq \nu^{1/4}}.
  \end{equation*}
  Now
  \begin{equation*}
    \nu^{(1 - \delta)/2} (\dist \vee \nu^{1/4})^{-3} = \nu^{\delta/4} (\nu^{1/4})^{-2 - 3 \delta} (\dist \vee \nu^{1/4})^{-3} \lesssim \nu^\eta \dist^{-1 - 3 \delta}
  \end{equation*}
  and
  \begin{equation*}
    \dist^{-1} \tbf{1}_{\dist \leq \nu^{1/4}} \leq \nu^{3\delta/4} \dist^{-1 - 3 \delta} \tbf{1}_{\dist \leq \nu^{1/4}} \leq \nu^\eta \dist^{-1 - 3 \delta}.
  \end{equation*}
  We conclude that
  \begin{equation}
    \label{eq:cubic-square}
    \partial_x(\inn{\psi}{\ell} - \psi_{0,\ell}) \lesssim \nu^\eta \dist^{-1 - 3 \delta}.
  \end{equation}

  It only remains to consider the effect of the cutoff $\zeta$ in the matching zone where $\euc \asymp \nu^\beta$.
  Using \eqref{eq:def-approx}, we must bound $(\out{\psi}{[K]} - \inn{\psi}{[L]})\partial_x \zeta$.
  But $\partial_x \zeta \asymp \nu^{-\beta}$, and Proposition~\ref{prop:outer-inner-matching} ensures that we can choose $K,L \gg 1$ such that the matching error $\out{\psi}{[K]} - \inn{\psi}{[L]}$ is much less than $\nu^\beta$.
  So this term is suitably small, and we see that $\partial_x (\app{\psi}{} - \out{\psi}{0}) \lesssim \nu^\eta \dist^{-2 - 3 \delta}$.
  By Lemma~\ref{lem:homog-Holder},
  \begin{equation*}
    \norms{\app{\psi}{} - \out{\psi}{0}}_{L_t^\infty \m{C}_x^{1/3 - \delta}} \lesssim \nu^\eta.
  \end{equation*}
  Since $\delta > 0$ can be arbitrarily small, we have shown that $\psi^{(\nu)} \to \psi^{(0)}$ in $L_t^\infty \m{C}_x^{1/3-}$ as $\nu \to 0$.
  In particular, the same holds for the shocking component $\sigma$.

  We next consider the nonshocking components.
  Recall that the entire zeroth column ($\ell = 0$) of the grid and inner expansion has only a shocking component.
  In the absence of $\psi_{k,0}$, \eqref{eq:outer-diff-Holder} implies that
  \begin{equation*}
    \partial_x(\out{\omega}{[K]} - \out{\omega}{0}) \lesssim \nu^\eta \dist^{-1}
  \end{equation*}
  in the outer region where $\euc \gtrsim \nu^\beta$.
  In the inner region, we use \eqref{eq:inner-breakdown} and ignore the pure-shocking $\ell = 0$ terms.
  For $\ell \geq 1$, \eqref{eq:cubic-square} yields
  \begin{equation*}
    \partial_x(\inn{\omega}{[L]} - \out{\omega}{0}) \lesssim \nu^\eta \dist^{-1 - 3 \delta}.
  \end{equation*}
  Contributions from $\zeta$ in the matching region can be treated as before, so we find
  \begin{equation*}
    \partial_x(\omega^{(\nu)} - \omega^{(0)}) \lesssim \nu^\eta \dist^{-1 - 3 \delta}.
  \end{equation*}
  By Lemma~\ref{lem:homog-Holder}, $\omega^{(\nu)} \to \omega^{(0)}$ in $L_t^\infty \m{C}_x^{2/3 - \delta}$.

  To conclude, we must show that $\sigma$ does not converge in $\m{C}^{1/3}.$
  By Theorem~\ref{thm:small}, we can choose $K,L$ sufficiently large that $\psi^{(\nu)} - \app{\psi}{[K,L]} \to 0$ in $L_t^\infty\m{C}_x^{1/3}$.
  Because $\app{\psi}{[K,L]} = \inn{\psi}{[L]}$ in the diffusive zone $\{\dist \leq \nu^{1/4}\}$, it suffices to show that $\inn{\sigma}{[L]} - \out{\sigma}{0} \not\to 0$ in $L_t^\infty\m{C}_x^{1/3}(\{\dist \leq \nu^{1/4}\})$.

  Let $w \coloneqq \inn{\sigma}{[L]} - \out{\sigma}{0}.$
  By \eqref{eq:inner-reduction}, $w = \inn{\sigma}{0} - \sigma_{0,0} + \m{O}(\nu^{1/3})$.
  Both $\inn{\sigma}{0}$ and $\sigma_{0,0}$ are odd in $x$, so $\inn{\sigma}{0}(0, 0) = \sigma_{0,0}(0, 0) = 0$.
  Combining these observations with \eqref{eq:gap}, we see that
  \begin{equation*}
    \abs{w(0, x_1) - w(0, 0)} \gtrsim \nu^{1/4}
  \end{equation*}
  for some $x_1 \in \R$ such that $\dist(0, x_1) \leq \nu^{1/4}$.
  It follows that $\abs{x_1} \lesssim \nu^{3/4}$, so
  \begin{equation*}
    \frac{\abs{w(0, x_1) - w(0, 0)}}{\abs{x_1}^{1/3}} \gtrsim 1.
  \end{equation*}
  This implies that $\sigma^{(\nu)} \not \to \sigma^{(0)}$ in $L_t^\infty \m{C}_x^{1/3}$.
\end{proof}
\begin{remark}
  \label{rem:unfrozen}
  As noted in the introduction, the $2/3$ regularity threshold for $\omega$ may seem somewhat lax.
  Indeed, by Theorem~4.2 of \cite{AndChaGra25a}, the true nonshocking components $l^{I'}(\psi^{(0)}) \partial_x \psi^{(0)}$ are smooth in the eikonal coordinates $(\tau, u)$.
  Because the eikonal function $u$ is $\m{C}_x^{1/3}$, the inviscid solution has a $\m{C}_x^{1,1/3}$-character in the nonshocking tangent directions.
  We emphasize that this result requires the use of the variable left eigenvector $l^{I'}(\psi^{(0)})$ of $A(\psi^{(0)})$ rather than the frozen vector $l^{I'}(0)$ used to define $\omega$.
  Thus, the $2/3$ threshold in Corollary~\ref{cor:sharp-conv} might appear to be an artifact of our frozen coefficients.
  This is not the case.

  In fact, the $2/3$ threshold is intrinsic to the viscous problem, and cannot generally be improved through a redefinition of the nonshocking component.
  Indeed, a principal source of irregularity in $\omega^{(\nu)}$ is a diffusive cross term in \eqref{eq:main} with schematic form $B_1^\sperp \partial_x^2 \sigma$.
  Because the diffusion has no reason to be diagonal in the eigenbasis of $A$, we generally expect $B_1^\sperp \neq 0$, so the second derivative of the shocking component forces the nonshocking component.
  This force has order $\nu (\dist \vee \nu^{1/4})^{-5}$.
  Through renormalization along the nonshocking characteristic, it contributes a term $\omega_{\text{cross}}$ of order $\nu (\dist \vee \nu^{1/4})^{-2}$ to the nonshocking component.
  Differentiating and following the reasoning of the above proof, we find
  \begin{equation*}
    \partial_x \omega_{\text{cross}} \lesssim \nu (\dist \vee \nu^{1/4})^{-5} \lesssim \nu^\eps \dist^{-1 - 3 \delta}.
  \end{equation*}
  So $\omega_{\text{cross}}$ tends to zero in $\m{C}_x^{2/3-}$, but not in $\m{C}_x^{2/3}$.
  This is not an artifact of our frozen definition of $\omega$.
  Indeed, we can easily check that $l^{I'}(\psi^{(\nu)}) \partial_x \psi^{(\nu)}$ also includes $\partial_x \omega_{\text{cross}}$.
  It follows that $l^{I'}(\psi^{(\nu)}) \partial_x \psi^{(\nu)} \to l^{I'}(\psi^{(0)}) \partial_x \psi^{(0)}$ in $L_t^\infty \m{C}_x^\al$ if and only if $\al < - 1/3$ (drawing on negative H\"older spaces).
  This holds despite the fact that the limit itself is considerably more regular: as noted above, $l^{I'}(\psi^{(0)}) \partial_x \psi^{(0)} \in L_t^\infty \m{C}_x^{1/3}$.
  
  This limited degree of convergence is unavoidable unless a special relationship between $A$ and $B$ leads $B_1^\sperp(0)$ to vanish.
  For example, consider the simple system
  \begin{equation*}
    \rd_t v + v \rd_x v = \nu \partial_x^2 v \And \rd_t w - \rd_x w = \nu \mathsf{B} \partial_x^2 v,
  \end{equation*}
  in which viscous Burgers forces a linear transport equation through its second derivative.
  If $\mathsf{B} = B_1^\sperp(0) \neq 0$, then $l^{I'}(\psi^{(\nu)}) \partial_x \psi^{(\nu)} \not\to l^{I'}(\psi^{(0)}) \partial_x \psi^{(0)}$ in $L_t^\infty \m{C}_x^{-1/3}$.
  (The eigenspace of $A$ is constant in this problem, so $l^{I'}(\psi) \partial_x \psi = \partial_x \omega = \partial_x w$.)
  On the other hand, if $\mathsf{B} = 0$, $B$ is diagonal in the eigenbasis of $A$.
  Then $\omega_{\text{cross}}$ does not arise, and $\partial_x w^{(\nu)} \to \partial_x \omega^{(0)}$ in $L_t^\infty \m{C}_x^{1/3-}$.
  Indeed, in this simple problem we have $\partial_x w^{(\nu)} = \partial_x \omega^{(0)}$ when $\mathsf{B} = 0$.
\end{remark}
Finally, we prove that $\Psi^{(\nu)}$ has a universal limit as $\nu \to 0$.
\begin{proof}[Proof of Corollary~\ref{cor:universal}]
  Let $\m{K} \subset (-\infty,0] \times \R$ be compact.
  By Theorem~\ref{thm:small}, we can choose $K$ and $L$ sufficiently large that $\Psi^{(\nu)} = \inn{\Psi}{[L]} + \smallO(1)$ in $\m{K}$.
  Moreover, Proposition~\ref{prop:inner-matching} implies that $\inn{\Psi}{[L]} = \inn{\Psi}{0} + \smallO(1)$.
  Now $\inn{\Psi}{0} = \inn{\Sigma}{0} e_1$, so $\Psi^{(\nu)} \to \inn{\Sigma}{0} e_1$ locally uniformly as $\nu \to 0$.
  By \eqref{eq:s0}, $\inn{\Sigma}{0}$ satisfies viscous Burgers.
  Precisely, it is the unique solution of viscous Burgers that converges to the cubic $\cub(T, X)$ at infinity with error $\m{O}(\Dist^{-3})$ \cite[Proposition~4.2]{CG23}.

  This solution is parametrized by three scalars: $a,b \neq 0$ in \eqref{eq:fu} and the shocking viscosity $B_1^1(0) > 0$ in \eqref{eq:s0}.
  (It may appear that the advective coefficient $\partial_1 A_1^1(0)$ is a fourth parameter, but this is bound to $a$ through the relation $a = -\partial_1 A_1^1(0)$.)
  Through an appropriate scaling of $T,X$, and $\Sigma$, we can set these parameters to $1$.
  Thus $\inn{\Sigma}{0}$ is unique modulo scaling.
\end{proof}

\printbibliography
\end{document}